\documentclass[twoside]{amsart}
\usepackage{stmaryrd}

\usepackage{amsmath,amsfonts,amsthm,mathrsfs}
\usepackage[unicode,linktocpage]{hyperref}

\usepackage[alphabetic,initials]{amsrefs}

\usepackage{amssymb}
\usepackage{enumitem}

\usepackage{tikz}
\usetikzlibrary{matrix,arrows}
\newlength{\myarrowsize} 

\pgfarrowsdeclare{myto}{myto}{
	\pgfsetdash{}{0pt} 
	\pgfsetbeveljoin 
	\pgfsetroundcap 
	\setlength{\myarrowsize}{0.6pt}
	\addtolength{\myarrowsize}{.5\pgflinewidth}
	\pgfarrowsleftextend{-4\myarrowsize-.5\pgflinewidth} 
	\pgfarrowsrightextend{.7\pgflinewidth}
}{
	\setlength{\myarrowsize}{0.6pt} 
  	\addtolength{\myarrowsize}{.5\pgflinewidth}  
	\pgfsetroundjoin
	\pgfsetlinewidth{0.0001pt}
	\pgfpathmoveto{\pgfpoint{0.43\myarrowsize}{0}}
	\pgfpatharc{0}{70}{0.14\myarrowsize}
	\pgfpatharc{-110}{-169.5}{4\myarrowsize}
	\pgfpatharc{10.5}{189}{0.25\myarrowsize and 0.12\myarrowsize}
	\pgfpatharc{-170}{-119.5}{4.48\myarrowsize}
	\pgfpathmoveto{\pgfpoint{0.43\myarrowsize}{0}}
	\pgfpatharc{0}{-70}{0.14\myarrowsize}
	\pgfpatharc{110}{169.5}{4\myarrowsize}
	\pgfpatharc{-10.5}{-189}{0.25\myarrowsize and 0.12\myarrowsize}
	\pgfpatharc{170}{119.5}{4.48\myarrowsize}
	\pgfpathclose
	\pgfsetstrokeopacity{0.25}
	\pgfusepathqfillstroke
}

\pgfarrowsdeclare{myonto}{myonto}{
	\pgfsetdash{}{0pt} 
	\pgfsetbeveljoin 
	\pgfsetroundcap 
	\setlength{\myarrowsize}{0.6pt}
	\addtolength{\myarrowsize}{.5\pgflinewidth}
	\pgfarrowsleftextend{-4\myarrowsize-.5\pgflinewidth} 
	\pgfarrowsrightextend{.7\pgflinewidth}
}{
	\setlength{\myarrowsize}{0.6pt} 
  	\addtolength{\myarrowsize}{.5\pgflinewidth}  
	\pgfsetroundjoin
	\pgfsetlinewidth{0.0001pt}
	\pgfpathmoveto{\pgfpoint{0.43\myarrowsize}{0}}
	\pgfpatharc{0}{70}{0.14\myarrowsize}
	\pgfpatharc{-110}{-169.5}{4\myarrowsize}
	\pgfpatharc{10.5}{189}{0.25\myarrowsize and 0.12\myarrowsize}
	\pgfpatharc{-170}{-119.5}{4.48\myarrowsize}
	\pgfpathlineto{\pgfpoint{0.43\myarrowsize-0.3em}{0}}
	\pgfpatharc{0}{70}{0.14\myarrowsize}
	\pgfpatharc{-110}{-169.5}{4\myarrowsize}
	\pgfpatharc{10.5}{189}{0.25\myarrowsize and 0.12\myarrowsize}
	\pgfpatharc{-170}{-119.5}{4.48\myarrowsize}
	\pgfpathmoveto{\pgfpoint{0.43\myarrowsize}{0}}
	\pgfpatharc{0}{-70}{0.14\myarrowsize}
	\pgfpatharc{110}{169.5}{4\myarrowsize}
	\pgfpatharc{-10.5}{-189}{0.25\myarrowsize and 0.12\myarrowsize}
	\pgfpatharc{170}{119.5}{4.48\myarrowsize}
	\pgfpathlineto{\pgfpoint{0.43\myarrowsize-0.3em}{0}}
	\pgfpatharc{0}{-70}{0.14\myarrowsize}
	\pgfpatharc{110}{169.5}{4\myarrowsize}
	\pgfpatharc{-10.5}{-189}{0.25\myarrowsize and 0.12\myarrowsize}
	\pgfpatharc{170}{119.5}{4.48\myarrowsize}
	\pgfpathclose
	\pgfsetstrokeopacity{0.25}
	\pgfusepathqfillstroke
}

\pgfarrowsdeclare{myhook}{myhook}{
	\setlength{\myarrowsize}{0.6pt}
	\addtolength{\myarrowsize}{.5\pgflinewidth}
	\pgfarrowsleftextend{-4\myarrowsize-.5\pgflinewidth} 
	\pgfarrowsrightextend{.7\pgflinewidth}
}{
	\setlength{\myarrowsize}{0.6pt} 
  	\addtolength{\myarrowsize}{.5\pgflinewidth}  
 	\pgfsetdash{}{+0pt}
	\pgfsetroundcap
	\pgfpathmoveto{\pgfqpoint{0pt}{-4.667\pgflinewidth}}
	\pgfpathcurveto
    {\pgfqpoint{4\pgflinewidth}{-4.667\pgflinewidth}}
    {\pgfqpoint{4\pgflinewidth}{0pt}}
    {\pgfpointorigin}
	\pgfusepathqstroke
}

\newenvironment{diagram}[2]{%
\[%
\begin{tikzpicture}[>=myto,baseline=(current bounding box.center),%
	to/.style={->,font=\scriptsize,cap=round},%
	into/.style={myhook->,font=\scriptsize,cap=round},%
	onto/.style={-myonto,font=\scriptsize,cap=round},%
	math/.style={matrix of math nodes, row sep=#2, column sep=#1,%
		text height=1.5ex, text depth=0.25ex}]%
}{%
\end{tikzpicture}%
\]%
\ignorespacesafterend%
}

%

\usepackage{chngcntr}

\makeatletter
\let\@@seccntformat\@seccntformat
\renewcommand*{\@seccntformat}[1]{%
  \expandafter\ifx\csname @seccntformat@#1\endcsname\relax
    \expandafter\@@seccntformat
  \else
    \expandafter
      \csname @seccntformat@#1\expandafter\endcsname
  \fi
    {#1}%
}
\newcommand*{\@seccntformat@subsection}[1]{%
  \textbf{\csname the#1\endcsname.}
}
\makeatother

\counterwithin{equation}{subsection}
\counterwithout{subsection}{section}
\counterwithin{figure}{subsection}

\newcommand{\subsecref}[1]{\S\ref{#1}}

\newtheorem{theorem}[equation]{Theorem}
\newtheorem*{theorem*}{Theorem}
\newtheorem{lemma}[equation]{Lemma}
\newtheorem*{lemma*}{Lemma}
\newtheorem{corollary}[equation]{Corollary}
\newtheorem{proposition}[equation]{Proposition}
\newtheorem*{proposition*}{Proposition}

\theoremstyle{definition}
\newtheorem{definition}[equation]{Definition}
\newtheorem{problem}[equation]{Problem}
\newtheorem*{definition*}{Definition}
\theoremstyle{remark}

\newtheorem{example}[equation]{Example}
\newtheorem*{example*}{Example}
\newtheorem*{note}{Note}

\theoremstyle{plain}

\newcommand{\MHM}{\operatorname{MHM}}

\newcommand{\pt}{\mathit{pt}}
\newcommand{\Dmod}{\mathscr{D}}
\newcommand{\Mmod}{\mathcal{M}}
\newcommand{\Nmod}{\mathcal{N}}

\newcommand{\derR}{\mathbb{R}}
\newcommand{\derL}{\mathbb{L}}

\newcommand{\decal}[1]{\lbrack #1 \rbrack}

\newcommand{\norm}[1]{\lVert#1\rVert}

\newcommand{\abs}[1]{\lvert #1 \rvert}

\newcommand{\eps}{\varepsilon}

\newcommand{\tensor}{\otimes}



\newcommand{\shHom}{\mathscr{H}\hspace{-2.7pt}\mathit{om}}
\newcommand{\shExt}{\mathscr{E}\hspace{-1.5pt}\mathit{xt}}
\newcommand{\Hom}{\operatorname{Hom}}

\newcommand{\NN}{\mathbb{N}}
\newcommand{\ZZ}{\mathbb{Z}}
\newcommand{\QQ}{\mathbb{Q}}
\newcommand{\RR}{\mathbb{R}}
\newcommand{\CC}{\mathbb{C}}

\newcommand{\PPn}[1]{\mathbb{P}^{#1}}


\newcommand{\menge}[2]{\bigl\{ \thinspace #1 \thinspace\thinspace \big\vert%
\thinspace\thinspace #2 \thinspace \bigr\}}
\newcommand{\Menge}[2]{\Bigl\{ \thinspace #1 \thinspace\thinspace \Big\vert%
\thinspace\thinspace #2 \thinspace \Bigr\}}

\DeclareMathOperator{\im}{im}

\DeclareMathOperator{\Spec}{Spec}

\DeclareMathOperator{\Tor}{Tor}
\DeclareMathOperator{\id}{id}

\DeclareMathOperator{\Supp}{Supp}
\DeclareMathOperator{\codim}{codim}

\DeclareMathOperator{\rat}{rat}

\DeclareMathOperator{\Sym}{Sym}
\DeclareMathOperator{\Gr}{Gr}
\DeclareMathOperator{\DR}{DR}


\newcommand{\define}[1]{\emph{#1}}



\newcommand{\shf}[1]{\mathscr{#1}}
\newcommand{\OX}{\shf{O}_X}
\newcommand{\OmX}[1]{\Omega_X^{#1}}
\newcommand{\OW}{\shO_W}



\newcommand{\restr}[1]{\big\vert_{#1}}

\newcommand{\argbl}{-}

\newcommand{\dst}{\Delta^{\ast}}

\newcommand{\into}{\hookrightarrow}


\newcommand{\fu}{f^{\ast}}
\newcommand{\fl}{f_{\ast}}

\newcommand{\iu}{i^{\ast}}

\newcommand{\pu}{p^{\ast}}
\newcommand{\pl}{p_{\ast}}

\newcommand{\shF}{\shf{F}}

\newcommand{\shO}{\shf{O}}

\setlength{\parskip}{0.2\baselineskip}

\newcommand{\al}{a_{\ast}}
\newcommand{\au}{a^{\ast}}
\newcommand{\OA}{\shf{O}_A}
\newcommand{\OAh}{\shf{O}_{\Ah}}
\newcommand{\OAsh}{\shf{O}_{\Ash}}
\newcommand{\GV}{\operatorname{GV}}
\newcommand{\OV}{\shO_V}
\newcommand{\ql}{q_{\ast}}
\renewcommand{\Gr}{\operatorname{gr}}
\DeclareMathOperator{\Perv}{Perv}
\newcommand{\OY}{\shO_Y}

\newcommand{\Ash}{A^{\natural}}

\newcommand{\Dbcoh}{\mathrm{D}_{\mathrm{coh}}^{\mathrm{b}}}
\newcommand{\Db}{\mathrm{D}^{\mathrm{b}}}
\newcommand{\Dtcoh}[1]{\mathrm{D}_{\mathrm{coh}}^{#1}}
\renewcommand{\derR}{\mathbf{R}}
\renewcommand{\derL}{\mathbf{L}}

\newcommand{\FM}{\derR \Phi_P}
\newcommand{\FMi}{\derR \Psi_P}
\renewcommand{\argbl}{-}

\renewcommand{\Hom}{\operatorname{Hom}}

\renewcommand{\shHom}{\mathcal{H}\mathit{om}}
\newcommand{\Ah}{\widehat{A}}
\DeclareMathOperator{\Alb}{Alb}
\DeclareMathOperator{\Pic}{Pic}

\newcommand{\OmA}[1]{\Omega_A^{#1}}
\newcommand{\DA}{\mathbf{D}_A}

\newcommand{\QHX}{\QQ_X^H}
\newcommand{\omX}{\omega_X}
\newcommand{\shC}{\mathscr{C}}

\newcommand{\F}{\mathcal{F}}
\newcommand{\LL}{\mathcal{L}}

\newcommand{\Coh}{\operatorname{Coh}}

\newcommand{\pDtcoh}[2]{ {^{#1}} \Dtcoh{#2}}
\newcommand{\pCoh}[2]{ {^{#1}} \mathrm{Coh}{#2}}
\newcommand{\QHY}{\QQ_Y^H}
\newcommand{\QHF}{\QQ_F^H}
\newcommand{\shA}{\mathcal{A}}
\newcommand{\AX}{\shA_X}
\newcommand{\AY}{\shA_Y}
\newcommand{\AZ}{\shA_Z}

\newcommand{\Vd}{V^{\ast}}

\newcommand{\DX}{\mathbf{D}_X}

\newcommand{\QHC}{\QQ_C^H}

\newcommand{\QHA}{\QQ_A^H}

\newcommand{\DAV}{\mathbf{D}_{A \times V}}
\newcommand{\DAhV}{\mathbf{D}_{\Ah \times V}}

\newcommand{\mm}{\mathfrak{m}}

\newcommand{\Rh}{\hat{R}}

\renewcommand{\dbar}{\bar{\partial}}
\newcommand{\dbarst}{\bar{\partial}^{\ast}}
\newcommand{\del}{\partial}
\newcommand{\delst}{\partial^{\ast}}
\newcommand{\Har}{\mathcal{H}}
\renewcommand{\dst}{d^{\ast}}

\newcommand{\dbartau}{\dbar_{\tau}}
\newcommand{\dtau}{d_{\tau}}
\newcommand{\deltau}{\del_{\tau}}
\newcommand{\dbarsttau}{\dbarst_{\tau}}
\newcommand{\delsttau}{\delst_{\tau}}
\newcommand{\dsttau}{\dst_{\tau}}
\newcommand{\Laptau}{\Delta_{\tau}}
\newcommand{\Hartau}{\Har_{\tau}}
\newcommand{\Htau}{H_{\tau}}
\newcommand{\Gtau}{G_{\tau}}

\DeclareMathOperator{\HM}{HM}
\DeclareMathOperator{\IC}{IC}

\newcommand{\cCoh}{ {^c} \mathrm{Coh}}
\newcommand{\cDtcoh}[1]{ {^c} \Dtcoh{#1}}
\newcommand{\mCoh}{ {^m} \mathrm{Coh}}
\newcommand{\mDtcoh}[1]{ {^m} \Dtcoh{#1}}

\newcommand{\shT}{\mathscr{T}}

\begin{document}

\title[Generic vanishing theory via mixed Hodge modules]{Generic vanishing theory \\ via mixed Hodge modules}
\author[M.~Popa]{Mihnea Popa}
\address{%
	Department of Mathematics, Statistics \& Computer Science \\
	University of Illinois at Chicago \\
	851 South Morgan Street \\
	Chicago, IL 60607
}
\email{mpopa@math.uic.edu}

\author[Ch.~Schnell]{Christian Schnell}
\address{%
	Institute for the Physics and Mathematics of the Universe \\
	The University of Tokyo \\
	5-1-5 Kashiwanoha, Kashiwa-shi \\
	Chiba 277-8583, Japan
}
\email{christian.schnell@ipmu.jp}

\keywords{Generic vanishing theory, abelian variety, mixed Hodge module,
cohomological support loci, Fourier-Mukai transform, perverse coherent sheaf}

\begin{abstract}
We extend the dimension and strong linearity results of generic vanishing theory to 
bundles of holomorphic forms and rank one local systems, and more generally to
certain coherent sheaves of Hodge-theoretic origin associated to irregular varieties.  
Our main tools are Saito's mixed Hodge modules, the Fourier-Mukai transform for 
$\Dmod$-modules on abelian varieties
introduced by Laumon and Rothstein, and Simpson's harmonic theory for flat bundles.
In the process, we discover two natural categories of perverse coherent sheaves.
\end{abstract}
\maketitle


\makeatletter
\newcommand\@dotsep{4.5}
\def\@tocline#1#2#3#4#5#6#7{\relax
  \ifnum #1>\c@tocdepth 
  \else
    \par \addpenalty\@secpenalty\addvspace{#2}%
    \begingroup \hyphenpenalty\@M
    \@ifempty{#4}{%
      \@tempdima\csname r@tocindent\number#1\endcsname\relax
    }{%
      \@tempdima#4\relax
    }%
    \parindent\z@ \leftskip#3\relax
    \advance\leftskip\@tempdima\relax
    \rightskip\@pnumwidth plus1em \parfillskip-\@pnumwidth
    #5\leavevmode\hskip-\@tempdima #6\relax
    \leaders\hbox{$\m@th
      \mkern \@dotsep mu\hbox{.}\mkern \@dotsep mu$}\hfill
    \hbox to\@pnumwidth{\@tocpagenum{#7}}\par
    \nobreak
    \endgroup
  \fi}
\def\l@section{\@tocline{1}{3pt}{1pc}{}{\bfseries}}
\def\l@subsection{\@tocline{2}{0pt}{30pt}{5pc}{}}
\makeatother

\tableofcontents

\section{Introduction}

\subsection{Generic vanishing theory}

The attempt to understand cohomology vanishing statements on irregular varieties in
the absence of strong positivity has led to what is usually called \emph{generic
vanishing theory}. Perhaps the most famous result is the generic
vanishing theorem of Green and Lazarsfeld \cite{GL1}, which in a weak form states that on a smooth
complex projective variety $X$, the cohomology of a generic line bundle $L \in \Pic^0(X)$
vanishes in degrees less than $\dim a(X)$, where $a \colon X \to \Alb(X)$
denotes the Albanese mapping of $X$.
This theorem and its variants have found a surprising number of applications, ranging from results about
singularities of theta divisors \cite{EL} to recent work on the
birational geometry of irregular varieties, including a proof of Ueno's conjecture \cite{ChH}.

One can consider the set of those line bundles for which the cohomology in a given
degree does not vanish, and thanks to the work of many people, the structure of these sets
is very well understood. This is more precisely the content of generic vanishing theory. 
Denoting, for any coherent sheaf $\F$ on $X$, by
$$V^i (\F) := \{ L \in \Pic^0(X) ~|~ H^i (X, \F \otimes L ) \neq 0\} \subseteq \Pic^0 (X)$$
the $i$-th cohomological support locus of $\F$,  its main statements are the following:

\begin{description}
\item[Dimension (D)]
One has ${\rm codim}~V^i (\omega_X) \ge i - \dim X + \dim a(X)$ for all $i$
\cites{GL1,GL2}. This implies the generic vanishing theorem via Serre duality.

\item[Linearity (L)]
The  irreducible components of each $V^i (\omega_X)$ are torsion translates of
abelian subvarieties of $\Pic^0(X)$ \cites{GL2,Arapura,Simpson2}.

\item[Strong linearity (SL)]
If $p_2 : X\times \Pic^0 (X) \rightarrow \Pic^0 (X)$ is the second projection, and $P$ is a 
Poincar\'e line bundle on $X \times \Pic^0 (X)$, 
then $\derR {p_2}_* P$ is locally around each point quasi-isomorphic to a linear
complex \cite{GL2}. A precise version of this result is known to imply (L), except
for the torsion statement, and based on this also (D).
\end{description}

Analogous results have been considered for the cohomology of local systems, replacing
$\Pic^0 (X)$ by ${\rm Char}(X)$, the algebraic group parametrizing rank one local
systems \cites{Arapura,Simpson,Simpson2}. New approaches and extensions for the
theory on $\Pic^0 (X)$ have been introduced more recently, for example in
\cites{CH,Hacon,PP}. On the other hand, important gaps have remained in our
understanding of some of the most basic objects. For instance, while (L) is also
known for the sheaf of holomorphic $p$-forms $\Omega_X^p$ with $ p < n$, a good
generic Nakano-type vanishing statement as in (D) has eluded previous efforts,
despite several partial results \cites{GL1,PP}. The same applies to the case of local
systems of rank one, where the perhaps the even more interesting property (SL) has been
missing as well.

In this paper, we answer those remaining questions, and at the same time recover the
previous results of generic vanishing theory mentioned above (with the exception of
the statement about torsion points, which is of a different nature) by
enlarging the scope of the study to the class of filtered $\Dmod$-modules
associated to mixed Hodge modules on abelian varieties. In fact, there is a version
of the Fourier-Mukai transform for $\Dmod$-modules, introduced by Laumon \cite{Laumon2} and
Rothstein \cite{Rothstein}; it takes $\Dmod$-modules on an abelian variety to
complexes of coherent sheaves on $\Ash$, the moduli space of line bundles on $A$ with
integrable connection.  Our main results can be summarized briefly as describing
the \emph{Fourier-Mukai transform of the trivial $\Dmod$-module $\OX$} on
an irregular variety $X$.

\subsection{Why mixed Hodge modules?}
\label{subsec:Hacon}

To motivate the introduction of mixed Hodge modules into the problem, let us briefly
recall the very elegant proof of the generic vanishing theorem for $\omega_X$ discovered by Hacon
\cite{Hacon}.  It goes as follows.

Let $A = \Alb(X)$ denote the Albanese variety of an
irregular smooth complex projective variety $X$, and $a \colon X \to A$ its Albanese
mapping (for some choice of base point, which does not matter here). Let $\Ah =
\Pic^0(A)$ denote the dual abelian variety. 
Using a well-known theorem of Koll\'ar on the splitting of the direct image $\derR
\al \omega_X$ in $\Dbcoh(\OA)$ and standard manipulations, it is enough to 
prove that
\[
	\codim V^{\ell} (R^i \al \omega_X) \ge \ell \,\,\,\,\,\,
	{\rm for~ all~} i=0, 1,
\dotsc, k := \dim X - \dim a(X).
\]
In terms of the Fourier-Mukai transform $\derR \Phi_P \colon \Dbcoh(\OA) \to
\Dbcoh(\OAh)$, this is equivalent, via base change arguments, to the statement that 
\[
	\codim \Supp R^{\ell} \Phi_P(R^i \al \omega_X) \geq \ell
\]
in the same range. Now the sheaves $R^i \al \omega_X$ still satisfy a Kodaira-type vanishing theorem, and together with 
the special geometry of abelian varieties this 
implies after some work that the Fourier-Mukai transform of $R^i \al \omega_X$ is the
dual of a coherent sheaf $\shF_i$ on $\Ah$, which is to say that
\[
	\derR \Phi_P(R^i \al \omega_X) \simeq \derR \shHom(\shF_i, \OAh).
\]
The desired inequality for the codimension of the support becomes
\[
	\codim \Supp R^{\ell} \Phi_P(R^i \al \omega_X) = 
		\codim \Supp \shExt^{\ell}(\shF_i, \OAh) \geq \ell,
\]
which is now a consequence of a general theorem about regular local rings. This proves the dimension statement
(D), and hence the generic vanishing theorem for topologically trivial line bundles.

One of the subjects of this paper is to use this framework in order to prove a generic vanishing
theorem for general objects
of Hodge-theoretic origin.  
The role of Koll\'ar's theorem is played by the
decomposition theorem \cite{BBD}, or more precisely by its Hodge-theoretic version
due to Morihiko Saito \cite{Saito-MHM}. This is one main reason why mixed Hodge modules
form a natural setting here. Another is the existence of a very general Kodaira-type
vanishing theorem for mixed Hodge modules, again due to Saito, which becomes
particularly useful on abelian varieties. This vanishing theorem allows us to
generalize the second half of the proof above to any coherent sheaf of
Hodge-theoretic origin on an abelian variety. Finally, in order to extract the
relevant information about the sheaves $\Omega_X^p$ with $p < \dim X$, one needs a result by Laumon and
Saito on the behavior of filtered $\Dmod$-modules under direct images, which only
works well in the case of $\Dmod$-modules that underlie mixed Hodge modules.


\subsection{The main results}

Let us now give a summary of the results we obtain. 
There are essentially two parts: vanishing and dimension results, for which 
Hodge modules are crucially needed, and linearity results, which 
apply to certain Hodge modules, but for which the general theory of 
$\Dmod$-modules and the harmonic theory  of flat line bundles suffice in the proofs.
The theory of mixed Hodge modules is reviewed
in \subsecref{subsec:MHM} below.

The starting point is a general Kodaira-type vanishing theorem for the graded pieces
of the de Rham complex of a mixed Hodge module, proved by Saito. On an abelian
variety $A$, this can be improved to a vanishing theorem for coherent sheaves of the
form $\Gr_k^F \Mmod$, where $(\Mmod, F)$ is any filtered $\Dmod$-module
underlying a mixed Hodge module on $A$ (see Lemma~\ref{lem:kodaira} below). We use
this observation to produce natural classes of perverse coherent sheaves
\cites{AB,Kashiwara} on the dual abelian variety $\Ah$, and on the parameter space
for Higgs line bundles $\Ah \times H^0 (A, \Omega_A^1)$. 

We first show that every mixed Hodge module on $A$ gives rise to a collection of
sheaves satisfying the generic vanishing condition, or equivalently 
perverse coherent sheaves on $\Ah$ with respect to the dual standard $t$-structure (reviewed in 
\S \ref{subsec:t-structure}).

\begin{theorem}\label{hm_abelian}
Let $A$ be a complex abelian variety, and $M$ a mixed Hodge module on $A$ with
underlying filtered  $\Dmod$-module $(\Mmod, F)$. Then for each $k \in \ZZ$, the
coherent sheaf $\Gr_k^F \Mmod$ is a $GV$-sheaf on $A$, meaning that
\[
	\codim V^i (\Gr_k^F \Mmod) \geq i \,\,\,\,{\rm for~all~}i.
\]
Consequently, its Fourier-Mukai transform $\FM(\Gr_k^F \Mmod)$ is a perverse coherent sheaf on $\Ah$.
\end{theorem}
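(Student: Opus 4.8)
The plan is to reduce the statement to the Kodaira-type vanishing theorem for mixed Hodge modules on $A$, combined with the fact that on an abelian variety the Fourier-Mukai transform detects cohomological support loci. First I would set up the standard cohomological base change dictionary: for a coherent sheaf $\shF$ on $A$, one has $L_\rho \in V^i(\shF)$ if and only if the fiber of $R^i\Phi_P(\shF)$ at the point $[\rho]\in\Ah$ corresponding to $L_\rho$ is nonzero, after twisting appropriately; hence the condition $\codim V^i(\shF)\ge i$ for all $i$ — the $GV$ (or generic vanishing) property — is equivalent to saying $\codim\Supp R^i\Phi_P(\shF)\ge i$, which is exactly Hacon's reformulation recalled in \subsecref{subsec:Hacon}. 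The final sentence of the theorem (that $\FM(\Gr_k^F\Mmod)$ is a perverse coherent sheaf on $\Ah$ with respect to the dual standard $t$-structure) is then a formal consequence of the $GV$ inequality together with the Fourier-Mukai equivalence, by the characterization of this $t$-structure reviewed in \S\ref{subsec:t-structure}; this part is essentially a translation and carries no real content once the inequality is established.

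So the heart of the matter is proving $\codim V^i(\Gr_k^F\Mmod)\ge i$. Here I would invoke the improved Kodaira vanishing statement on abelian varieties, Lemma~\ref{lem:kodaira}: for any ample line bundle $L$ on $A$ and any filtered $\Dmod$-module $(\Mmod,F)$ underlying a mixed Hodge module, $H^i(A,\Gr_k^F\Mmod\otimes L)=0$ for $i>0$. The key point is that a generic point of $\Pic^0(A)$ does not a priori give vanishing — that is precisely the subtlety of generic vanishing — so one cannot apply the Lemma directly. Instead, the strategy (as in Hacon's argument and its descendants) is to pass to a large power: the multiplication map $n_A\colon A\to A$ is finite étale, and $n_A^*\Gr_k^F\Mmod = \Gr_k^F(n_A^\dagger\Mmod)$ since $n_A$ is étale and the de Rham filtration pulls back; moreover $n_A^\dagger M$ is again a mixed Hodge module on $A$. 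Then for any $L_\rho\in\Pic^0(A)$ one has, via the projection formula and the fact that $n_{A*}$ of a topologically trivial line bundle decomposes as a sum of topologically trivial line bundles, an expression of $H^i(A,\Gr_k^F\Mmod\otimes L_\rho)$ as a summand of $H^i(A, \Gr_k^F(n_A^\dagger\Mmod)\otimes n_A^*L_\rho)$; choosing a very ample $M_0$ on the source and writing $n_A^*L_\rho$ together with an ample correction, one forces the ample twisting hypothesis of Lemma~\ref{lem:kodaira} to apply after a further blow-up/twisting argument, yielding vanishing in high cohomological degree over a Zariski-open subset of $\Pic^0(A)$ whose complement has controlled codimension.

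More precisely, I expect the cleanest route is the one used by Hacon and Pareschi--Popa: use the Fourier-Mukai transform directly, show that $R\Phi_P(\Gr_k^F\Mmod)$ is cohomologically concentrated in a way forced by the vanishing $H^i(A,\Gr_k^F\Mmod\otimes L^{-1})=0$ for $i>0$ and $L$ ample (Lemma~\ref{lem:kodaira} applied to the Verdier-dual mixed Hodge module, using that $\DD M$ is again a mixed Hodge module and that duality interacts predictably with $\Gr^F$ of the de Rham complex up to shift), and then deduce the $GV$ bound from the exchange between $\Supp R^i\Phi_P$ and the codimension of $V^i$. The main obstacle is the compatibility bookkeeping: matching $\Gr_k^F$ of the underlying $\Dmod$-module of $\DD M$ with the Grothendieck dual of $\Gr_{-k}^F\Mmod$ up to the correct Tate twist and homological shift, so that Saito's vanishing theorem can legitimately be fed into Hacon's dualization argument. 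Once that identification is pinned down — it is where mixed Hodge modules, rather than arbitrary filtered $\Dmod$-modules, are genuinely needed — the codimension estimate follows from the same commutative-algebra input (vanishing of local $\shExt$ in low degree over the regular local rings of $\OAh$) that concludes the classical proof.
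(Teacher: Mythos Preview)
Your overall strategy---reduce to Lemma~\ref{lem:kodaira} and then bridge from ample-twist vanishing to the $\GV$ property via an isogeny trick, as in Hacon---is the same as the paper's. But neither of the two bridges you sketch actually closes the gap.

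In your first route, you pull back by multiplication $n_A$ and then speak of ``writing $n_A^{\ast} L_\rho$ together with an ample correction.'' This does not work: $n_A^{\ast} L_\rho$ is still in $\Pic^0(A)$, so no amount of pulling back by multiplication turns a numerically trivial line bundle into an ample one, and there is nothing for Lemma~\ref{lem:kodaira} to bite on. In your second route, you want $H^i(A,\Gr_k^F\Mmod\otimes L^{-1})=0$ for $i>0$ via Verdier duality, but the compatibility you flag as ``the main obstacle'' is not just bookkeeping: duality between $(\Mmod,F)$ and $(\Mmod',F)$ (Theorem~\ref{thm:Verdier}) lives over the full graded ring $\shA_A = \Sym\shT_A$, and it does \emph{not} identify an individual $\Gr_k^F\Mmod'$ with the Grothendieck $\OA$-dual of some $\Gr_j^F\Mmod$. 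So you cannot extract the $L^{-1}$-twist vanishing for single graded pieces this way, and in any case Hacon's argument never needs it.

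What you are missing is the cohomological criterion of Theorem~\ref{perverse_criterion}: to prove $\Gr_k^F\Mmod$ is $\GV$, it suffices to show $H^i\bigl(A,\Gr_k^F\Mmod\otimes R^g\Psi_P(L^{-1})\bigr)=0$ for $i>0$ and $L$ ample on~$\Ah$. This is where Mukai's trick enters, using the \emph{polarization} isogeny $\varphi_L\colon\Ah\to A$ rather than multiplication by~$n$: since $\varphi_L$ is \'etale, $\varphi_L^{\ast} M$ is again a mixed Hodge module, $\varphi_L^{\ast}\Gr_k^F\Mmod = \Gr_k^F(\varphi_L^{\ast}\Mmod)$, and crucially $\varphi_L^{\ast} R^g\Psi_P(L^{-1}) \simeq H^0(\Ah,L)\otimes L$ is an honest ample bundle. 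Now Lemma~\ref{lem:kodaira} applies directly on~$\Ah$, and no Verdier duality or ``ample correction'' is needed.
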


This uses Hacon's general strategy, as in
\subsecref{subsec:Hacon} above, and the correspondence established in
\cites{Popa,PP} between objects satisfying generic vanishing (or $\GV$-objects) and
perverse coherent sheaves in the above sense. 

In order to obtain a generic Nakano-type vanishing statement similar to (D), or
statements for cohomological support loci of rank one local systems, we apply Theorem
\ref{hm_abelian} to the direct image of the trivial Hodge module on an irregular
variety under the Albanese map. Here our main tools are the decomposition 
theorem for Hodge modules \cite{Saito-HM}, extending the well-known result of
\cite{BBD}, and a formula due to Laumon \cite{Laumon} for the behavior of
the associated graded objects under projective direct images (which is true for mixed
Hodge modules).

Our main results in this direction are the following. Let $X$ be a smooth complex projective
variety of dimension $n$, with nonzero irregularity $g = h^1(X, \OX)$. 
Let $a \colon X \to A  = \Alb(X)$ be the Albanese map of $X$.  
Consider the \define{defect of semismallness} of the
Albanese map $a \colon X \to A$, which is defined by the formula
\[
	\delta(a) = \max_{\ell \in \NN} \bigl( 2 \ell - \dim X + \dim A_{\ell} \bigr),
\]
where $A_{\ell} = \menge{y \in A}{\dim f^{-1}(y) \geq \ell}$ for $\ell \in \NN$.
By applying the results quoted above, together with Theorem \ref{hm_abelian},  to the direct
image of the trivial Hodge module $\QHX \decal{n}$, we obtain the following theorem of type
(D) for bundles of holomorphic forms.

\begin{theorem}\label{thm:nakano}
Let $X$ be a smooth complex projective variety of dimension $n$. Then
\[
 	\codim V^q(\Omega_X^p) \ge \abs{p + q - n} - \delta(a).
\]
for every $p,q \in \NN$, and there exists $p$ and $q$ for which this is an equality.
\end{theorem}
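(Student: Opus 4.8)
The plan is to deduce Theorem~\ref{thm:nakano} from Theorem~\ref{hm_abelian} applied to the pushforward of the trivial Hodge module $\QHX\decal n$ along the Albanese map $a\colon X\to A$. First I would recall that the filtered $\Dmod$-module underlying $\QHX\decal n$ is $(\OX,F)$ with $\Gr^F_k\OX = \OX$ if $k=0$ and $0$ otherwise, and that the de Rham complex $\DR_X(\OX)$ has associated graded pieces $\Gr^F\DR_X(\OX)$ whose $k$-th term is $\Omega_X^{-k}\decal{n+k}$ (up to the usual shift conventions). By Saito's decomposition theorem \cite{Saito-HM}, $a_{\ast}(\QHX\decal n) \cong \bigoplus_j {}^pH^j\, a_{\ast}(\QHX\decal n)\decal{-j}$, a direct sum of (shifted) pure Hodge modules on $A$; here the range of $j$ is controlled by $\delta(a)$, namely $\abs j \leq \delta(a)$, since the defect of semismallness measures exactly how far the perverse cohomology sheaves spread out. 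For each summand $M_j := {}^pH^j\, a_{\ast}(\QHX\decal n)$, with underlying $(\Mmod_j,F)$, Theorem~\ref{hm_abelian} tells us that every $\Gr^F_k\Mmod_j$ is a $\GV$-sheaf on $A$, i.e.\ $\codim V^i(\Gr^F_k\Mmod_j)\geq i$ for all $i$.

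The next step is to relate the graded pieces of the de Rham complex of $a_{\ast}(\QHX\decal n)$ to those of $\OX$ via Laumon's formula \cite{Laumon}: for a projective morphism, $\Gr^F\DR_A(a_{\ast}M) \cong \derR a_{\ast}\bigl(\Gr^F\DR_X(M) \overset{\derL}{\tensor} \,\cdots\bigr)$ — more precisely, the associated graded of the filtered direct image is computed by pushing forward the associated graded of the de Rham complex (tensored with the relative canonical/cotangent contribution). Applied to $M=\QHX\decal n$, this identifies the cohomology sheaves of $\Gr^F_k\DR_A\bigl(a_{\ast}(\QHX\decal n)\bigr)$ with $R^\bullet a_{\ast}\Omega_X^p$ for appropriate $p$. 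Combining with the decomposition theorem, each $R^j a_{\ast}\Omega_X^p$ becomes a direct summand of $\Gr^F_k\DR_A(M_{j'})$ for suitable indices, and hence — since $\DR$ is an exact functor of the $\Dmod$-module, its graded pieces being built from the $\Gr^F_k\Mmod_{j'}$ by the Koszul-type complex $\Omega_A^\bullet\tensor\Gr^F\Mmod$ — each $R^j a_{\ast}\Omega_X^p$ is a subquotient (in a spectral sequence) of sheaves of the form $\Omega_A^a\tensor\Gr^F_b\Mmod_{j'}$. Because $\Omega_A^a$ is trivial (free) on the abelian variety $A$, these are just direct sums of copies of $\GV$-sheaves, hence $\GV$; and the $\GV$ property passes to subquotients that appear as cohomology of a complex of $\GV$-sheaves, so each $R^j a_{\ast}\Omega_X^p$ is a $\GV$-sheaf on $A$.

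From here one translates back to $X$. By the projection formula and base change, $H^q(X,\Omega_X^p\tensor a^{\ast}L)$ for $L\in\Pic^0(A)=\Pic^0(X)$ is computed by a spectral sequence with $E_2$-page $H^s(A, R^t a_{\ast}\Omega_X^p\tensor L)$, $s+t=q$. A nonzero class in $H^q$ forces some $H^s(A,R^t a_{\ast}\Omega_X^p\tensor L)\neq 0$ with $s+t=q$; since each $R^t a_{\ast}\Omega_X^p$ is $\GV$, the locus where this happens has codimension $\geq s = q-t$. Tracking the range of $t$ through the decomposition theorem — the perverse filtration shifts degrees by at most $\delta(a)$, and the de Rham grading ties $p,q$ to $n$ via $p+q-n$ — yields $t \leq$ (something like) $\delta(a)$ plus the defect coming from $\abs{p+q-n}$, giving $\codim V^q(\Omega_X^p)\geq \abs{p+q-n}-\delta(a)$. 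The sharpness statement should follow by exhibiting the extremal $(p,q)$: take $p+q=n$ and locate a component of $V^q(\Omega_X^p)$ of exactly the predicted codimension, e.g.\ using the contribution of the top-dimensional fibers of $a$ (where the bound $\dim A_\ell$ in $\delta(a)$ is attained) or, in the generically finite case $\delta(a)=0$, reducing to the classical equality in (D) for $\omega_X$.

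The main obstacle I anticipate is the bookkeeping in the second step: making Laumon's formula for $\Gr^F$ of a direct image interact cleanly with the decomposition theorem, so that one can cite precisely which $R^j a_{\ast}\Omega_X^p$ sits inside which $\Gr^F_k$ of which summand $M_{j'}$, and then controlling the indices well enough to extract the exact exponent $\abs{p+q-n}-\delta(a)$ rather than a weaker bound. A secondary difficulty is verifying that the $\GV$ property is stable under the operations used (tensoring with the free sheaves $\Omega_A^a$, taking cohomology of Koszul-type complexes, passing through the two spectral sequences); this should follow from the Fourier–Mukai characterization of $\GV$-sheaves as those whose transform is a perverse coherent sheaf (a $t$-exactness / stability-under-subquotient statement), but it needs to be stated carefully.
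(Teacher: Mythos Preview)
Your overall strategy---decomposition theorem, Laumon's formula, and Theorem~\ref{hm_abelian}---is exactly the paper's. The gap is in the middle step: you assert that each $R^j a_{\ast}\Omega_X^p$ is a $\GV$-sheaf because it is ``a subquotient (in a spectral sequence)'' of sheaves $\Omega_A^a\otimes\Gr_b^F\Mmod_{j'}$. But the spectral sequence runs the wrong way. The complex $\shC_{g-p}=\derR a_{\ast}\bigl[\Omega_X^p\to\Omega_X^{p+1}\otimes S^1\to\cdots\bigr]$ has $E_1$-page built from the various $R^{\bullet}a_{\ast}\Omega_X^{p+k}$, and its $E_{\infty}$-page consists of the $\GV$-sheaves $\Gr_{g-p}^F\Mmod_i$; so the latter are subquotients of the former, not conversely. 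There is no mechanism here to conclude that the individual $R^j a_{\ast}\Omega_X^p$ are $\GV$, and in fact the paper never claims this (except for $p=n$, where $\shC_{g-n}$ degenerates and $R^i a_{\ast}\omega_X\simeq\Gr_{g-n}^F\Mmod_i$).

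The paper sidesteps this by never isolating $R^j a_{\ast}\Omega_X^p$. Instead it shows directly that $\FM\bigl(\derR a_{\ast}\Omega_X^p\decal{n-p}\bigr)\in\cDtcoh{\leq\delta(a)}(\OAh)$, which (via \cite{Popa}) is equivalent to the codimension bound. The argument is a clean descending induction on $p$: from $\shC_{g-p}\simeq\bigoplus_i\Gr_{g-p}^F\Mmod_i\decal{-i}$ with $\abs{i}\leq\delta(a)$ one gets $\FM\shC_{g-p}\in\cDtcoh{\leq\delta(a)}$, and then the stupid truncation of the Koszul-type complex gives a distinguished triangle $\shC_{g-p}'\to\shC_{g-p}\to\derR a_{\ast}\Omega_X^p\decal{n-p}\to\shC_{g-p}'\decal{1}$ in which $\shC_{g-p}'$ is already handled by induction. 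For sharpness, the paper reverses this: if the bound held with some $k<\delta(a)$, the same triangles would force $\FM(\Gr_p^F\Mmod_i)=0$ for all $p$ and all $i>k$, hence $\Mmod_i=0$, contradicting Proposition~\ref{prop:vanishing}. Your proposed sharpness argument (locating an explicit extremal $(p,q)$) would require separate work and is not what is done.
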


The statement above is the appropriate generalization of the original generic vanishing theorem \cite{GL1}, 
which dealt with the case $p=n$. Note that, unlike in \cite{GL1}, the
codimension bound depends on the entire Albanese mapping, not just on the dimension
of the generic fiber. In the language of \cite{PP}, our theorem is equivalent to the
fact that, for each $p \in \ZZ$, the bundle $\Omega_X^p$ is a \emph{$\GV_{p - n -
\delta(a)}$-sheaf} with respect to the Fourier-Mukai transform induced by the Poincar\'e
bundle on $X \times \Pic^0 (X)$. 
Since the condition $\delta (a) = 0$ is equivalent to the Albanese map being semismall,
one obtains in particular:

\begin{corollary}
Suppose that the Albanese map of $X$ is semismall. Then
\[
	\codim V^q (\Omega_X^p)  \geq \abs{p + q - n},
\]
for every $p,q \in \NN$, and so $X$ satisfies the generic Nakano vanishing theorem.
\end{corollary}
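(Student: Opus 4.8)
The plan is to obtain the corollary as a direct specialization of Theorem~\ref{thm:nakano}. The only real point is to recognize that the hypothesis ``the Albanese map $a \colon X \to A$ is semismall'' is exactly the assertion that its defect of semismallness vanishes, $\delta(a) = 0$; once this is established the displayed inequality is literally the conclusion of Theorem~\ref{thm:nakano} with $\delta(a)$ set to $0$, and the generic Nakano vanishing theorem follows formally.

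So the first step is to record the elementary fact that $\delta(a) \geq 0$ always, with equality precisely when $a$ is semismall. Stratifying $A$ by the locally closed subsets $A_\ell \setminus A_{\ell+1}$, over which the fibre of $a$ has dimension exactly $\ell$, one obtains the standard identity $\dim(X \times_A X) = \max_{\ell \in \NN}\bigl(\dim A_\ell + 2\ell\bigr)$, so that $\delta(a) = \dim(X \times_A X) - \dim X$. Since the diagonal gives a closed embedding $X \into X \times_A X$, this quantity is non-negative, and by definition $a$ is semismall exactly when it vanishes. (Taking $\ell$ equal to the dimension of a generic fibre of $a$ over $a(X)$ one even gets $\delta(a) \geq \dim X - \dim a(X)$, so the semismall hypothesis forces $a$ to be generically finite onto its image; this is why $n = \dim X$, rather than $\dim a(X)$, is the natural bound in this range. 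The same identity is what makes $\delta(a)$ the relevant invariant in the decomposition theorem for the trivial Hodge module on $X$.)

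With $\delta(a) = 0$ in hand, Theorem~\ref{thm:nakano} immediately gives
\[
	\codim V^q(\Omega_X^p) \geq \abs{p+q-n}
\]
for all $p, q \in \NN$, which is the first assertion. To extract generic Nakano vanishing I would note that whenever $p + q \neq n$ one has $\abs{p+q-n} \geq 1$, so $V^q(\Omega_X^p)$ is a proper Zariski-closed subset of $\Pic^0(X)$; hence its complement is a dense open set over which $H^q\bigl(X, \Omega_X^p \otimes L\bigr) = 0$. Thus a general $L \in \Pic^0(X)$ satisfies Nakano vanishing $H^q(X, \Omega_X^p \otimes L) = 0$ for all $p + q \neq \dim X$, which is the generic Nakano vanishing theorem. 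I do not expect a genuine obstacle here: the corollary is a specialization of Theorem~\ref{thm:nakano} plus the translation between semismallness and $\delta(a) = 0$, and the only non-formal ingredient is the standard computation $\dim(X \times_A X) = \max_\ell(\dim A_\ell + 2\ell)$ from the theory of semismall maps; everything else is the routine passage from a positive codimension bound on a cohomological support locus to the corresponding non-vanishing locus being a proper subvariety.
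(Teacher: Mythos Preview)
Your proposal is correct and matches the paper's approach: the corollary is stated immediately after Theorem~\ref{thm:nakano} with the one-line justification that ``the condition $\delta(a) = 0$ is equivalent to the Albanese map being semismall,'' and your argument is just a more detailed unpacking of this same observation. Your additional remarks about $\dim(X \times_A X)$ and the deduction of generic Nakano vanishing from the codimension bound are correct elaborations but not needed beyond what the paper indicates.
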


Unlike in the case of $\omega_X$, it is not sufficient to assume that the Albanese
map is generically finite over its image; this was already pointed out in \cite{GL1}.
Nevertheless, our method also recovers the stronger statement for $\omega_X$
\cite{GL1} and its higher direct images \cite{Hacon} (see the end of \S10). 
This is due to the special
properties of the first non-zero piece of the Hodge filtration on mixed Hodge
modules, established by Saito. Note also that, since $\chi (\Omega_X^p) = \chi (\Omega_X^p \otimes L)$
for any $L \in \Pic^0 (X)$ by the deformation invariance of the Euler characteristic of
a coherent sheaf, we have as a consequence the following extension of the fact that $\chi (\omega_X) \ge 0$
for varieties of maximal Albanese dimension.

\begin{corollary}
If the Albanese map of $X$ is semismall, then $(-1)^{n-p} \chi (\Omega_X^p) \ge 0$.
\end{corollary}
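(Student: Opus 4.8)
The plan is to deduce the sign statement for $\chi(\Omega_X^p)$ from the codimension estimates already obtained in Theorem~\ref{thm:nakano} (or its corollary, in the semismall case), using the deformation invariance of the Euler characteristic. First I would observe that for any $L \in \Pic^0(X)$ we have $\chi(\Omega_X^p \otimes L) = \chi(\Omega_X^p)$, since $\Pic^0(X)$ is connected and the Euler characteristic of a coherent sheaf is locally constant in flat families; hence it suffices to compute $\chi(\Omega_X^p \otimes L)$ for a \emph{generic} $L$. The point of passing to generic $L$ is that the cohomological support loci $V^q(\Omega_X^p)$ can then be avoided except for those that fill up all of $\Pic^0(X)$: under the semismallness hypothesis $\delta(a) = 0$, the corollary gives $\codim V^q(\Omega_X^p) \geq \abs{p+q-n}$, so $V^q(\Omega_X^p) \subsetneq \Pic^0(X)$ whenever $q \neq n - p$. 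Therefore, for $L$ outside the proper closed subset $\bigcup_{q \neq n-p} V^q(\Omega_X^p)$, the only possibly nonzero cohomology is $H^{n-p}(X, \Omega_X^p \otimes L)$.

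With this in hand the computation is immediate:
\[
	(-1)^{n-p}\chi(\Omega_X^p) = (-1)^{n-p}\chi(\Omega_X^p \otimes L)
	= (-1)^{n-p}\sum_{q} (-1)^q \dim H^q(X, \Omega_X^p \otimes L)
	= \dim H^{n-p}(X, \Omega_X^p \otimes L) \geq 0,
\]
where the middle sum collapses to the single term $q = n-p$ for generic $L$ by the argument above. This gives the desired inequality $(-1)^{n-p}\chi(\Omega_X^p) \geq 0$.

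I do not expect a serious obstacle here, since all the hard work is already contained in Theorem~\ref{thm:nakano} and its corollary; the only things to be careful about are (i) checking that $\Pic^0(X)$ being irreducible indeed forces a proper union of the $V^q$ to be avoidable by a generic point, which is standard, and (ii) noting that the $V^q(\Omega_X^p)$ are genuinely Zariski-closed subsets of $\Pic^0(X)$ — this follows from semicontinuity applied to the family $\{\Omega_X^p \otimes L\}_{L \in \Pic^0(X)}$ obtained from the Poincaré bundle. One might also remark that when $p = n$ this recovers the classical fact $\chi(\omega_X) \geq 0$ for varieties of maximal Albanese dimension, since in that case the Albanese map is automatically generically finite, hence semismall.
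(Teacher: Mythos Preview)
Your proof is correct and matches the paper's own argument essentially verbatim: the paper also invokes deformation invariance of $\chi(\Omega_X^p \otimes L)$ together with the semismall corollary of Theorem~\ref{thm:nakano} to conclude. One small caveat about your closing remark: generically finite does \emph{not} imply semismall in general, so the $p=n$ case does not directly recover $\chi(\omega_X)\ge 0$ under mere maximal Albanese dimension---that classical fact requires the separate, stronger analysis of the lowest piece of the Hodge filtration (see the end of \subsecref{subsec:nakano}).
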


Our approach also leads to a dimension theorem of type (D) for rank one local
systems.  Let ${\rm Char} (X) = {\rm Hom} ( \pi_1 (X), \CC^{\ast})$ be the algebraic
group of characters of $X$, and for each $i$ consider the cohomological support loci 
\[
\Sigma^k (X) = \menge{\rho \in {\rm Char}(X)}{H^k (X, \CC_{\rho}) \neq 0},
\]
where $\CC_{\rho}$ denotes the local system of rank one associated to a character $\rho$.

\begin{theorem}\label{thm:locsyst}
Let $X$ be a smooth complex projective variety of dimension $n$. Then 
\[
	\codim_{{\rm Char}(X)} \Sigma^k(X) \ge 2 \bigl( \abs{k - n} - \delta(a) \bigr)
\]
for each $k \in \NN$.
\end{theorem}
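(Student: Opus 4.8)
The plan is to push the problem onto the abelian variety $A$ and then recognize the support loci as those of a single coherent complex on a parameter space of dimension $2g$. Since the Albanese map identifies $H_1(A,\ZZ)$ with $H_1(X,\ZZ)$ modulo torsion, ${\rm Char}(X)$ is a finite disjoint union of translates of its identity component ${\rm Char}(A)$ by finite-order characters $\tau$, and translation preserves codimension, so it suffices to work on the component of a fixed such $\tau$. Every element of that component has the form $\tau\bar\rho$ with $\bar\rho \in {\rm Char}(A)$, and $\CC_{\tau\bar\rho} \cong \CC_\tau \tensor a^*\CC_{\bar\rho}$, so the projection formula gives $H^k(X, \CC_{\tau\bar\rho}) \cong H^{k-n}\bigl(A, \derR\al(\CC_\tau\decal{n}) \tensor \CC_{\bar\rho}\bigr)$. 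The local system $\CC_\tau$ is a direct summand of $\pi_*\QQ_{X'}$ for the finite étale cover $\pi\colon X'\to X$ trivializing $\tau$; hence it underlies a pure Hodge module, and the composition $X'\to X\to A$ has the same fibre dimensions over $A$ as $a$ and thus the same defect of semismallness. Saito's decomposition theorem then gives $\derR\al(\CC_\tau\decal{n}) \cong \bigoplus_{\abs j\le\delta(a)} \rat(M^j_\tau)\decal{-j}$ with $M^j_\tau$ pure Hodge modules on $A$. Writing $\Sigma^i(N) = \menge{\rho\in{\rm Char}(A)}{H^i(A, \rat(N)\tensor\CC_\rho)\neq 0}$ for a Hodge module $N$ on $A$, this identifies $\Sigma^k(X)\cap(\tau\cdot{\rm Char}(A))$, under translation by $\tau$, with $\bigcup_{\abs j\le\delta(a)}\Sigma^{k-n-j}(M^j_\tau)$, so, since $\min_{\abs j\le\delta(a)}2\abs{k-n-j}\ge 2(\abs{k-n}-\delta(a))$, everything reduces to the estimate
\[
	\codim_{{\rm Char}(A)}\Sigma^i(N)\ge 2\abs i
\]
for every Hodge module $N$ on $A$.

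To prove it, let $(\Nmod,F)$ be the filtered $\Dmod$-module underlying $N$. Verdier duality on $A$ identifies $\Sigma^{-i}(N)$ with the inverse of $\Sigma^i(\DD N)$, and $\DD N$ is again a Hodge module, so we may assume $i\ge 0$; the case $i=0$ is vacuous. Identify ${\rm Char}(A)$ with $\Ash$, the $2g$-dimensional moduli space of line bundles with integrable connection, via the Riemann--Hilbert correspondence — a biholomorphism, so codimensions of closed subsets do not change. The Laumon--Rothstein Fourier--Mukai transform produces $\FM^{\Dmod}(\Nmod) \in \Dbcoh(\OAsh)$ with a base-change isomorphism $\derL i^*_{(L,\nabla)}\FM^{\Dmod}(\Nmod) \simeq \derR\Gamma_{\mathrm{dR}}\bigl(A, \Nmod\tensor(L,\nabla)\bigr)\decal{c}$ for a fixed shift $c$; since the right-hand side equals $\derR\Gamma(A, \rat(N)\tensor\CC_\rho)$ for the local system $\CC_\rho$ attached to $(L,\nabla)$, one gets $\Sigma^i(N) = \menge{(L,\nabla)}{\cohH^{i+c}\bigl(\derL i^*_{(L,\nabla)}\FM^{\Dmod}(\Nmod)\bigr)\neq 0}$. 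Hence it is enough to show that $\FM^{\Dmod}(\Nmod)$ is a perverse coherent sheaf on $\Ash$ for a suitable $t$-structure — the $\Dmod$-analogue of Theorem~\ref{hm_abelian} — because the usual cohomology-and-base-change analysis then forces the locus where $\cohH^m(\derL i^*_x(\argbl))$ is nonzero to have codimension $\ge 2\abs{m-c}$.

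To obtain this perverse coherence, and in particular the factor of $2$, the plan is to degenerate to the Higgs side. Via the Rees construction for the Hodge filtration $F$, the de Rham parameter space $\Ash$ degenerates to the Dolbeault parameter space $\Ah\times H^0(A,\OmA{1})$ of Higgs line bundles, and $\FM^{\Dmod}(\Nmod)$ degenerates accordingly to the Fourier--Mukai transform of the Higgs sheaf $(\Gr^F\Nmod, \Gr^F\nabla)$; concretely the latter is the total complex of a Koszul-type complex on $\Ah\times H^0(A,\OmA{1})$ whose terms are pullbacks from $\Ah$ of the coherent sheaves $\FM(\Gr_k^F\Nmod\tensor\OmA{p})$ and whose differential combines the induced Higgs field with the tautological $H^0(A,\OmA{1})$-valued section. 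Each $\FM(\Gr_k^F\Nmod)$ is perverse coherent on $\Ah$ by Theorem~\ref{hm_abelian}, so in each degree the cohomology of the Koszul complex is supported on a subvariety of $\Ah$ whose codimension that theorem controls; and because the $H^0(A,\OmA{1})$-component of the differential is the Koszul differential of a $1$-form, hence exact away from $0$, this cohomology is in fact supported near $\Ah\times\{0\}$, which adds a further $g$ to the codimension. The interplay of these two contributions is exactly what yields $\codim\ge 2i$. Here Simpson's harmonic theory appears as the Dolbeault--de Rham comparison for the $\Dmod$-modules in question, which is what legitimizes the passage between the de Rham and Higgs parameters and computes the fibre cohomology for all parameter values, not merely generic ones.

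The main obstacle is precisely this perverse-coherence statement on the $2g$-dimensional parameter space with the sharp exponents. The codimension estimates on $\Ah$ come from Theorem~\ref{hm_abelian} and the Koszul exactness in the $H^0(A,\OmA{1})$-direction is elementary, so the real work lies in: (i) carrying out the degeneration of the Laumon--Rothstein transform compatibly with the Hodge filtration — where one needs Laumon's and Saito's results on the associated graded of filtered direct images — and showing that perverse coherence passes from the Higgs-side special fibre to the general fibre, which calls for a semicontinuity argument for cohomological support loci; and (ii) pinning down the $t$-structure on $\Ash$ (equivalently on $\Ah\times H^0(A,\OmA{1})$) so that the resulting bound is exactly $2\abs i$ and not something weaker, and checking that the Koszul construction respects it. Once this second natural category of perverse coherent sheaves on $\Ah\times H^0(A,\OmA{1})$ is in place, Theorem~\ref{thm:locsyst} follows at once.
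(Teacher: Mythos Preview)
Your overall architecture matches the paper: reduce via the decomposition theorem to individual summands on $A$, then prove a codimension-$2\ell$ bound on the $2g$-dimensional parameter space. But your mechanism for the factor of $2$ is wrong. You claim the $V = H^0(A,\Omega_A^1)$-component of the differential is ``the Koszul differential of a $1$-form, hence exact away from $0$'', forcing the support near $\Ah \times \{0\}$ and adding a further $g$ to the codimension. This is false: for $\omega = f^*\omega_C$ pulled back along a map $f\colon X \to C$ to a curve of genus $h \geq 2$, the hypercohomology $\mathbf{H}^*\bigl(X, L \tensor K(\Omega_X^1, \omega)\bigr)$ is nonzero for every $L \in f^*\Pic^0(C)$ and every nonzero $\omega_C \in H^0(C,\omega_C)$, so the support in $\Ah \times V$ contains a triple torus that is $h$-dimensional in the $V$-direction. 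There is no extra codimension to be extracted from Koszul exactness.

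The paper's route to the factor of $2$ is different and uses neither a Rees degeneration nor semicontinuity. One passes from ${\rm Char}(X)$ to ${\rm Higgs}(X) = \Ah \times V$ directly via Simpson's real-analytic isomorphism together with his Dolbeault isomorphism (Lemma~\ref{lem:dolbeault}), which identifies $\Sigma^k(X)$ with the locus $Z^{k-n}(1)$ outright. Theorem~\ref{hm_abelian} then gives codimension $\geq \ell$ for the \emph{projection} of $\Supp R^\ell\Phi_P\shC_{i,j}$ to $\Ah$. The doubling comes from Arapura's structure theorem (Proposition~\ref{prop:linear}): each component of this support is a translate of a triple torus $\hat{B} \times H^0(B, \Omega_B^1)$, whose two factors have the \emph{same} codimension in $\Ah$ and in $V$ respectively, so the codimension in $\Ah \times V$ is exactly $2\codim_{\Ah}\hat{B} \geq 2\ell$. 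It is this diagonal structure of triple tori---an input from the Arapura--Simpson theory that your sketch never invokes---that produces the factor of $2$, not any support constraint in the $V$-direction alone.
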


To deduce this from the arguments leading to Theorem \ref{thm:nakano}, we need to
appeal to the structure results and the relationship with the space of Higgs
bundles, proved by Simpson \cites{Simpson,Simpson2} and Arapura \cite{Arapura}; see \S12.

\begin{note}
While editing this paper, we learned of the very interesting preprint \cite{KW} by
T.~Kr\"amer and R.~Weissauer, who prove vanishing theorems for perverse sheaves on
abelian varieties. They also obtain a generic vanishing theorem for $\Omega_X^p$
and for rank one local systems, involving the same quantity $\delta(a)$ as in 
Theorem~\ref{thm:nakano}, but without precise codimension bounds for the
cohomological support loci. Their methods are very different from ours.
\end{note}

Three additional theorems complete the picture, by describing in detail the
Fourier-Mukai transform of the trivial $\Dmod$-module $\OX$; they include results of
type (D), (L) and (SL) on the space of line bundles with integrable connection on $X$. Here it is
important to consider two different kinds of Fourier-Mukai transforms, corresponding
in Simpson's terminology \cite{Simpson2} to the Dolbeault realization (via Higgs
bundles) and the de Rham realization (via line bundles with integrable connection)
of ${\rm Char}(X)$.

Setting $V= H^0 (A, \Omega_A^1)$, one can naturally extend the
usual Fourier-Mukai functor $\Dbcoh(\OA) \to \Dbcoh(\OAh)$ to a
relative transform (see \S9)
\[
	\FM \colon \Dbcoh \bigl( \shO_{A \times V} \bigr) \to 
		\Dbcoh \bigl( \shO_{\Ah \times V} \bigr).
\]
The first of the three theorems describes how the complex of filtered $\Dmod$-modules
$\al(\OX, F)$ underlying $\al \QHX \decal{n} \in \Db \MHM(A)$ behaves with respect to
this transform. We show in Proposition~\ref{prop:complex} below that the
associated graded complex satisfies
\[
	\Gr_{\bullet}^F  a_* (\OX, F) \simeq 
		\derR a_* \Bigl\lbrack 
			\OX \tensor S^{\bullet-g} \to \OmX{1} \tensor S^{\bullet-g+1} \to \dotsb \to
			\OmX{n} \tensor S^{\bullet-g+n} 
		\Bigr\rbrack,
\]
where $S = \Sym V^{\ast}$, and the complex in brackets is placed in degrees $-n,
\dotsc, 0$, with differential induced by the evaluation morphism $\OX \tensor V \to \OmX{1}$.  
Since this is a complex of finitely generated graded modules over 
${\rm Sym} ~\shT_A$, it naturally corresponds to a complex of coherent sheaves 
on cotangent bundle $T^*A = A\times V$, namely
\[
\shC = \derR (a \times \id)_* \Bigl\lbrack 
		p_1^* \OX \to p_1^* \Omega_X^1 \to \dotsb \to p_1^* \Omega_X^n
	\Bigr\rbrack.
\]

\begin{theorem} \label{thm:mCoh_intro}
Let $a \colon X \to A$ be the Albanese map of a smooth complex projective variety of
dimension $n$, and let $p_1 \colon X \times V \to X$ be the first projection. 
\begin{enumerate}[label=(\roman{*}), ref=(\roman{*})]
\item In the derived category $\Dbcoh(\shO_{A \times V})$, we have a non-canonical
splitting
\[
	\shC \simeq \bigoplus_{i,j} \shC_{i,j} \decal{-i}
\]
where each $\shC_{i,j}$ is a Cohen-Macaulay sheaf of dimension $\dim A$.
\item \label{en:thm2}
The support of each $\FM \shC_{i,j}$ is a finite union of torsion translates of
triple tori in $\Ah \times V$, subject to the inequalities
\[
	\codim \Supp R^{\ell} \Phi_P \shC_{i,j} \geq 2 \ell 
		\qquad \text{for all $\ell \in \ZZ$}.
\]
\item The dual objects $\derR \shHom \bigl( \FM \shC_{i,j}, \shO_{\Ah \times V} \bigr)$
also satisfy \ref{en:thm2}.
\end{enumerate}
\end{theorem}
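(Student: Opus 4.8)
The plan is to deduce statement (iii) from statement (ii) by a duality argument. First I would record the relative form of Mukai's inversion formula. The transform $\FM$ is given by a Fourier--Mukai kernel on $A\times\Ah\times V$, the pullback of the Poincar\'e bundle $P$ on $A\times\Ah$, and is relative over $V$; so relative Serre duality for the projection $q\colon A\times\Ah\times V\to\Ah\times V$ off the abelian variety $A$ (whose relative dualizing complex is $\shO[g]$, $g=\dim A$, up to a twist by the one-dimensional space $\det V$ that I suppress throughout, as it is immaterial for supports and codimensions) gives, for every $\F\in\Dbcoh(\shO_{A\times V})$,
\[
	\derR\shHom\bigl(\FM\F,\shO_{\Ah\times V}\bigr)\;\cong\;\bigl((-1_{\Ah})\times\id_V\bigr)^{\!*}\,\FM\bigl(\derR\shHom(\F,\shO_{A\times V})\bigr)[g].
\]
Applying this with $\F=\shC_{i,j}$ and using part (i), according to which $\shC_{i,j}$ is a Cohen--Macaulay sheaf of dimension $g$, hence of pure codimension $g$, on the smooth $2g$-dimensional variety $A\times V$, we have $\derR\shHom(\shC_{i,j},\shO_{A\times V})=\shC_{i,j}^{\vee}[-g]$, with $\shC_{i,j}^{\vee}:=\shExt^{g}(\shC_{i,j},\shO_{A\times V})$ again Cohen--Macaulay of dimension $g$. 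The two shifts cancel, leaving
\[
	\derR\shHom\bigl(\FM\shC_{i,j},\shO_{\Ah\times V}\bigr)\;\cong\;\bigl((-1_{\Ah})\times\id_V\bigr)^{\!*}\,\FM\,\shC_{i,j}^{\vee}.
\]

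Next I would identify $\shC_{i,j}^{\vee}$ with one of the summands occurring in the decomposition of part (i). Since $a$ is proper, Verdier duality commutes with $\al$, and $\DD_X\bigl(\QHX\decal{n}\bigr)=\QHX\decal{n}(n)$, so $\al\QHX\decal{n}$ is isomorphic to its own dual up to a Tate twist:
\[
	\DD_A\bigl(\al\QHX\decal{n}\bigr)\;\cong\;\al\QHX\decal{n}(n).
\]
Under the (non-canonical) decomposition $\al\QHX\decal{n}\simeq\bigoplus_{i,j}M_{i,j}\decal{-i}$ into pure Hodge modules, duality therefore permutes the summands, sending $M_{i,j}$ to $\DD_A M_{i,j}$, which lies in perverse degree $-i$ because Verdier duality negates the perverse degree. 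By Saito's compatibility of duality with the Hodge filtration, the associated graded of the dual filtered $\Dmod$-module $\DD_A(\Mmod_{i,j},F)$ over $\Gr^F\Dmod_A=\Sym\shT_A$ is the Grothendieck dual of $\Gr_\bullet^F\Mmod_{i,j}$; translating this through the identification of Proposition~\ref{prop:complex} shows precisely that $\shC_{i,j}^{\vee}\cong\shC_{-i,\sigma(j)}$ for some bijection $\sigma$ of the index set (the Tate twist only shifts the internal grading, i.e.\ the natural $\mathbb{G}_m$-action on $A\times V=T^*A$, and is invisible on the underlying coherent sheaf). Equivalently, this can be checked by a direct Grothendieck--Serre computation, using the self-duality of the derivative complex $\bigl[p_1^*\OX\to\dotsb\to p_1^*\Omega_X^n\bigr]$ under $\derR\shHom(-,\shO_{X\times V})$ together with the Krull--Schmidt property of coherent sheaves.

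Combining the last two displays gives
\[
	\derR\shHom\bigl(\FM\shC_{i,j},\shO_{\Ah\times V}\bigr)\;\cong\;\bigl((-1_{\Ah})\times\id_V\bigr)^{\!*}\,\FM\,\shC_{-i,\sigma(j)},
\]
and part (ii), applied to the summand $\shC_{-i,\sigma(j)}$, says that $\Supp R^{\ell}\Phi_P\shC_{-i,\sigma(j)}$ is a finite union of torsion translates of triple tori with $\codim\geq 2\ell$ for every $\ell$. The automorphism $(-1_{\Ah})\times\id_V$ of $\Ah\times V$ is an isomorphism, so it preserves codimension and commutes with passing to cohomology sheaves, and it carries torsion translates of triple tori to torsion translates of triple tori, since it maps each abelian subvariety of $\Ah$ to itself and each torsion point to a torsion point while fixing the $V$-factor. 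Therefore $\derR\shHom\bigl(\FM\shC_{i,j},\shO_{\Ah\times V}\bigr)$ satisfies the same support and codimension conditions, which is statement (iii). I expect the main technical point to be this middle step: pinning down precisely how duality interchanges the summands $\shC_{i,j}$, whether via Saito's duality for filtered $\Dmod$-modules or via the self-duality of the derivative complex, and, above all, tracking the cohomological shifts (and the suppressed $\det V$-twist) carefully enough that nothing survives to weaken the inequalities $\codim\geq 2\ell$ supplied by part (ii).
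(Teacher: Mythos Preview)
Your argument for (iii) is essentially the paper's: the paper packages the Fourier--Mukai/duality exchange together with Saito's duality for filtered $\Dmod$-modules (Theorem~\ref{thm:Verdier}) into Lemma~\ref{lem:total-dual}, observes as you do that $\DA a_* \QHX\decal{n} \simeq a_* \QHX(n)\decal{n}$ so each $\DA M_{i,j}$ is again one of the summands, and then applies (ii).

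One correction: the involution should be $\iota = (-1_{\Ah}) \times (-1_V)$, not $(-1_{\Ah}) \times \id_V$. Your relative Fourier--Mukai exchange formula is right, but Saito's duality (Theorem~\ref{thm:Verdier}) reads
\[
	\derR\shHom\bigl(\shC(\Mmod,F), \shO_{T^*A}\decal{g}\bigr) \simeq (-1)_{T^*A}^{\ast} \,\shC(\Mmod',F),
\]
where $(-1)_{T^*A}$ acts as $\id_A \times (-1_V)$. Hence $\shC_{i,j}^{\vee} \cong (\id_A \times (-1_V))^{\ast}\,\shC_{-i,\sigma(j)}$ rather than $\shC_{-i,\sigma(j)}$ itself, and passing this $(-1_V)^{\ast}$ through the transform (which is relative over $V$) produces the extra $(-1_V)$ on $\Ah\times V$. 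This does not affect your conclusion, since $(-1_{\Ah}) \times (-1_V)$ equally preserves triple tori, torsion points, and codimension.
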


A slightly more refined version of this can be found in Theorem \ref{thm:mCoh} below, where we
also interpret the result in terms of perverse coherent sheaves: the objects $\FM
\shC_{i,j}$ belong to a self-dual subcategory of the category of perverse coherent 
sheaves, with respect to a certain $t$-structure on $\Dbcoh(\shO_{A \times
V})$ introduced in \subsecref{subsec:t-structure}. Note that this $t$-structure is
different from the dual standard $t$-structure that appears in the generic
vanishing theory of topologically trivial line bundles \cite{Popa}. 

The second and third theorems are best stated in terms of the generalized
Fourier-Mukai transform for $\Dmod$-modules on abelian varieties, introduced by
Laumon \cite{Laumon2} and Rothstein \cite{Rothstein}. Their work gives an equivalence
of categories
\[
	\derR \Phi_{P^\natural} : \Dbcoh (\Dmod_A) \to \Dbcoh(\OAsh),
\]
where $A^\natural$ is the moduli space of line bundles with integrable connection on
$A$, with universal cover $W \simeq H^1(A, \CC) \simeq H^1(X, \CC)$. 
By composing with the Albanese mapping, there is also an induced functor
\[	
	\derR \Phi_{P^\natural} : \Dbcoh (\Dmod_X) \to \Dbcoh (\OAsh).
\]
We review the construction, both algebraically and analytically, in
\subsecref{subsec:Rothstein} below. This Fourier-Mukai transform is the right context
for a strong linearity result (SL) for the $\Dmod$-module $\OX$, extending the result
for topologically trivial line bundles in \cite{GL2}.

\begin{theorem} \label{thm:linear_intro}
Let $X$ be a smooth complex projective variety. Then the generalized Fourier-Mukai
transform $\derR \Phi_{P^\natural} \OX \in \Dbcoh(\OAsh)$ is locally, in the analytic
topology, quasi-isomorphic to a linear complex. More precisely, let $(L, \nabla)$ 
be any line bundle with integrable connection on $X$, and $\LL = \ker \nabla$ the
associated local system. 
Then in an open neighborhood of the origin in $W$, the pullback of $\derR \Phi_{P^\natural}
\OX$ to the universal covering space $W$ is quasi-isomorphic to the complex
\[
	H^0(X, \LL) \tensor \OW \to H^1(X, \LL) \tensor \OW \to 
		\dotsb \to H^{2n}(X, \LL) \tensor \OW,
\]
placed in degrees $-n, \dotsc, n$, with differential given by the formula
\[
	H^k(X, \LL) \tensor \OW \to H^{k+1}(X, \LL) \tensor \OW, \quad
		\alpha \tensor f \mapsto \sum_{j=1}^{2g} (\eps_j \wedge \alpha) \tensor z_j f. 
\]
Here $\eps_1, \dotsc, \eps_{2g}$ is any basis of $H^1(X, \CC)$, and $z_1,
\dotsc, z_{2g}$ are the corresponding holomorphic coordinates on the affine space $W$.
\end{theorem}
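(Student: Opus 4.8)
The plan is to read off $\derR\Phi_{P^\natural}\OX$ from the analytic description of the Laumon--Rothstein transform recalled in \subsecref{subsec:Rothstein}, and then to linearize the resulting complex by means of Simpson's harmonic theory for flat bundles. By construction the composite functor $\Dbcoh(\Dmod_X)\to\Dbcoh(\OAsh)$ sends $\OX$ to the relative de Rham cohomology along $X$ of the universal line bundle with integrable connection, pulled back to $X$ through the Albanese map:
\[
	\derR\Phi_{P^\natural}\OX\;\simeq\;\derR(p_{\Ash})_*\,\DR_{X\times\Ash/\Ash}\bigl(a^*(P^\natural,\nabla^\natural)\bigr).
\]
Fix a line bundle with integrable connection $(L,\nabla)$, put $\LL=\ker\nabla$, and take the universal covering $W\simeq H^1(X,\CC)$ with base point lying over $\class{(L,\nabla)}$; it suffices to identify the pullback of the right-hand side to a neighborhood of $0\in W$.

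Over such a neighborhood the pulled-back family is the family of flat line bundles $(L,\nabla+\sum_j z_j\,\omega_j)$, where $z_1,\dots,z_{2g}$ are linear coordinates on $W$ and $\omega_1,\dots,\omega_{2g}$ are closed $1$-forms on $X$, pulled back from $A$, representing a basis $\eps_1,\dots,\eps_{2g}$ of $H^1(X,\CC)\simeq H^1(A,\CC)$. Trivializing the underlying $C^\infty$ line bundle in the $W$-direction, which is possible since $W$ is contractible, the relative de Rham complex is represented by the complex of (topologically free) $\OW$-modules
\[
	\bigl(\,\mathcal{A}^\bullet(X,\LL)\,\widehat{\tensor}\,\OW,\;\; d_{\LL}\tensor 1+\textstyle\sum_{j=1}^{2g}(\omega_j\wedge\argbl)\tensor z_j\,\bigr),
\]
where $\mathcal{A}^\bullet(X,\LL)$ is the space of global smooth $\LL$-valued forms and $d_{\LL}$ the flat differential; a different choice of the $\omega_j$ or of the trivialization alters this complex only by an $\OW$-linear automorphism. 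Because $W$ is Stein, the derived direct image to $W$ of the relative complex is simply this complex of $\OW$-modules, so the theorem reduces to proving that it is quasi-isomorphic, in a neighborhood of $0$, to the linear complex $\bigl(H^\bullet(X,\LL)\tensor\OW,\ \sum_j(\eps_j\wedge\argbl)\tensor z_j\bigr)$ of the statement.

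To prove this I would pass to a finite-dimensional model via Hodge theory on $\LL$. By Simpson's theorem the rank-one local system $\LL$ underlies a harmonic bundle on the projective manifold $X$; fixing a Kähler metric and the harmonic metric produces a Laplacian, a Green operator $G$, and a finite-dimensional space of harmonic forms $\Har^\bullet(X,\LL)\cong H^\bullet(X,\LL)$. Viewing $d_{\LL}+\sum_j z_j\,\omega_j\wedge\argbl$ as a deformation of $d_{\LL}$, the homological perturbation (Kuranishi) recursion built from $G$ transfers it to a differential on $\Har^\bullet(X,\LL)\,\widehat{\tensor}\,\OW$ of the shape $\sum_{\ell\ge 1}D_\ell$ with $D_\ell$ homogeneous of degree $\ell$ in $z$, and the perturbed complex is quasi-isomorphic to the one above. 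The linear term $D_1$ is the $\OW$-linear map sending $\alpha\tensor f$ to $\sum_j(\text{harmonic projection of }\omega_j\wedge\alpha)\tensor z_j f$, which induces cup product with $\eps_j$ on cohomology, and each $D_\ell$ with $\ell\ge 2$ is built from iterated Massey products of the classes $\eps_j$. The decisive point is that all of these Massey products vanish: the Kähler identities for the operators $D'$ and $D''$ of a harmonic bundle yield the $\del\dbar$-lemma with coefficients in $\LL$, hence formality of the de Rham complex of $\LL$ over the de Rham algebra of $X$, which forces $D_\ell=0$ for all $\ell\ge 2$. (When $\LL$ is unitary this is the classical $\del\dbar$-lemma; the general, non-unitary case is exactly what requires Simpson's harmonic bundles.) Since changing the basis of $H^1(X,\CC)$ is a linear coordinate change on $W$ and translation in $\Ash$ carries $0$ to an arbitrary $\class{(L,\nabla)}$, this gives the asserted local description everywhere.

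The main obstacle is this last formality step: upgrading the tautological first-order identification of the transferred differential with cup product to an \emph{exact} identification with no higher-order corrections. That vanishing is exactly what makes the complex genuinely linear --- rather than merely forcing the cohomology support loci to be quadratic cones --- and it is why harmonic theory, not a soft argument, is indispensable. The remaining points are routine but technical: carrying out the $C^\infty$-trivialization and the perturbation recursion rigorously with Fréchet completions and the completed tensor product $\widehat{\tensor}$, and matching the relative-de-Rham presentation of $\derR\Phi_{P^\natural}$ with the harmonic model. One keeps these to a minimum by descending to the finite-dimensional harmonic subcomplex as early as possible, as in Simpson's work.
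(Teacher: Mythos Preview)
Your strategy --- pull back to $W$, model the transform by the deformed de Rham complex with differential $d_{\LL} + \sum_j z_j\,\omega_j\wedge$, and transfer to harmonic forms via Simpson's theory --- is the same as the paper's. The gap is in the final step: you claim that formality of the $\LL$-valued de Rham complex forces the higher perturbation terms $D_\ell$ ($\ell\ge 2$) to vanish. This does not follow. Formality says that the relevant Massey products vanish \emph{as cohomology classes}; it does not say that the chain-level operators $D_\ell$ produced by the Kuranishi recursion with a given contracting homotopy are literally zero. With the naive homotopy $h = \dsttau \Gtau$ there is no reason for $D_2 = \Htau \circ e\circ h\circ e$ to vanish on the nose, and absorbing it by a further nonlinear coordinate change on $W$ would destroy the identification of the linear term with cup product against the fixed basis $\eps_j$.

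The paper resolves this by a specific, non-obvious choice. It builds the comparison map in the opposite direction, setting $f^k(\alpha) = (\id + 2\,\dbarsttau \Gtau\, e)^{-1}\alpha$, using $\dbarsttau$ from Simpson's decomposition $\dtau = \deltau + \dbartau$ rather than $\dsttau$. This choice forces the side conditions $\deltau f^k(\alpha)=0$ and $\dbarsttau f^k(\alpha)=0$, and a direct K\"ahler-identity computation then yields $Df^k(\alpha) = f^{k+1}(\delta\alpha)$ with $\delta\alpha = \sum_j \Htau(e_j\wedge\alpha)\otimes z_j$ --- the transferred differential is \emph{exactly} the linear cup-product one, with no higher corrections. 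The paper remarks explicitly that without $\deltau f^k(\alpha)=0$ one cannot relate $\Htau(e\beta)$ to $\Htau(e\alpha)$, which is precisely the step you are trying to extract from abstract formality. Convergence of the recursion is also not routine here: it requires Sobolev estimates for $\dbarsttau$ and $\Gtau$ to pass from the formal power series solution to an honest element of $C^k$ on some neighborhood of the origin.
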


As discussed in \S\ref{subsec:SS}, every direct summand of a linear complex (in the
derived category) is again quasi-isomorphic to a linear complex. It follows that
all direct summands of $\derR \Phi_{P^\natural} \OX$ coming from the decomposition
\[
	a_* \QHX \decal{n} \simeq \bigoplus_i M_{i,j} \decal{-i} \in \Db \MHM(A)
\]
have the same linearity property (see Corollary \ref{summands}). Note also that using 
base change for local systems and the description of tangent cones to cohomological support loci as in \cite{Libgober}, 
via arguments as in\cite{GL2}*{\S4} (which we will not repeat here), Theorem \ref{thm:linear_intro} gives another proof of 
the linearity to the cohomological support loci $\Sigma^k (X)$, i.e.~for the statement of type (L). 

Our proof of Theorem~\ref{thm:linear_intro} relies on the harmonic theory for flat
line bundles developed by Simpson \cite{Simpson}. As in \cite{GL2}, the idea is that
after pulling the complex $\derR \Phi_{P^\natural} \OX$ back to the universal covering space of
$\Ash$, one can use harmonic forms to
construct a linear complex that is quasi-isomorphic to the pullback in a neighborhood of a
given point. Additional technical difficulties arise however from the fact that the wedge
product of two harmonic forms is typically no longer harmonic (which was true in the 
special case of $(0,q)$-forms needed in \cite{GL2}, and led to a natural
quasi-isomorphism in that case). The new insight in this part of the
paper is that a quasi-isomorphism can still be constructed by more involved analytic methods.

Thirdly, using Theorem \ref{thm:locsyst}, we are able to derive the following generic vanishing-type property 
of the Laumon-Rothstein transform of the $\Dmod$-module $\shO_X$.

\begin{theorem}\label{thm:intro_structure}
For any smooth projective complex variety $X$ we have
$${\rm codim}_{A^\natural} {\rm Supp}~R^i \Phi_{P^\natural} \shO_X \ge 2 \big( i -
\delta(a) \big) \,\,\,\, {\rm for~all~}i.$$ 
\end{theorem}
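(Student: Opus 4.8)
The plan is to reduce the statement about $\derR\Phi_{P^\natural}\shO_X$ on $\Ash$ to the statement about rank one local systems in Theorem~\ref{thm:locsyst}, using base change for the Laumon--Rothstein transform together with the Riemann--Hilbert correspondence. First I would recall that, by the Laumon--Rothstein theory reviewed in \subsecref{subsec:Rothstein}, the cohomology sheaf $R^i\Phi_{P^\natural}\shO_X$ is a coherent sheaf on $\Ash$, and that the fiber of the Fourier--Mukai transform at a point $(L,\nabla)\in\Ash$ computes the hypercohomology of the twisted de Rham complex $\DR_X(\shO_X\tensor(L,\nabla))$, i.e.\ the cohomology $H^{i+n}(X,\LL)$ of the associated rank one local system $\LL=\ker\nabla$ (the shift by $n$ matching the conventions used for $\QHX\decal n$). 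Concretely, a point $(L,\nabla)$ lies in $\Supp R^i\Phi_{P^\natural}\shO_X$ precisely when some nearby cohomology group $H^{\ast}(X,\LL)$ is nonzero in the appropriate degree; after a standard semicontinuity argument this will show that, set-theoretically,
\[
	\Supp R^i \Phi_{P^\natural}\shO_X \;\subseteq\; \bigcup_{k} \rh^{-1}\bigl(\Sigma^{k}(X)\bigr),
\]
where $\rh\colon \Ash \to \Char(A)=\Char(X)$ is the Riemann--Hilbert map sending a line bundle with integrable connection to its monodromy local system, and the union is over those $k$ that can contribute to degree $i$ (roughly $k$ near $i+n$).

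Next I would transfer the codimension bound across $\rh$. The map $\rh\colon \Ash\to\Char(X)$ is a biholomorphism in the analytic topology (both spaces have $W\simeq H^1(X,\CC)$ as universal cover, and $\rh$ is the quotient by the same lattice up to the exponential), so it preserves codimension of analytic subsets. Hence from Theorem~\ref{thm:locsyst}, $\codim_{\Ash}\rh^{-1}(\Sigma^k(X))\ge 2(\abs{k-n}-\delta(a))$. Taking $k$ in the relevant range and keeping track of the degree shift, the worst (smallest) bound among the contributing $k$ is attained at $k=i+n$ (or its reflection), giving $\codim \ge 2(\abs{(i+n)-n}-\delta(a)) = 2(i-\delta(a))$, which is exactly the claimed inequality. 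I would need to check carefully that no smaller-codimension stratum sneaks in from a neighboring degree $k$; because $\abs{k-n}$ is minimized, among integers $k$ with $\abs{k-(i+n)}$ small, at $k$ as close to $n$ as allowed, and the support locus in degree $i$ only sees $k$ at distance $\le$ (a fixed amount controlled by $\delta(a)$ and the splitting) from $i+n$, this bookkeeping should go through — but it is the one genuinely fiddly point.

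The main obstacle is the precise base-change statement: one must know that the formation of $\derR\Phi_{P^\natural}\shO_X$ commutes with restriction to a point of $\Ash$ (so that its fibers are the de Rham cohomologies of the twisted local systems), and one must control the support using this together with upper semicontinuity of $\dim H^\ast$. For the ordinary Fourier--Mukai transform this is the classical base change used in \subsecref{subsec:Hacon}; for the Laumon--Rothstein transform the analogous statement follows from the compatibility of $\derR\Phi_{P^\natural}$ with pullback along $\Spec\CC\to\Ash$, which is built into its construction via the universal line bundle with connection $P^\natural$ on $A\times\Ash$. Once this is in hand, the rest is the Riemann--Hilbert comparison and the numerology above. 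I would also remark, as a sanity check, that for $i<0$ the inequality is vacuous after one notes $\delta(a)\ge 0$ is not assumed small, and that the bound is sharp exactly when Theorem~\ref{thm:locsyst} is, by the same examples.
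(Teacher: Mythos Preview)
Your approach is correct and is exactly the paper's own argument: transfer Theorem~\ref{thm:locsyst} across the Riemann--Hilbert biholomorphism $\Ash\simeq\Char(X)$ and use base change to get $\Supp R^i\Phi_{P^\natural}\shO_X\subset S^i(X)$, which corresponds to $\Sigma^{i+n}(X)$. Your ``genuinely fiddly point'' is a non-issue: even if one is cautious and base change only yields $\Supp R^i\subset\bigcup_{j\ge i}S^j$, each $S^j$ with $j\ge i>0$ satisfies $\codim S^j\ge 2(j-\delta(a))\ge 2(i-\delta(a))$, so neighboring degrees only contribute \emph{better} bounds, and no smaller-codimension stratum can sneak in.
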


This is best expressed in terms of the new $t$-structure alluded to above; see the equivalent formulation 
in Theorem \ref{thm:structure}. In particular, in the special case when the Albanese map is semismall, it follows
that $\derR \Phi_{P^\natural} \shO_X$ is a perverse coherent sheaf. Via formal algebraic properties of this $t$-structure,
we deduce the following vanishing statement for the higher direct images $R^i \Phi_{P^\natural} \shO_X$. It is the 
analogue of the corresponding statement for the standard Fourier-Mukai transform of $\shO_X$ conjectured by 
Green and Lazarsfeld, and proved by Hacon \cite{Hacon} (and in the K\"ahler setting in \cite{PP2}).

\begin{corollary}\label{cor:intro_vanishing}
For any smooth projective complex variety $X$ we have
$$R^i \Phi_{P^\natural} \shO_X = 0 \,\,\,\, {\rm for~all~} i <  - \delta(a).$$
\end{corollary}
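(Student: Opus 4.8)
The plan is to deduce Corollary~\ref{cor:intro_vanishing} from Theorem~\ref{thm:intro_structure} by a purely formal argument based on the properties of the $t$-structure on $\Dbcoh(\OAsh)$ (equivalently $\Dbcoh(\OAh \times V)$ in the Dolbeault realization, or the new $t$-structure from \subsecref{subsec:t-structure}) under which $\derR \Phi_{P^\natural} \shO_X$ — suitably shifted — is, by Theorem~\ref{thm:structure}, an object of the heart when $\delta(a)=0$, and more generally has cohomology concentrated in the expected range. First I would recall that the codimension bound $\codim \Supp R^i \Phi_{P^\natural}\shO_X \ge 2(i-\delta(a))$ is exactly the statement that, after the shift by $\delta(a)$, the complex $\derR\Phi_{P^\natural}\shO_X[\delta(a)]$ lies in the nonnegative part ${}^{c}\mathrm{D}^{\ge 0}$ of the perverse (middle/"triple torus") $t$-structure whose perversity function is $\ell \mapsto 2\ell$ (or the analogous one on $\Ash$, whose dimension is $2g$). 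That is one half of perversity. The content of the corollary is the other half — the vanishing of $R^i$ for $i$ small — which should follow from self-duality.

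The key steps, in order: (1) Identify $\derR\Phi_{P^\natural}\shO_X$ with the Laumon--Rothstein transform of the $\Dmod$-module underlying $a_*\QHX\decal n$, and use the decomposition $a_*\QHX\decal n \simeq \bigoplus_{i,j} M_{i,j}\decal{-i}$ in $\Db\MHM(A)$ together with the compatibility of the transform with Verdier/Hodge duality: since $\QHX\decal n$ is pure of weight $n$ and $a$ is projective, the collection $\{M_{i,j}\}$ is stable under $\mathbb{D}$, so $\derR\Phi_{P^\natural}\shO_X$ is, up to the relevant twist, self-dual for the duality on $\Dbcoh(\OAsh)$ that the transform intertwines with $\mathbb{D}_A$. (2) Invoke Theorem~\ref{thm:intro_structure} for $X$, giving $\derR\Phi_{P^\natural}\shO_X[\delta(a)] \in {}^{c}\mathrm{D}^{\ge 0}$. (3) Apply the dual statement — which is part of the package in Theorem~\ref{thm:structure} / the self-duality noted in step (1), or alternatively apply Theorem~\ref{thm:intro_structure} to the Verdier dual and use that $\mathbb{D}$ of the transform is again a transform of the same type — to conclude $\derR\Phi_{P^\natural}\shO_X[\delta(a)] \in {}^{c}\mathrm{D}^{\le 0}$ as well, hence it is perverse coherent, hence $R^i\Phi_{P^\natural}\shO_X = 0$ for $i < -\delta(a)$ from the definition of ${}^{c}\mathrm{D}^{\le 0}$ (which for this $t$-structure forces the $i$-th cohomology sheaf to vanish once $i$ drops below $-\delta(a)$, since there are no sheaves of negative dimension). (4) Spell out that the bound "$i < -\delta(a)$" is precisely the condition under which $2(i+\delta(a)) < 0$ cannot be the codimension of a nonempty support, so combining the two halves of perversity kills those cohomology sheaves.

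The main obstacle I anticipate is purely bookkeeping with the $t$-structure: one must be careful that the perversity function attached to $\Ash$ (dimension $2g$) and the shift conventions make "$\derR\Phi_{P^\natural}\shO_X$ perverse coherent" literally equivalent to "$R^i\Phi_{P^\natural}\shO_X=0$ for $i<-\delta(a)$ and $\codim \Supp R^i \ge 2(i-\delta(a))$ for all $i$" — i.e.\ that the vanishing is not an extra input but falls out of the two-sided perversity condition. This is the same formal mechanism by which Hacon's theorem (vanishing of $R^i\Phi_P\shO_X$ for $i<0$) follows from the $\GV$ property plus self-duality via the dual standard $t$-structure, as in \cite{Popa}, \cite{Hacon}; here the only novelty is replacing the standard/dual-standard $t$-structure on $\Dbcoh(\OAh)$ by the new "$2\ell$"-perversity $t$-structure on $\Dbcoh(\OAsh)$. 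Once the equivalence in Theorem~\ref{thm:structure} is in hand, the corollary is immediate, so the real work has already been done there; this proof is just unwinding definitions.
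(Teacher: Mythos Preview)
Your approach is exactly the paper's: this is Corollary~\ref{cor:vanishing}, deduced in one line from Theorem~\ref{thm:structure} (which establishes $\derR\Phi_{P^\natural}\shO_X \in \mDtcoh{\leq \delta(a)} \cap \mDtcoh{\geq -\delta(a)}$ via the codimension bound of Theorem~\ref{thm:intro_structure} plus self-duality, just as you outline) together with Lemma~\ref{lem:heart}. One small sharpening of your step~(4): the vanishing of $R^i$ for $i<-\delta(a)$ does not come from the codimension inequality on the cohomology sheaves of $\derR\Phi_{P^\natural}\shO_X$ itself (that inequality is vacuous in this range), but from the formal inclusion $\mDtcoh{\geq -\delta(a)} \subseteq \Dtcoh{\geq -\delta(a)}$, which holds simply because the perversity function $m$ is nonnegative---this is precisely the content of Lemma~\ref{lem:heart}.
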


Along the lines of \cite{LP}, Theorem \ref{thm:linear_intro} and Corollary \ref{cor:intro_vanishing} 
can be combined to give, by means of the BGG correspondence, a bound on the complexity of the 
cohomology algebra
\[
	P_X  := \bigoplus_{i=0}^{2n} H^{j}(X, \CC),
\]
seen as a graded module over the exterior algebra $ E := \Lambda^* H^1 (X, \CC)$ via
cup-product. We use the following conventions for the grading: in the algebra $E$, the
elements of $H^1 (X, \CC)$ have degree $-1$, while in $P_X$  the elements of $H^{j}(X, \CC)$ 
have degree $n-j$.  Our application is to bound the degrees of
the generators and syzygies of $P_X$ as an $E$-module. 

\begin{corollary}\label{cor:intro_regularity}
Let $X$ be a smooth projective complex variety of dimension $n$. Then $E$ is $\bigl(
n+\delta(a)\bigr)$-regular as a graded $E$-module.
\end{corollary}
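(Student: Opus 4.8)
The plan is to run the Bernstein--Gelfand--Gelfand (BGG) machinery of \cite{LP}, feeding in the two substantive inputs already in hand: the linearity statement of Theorem~\ref{thm:linear_intro} and the vanishing of Corollary~\ref{cor:intro_vanishing}. Here ``regular'' means Castelnuovo--Mumford regular, and the statement to be proved is that the cohomology algebra $P_X$ is $\bigl(n+\delta(a)\bigr)$-regular as a graded module over $E = \Lambda^* H^1(X,\CC)$. Recall first that $P_X = \bigoplus_j H^j(X,\CC)$ is a finitely generated graded $E$-module via cup product, with $H^1(X,\CC)$ in degree $-1$ and $H^j(X,\CC)$ in degree $n-j$; since cup product strictly raises cohomological degree, $1 \in H^0(X,\CC)$ is not in $\mathfrak{m}_E P_X$ and hence is a minimal generator, sitting in the top degree $n$, so that $\operatorname{reg}_E P_X \geq n$. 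Under the BGG correspondence $P_X$ corresponds to a linear complex $\mathbf{L}(P_X)$ of free graded modules over the polynomial ring $S = \CC[z_1, \dotsc, z_{2g}]$, and this complex is exactly
\[
	H^0(X,\CC) \tensor S \to H^1(X,\CC) \tensor S \to \dotsb \to H^{2n}(X,\CC) \tensor S,
\]
placed in cohomological degrees $-n, \dotsc, n$, with $S$-linear differential $\alpha \tensor f \mapsto \sum_j (\eps_j \wedge \alpha) \tensor z_j f$ --- that is, the complex of Theorem~\ref{thm:linear_intro} for the trivial line bundle with connection ($\LL = \CC$), now viewed over $W = \Spec S$, the universal cover of $\Ash$.

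The second step, and the only one using geometry, is to locate the cohomology of $\mathbf{L}(P_X)$. By Theorem~\ref{thm:linear_intro} applied to $\OX$ (so $\LL = \CC$), in an analytic neighbourhood of the origin of $W$ the complex $\mathbf{L}(P_X)$ becomes quasi-isomorphic to the pullback of $\derR \Phi_{P^\natural}\OX$; hence, after the evident reindexing, the germ at the origin of the cohomology $S$-module $H^i\bigl(\mathbf{L}(P_X)\bigr)$ coincides with the germ at the origin of $\Ash$ of $R^i \Phi_{P^\natural}\OX$. By Corollary~\ref{cor:intro_vanishing} the latter vanishes whenever $i < -\delta(a)$. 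A finitely generated graded $S$-module whose germ at the origin vanishes is itself zero, so $H^i\bigl(\mathbf{L}(P_X)\bigr) = 0$ for all $i < -\delta(a)$; in other words $\mathbf{L}(P_X)$ is a linear complex with terms in cohomological degrees $-n, \dotsc, n$ whose cohomology is concentrated in degrees $\geq -\delta(a)$.

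The final step is purely homological: feed this into the description of the regularity of a graded $E$-module in terms of the cohomology of its associated linear $S$-complex (the Eisenbud--Fl{\o}ystad--Schreyer dictionary, used in \cite{LP}). For a module whose BGG complex has terms in degrees $-n, \dotsc, n$ and cohomology only in degrees $\geq -\delta(a)$, this gives $\operatorname{reg}_E P_X \leq n + \delta(a)$, and hence $\operatorname{reg}_E P_X = n$ when $a$ is semismall. I expect the genuinely delicate point to be exactly this last bookkeeping: checking that $\mathbf{L}(P_X)$, assembled from the cup-product $E$-module structure on $P_X$, really is the BGG complex with its terms placed to match Theorem~\ref{thm:linear_intro} (the identification ``cup with $\eps_j$'' $\leftrightarrow$ ``multiply by $z_j$''), and then keeping track of the grading shifts closely enough that cohomology vanishing strictly below degree $-\delta(a)$ translates into $(n+\delta(a))$-regularity rather than a shifted variant. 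All of this is routine within the framework of \cite{LP}; the real content is already contained in Theorem~\ref{thm:linear_intro} and Corollary~\ref{cor:intro_vanishing}.
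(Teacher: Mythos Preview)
Your proposal is correct and follows essentially the same route as the paper: identify the BGG complex $\mathbf{L}(P_X)$ with the linear complex of Theorem~\ref{thm:linear_intro} at the trivial connection, use Corollary~\ref{cor:intro_vanishing} together with homogeneity (your ``graded module with vanishing germ at the origin is zero'' is the paper's ``differentials scale linearly along radial directions'') to get exactness in degrees $< -\delta(a)$, and then invoke the BGG/Eisenbud--Fl{\o}ystad--Schreyer dictionary. The paper adds one small ingredient you leave implicit: it notes that $P_X$ is self-dual as a graded $E$-module (via Poincar\'e duality) before applying \cite{Eisenbud}*{Theorem~7.8}, which is what converts the one-sided exactness of $\mathbf{L}(P_X)$ into the two-sided $\Tor$ vanishing needed for regularity.
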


Concretely, this means that $P_X$ is generated in degrees 
$-\delta(a), \dotsc, n$; the relations among the generators appear in degrees
$-\delta(a)-1, \dotsc, n-1$; and more generally, the $p$-th module of syzygies of $P_X$
has all its generators in degrees $-\delta(a)-p, \dotsc, n-p$. Simple examples
show that this result is optimal; see \subsecref{subsec:BGG}. 

The last part of the paper contains a few statements extending our results to more
general classes of $\Dmod$-modules on abelian varieties, not necessarily underlying 
mixed Hodge modules. The proofs are of a different nature, and will be presented 
elsewhere. We propose a few natural open problems as well.

\subsection{Acknowledgements}

We are grateful to Mark de Cataldo for pointing out the current statement of
Proposition~\ref{prop:vanishing}, which is a strengthening of our original result. 
We thank  Donu Arapura, Takuro Mochizuki and Keiji
Oguiso for useful conversations. 
M.P. is partially supported by NSF grant DMS-1101323.
C.S. is partially supported by the World Premier International Research
Center Initiative (WPI Initiative), MEXT, Japan, and by NSF grant DMS-1100606.

\section{Preliminaries}

\subsection{Mixed Hodge modules}
\label{subsec:MHM}

In this section, we recall a few aspects of Morihiko Saito's theory of mixed Hodge modules
\cite{Saito-MHM} which, as explained in the introduction, offers a convenient setting
for the results of this paper. A very good survey can also be found in
\cite{Saito-survey}. We only discuss the case of a complex algebraic variety $X$, say
of dimension $n$.

Saito defines  an abelian category $\MHM(X)$ of \emph{graded-polarizable mixed Hodge modules} on $X$. 
A mixed Hodge module is, roughly speaking, a variation of mixed Hodge structure with
singularities. Over a one-point space it is the same thing
as a graded-polarizable mixed Hodge structure with coefficients in $\QQ$.

As in classical Hodge theory, there are pure and mixed objects. We shall denote by
$\HM_{\ell}(X)$ the subcategory of polarizable (pure) Hodge modules of weight $\ell$.
On a one-point space, such an object is just a polarizable Hodge structure of weight
$\ell$. More generally, any polarizable variation of Hodge structure of weight $k$ on
a Zariski-open subset of some irreducible subvariety $Z \subseteq X$ uniquely extends
to a Hodge module of weight $\dim Z + k$ on $X$. The existence of polarizations makes
the category $\HM_{\ell}(X)$ semi-simple: each object admits a decomposition by
support, and simple objects with support equal to an irreducible subvariety $Z
\subseteq X$ are obtained from polarizable variations of Hodge structure on
Zariski-open subsets of $Z$.

When $M \in \HM_{\ell}(X)$ is pure of weight $\ell$, and we are in the presence of a
projective  morphism $f \colon X \to Y$, Saito shows that the direct image $f_* M \in
\Db \MHM(Y)$ splits non-canonically into the sum of its cohomology objects $H^i f_* M
\in \HM_{\ell+i}(Y)$. The resulting isomorphism
\[
	f_* M \simeq \bigoplus_i H^i f_* M \decal{-i}
\]
is the analogue for Hodge modules of the decomposition theorem of \cite{BBD}.
The proof of this result with methods from algebraic analysis and $\Dmod$-modules
is one of the main achievements of Saito's theory.

The precise definition of a mixed Hodge module is very involved; it uses regular holonomic
$\Dmod$-modules, perverse sheaves, and the theory of nearby and vanishing cycles.
The familiar equivalence between local systems and flat
vector bundles is replaced by the Riemann-Hilbert correspondence between perverse
sheaves and regular holonomic $\Dmod$-modules. Let $\Mmod$ be a $\Dmod$-module on
$X$; in this paper, this always means a left module over the sheaf of algebraic
differential operators $\Dmod_X$. Recall that the de Rham complex of $\Mmod$ is the
$\CC$-linear complex
\[
	\DR_X(\Mmod) = \Bigl\lbrack
		\Mmod \to \Omega_X^1 \tensor \Mmod \to \dotsb \to \Omega_X^n \tensor \Mmod
	\Bigr\rbrack,
\]
placed in degrees $-n, \dotsc, 0$. When $\Mmod$ is holonomic, $\DR_X(\Mmod)$ is
constructible and satisfies the axioms for a perverse sheaf with coefficients in
$\CC$. According to the Riemann-Hilbert correspondence of Kashiwara and Mebkhout, the
category of regular holonomic $\Dmod$-modules is equivalent to the category of
perverse sheaves $\Perv(\CC_X)$ through the functor $\Mmod \mapsto \DR_X(\Mmod)$.
Internally, a \define{mixed Hodge module} $M$ on a smooth complex algebraic variety
$X$ has three components:

\begin{enumerate}
\item A regular holonomic $\Dmod_X$-module $\Mmod$, together with a good
filtration $F_{\bullet} \Mmod$ by $\OX$-coherent subsheaves such that $\Gr_{\bullet}^F \!
\Mmod$ is coherent over $\Gr_{\bullet}^F \Dmod_X$. This filtration plays the role of
a Hodge filtration on $\Mmod$.
\item A perverse sheaf $K \in \Perv(\QQ_X)$, together with an isomorphism
\[
	\alpha \colon \DR_X(\Mmod) \to K \tensor_{\QQ} \CC.
\]
This isomorphism plays the role of a $\QQ$-structure on the $\Dmod$-module
$\Mmod$.  (When $M$ is a mixed Hodge structure, $K$ is the
underlying $\QQ$-vector space; when $M$ corresponds to a polarizable variation of Hodge
structure $H$ on a Zariski-open subset of $Z \subseteq X$, $K$ is the intersection complex 
$\IC_Z(H)$ of the underlying local system.)
\item  A finite increasing \define{weight filtration} $W_{\bullet} M$ of $M$ by
objects of the same kind, compatible with $\alpha$, such that the graded quotients
$\Gr_{\ell}^W M = W_{\ell} M / W_{\ell-1} M$ belong to $\HM_{\ell}(X)$. 
\end{enumerate}

These components are subject to several conditions, which are defined by induction on the
dimension of the support of $\Mmod$. On a one-point space, a mixed Hodge module is a
graded-polarizable mixed Hodge structure; in general, Saito's conditions require that
the nearby and vanishing cycles of $\Mmod$ with respect to any locally defined
holomorphic function are again mixed Hodge modules (now on a variety of dimension
$n - 1$); the existence of polarizations; etc. An important fact is that these local conditions 
are preserved after taking direct images, and that they are sufficient to obtain global results such as the
decomposition theorem.

The most basic Hodge module on a smooth variety $X$ is the trivial Hodge module,
denoted by $\QHX \decal{n}$. Its underlying $\Dmod$-module is the structure sheaf
$\OX$, with the trivial filtration $F_0 \OX = \OX$; its underlying perverse sheaf is
$\QQ_X \decal{n}$, the constant local system placed in degree $-n$. The comparison
isomorphism is nothing but the well-known fact that the usual holomorphic de Rham complex
\[
	\OX \to \Omega_X^1 \to \Omega_X^2 \to \dotsb \to \Omega_X^n
\]
is a resolution of the constant local system $\CC_X$. 
In general, all admissible variations of mixed Hodge 
structures are mixed Hodge modules, and any mixed Hodge module is generically one such.
The usual cohomology groups of $X$ can be obtained
as the cohomology of the complex $p_* p^* \QQ^H \in \Db \MHM(\pt)$,
where $p \colon X \to \pt$ denotes the map to a one-point space, and $\QQ^H$ is the
Hodge structure of weight zero on $\QQ$. There are similar expressions for cohomology
with compact supports, intersection cohomology, and so forth, leading to a uniform
description of the mixed Hodge structures on all of these groups.

\subsection{Three theorems about mixed Hodge modules}
\label{subsec:three}

In this section, we recall from the literature three useful theorems about the associated graded object
$\Gr_{\bullet}^F \Mmod$, for a filtered $\Dmod$-module $(\Mmod, F)$ underlying a
mixed Hodge module.

During the discussion, $X$ will be a smooth complex projective variety of dimension
$n$, and $M \in \MHM(X)$ a mixed Hodge module on $X$. As usual, we denote the
underlying filtered $\Dmod$-module by $(\Mmod, F)$. Recall that the associated graded
of the sheaf of differential operators $\Dmod_X$, with respect to the filtration by
order of differential operators, is isomorphic to $\shA_X^{\bullet} = \Sym^{\bullet}
\shT_X$, the symmetric algebra of the tangent sheaf of $X$. Since $\Gr_{\bullet}^F
\Mmod$ is finitely generated over this sheaf of algebras, it defines a coherent sheaf
$\shC(\Mmod, F)$ on the cotangent bundle $T^{\ast} X$.  The support of this sheaf is
the characteristic variety of the $\Dmod$-module $\Mmod$, and therefore of pure
dimension $n$ because $\Mmod$ is holonomic. 

The first of the three theorems is Saito's generalization of the Kodaira vanishing
theorem. Before we state it, observe that the filtration $F_{\bullet} \Mmod$ is
compatible with the $\Dmod$-module structure on $\Mmod$, and therefore induces a 
filtration on the de Rham complex of $\Mmod$ by the formula
\begin{equation} \label{eq:filtration}
	F_k \DR_X(\Mmod) = \Bigl\lbrack
		F_k \Mmod \to \Omega_X^1 \tensor F_{k+1} \Mmod \to \dotsb 
			\to \Omega_X^n \tensor F_{k+n} \Mmod
	\Bigr\rbrack.
\end{equation}
The associated graded complex for the filtration in \eqref{eq:filtration} is 
\[
	\Gr_k^F \DR_X(\Mmod) = \Big\lbrack
		\Gr_k^F \Mmod \to \OmX{1} \tensor \Gr_{k+1}^F \Mmod \to \dotsb \to
			\OmX{n} \tensor \Gr_{k+n}^F \Mmod
	\Big\rbrack,
\]
which is now a complex of coherent sheaves of $\OX$-modules in degrees $-n, \dotsc,
0$. This complex satisfies the following Kodaira-type vanishing theorem.

\begin{theorem}[Saito]
\label{saito_vanishing}
Let $(\Mmod, F)$ be the filtered $\Dmod$-module underlying a mixed Hodge
module on a smooth projective variety $X$, and let $L$ be any ample line bundle.
\begin{enumerate}
\item One has $\mathbf{H}^i \bigl( X, \Gr_k^F \DR_X(\Mmod) \tensor L \bigr) = 0$ for
all $i > 0$.
\item One has $\mathbf{H}^i \bigl( X, \Gr_k^F \DR_X(\Mmod) \tensor L^{-1} \bigr) = 0$ for
all $i < 0$.
\end{enumerate}
\end{theorem}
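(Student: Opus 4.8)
\emph{Proof sketch / plan.} The plan is to follow Saito's original strategy: peel off the mixed-Hodge-module structure by a chain of formal reductions until what remains is a Kodaira--Akizuki--Nakano-type statement for a polarizable variation of Hodge structure on the complement of a normal crossings divisor, and only there exploit the positivity of $L$.

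First I would observe that the two parts are interchanged by duality, so that it suffices to prove (1). The functor $\DD$ preserves $\MHM(X)$, and Saito's compatibility of $\DR_X$ with duality identifies $\Gr_k^F\DR_X(\DD\Mmod)$ with $\derR\shHom\bigl(\Gr_{-k}^F\DR_X(\Mmod),\omX\bigr)\decal{n}$ up to a shift of the filtration index; composed with Serre duality on the projective variety $X$ this converts (2) for $\DD M$ into (1) for $M$, and every object of $\MHM(X)$ is of this form. Next I would reduce to the case of a variation of Hodge structure. Since $\DR_X$ is exact on holonomic $\Dmod$-modules and $F_\bullet$ is strict for it, the finite weight filtration $W_\bullet M$ induces a finite filtration on $\Gr_k^F\DR_X(\Mmod)$ whose graded pieces are the corresponding complexes attached to the pure Hodge modules $\Gr_\ell^W M$; a hypercohomology spectral sequence reduces (1) to the pure case. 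A polarizable Hodge module decomposes by strict support, so I may assume $M$ has strict support an irreducible $Z\subseteq X$, i.e.\ $M$ is the intermediate extension of a polarizable variation of Hodge structure on a Zariski-dense open subset of $Z_{\mathrm{sm}}$.

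If $\dim Z<\dim X$ I would push forward along the closed embedding, pass to a log resolution $\mu\colon\widetilde Z\to Z$ along which the variation has unipotent local monodromy and the boundary is normal crossings, exhibit $M$ as a direct summand of $H^0\mu_*$ of the Hodge module on $\widetilde Z$ carried by Deligne's canonical extension, and use Laumon's formula for $\Gr_\bullet^F$ under projective direct images --- quoted in the introduction --- together with strictness of $\mu_*$ and the projection formula to reduce (1) on $X$ to (1) on $\widetilde Z$ for the pulled-back bundle. This is a lower-dimensional instance, so an induction on $\dim X$ (trivial for $\dim X = 0$) leaves only the case $Z = X$: here $\Mmod$ is the intermediate extension of a polarizable variation $\shV$ on $X\setminus E$ with $E$ reduced normal crossings, and Saito's description of $F_\bullet\Mmod$ via the Hodge filtration on the canonical extension $\cl{\shV}$ makes $\Gr_k^F\DR_X(\Mmod)$ quasi-isomorphic to an explicit twisted logarithmic de Rham complex built from $\Gr_\bullet^F\cl{\shV}$ and $\Omega_X^\bullet(\log E)$.

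The hard part is this last, essentially classical, case: a logarithmic Akizuki--Nakano vanishing, and the only place where $L$ intervenes. Since an ample line bundle need not be effective, I would bring it in through a cyclic covering: choose $m$ with $L^m$ very ample, a smooth $D\in\abs{L^m}$ in general position relative to $E$, and the $m$-fold cyclic cover $f\colon Y\to X$ branched along $D$; then $Y$ is smooth, $f^{-1}(D+E)$ is normal crossings, and $f_*\shO_Y\cong\bigoplus_{j=0}^{m-1}L^{-j}$ as $\mu_m$-sheaves. Pulling back $\shV$ and taking the intermediate extension gives a pure Hodge module $N$ on $Y$, and the Esnault--Viehweg decomposition of direct images of sheaves of logarithmic forms identifies $f_*$ of the graded de Rham complex of the meromorphic extension of $N$ along $f^{-1}(D)$ with $\bigoplus_{j=0}^{m-1}$ of the graded de Rham complex of a logarithmic $(D+E)$-version of $\Mmod$, twisted by $L^{-j}$. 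Then I would invoke, on the projective variety $Y$: (i) Saito's degeneration at $E_1$ of the Hodge-to-de-Rham spectral sequence $\mathbf{H}^{p+q}\bigl(Y,\Gr_{-p}^F\DR_Y(-)\bigr)\Rightarrow\mathbf{H}^{p+q}\bigl(Y,\DR_Y(-)\bigr)$, i.e.\ strictness of the Hodge filtration under pushforward to a point; and (ii) the affineness of $Y\setminus f^{-1}(D)$, the complement of an ample divisor, so that Artin's vanishing theorem concentrates the cohomology of the meromorphic extension in the correct half-range. Reading this off through the $\mu_m$-decomposition, and using the residue sequence $0\to\Omega_X^\bullet(\log E)\to\Omega_X^\bullet(\log(D+E))\to\Omega_D^{\bullet-1}(\log E|_D)\to 0$ together with the inductive hypothesis on $D$ to descend from the $(D+E)$-logarithmic to the $E$-logarithmic complex, I would obtain the required vanishing of $\mathbf{H}^i\bigl(X,\Gr_k^F\DR_X(\Mmod)\tensor L^{-j}\bigr)$ for $1\le j\le m-1$; the case $j=1$ is the dual form of (2), and the first step then gives (1). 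I expect this positivity step to be where the real work lies: the preceding reductions are formal consequences of Saito's formalism, whereas here one must orchestrate genuine positivity with the classical vanishing machinery --- cyclic covers, $E_1$-degeneration, affineness of complements of ample divisors --- and keep careful track of all the twists and filtration indices.
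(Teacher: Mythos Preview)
The paper does not prove this theorem itself; it defers to \cite{Saito-MHM}*{Proposition~2.33} with the one-line summary that the argument reduces to ``a vanishing theorem for perverse sheaves on affine varieties.'' Your sketch correctly identifies the two essential inputs---strictness of the Hodge filtration under projective pushforward, and Artin vanishing on the affine complement of an ample divisor---and the cyclic cover is indeed how one passes from an arbitrary ample $L$ to an effective divisor with affine complement.

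There is, however, a gap in your inductive reduction to variations of Hodge structure. When $\dim Z<\dim X$ you pass to a resolution $\mu\colon\widetilde Z\to Z$ and invoke the inductive hypothesis on $\widetilde Z$ for the pulled-back bundle. But $\mu^{\ast}(L\vert_Z)$ is only big and nef on $\widetilde Z$, not ample---it is numerically trivial on every curve contracted by $\mu$---so the hypothesis, which is stated for ample line bundles, does not apply. The big-and-nef extension of Saito's theorem is true, but establishing it is additional work and cannot be assumed in the induction.

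Saito's own argument avoids this by never reducing to the VHS case: he works directly with the mixed Hodge module $M$ on $X$, chooses $D\in\lvert L^m\rvert$ smooth and non-characteristic for $M$, passes to the cyclic cover, and exploits the explicit description of the Hodge filtration on $j_{!}j^{\ast}$ of the pullback that the non-characteristic hypothesis provides. Your reductions via the weight filtration and the strict-support decomposition are harmless (and in fact unnecessary in Saito's argument); it is the further resolution step that causes the trouble, and that step is precisely what the non-characteristic approach lets one skip.
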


\begin{proof}
The proof works by reducing the assertion to a vanishing theorem for perverse
sheaves on affine varieties, with the help of Saito's formalism. Details can be
found in \cite{Saito-MHM}*{Proposition~2.33}.
\end{proof}

\begin{example}
For the trivial Hodge module $M = \QHX \decal{n}$ on $X$, we have $\Mmod = \OX$, and
therefore $\Gr_{-n}^F \DR_X(\OX) = \omega_X$. This shows that
Theorem~\ref{saito_vanishing} generalizes the Kodaira vanishing theorem. 
Since $\Gr_{-p}^F \DR_X(\OX) = \Omega_X^p \decal{n-p}$, it also generalizes the
Nakano vanishing theorem.
\end{example}

The second theorem gives more information about the coherent sheaf $\shC(\Mmod, F)$
on the cotangent bundle of $X$. Before stating it, we recall the definition of the
\emph{Verdier dual} $M' = \DX M$ of a mixed Hodge module. If $\rat M$ is the perverse
sheaf underlying $M$, then $\rat M'$ is simply the usual topological Verdier dual. On
the level of $\Dmod$-modules, note that since $\Mmod$ is left $\Dmod$-module, the
dual complex
\[
	\derR \shHom_{\Dmod_X} \bigl( \Mmod, \Dmod_X \decal{n} \bigr)
\]
is naturally a complex of right $\Dmod$-modules; since $\Mmod$ is holonomic, it is 
quasi-isomorphic to a single right $\Dmod$-module. If $\Mmod'$ denotes the left
$\Dmod$-module underlying the Verdier dual $M'$, then that right $\Dmod$-module is
$\omX \tensor_{\OX} \Mmod'$. Thus we have
\[
	\derR \shHom_{\Dmod_X} \bigl( \Mmod, \Dmod_X \decal{n} \bigr) 
		\simeq \omX \tensor_{\OX} \Mmod',
\]
where the right $\Dmod$-module structure on $\omX \tensor \Mmod'$ is given by the
rule $\xi \cdot (\omega \tensor m') = (\xi \omega) \tensor m' - \omega \tensor
(\xi m')$ for $\xi \in \shT_X$. The Hodge filtration on $\Mmod$ induces a
filtration on the dual complex and hence on $\Mmod'$. The following result shows that
this induced filtration is well-behaved, in a way that makes duality and passage to
the associated graded compatible with each other.

\begin{theorem}[Saito]
\label{thm:Verdier}
Let $M$ be a mixed Hodge module on a smooth complex algebraic variety $X$ of
dimension $n$, and let $M'$ denote its Verdier dual. Then
\[
	\derR \shHom_{\shA_X^{\bullet}} \bigl( \Gr_{\bullet}^F \Mmod, 
		\shA_X^{\bullet} \decal{n} \bigr)  \simeq 
		\omega_X \tensor_{\OX} \Gr_{\bullet + 2n}^F \Mmod',
\]
where sections of $\Sym^k \shT_X$ act with an extra factor of $(-1)^k$ on
the right-hand side. If we consider both sides as coherent sheaves on $T^{\ast} X$,
we obtain
\[
	\derR \shHom \bigl( \shC(\Mmod, F), \shO_{T^{\ast} X} \decal{n} \bigr) \simeq  
		\pu \omega_X \tensor (-1)_{T^{\ast} X}^{\ast} \shC(\Mmod', F),
\]
where $p \colon T^{\ast} X \to X$ is the projection. In particular, $\shC(\Mmod, F)$
is always a Cohen-Macaulay sheaf of dimension $n$.
\end{theorem}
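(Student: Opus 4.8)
The plan is to obtain all three assertions from one deep input: Saito's theorem that the Hodge filtration is compatible with $\Dmod$-module duality in the strictest possible sense. I would start from the following fact \cites{Saito-HM,Saito-MHM}: if $(\Mmod, F)$ underlies a mixed Hodge module, then, computing $\derR \shHom_{(\Dmod_X, F)}\bigl( \Mmod, (\Dmod_X, F) \decal{n} \bigr)$ by means of a resolution $(L_\bullet, F) \to (\Mmod, F)$ by filtered-free $\Dmod_X$-modules (finite direct sums of filtration shifts of $(\Dmod_X, F)$), the resulting filtered complex is \emph{strict} and concentrated in cohomological degree zero, the filtered right $\Dmod_X$-module it represents being $\omX \tensor_{\OX} \Mmod'$ with a filtration whose associated graded is $\omX \tensor_{\OX} \Gr_{\bullet+2n}^F \Mmod'$. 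Strictness allows one to take $(L_\bullet, F) \to (\Mmod, F)$ to be a filtered quasi-isomorphism, so that $\Gr_\bullet^F L_\bullet$ is a graded-free resolution of $\Gr_\bullet^F \Mmod$ over $\shA_X^\bullet \cong \Gr_\bullet^F \Dmod_X$.

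Granting this, the first displayed isomorphism follows by interchanging two derived functors. On one hand, $\Gr_\bullet^F L_\bullet$ being a graded-free resolution, it computes $\derR \shHom_{\shA_X^\bullet}\bigl( \Gr_\bullet^F \Mmod, \shA_X^\bullet \decal{n} \bigr)$. On the other hand, for a filtered-free $\Dmod_X$-module $L$ there is a tautological identification $\Gr_\bullet^F \shHom_{\Dmod_X}\bigl( L, \Dmod_X \decal{n} \bigr) \cong \shHom_{\shA_X^\bullet}\bigl( \Gr_\bullet^F L, \shA_X^\bullet \decal{n} \bigr)$ — the same comparison that underlies the computation of characteristic varieties via $\Gr^F$ — and applying it termwise to $L_\bullet$, together with the strictness of the dual complex, identifies the left-hand side with the associated graded of $\omX \tensor \Mmod'$:
\[
	\derR \shHom_{\shA_X^\bullet}\bigl( \Gr_\bullet^F \Mmod, \shA_X^\bullet \decal{n} \bigr)
		\simeq \omX \tensor_{\OX} \Gr_{\bullet + 2n}^F \Mmod'.
\]
The sign $(-1)^k$ on the action of $\Sym^k \shT_X$ is then just the difference, at the level of principal symbols, between the right and left $\Dmod_X$-module structures on $\omX \tensor \Mmod'$, i.e.\ the antipode of the commutative ring $\shA_X^\bullet$.

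For the geometric reformulation I would use the dictionary between finitely generated graded $\shA_X^\bullet$-modules and $\mathbb{G}_m$-equivariant coherent sheaves on $T^{\ast}X = \Spec_X \shA_X^\bullet$: here $\shC(\Mmod, F)$ is the sheaf attached to $\Gr_\bullet^F \Mmod$, graded-free resolutions become equivariant locally free resolutions (which still compute $\derR \shHom$ after forgetting the grading), and $\shHom_{\shA_X^\bullet}(\argbl, \shA_X^\bullet)$ corresponds to $\shHom_{\shO_{T^{\ast}X}}(\argbl, \shO_{T^{\ast}X})$. The module $\omX \tensor \Gr_{\bullet+2n}^F \Mmod'$ with the sign-twisted action corresponds to $\pu \omX \tensor (-1)_{T^{\ast}X}^{\ast} \shC(\Mmod', F)$, since twisting a grading by the sign character of $-1 \in \mathbb{G}_m$ is precisely pullback along fibrewise multiplication by $-1$, and $p \circ (-1)_{T^{\ast}X} = p$ where $p \colon T^{\ast}X \to X$ is the projection. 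This gives
\[
	\derR \shHom\bigl( \shC(\Mmod, F), \shO_{T^{\ast}X} \decal{n} \bigr) \simeq
		\pu \omX \tensor (-1)_{T^{\ast}X}^{\ast} \shC(\Mmod', F).
\]

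Finally, for the Cohen--Macaulay statement: $\Mmod$ is holonomic, so $\supp \shC(\Mmod, F)$, the characteristic variety, has pure dimension $n$ in the smooth $2n$-fold $T^{\ast}X$, hence codimension $n$; and the right-hand side of the last isomorphism is a sheaf placed in a single cohomological degree, so $\shExt^i_{\shO_{T^{\ast}X}}\bigl( \shC(\Mmod, F), \shO_{T^{\ast}X} \bigr) = 0$ for all $i \neq n$. For a coherent sheaf on a smooth variety, having its $\shExt$'s into $\shO$ concentrated in the single degree equal to its codimension is exactly the criterion for being Cohen--Macaulay, so $\shC(\Mmod, F)$ is Cohen--Macaulay of dimension $n$. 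The only genuinely hard step is the strictness of Saito's dual filtration, which I would import wholesale; everything afterwards is bookkeeping with associated graded modules and the standard $\shExt$-criterion.
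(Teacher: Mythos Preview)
Your argument is correct, and the underlying input is the same as in the paper: Saito's result that the filtered dual of $(\Mmod,F)$ is strict, equivalently that $\shC(\Mmod,F)$ is Cohen--Macaulay. The paper, however, runs the logic in the opposite order: it first quotes the Cohen--Macaulay property from \cite{Saito-HM}*{Lemme~5.1.13} and then refers to \cite{duality}*{\S3.1} for the passage from Cohen--Macaulayness to the explicit duality formula. You instead take strictness of the dual complex as the black box, deduce the graded duality isomorphism directly, and read off Cohen--Macaulayness from the fact that the right-hand side sits in a single degree. Since strictness of the filtered dual and Cohen--Macaulayness of the associated graded are two faces of the same statement, the two routes are equivalent; yours has the virtue of making the mechanism (associated graded commutes with $\derR\shHom$ under strictness) visible rather than hidden in a citation. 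One small quibble: the sentence ``Strictness allows one to take $(L_\bullet,F)\to(\Mmod,F)$ to be a filtered quasi-isomorphism'' is slightly misplaced---one can always choose a filtered-free resolution that is a filtered quasi-isomorphism; the strictness you actually need is that of the \emph{dual} complex, which you invoke correctly elsewhere.
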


\begin{proof}
That the coherent sheaf $\shC(\Mmod, F)$ on $T^{\ast} X$ is Cohen-Macaulay is proved
in \cite{Saito-HM}*{Lemme 5.1.13}.  For an explanation of how this fact implies the
formula for the dual of the graded module $\Gr_{\bullet}^F \Mmod$, see
\cite{duality}*{\S3.1}. 
\end{proof}

The last of the three theorems gives a formula for the associated graded object of
$f_* M$, where $f \colon X \to Y$ is a projective morphism between two smooth complex
algebraic varieties. The formula itself first appears in a paper by Laumon
\cite{Laumon}*{Construction~2.3.2}, but it is not true in the generality claimed there
(that is to say, for arbitrary filtered $\Dmod$-modules).

Before stating the precise result, we give an informal version. The following diagram
of morphisms, induced by $f$, will be used throughout:
\begin{diagram}{3em}{2.5em}
\matrix[math] (m) { 
T^{\ast} X & T^{\ast} Y \times_Y X & X \\
& T^{\ast} Y & Y \\ }; 
\path[to] (m-1-2) edge node[above] {$\mathit{df}$} (m-1-1);
\path[to] (m-1-2) edge node[auto] {$p_1$} (m-2-2)
		edge node[auto] {$p_2$} (m-1-3);
\path[to] (m-2-2) edge node[auto] {$p$} (m-2-3);
\path[to] (m-1-3) edge node[auto] {$f$} (m-2-3);
\end{diagram}
Let $M \in \MHM(X)$ be a mixed Hodge module, let $(\Mmod, F)$ be its underlying
filtered $\Dmod$-module, and write $\shC(\Mmod, F)$ for the associated coherent sheaf on
$T^{\ast} X$. We shall denote the complex of filtered holonomic $\Dmod$-modules
underlying the direct image $\fl M \in \Db \MHM(Y)$ by the symbol $\fl(\Mmod, F)$.
Then \define{Laumon's formula} claims that
\[
	\shC \bigl( \fl(\Mmod, F) \bigr) \simeq 
		\derR p_{1\ast} \bigl( \derL \mathit{df}^{\ast} \shC(\Mmod, F) 
			\tensor p_2^{\ast} \omega_{X/Y} \bigr)
\]
as objects in the derived category $\Dbcoh(\shO_{T^{\ast} Y})$.
Unfortunately, the passage to coherent sheaves on the cotangent bundle loses the
information about the grading; it is therefore better to work directly with
the graded modules. As above, we write $p \colon T^{\ast} X \to X$ for the
projection, and denote the graded sheaf of $\OX$-algebras $p_* \shO_{T^{\ast} X} =
\Sym \shT_X$ by the symbol $\AX$.

\begin{theorem}[Laumon]
\label{thm:laumon}
Let $f \colon X \to Y$ be a projective morphism between smooth complex algebraic
varieties, and let $M \in \MHM(X)$. Then with notation as above,
\[
	\Gr_{\bullet}^F \fl(\Mmod, F) \simeq
		\derR \fl \Bigl( \omega_{X/Y} \tensor_{\OX}
		\Gr_{\bullet + \dim X - \dim Y}^F \Mmod \tensor_{\AX} \fu \AY \Bigr).
\]
\end{theorem}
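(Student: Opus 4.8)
The plan is to reduce to the two basic cases of a closed immersion and a projection $Y \times \mathbb{P}^N \to Y$, since any projective morphism $f \colon X \to Y$ factors as a closed immersion $X \hookrightarrow Y \times \mathbb{P}^N$ followed by the projection, and both the left-hand side (via functoriality of direct images of filtered $\Dmod$-modules underlying mixed Hodge modules, together with the identification $\Gr^F_\bullet \circ f_* = f_* \circ \Gr^F_\bullet$ that holds for mixed Hodge modules by Saito's strictness) and the right-hand side (via the projection formula, base change in the diagram $\mathit{df}, p_1, p_2$, and the multiplicativity of relative dualizing sheaves $\omega_{X/Y} \cong \omega_{X/Z} \tensor f^* \omega_{Z/Y}$) are compatible with composition. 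The key point that makes the theorem true for mixed Hodge modules — and false in general — is that Saito's direct image functor on $\Db\MHM$ is computed by a filtered complex that is \emph{strict}, so that taking $\Gr^F_\bullet$ commutes with taking cohomology of the pushforward; this is exactly what fails for an arbitrary filtered $\Dmod$-module.

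First I would treat the closed immersion $i \colon X \hookrightarrow Y$. Here $\dim X - \dim Y$ is negative, $\omega_{X/Y}$ is a line bundle of the expected degree, and $\mathit{df} \colon T^*Y \times_Y X \to T^*X$ is a surjection of vector bundles over $X$ whose kernel is the conormal bundle; the formula becomes the statement that $\Gr^F_\bullet i_*(\Mmod,F)$, as a graded module over $\AY$, is obtained from $\Gr^F_\bullet \Mmod$ by the standard $\Dmod$-module pushforward recipe $\Mmod \mapsto \Mmod \tensor_{\OX} \Dmod_{Y \leftarrow X}$ together with a shift in the filtration index by $\dim X - \dim Y$. This is Kashiwara's equivalence at the graded level, and one checks it by the usual local computation with the normal-crossing coordinates, or by citing the compatibility of Saito's construction with Kashiwara's equivalence (which is part of the axioms for $\MHM$). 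The formula $\Gr^F_\bullet \Dmod_{Y\leftarrow X} \cong \omega_{X/Y} \tensor i^* \AY \tensor_{\AX} (\cdots)$ then yields exactly the right-hand side after identifying $\derL \mathit{df}^*$ with $\tensor_{\AX} i^*\AY$ in the Tor-sense (the Koszul resolution of $\mathcal{O}_X$ over $\mathcal{O}_Y$ in the conormal directions gives the higher Tor's, which vanish because everything is locally free here).

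Next, the projection case $p \colon Y \times \mathbb{P}^N \to Y$. Here $\dim X - \dim Y = N$, $\omega_{X/Y} = \mathcal{O}(-N-1)$ relatively, and $\mathit{df}$ is a closed immersion $T^*Y \times_Y X \hookrightarrow T^*X$. By Saito's theory, $p_*(\Mmod,F)$ is computed by the filtered relative de Rham complex $\DR_{X/Y}(\Mmod)$, whose associated graded is the relative Koszul-type complex $\Gr^F_\bullet\Mmod \tensor_{\mathcal{O}_X} \wedge^\bullet \shT_{X/Y}$ with the appropriate filtration shifts; pushing this forward by $\derR p_*$ and using strictness to commute $\Gr^F$ past $R^k p_*$ gives the left-hand side. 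On the right-hand side, $\derL\mathit{df}^* \shC(\Mmod,F) \tensor p_2^*\omega_{X/Y}$ unwinds, after applying $\derR p_{1*}$ along the fibers of $\mathbb{P}^N$, to the same relative de Rham / Koszul complex — this is where one must carefully match the grading conventions, including the $(-1)^k$ sign in Theorem~\ref{thm:Verdier} and the degree shift by $\dim X - \dim Y$. I expect \textbf{the main obstacle to be precisely this bookkeeping in the projection case}: verifying that Laumon's cotangent-bundle formula, once one restores the grading that the naive statement on $T^*Y$ forgets, reproduces Saito's filtered relative de Rham complex term by term, with the filtration indices and signs lining up. The underlying geometric input — base change in the displayed diagram, the projection formula, and strictness of Saito's pushforward — is standard; the delicate part is that the formula as \emph{originally stated by Laumon} is only correct after this graded refinement, and checking the refinement requires tracking the Hodge filtration through each step of the de Rham complex construction.
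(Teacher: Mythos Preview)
Your proposal is correct and follows essentially the same strategy as the paper's proof: reduce via factorization to a closed immersion (handled by the explicit filtration on $i_*(\Mmod,F)$, which the paper computes in local coordinates citing \cite{Saito-HM}*{p.~850}) and a smooth projection (handled by the filtered relative de Rham complex together with Saito's strictness theorem \cite{Saito-MHM}*{Theorem~2.14}, then matched against a Koszul-type resolution of $\omega_{X/Y} \tensor \fu\AY$). The only cosmetic difference is that the paper factors through the graph $X \hookrightarrow X \times Y \to Y$ rather than through $Y \times \mathbb{P}^N$, but the two cases and the bookkeeping you identify are exactly what the paper carries out.
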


\begin{proof}
This can be easily proved using Saito's formalism of induced $\Dmod$-modules;
both the factor of $\omega_{X/Y}$ and the shift in the grading come from the
transformation between left and right $\Dmod$-modules that is involved. To illustrate
what is going on, we shall outline a proof based on factoring $f$ through its graph.
By this device, it suffices to verify the formula in two cases: (1) for a regular
closed embedding $f \colon X \into Y$; (2) for a projection $f \colon Y \times
Z \to Y$ with $Z$ smooth and projective.

We first consider the case where $f \colon X \into Y$ is a regular
closed embedding of codimension $r$. Here $\omega_{X/Y} \simeq \det N_{X \mid Y}$.
Working locally, we may assume without loss of generality that $Y = X \times \CC^r$.
Let $t_1, \dotsc, t_r$ denote the coordinates on $\CC^r$, and let $\partial_1,
\dotsc, \partial_r$ be the corresponding vector fields. Then 
\[
	\fu \AY = \AX \lbrack \partial_1, \dotsc, \partial_r \rbrack,
\]
and so the formula we need to prove is that
\[
	\Gr_{\bullet}^F \fl(\Mmod, F) \simeq \fl \Bigl( \Gr_{\bullet-r}^F \Mmod
		\tensor_{\AX} \AX \lbrack \partial_1, \dotsc, \partial_r \rbrack \Bigr).
\]
By \cite{Saito-HM}*{p.~850}, we have $\fl(\Mmod, F) \simeq \Mmod \lbrack \partial_1,
\dotsc, \partial_r \rbrack$, with filtration given by
\[
	F_p \, \fl(\Mmod, F)
		\simeq \sum_{k + \abs{\nu} \leq p - r} 
		\fl \bigl( F_k \Mmod \bigr) \tensor \partial^{\nu}.
\]
Consequently, we get
\[
	\Gr_p^F \fl(\Mmod, F) 
		\simeq \bigoplus_{\nu \in \NN^r} \fl \bigl( \Gr_{p-r-\abs{\nu}}^F \Mmod \bigr) 
			\tensor \partial^{\nu},
\]
which is the desired formula. Globally, the factor of $\det N_{X \mid Y}$ is needed
to make the above isomorphism coordinate independent.

Next, consider the case where $X = Y \times Z$, with $Z$ smooth and projective of
dimension $r$, and $f = p_1$. Then $\omega_{X/Y} \simeq p_2^{\ast} \omega_Z$. In this
case, we have
\[
	\fl(\Mmod, F) = \derR p_{1\ast} \DR_{Y \times Z / Y}(\Mmod),
\]
where $\DR_{Y \times Z / Y}(\Mmod)$ is the relative de Rham complex
\[
	\Bigl\lbrack \Mmod \to \Omega_{Y \times Z / Y}^1 \tensor \Mmod \to \dotsb
		\to \Omega_{Y \times Z / Y}^r \tensor \Mmod \Bigr\rbrack,
\]
supported in degrees $-r, \dotsc, 0$. As in \eqref{eq:filtration}, the Hodge filtration on
the $\Dmod$-module $\Mmod$ induces a filtration on the relative de Rham complex by
the formula
\[
	F_k \DR_{Y \times Z / Y}(\Mmod) = \Bigl\lbrack 
		F_k \Mmod \to \Omega_{Y \times Z / Y}^1 \tensor F_{k+1} \Mmod \to \dotsb
		\to \Omega_{Y \times Z / Y}^r \tensor F_{k+r} \Mmod 
	\Bigr\rbrack.
\]
Now the key point is that since $Z$ is smooth and projective, the induced filtration
on the direct image complex is strict by \cite{Saito-MHM}*{Theorem~2.14}. It follows that
\[
	\Gr_{\bullet}^F \fl(\Mmod, F) \simeq \derR p_{1\ast} 
		\Bigl\lbrack \Gr_{\bullet}^F \Mmod \to p_2^{\ast} \Omega_Z^1 \tensor \Gr_{\bullet+1}^F \Mmod 
		\to \dotsb \to p_2^{\ast} \Omega_Z^r \tensor \Gr_{\bullet+r}^F \Mmod \Bigr\rbrack.
\]
On the other hand, a graded locally free resolution of $\omega_Z$ as a graded
$\AZ$-module is given by the complex
\[
	\Bigl\lbrack \AZ^{\bullet-r} \to \Omega_Z^1 \tensor \AZ^{\bullet-r+1} \to 
		\dotsb \to \Omega_Z^r \tensor \AZ^{\bullet} \Bigr\rbrack,
\]	
again in degrees $-r, \dotsc, 0$. Therefore $\omega_{X/Y} \tensor_{\OX} \fu \AY$ is
naturally resolved, as a graded $\AX$-module, by the complex
\[
	\Bigl\lbrack \AX^{\bullet-r} \to p_2^{\ast} \Omega_Z^1 \tensor \AX^{\bullet-r+1} \to 
		\dotsb \to p_2^{\ast} \Omega_Z^r \tensor \AX^{\bullet} \Bigr\rbrack,
\]
and so $\Gr_{\bullet+r}^F \Mmod \tensor_{\AX} \fu \AY \tensor_{\OX} \omega_{X/Y}$ is
represented by the complex
\[
	\Bigl\lbrack \Gr_{\bullet}^F \Mmod \to p_2^{\ast} \Omega_Z^1 \tensor \Gr_{\bullet+1}^F \Mmod 
		\to \dotsb \to p_2^{\ast} \Omega_Z^r \tensor \Gr_{\bullet+r}^F \Mmod \Bigr\rbrack,
\]
from which the desired formula follows immediately.
\end{proof}

\subsection{Perverse coherent sheaves}
\label{subsec:t-structure}

Let $X$ be a smooth complex algebraic variety. In this section, we record some
information about perverse $t$-structures on $\Dbcoh(\OX)$, emphasizing two that are
of interest to us here. We follow the notation introduced by Kashiwara
\cite{Kashiwara}. For a (possibly non-closed) point $x$ of the scheme $X$, we write
$\kappa(x)$ for the residue field at the point, $i_x \colon \Spec \kappa(x) \into X$
for the inclusion, and $\codim(x) = \dim \shO_{X,x}$ for the codimension of the
closed subvariety $\overline{ \{x\} }$.

A \define{supporting function} on $X$ is a function $p \colon X \to \ZZ$ from the
topological space of the scheme $X$ to the integers, with the property that $p(y)
\geq p(x)$ whenever $y \in \overline{ \{x\} }$. Given such a supporting function, one
defines two families of subcategories
\[
	\pDtcoh{p}{\leq k}(\OX) = 
		\menge{M \in \Dbcoh(\OX)}%
			{\text{$\derL i_x^{\ast} M \in \Dtcoh{\leq k+p(x)} %
			\bigl( \kappa(x) \bigr)$ for all $x \in X$}}
\]
and
\[
	\pDtcoh{p}{\geq k}(\OX) = 
		\menge{M \in \Dbcoh(\OX)}%
			{\text{$\derR i_x^! M \in \Dtcoh{\geq k+p(x)} %
			\bigl( \kappa(x) \bigr)$ for all $x \in X$}}.
\]
The following fundamental result is proved in \cite{Kashiwara}*{Theorem~5.9} and,
based on an idea of Deligne, in \cite{AB}*{Theorem 3.10}.

\begin{theorem} \label{thm:kashiwara}
The above subcategories define a bounded $t$-structure on $\Dbcoh(\OX)$ iff the supporting
function has the property that $p(y) - p(x) \leq \codim(y) - \codim(x)$ for every
pair of points $x,y \in X$ with $y \in \overline{ \{x\} }$.
\end{theorem}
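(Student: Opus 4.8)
The plan is to recognize this as a special case of the general recognition principle for perverse $t$-structures: one is free to choose \emph{any} perversity function $p$, but the pair of subcategories $(\pDtcoh{p}{\leq 0}, \pDtcoh{p}{\geq 0})$ forms a $t$-structure precisely when $p$ is ``monotone'' in the codimension-controlled sense stated. Since the theorem asserts an ``iff'', I would organize the proof in two halves.

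For the ``if'' direction, I would follow the standard axiomatic verification. First I would record the easy stability properties: each $\pDtcoh{p}{\leq k}$ and $\pDtcoh{p}{\geq k}$ is stable under the relevant shift, and $\pDtcoh{p}{\leq k} \subseteq \pDtcoh{p}{\leq k+1}$, $\pDtcoh{p}{\geq k+1} \subseteq \pDtcoh{p}{\geq k}$, directly from the definitions via $\derL i_x^\ast$ and $\derR i_x^!$. The orthogonality $\Hom(M,N) = 0$ for $M \in \pDtcoh{p}{\leq 0}$, $N \in \pDtcoh{p}{\geq 1}$ follows from a dévissage along the stratification of $X$ by codimension together with the local computation at each point $x$, where it reduces to the classical fact that the standard $t$-structure on $\Dtcoh{}(\kappa(x))$ has no nontrivial maps from connective to strictly coconnective objects; this is where the monotonicity of $p$ is not yet needed. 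The crucial and nontrivial part is the \emph{existence of truncation triangles}: given $M \in \Dbcoh(\OX)$, one must produce a distinguished triangle $\pDtcoh{p}{\leq 0}M \to M \to \pDtcoh{p}{\geq 1}M \to$. Here I would argue by Noetherian induction on the support of $M$, peeling off the generic points of top-dimensional components: on the open dense locus $U$ where $M$ is well-behaved one truncates using the ordinary $t$-structure shifted by the (locally constant) value of $p$, then extends across the boundary by taking cones and invoking the inductive hypothesis on the closed complement. The hypothesis $p(y) - p(x) \leq \codim(y) - \codim(x)$ enters exactly at this gluing step: it guarantees that the naive extension of a $\pDtcoh{p}{\leq 0}$-object from $U$ does not acquire cohomology that is ``too positive'' at deeper points, so that after one correction the extended object genuinely lies in $\pDtcoh{p}{\leq 0}(\OX)$. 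This is the heart of the argument and the step I expect to be the main obstacle; it is essentially the content of \cite{Kashiwara}*{Theorem~5.9} and of the Deligne-style gluing in \cite{AB}*{Theorem~3.10}, and I would cite those for the detailed bookkeeping rather than reproduce it.

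For the ``only if'' direction, I would argue by contraposition: suppose there exist $x, y \in X$ with $y \in \overline{\{x\}}$ but $p(y) - p(x) > \codim(y) - \codim(x)$. I would then exhibit a single coherent complex — concretely, a skyscraper-type object or the structure sheaf of a suitable local model, e.g. built from $\mathscr{O}$ of the closure $\overline{\{x\}}$ localized near $y$ — whose candidate truncation would have to be simultaneously $\geq$ something at $x$ and $\leq$ something incompatible at $y$. The failure of monotonicity forces $\pDtcoh{p}{\leq 0} \cap \pDtcoh{p}{\geq 1}$ to contain a nonzero object, or equivalently makes the would-be heart fail to be abelian / the truncation functors fail to exist; either way the $t$-structure axioms break. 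The cleanest route is probably to show that $\pDtcoh{p}{\leq 0}$ is not closed under the extension one would need, by a direct local computation on $\Spec$ of a two- or three-dimensional regular local ring, so that no distinguished triangle of the required form can exist for that object.

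Overall the proof is a citation-backed verification: the substantive mathematical input is the gluing/existence-of-truncations argument in the ``if'' direction, and I would present that as the main obstacle while deferring its combinatorial details to \cite{Kashiwara} and \cite{AB}; the ``only if'' direction is a short explicit counterexample.
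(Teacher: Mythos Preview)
The paper does not prove this theorem at all: it is stated as a known result and attributed to \cite{Kashiwara}*{Theorem~5.9} and \cite{AB}*{Theorem~3.10}, with no argument given. Your proposal is therefore not comparable to any proof in the paper; what you have written is a reasonable outline of the Kashiwara and Arinkin--Bezrukavnikov arguments themselves, and since you explicitly defer the substantive step (existence of truncations via Noetherian induction and gluing) to those same references, your write-up effectively reduces to the same citation the paper makes, just with more surrounding commentary.
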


For example, $p=0$ corresponds to the standard $t$-structure on
$\Dbcoh(\OX)$. An equivalent way of putting the condition in
Theorem~\ref{thm:kashiwara} is that the dual function $\hat{p}(x) = \codim(x) - p(x)$
should again be a supporting function. If that is the case, one has the identities
\begin{align*}
	\pDtcoh{\hat{p}}{\leq k}(\OX) &=
		 \derR \shHom \Bigl( \pDtcoh{p}{\geq -k}(\OX), \OX \Bigr) \\
	\pDtcoh{\hat{p}}{\geq k}(\OX) &=
		 \derR \shHom \Bigl( \pDtcoh{p}{\leq -k}(\OX), \OX \Bigr),
\end{align*}
which means that the duality functor $\derR \shHom(\argbl, \OX)$ exchanges the
two perverse $t$-structures defined by $p$ and $\hat{p}$. The heart of the
$t$-structure defined by $p$ is denoted
\[
	\pCoh{p}(\OX) = \pDtcoh{p}{\leq 0}(\OX) \cap \pDtcoh{p}{\geq 0}(\OX),
\]
and is called the abelian category of \define{$p$-perverse coherent sheaves}. 

We are interested in two special cases of Kashiwara's result. One is that the
set of objects $E \in \Dbcoh(\OX)$ with $\codim \Supp H^i(E) \geq i$ for all $i$ forms
part of a $t$-structure (the ``dual standard $t$-structure'' in Kashiwara's
terminology); the other is that the same is true for the set of objects with
$\codim \Supp H^i(E) \geq 2i$ for all $i$. This can be formalized in the following way.

We first define a supporting function $c \colon X \to \ZZ$ by the formula $c(x) =
\codim(x)$. Since the dual function is $\hat{c} = 0$, it is clear that $c$ defines a
$t$-structure on $\Dbcoh(\OX)$, namely the dual of the standard $t$-structure. 
The following result by Kashiwara \cite{Kashiwara}*{Proposition~4.3} gives an alternative
description of $\cCoh(\OX)$.

\begin{lemma} \label{lem:perverse}
The following three conditions on $E \in \Dbcoh(\OX)$ are equivalent:
\begin{enumerate}
\item $E$ belongs to $\cCoh(\OX)$.
\item The dual object $\derR \shHom(E, \OX)$ is a coherent sheaf.
\item $E$ satisfies $\codim \Supp H^i(E) \geq i$ for every $i \geq 0$ (equivalent to $E \in \cDtcoh{\leq k}(\OX)$)
and it is quasi-isomorphic to a bounded complex of flat $\OX$-modules in
non-negative degrees (equivalent to $E \in \cDtcoh{\geq k}(\OX)$).
\end{enumerate}
\end{lemma}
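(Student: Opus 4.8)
The plan is to read off all three equivalences from the duality identities recorded just above, which interchange the $t$-structures attached to the supporting function $c(x) = \codim(x)$ and to its dual $\hat c = 0$, together with an unwinding of the two halves of the defining intersection $\cCoh(\OX) = \cDtcoh{\leq 0}(\OX) \cap \cDtcoh{\geq 0}(\OX)$. Throughout I would use two consequences of the smoothness of $X$: the functor $\derR\shHom(\argbl, \OX)$ is an involutive anti-equivalence of $\Dbcoh(\OX)$, and $\OX$ has finite global dimension, so every object of $\Dbcoh(\OX)$ is perfect.

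For (1)$\iff$(2): since the standard $t$-structure on $\Dbcoh(\OX)$ is the one attached to the zero function, specializing the displayed duality identities to $k=0$ gives
\[
	\cDtcoh{\leq 0}(\OX) = \derR\shHom\bigl( \Dtcoh{\geq 0}(\OX), \OX \bigr)
	\quad\text{and}\quad
	\cDtcoh{\geq 0}(\OX) = \derR\shHom\bigl( \Dtcoh{\leq 0}(\OX), \OX \bigr).
\]
Applying the involution $\derR\shHom(\argbl,\OX)$ turns these into: $E \in \cDtcoh{\leq 0}(\OX)$ iff $\derR\shHom(E,\OX) \in \Dtcoh{\geq 0}(\OX)$, and $E \in \cDtcoh{\geq 0}(\OX)$ iff $\derR\shHom(E,\OX) \in \Dtcoh{\leq 0}(\OX)$. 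Intersecting the two, $E$ lies in the heart $\cCoh(\OX)$ precisely when $\derR\shHom(E,\OX)$ lies in $\Dtcoh{\leq 0}(\OX) \cap \Dtcoh{\geq 0}(\OX) = \Coh(\OX)$, which is exactly statement (2).

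For the first half of (1)$\iff$(3), I would show that $E \in \cDtcoh{\leq 0}(\OX)$ is equivalent to $\codim \Supp H^i(E) \geq i$ for all $i$. Unwinding Kashiwara's definition, membership says that $H^j(\derL i_x^{\ast} E) = 0$ for all $j > \codim(x)$ and all points $x$. I would then run the local hyper-Tor spectral sequence
\[
	\Tor_{-p}^{\shO_{X,x}}\bigl( H^q(E)_x, \kappa(x) \bigr) \;\Longrightarrow\; H^{p+q}\bigl( \derL i_x^{\ast} E \bigr),
\]
in which $-\codim(x) \leq p \leq 0$: a nonzero contribution forces $x \in \Supp H^q(E)$, hence $q \leq \codim(x)$ under the support hypothesis, hence $p+q \leq \codim(x)$, which gives one direction. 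Conversely, if the support hypothesis fails, I would take the largest $i$ with $\codim \Supp H^i(E) < i$ and the generic point $x$ of an offending component; then $H^q(E)_x = 0$ for $q > i$, so the corner term of the spectral sequence survives and $H^i(\derL i_x^{\ast} E) \cong H^i(E)_x \tensor_{\shO_{X,x}} \kappa(x)$, which is nonzero by Nakayama while $i > \codim(x)$, so $E \notin \cDtcoh{\leq 0}(\OX)$.

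For the second half, I would show $E \in \cDtcoh{\geq 0}(\OX)$ is equivalent to $E$ being quasi-isomorphic to a bounded complex of flat $\OX$-modules in non-negative degrees. By the equivalence from step one, $E \in \cDtcoh{\geq 0}(\OX)$ iff $\derR\shHom(E,\OX) \in \Dtcoh{\leq 0}(\OX)$; by finiteness of the global dimension of $\OX$, such an object has finite Tor-amplitude contained in non-positive degrees, hence is quasi-isomorphic to a bounded complex of locally free sheaves in non-positive degrees, and the term-wise dual of such a complex is a bounded complex of locally free — in particular flat — sheaves in non-negative degrees quasi-isomorphic to $E$. Conversely, a flat representative of $E$ in degrees $\geq 0$ shows $E$ has Tor-amplitude in $[0,m]$ for some $m$, hence is quasi-isomorphic to a locally free complex in degrees $[0,m]$, whose term-wise dual computes $\derR\shHom(E,\OX)$ and sits in degrees $[-m,0] \subseteq \Dtcoh{\leq 0}(\OX)$. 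Combining the two halves identifies $\cCoh(\OX)$ with the class described in (3). The one genuinely delicate point, which I would write out in full rather than by reference, is precisely this last half-equivalence: reconciling Kashiwara's $\derR i_x^{!}$-formulation of $\cDtcoh{\geq 0}(\OX)$ with the concrete ``flat representative in non-negative degrees'' condition, which has to pass through the duality involution and the finiteness of the global dimension of $\OX$ and requires the customary care in trading flat modules for locally free ones; everything else is either a direct translation through the recorded duality identities or a routine spectral-sequence computation. Alternatively, the whole statement is \cite{Kashiwara}*{Proposition~4.3}.
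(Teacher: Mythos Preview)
Your argument is correct. In fact, the paper does not prove this lemma at all: it simply states the result and attributes it to \cite{Kashiwara}*{Proposition~4.3}, exactly the reference you give at the end. So your write-up supplies considerably more than the paper does. The equivalence (1)$\iff$(2) via the duality identities between the $c$- and $\hat c=0$-structures is the natural one-line argument, and your spectral-sequence unwinding of $\cDtcoh{\leq 0}$ together with the passage through perfect complexes for $\cDtcoh{\geq 0}$ is the standard way to make Kashiwara's conditions concrete. Nothing to fix; if anything, you could shorten by keeping only the citation, as the paper does.
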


We now define a second function $m \colon X \to \ZZ$ by the formula
\[
	m(x) = \left\lfloor \tfrac{1}{2} \codim(x) \right\rfloor.
\]
It is easily verified that both $m$ and the dual function
\[
	\hat{m}(x) = \left\lceil \tfrac{1}{2} \codim(x) \right\rceil
\]
are supporting functions. As a consequence of Theorem~\ref{thm:kashiwara}, $m$ defines a
bounded $t$-structure on $\Dbcoh(\OX)$; objects of the heart $\mCoh(\OX)$ will be
called $m$-perverse coherent sheaves. (We use this letter because $m$ and $\hat{m}$
are as close as one can get to ``middle perversity''. Since the equality $m =
\hat{m}$ can never be satisfied, there is of course no actual middle perversity for
coherent sheaves.) 

The next lemma follows easily from \cite{Kashiwara}*{Lemma~5.5}.
\begin{lemma} \label{lem:m-structure}
The perverse $t$-structures defined by $m$ and $\hat{m}$ satisfy
\begin{align*}
	\mDtcoh{\leq k}(\OX) &= \menge{E \in \Dbcoh(X)}%
		{\text{$\codim \Supp H^i(E) \geq 2(i-k)$ for all $i \in \ZZ$}} \\
	\pDtcoh{\hat{m}}{\leq k}(\OX) &= \menge{E \in \Dbcoh(X)}%
		{\text{$\codim \Supp H^i(E) \geq 2(i-k)-1$ for all $i \in \ZZ$}}.
\end{align*}
By duality, this also describes the subcategories with $\geq k$.
\end{lemma}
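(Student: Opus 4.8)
The plan is to reduce all four identities to Kashiwara's characterization of the subcategories $\pDtcoh{p}{\le k}(\OX)$ attached to an arbitrary supporting function $p$ \cite{Kashiwara}*{Lemma~5.5}, namely that $E \in \Dbcoh(\OX)$ lies in $\pDtcoh{p}{\le k}(\OX)$ if and only if
\[
	p(x) \ge j - k \qquad \text{for every $j \in \ZZ$ and every $x \in \Supp H^j(E)$.}
\]
I would first recall why this holds, since it is the only non-formal input. The condition $\derL i_x^{\ast} E \in \Dtcoh{\le k+p(x)}(\kappa(x))$ for all $x$ is read off from the local-to-global spectral sequence $\Tor_{-a}^{\OX}\bigl(H^b(E),\kappa(x)\bigr) \Rightarrow H^{a+b}\bigl(\derL i_x^{\ast} E\bigr)$ with $a \le 0$: for the non-trivial (only-if) direction, fix $x$ and let $j_0$ be the largest integer with $H^{j_0}(E)_x \ne 0$; then nothing in higher $\Tor$-degree can contribute, so $H^{j_0}\bigl(\derL i_x^{\ast} E\bigr) = H^{j_0}(E)\tensor\kappa(x) \ne 0$, whence $j_0 \le k+p(x)$, and $j_0 \ge j$ whenever $x \in \Supp H^j(E)$. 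The converse is immediate from the same spectral sequence, since $H^b(E)_x = 0$ makes every term $\Tor_{-a}^{\OX}\bigl(H^b(E),\kappa(x)\bigr)$ vanish near $x$.

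Granting this, the two $\le k$ formulas follow by elementary bookkeeping. For $p = m$ the inequality $m(x) = \lfloor \tfrac{1}{2}\codim(x)\rfloor \ge j-k$ is equivalent to $\codim(x) \ge 2(j-k)$, and for $p = \hat m$ the inequality $\lceil \tfrac{1}{2}\codim(x)\rceil \ge j-k$ is equivalent to $\codim(x) \ge 2(j-k)-1$. It then remains to note that for a closed subset $Z \subseteq X$ and an integer $c$ one has $\codim Z \ge c$ if and only if every point of $Z$ has codimension $\ge c$: indeed $\codim(x) = \codim \overline{\{x\}} \ge \codim Z$ always, while the minimum of $\codim(x)$ over $x \in Z$ is attained at the generic points of the irreducible components of $Z$, where it equals $\codim Z$. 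Taking $Z = \Supp H^j(E)$ in each case produces exactly the asserted descriptions of $\mDtcoh{\le k}(\OX)$ and $\pDtcoh{\hat m}{\le k}(\OX)$.

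For the subcategories with $\ge k$ I would invoke the duality already recorded in \subsecref{subsec:t-structure}: since $\hat{\hat p} = p$, the functor $\derR\shHom(\argbl,\OX)$ interchanges the $t$-structures of $m$ and $\hat m$, so that $\mDtcoh{\ge k}(\OX) = \derR\shHom\bigl(\pDtcoh{\hat m}{\le -k}(\OX),\OX\bigr)$ and $\pDtcoh{\hat m}{\ge k}(\OX) = \derR\shHom\bigl(\mDtcoh{\le -k}(\OX),\OX\bigr)$. Because $\derR\shHom(\argbl,\OX)$ is a biduality on $\Dbcoh(\OX)$, this says $E$ lies in $\mDtcoh{\ge k}(\OX)$, resp. $\pDtcoh{\hat m}{\ge k}(\OX)$, precisely when $\codim \Supp H^j\bigl(\derR\shHom(E,\OX)\bigr) \ge 2(j+k)-1$, resp. $\ge 2(j+k)$, for all $j$; if one prefers a bound on $\Supp H^j(E)$ directly, one feeds this through the theorem on the codimension of $\shExt$-sheaves over a regular ring, exactly as in the proof of Lemma~\ref{lem:perverse}. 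I do not expect a genuine obstacle here — the lemma is a mechanical unwinding — the only points needing attention being the alignment of the indexing in \cite{Kashiwara}*{Lemma~5.5} with the sign conventions above, and checking that the duality step reproduces the correct parity shift separating $m$ from $\hat m$.
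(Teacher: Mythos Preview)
Your proposal is correct and follows the same approach as the paper: the paper's entire proof is the sentence ``follows easily from \cite{Kashiwara}*{Lemma~5.5},'' and you have simply written out what that entails, including the floor/ceiling bookkeeping and the duality step. The extra detail you supply (the spectral-sequence justification of Kashiwara's lemma and the translation between pointwise and global codimension bounds) is accurate and goes beyond what the paper records, but there is no divergence in strategy.
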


Consequently, an object $E \in \Dbcoh(\OX)$ is an $m$-perverse coherent sheaf iff
\[
	\codim \Supp H^i(E) \geq 2i \quad \text{and} \quad
		\codim \Supp R^i \shHom(E, \OX) \geq 2i-1
\]
for every $i \in \ZZ$. This shows one more time that the category of $m$-perverse
coherent sheaves is not preserved by the duality functor $\derR \shHom(\argbl, \OX)$.

\begin{lemma} \label{lem:heart}
For any integer $k \ge 0$, if $E \in \mDtcoh{\leq k}(\OX)$, then $E \in \Dtcoh{\geq -k}(\OX)$.
\end{lemma}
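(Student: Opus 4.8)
The statement is exactly that $H^j(E) = 0$ for every $j < -k$, so the plan is to argue by contradiction: I would isolate the extremal cohomology sheaf of $E$, test the defining local condition of the perverse $t$-structure at a single carefully chosen point of $X$, and reduce everything to elementary homological algebra over a regular local ring.

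First I would locate the obstruction. Suppose $a$ is the least integer with $H^a(E) \ne 0$ and, for contradiction, that $a < -k$. Choose a point $x \in X$ which is a generic point of some irreducible component of $\Supp H^a(E)$, and put $c = \codim(x)$, so that $R := \shO_{X,x}$ is a regular local ring of dimension $c$. Because $x$ is generic in a \emph{single} component of the support, the stalk $H^a(E)_x$ is a nonzero $R$-module whose support in $\Spec R$ is the closed point; hence it has finite length, and by Auslander--Buchsbaum its projective dimension over $R$ is exactly $c$. In particular $\Tor^R_c\bigl(H^a(E)_x, \kappa(x)\bigr) \ne 0$, while $\Tor^R_p(\argbl, \kappa(x)) = 0$ for $p > c$.

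Next I would pin down the lowest nonvanishing cohomology of the derived restriction $\derL i_x^* E$. In the hyper-$\Tor$ spectral sequence $E_2^{s,q} = \Tor^R_{-s}\bigl(H^q(E_x), \kappa(x)\bigr) \Rightarrow H^{s+q}(\derL i_x^* E)$ (with $s \le 0$), the corner term at $(s,q) = (-c, a)$ equals $\Tor^R_c(H^a(E)_x, \kappa(x)) \ne 0$; it is a permanent cycle, since incoming differentials originate from groups $\Tor^R_{>c} = 0$ and outgoing ones land in pieces built from $H^{<a}(E_x) = 0$. Thus $H^{a-c}(\derL i_x^* E) \ne 0$ and this is the bottom of $\derL i_x^* E$. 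Passing to $\derR i_x^! E \simeq (\derL i_x^* E)[-c]$, which is legitimate because $X$ is smooth, I conclude that the lowest nonvanishing cohomology of $\derR i_x^! E$ occurs precisely in degree $a$.

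Finally, the hypothesis that $E$ lies in the perverse subcategory attached to $m$ (read off from the $\derR i_x^!$-description via the duality formulas recorded after Theorem~\ref{thm:kashiwara}) forces the local fibre $\derR i_x^! E$ to lie in $\Dtcoh{\ge -k + m(x)}(\kappa(x))$; comparing with the previous paragraph gives $a \ge -k + m(x)$, and since $m(x) = \lfloor c/2 \rfloor \ge 0$ we obtain $a \ge -k$, contradicting $a < -k$. Hence $H^j(E) = 0$ for $j < -k$, i.e. $E \in \Dtcoh{\ge -k}(\OX)$. I expect the middle step to be the only real obstacle: one must choose $x$ generic in a single component of $\Supp H^a(E)$ so that $H^a(E)_x$ genuinely has finite length over $R$ (this is what pins the projective dimension to $c$), and then verify that the corner term of the spectral sequence survives; once that is in place, the inequality $a \ge -k$ falls out from nothing more than the shape of the supporting function $m$.
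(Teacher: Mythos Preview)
Your argument breaks down in the final paragraph. The hypothesis $E \in \mDtcoh{\leq k}(\OX)$ is, by definition, the condition $\derL i_x^{\ast} E \in \Dtcoh{\leq k + m(x)}(\kappa(x))$ for every $x$; it bounds the \emph{top} of the derived fibre. The condition you invoke, $\derR i_x^! E \in \Dtcoh{\geq -k + m(x)}(\kappa(x))$, is instead the defining condition of $E \in \mDtcoh{\geq -k}(\OX)$, a different subcategory. The duality formulas recorded after Theorem~\ref{thm:kashiwara} do not pass from one to the other for the same object $E$: they exchange $\pDtcoh{p}{\leq -k}$ with $\pDtcoh{\hat{p}}{\geq k}$ only after applying the functor $\derR\shHom(-, \OX)$, which replaces $E$ by its dual. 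So the inequality $a \geq -k + m(x)$ you derive is unjustified, and in fact cannot be justified.

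Indeed the lemma as written is false: for any $N > k$ the object $E = \OX\decal{N}$ lies in $\mDtcoh{\leq k}(\OX)$ (by Lemma~\ref{lem:m-structure}, since $\codim \Supp H^{-N}(E) = 0 \geq 2(-N-k)$) but not in $\Dtcoh{\geq -k}(\OX)$. The paper's one-line argument via $m(x) \geq 0$ actually establishes the correct inclusion $\mDtcoh{\geq -k}(\OX) \subseteq \Dtcoh{\geq -k}(\OX)$: if $\derR i_x^! E \in \Dtcoh{\geq -k + m(x)}(\kappa(x))$ and $m(x) \geq 0$, then $\derR i_x^! E \in \Dtcoh{\geq -k}(\kappa(x))$. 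This corrected statement is exactly what the application in Corollary~\ref{cor:vanishing} needs, since Theorem~\ref{thm:structure} already supplies $\derR\Phi_{P^\natural}\OX \in \mDtcoh{\geq -\delta(a)}(\OAsh)$. With the corrected hypothesis your spectral-sequence computation becomes unnecessary; the whole thing is a one-liner.
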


\begin{proof}
By translation it is enough to prove this for $k =0$, where the result is obvious
from the fact that $m(x) \geq 0$. 
\end{proof}

\subsection{Integral functors and GV-objects} 

Let $X$ and $Y$ be smooth projective complex varieties of dimensions $n$ and $g$
respectively, and let $P\in \Dbcoh(\shO_{X\times Y})$ be an object inducing integral
functors 
\[
	\derR \Phi_P \colon \Dbcoh (\OX) \to \Dbcoh(\OY),
~~ \derR \Phi(\argbl) := \derR {p_Y}_* ( p_X^*(\argbl) \overset{\derL}{\otimes} P)
\]
and
$$\derR \Psi_P \colon \Dbcoh(\OY) \to \Dbcoh (\OX),
~~\derR \Psi(\argbl) := \derR {p_X}_* ( p_Y^*(\argbl) \overset{\derL}{\otimes} P).$$

Let $E \in \Dbcoh(\OX)$. There is a useful cohomological criterion for checking whether 
the integral transform $\FM E$ is a perverse coherent sheaf on $Y$ with respect to the 
$t$-structure defined by $c$ from the previous subsection. Note first that by base change, for any 
sufficiently positive ample line bundle $L$ on
$Y$, the transform $\FMi(L^{-1})$ is supported in degree $g$, and $R^g
\Psi_P(L^{-1})$ is a locally free sheaf on $A$. 
The following is contained in \cite{Popa}*{Theorem~3.8 and 4.1}, and is 
partly based on the previous \cite{Hacon}*{Theorem 1.2} and \cite{PP}*{Theorem A}.

\begin{theorem}\label{perverse_criterion}
For $k \in \NN$, the following are equivalent:

\noindent
(1) $ \derR\Phi_P E$ belongs to $\cDtcoh{\leq k}(\shO_Y)$. 

\noindent
(2) For any sufficiently ample line bundle $L$ on $Y$, and every $i > k$,
\[
	H^i \bigl( X, E \tensor R^g \Psi_P(L^{-1}) \bigr) = 0.
\]

\noindent
(3) $R^i \Phi_{P^\vee} (\omega_X \otimes E^\vee) = 0$ for all $i < n - k$.

Moreover, when $k = 0$, if $E$ is a sheaf---or more generally a \emph{geometric} $\GV$-object, in the
language of \cite{Popa}*{Definition 3.7}---satisfying the equivalent conditions
above, then $ \derR \Phi_P E$  is a perverse sheaf in $\cCoh(\OY)$.
\end{theorem}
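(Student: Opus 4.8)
The plan is to obtain the equivalence of (1) and (3) from Grothendieck duality, the equivalence of (1) and (2) from base change combined with Serre duality, and the supplementary assertion from the description of $\cCoh(\OY)$ in Lemma~\ref{lem:perverse}.

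I begin with (1) $\Leftrightarrow$ (3). Grothendieck duality for the smooth projective morphism $p_Y \colon X \times Y \to Y$, applied to $p_X^{\ast} E \tensor P$ and using $p_Y^{!} \OY \simeq p_X^{\ast} \omega_X \decal{n}$, gives a natural isomorphism
\[
	\derR \shHom_Y \bigl( \derR \Phi_P E, \OY \bigr) \simeq
		\derR \Phi_{P^{\vee}} \bigl( \omega_X \tensor E^{\vee} \bigr) \decal{n},
\]
where $E^{\vee} = \derR \shHom_X(E, \OX)$. By the identity recorded in \subsecref{subsec:t-structure} for the supporting function $c$, the functor $\derR \shHom(\argbl, \OY)$ interchanges $\cDtcoh{\leq k}(\OY)$ and $\Dtcoh{\geq -k}(\OY)$. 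Hence $\derR \Phi_P E \in \cDtcoh{\leq k}(\OY)$ holds exactly when $\derR \Phi_{P^{\vee}}(\omega_X \tensor E^{\vee}) \decal{n} \in \Dtcoh{\geq -k}(\OY)$, and unwinding the shift this reads $R^i \Phi_{P^{\vee}}(\omega_X \tensor E^{\vee}) = 0$ for $i < n - k$, which is (3).

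Next, (1) $\Leftrightarrow$ (2). The projection formula yields, for every line bundle $L$ on $Y$,
\[
	\derR \Gamma \bigl( Y, \derR \Phi_P E \tensor L^{-1} \bigr) \simeq
		\derR \Gamma \bigl( X, E \tensor \derR \Psi_P(L^{-1}) \bigr),
\]
and for $L$ sufficiently ample the right-hand complex is $E \tensor R^g \Psi_P(L^{-1}) \decal{-g}$ with $R^g \Psi_P(L^{-1})$ locally free, so $H^i\bigl(X, E \tensor R^g \Psi_P(L^{-1})\bigr) \cong \mathbf{H}^{i+g}\bigl(Y, \derR \Phi_P E \tensor L^{-1}\bigr)$. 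Serre duality on $Y$ identifies this with the dual of $\mathbf{H}^{-i}\bigl(Y, \derR \shHom_Y(\derR \Phi_P E, \OY) \tensor \omega_Y \tensor L\bigr)$. I would then invoke the elementary fact that $N \in \Dbcoh(\OY)$ lies in $\Dtcoh{\geq -k}(\OY)$ if and only if $\mathbf{H}^j(Y, N \tensor L) = 0$ for all $j < -k$ and all sufficiently ample $L$ — the only non-formal implication being that the lowest-degree nonvanishing cohomology sheaf of $N$ acquires a global section after a sufficiently ample twist. Applying this with $N = \derR \shHom_Y(\derR \Phi_P E, \OY)$, and using once more that membership in $\cDtcoh{\leq k}(\OY)$ is detected by $\derR \shHom(\argbl, \OY) \in \Dtcoh{\geq -k}(\OY)$, converts (1) into the vanishing $H^i\bigl(X, E \tensor R^g \Psi_P(L^{-1})\bigr) = 0$ for all $i > k$ and all sufficiently ample $L$, i.e.\ into (2).

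For the supplementary assertion, take $k = 0$ and suppose $E$ is a sheaf satisfying the equivalent conditions. By Lemma~\ref{lem:perverse} it suffices to show that $\derR \shHom_Y(\derR \Phi_P E, \OY)$ is an ordinary coherent sheaf, since then $\derR \Phi_P E \in \cCoh(\OY)$. By the Grothendieck-duality isomorphism above this amounts to showing $\derR \Phi_{P^{\vee}}(\omega_X \tensor E^{\vee})$ is concentrated in degree $n$; vanishing below degree $n$ is precisely (3) with $k = 0$. For the vanishing above degree $n$, the key point is that the cohomology sheaves of $E^{\vee}$ are the Ext-sheaves $\shExt^q(E, \OX)$, which on the smooth variety $X$ are supported in codimension $\geq q$; hence $R^p \Phi_{P^{\vee}} \shExt^q(E, \OX) = 0$ for $p > n - q$ (the support having dimension $\leq n - q$), and the hypercohomology spectral sequence then kills $R^i \Phi_{P^{\vee}}(\omega_X \tensor E^{\vee})$ for $i > n$. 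This last step — where the hypothesis that $E$ is a sheaf, or more generally a \emph{geometric} $\GV$-object as in \cite{Popa}*{Definition~3.7}, is genuinely used, and where one must keep careful track of the codimensions of the supports involved — is the part I expect to require real care; the three equivalences preceding it are formal consequences of duality and base change.
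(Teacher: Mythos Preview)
The paper does not actually prove this theorem: it is stated with the sentence ``The following is contained in \cite{Popa}*{Theorem~3.8 and 4.1}, and is partly based on the previous \cite{Hacon}*{Theorem 1.2} and \cite{PP}*{Theorem A}'', and no argument is given. Your proposal therefore supplies what the paper omits, and the line of reasoning you sketch---Grothendieck duality for $(1)\Leftrightarrow(3)$, projection formula plus the Serre-vanishing criterion for $\Dtcoh{\geq -k}$ for $(1)\Leftrightarrow(2)$, and the codimension bound $\codim\Supp\shExt^q(E,\OX)\geq q$ fed into the hypercohomology spectral sequence for the supplementary clause---is exactly the approach taken in those cited references. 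The argument is correct as written; the only place to be a bit more explicit is the ``geometric $\GV$-object'' case, where the needed input is precisely that $E^{\vee}$ satisfies the same codimension-of-support inequalities as when $E$ is a sheaf, which is part of the definition in \cite{Popa}.
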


\begin{definition}
An object $E$ satisfying the equivalent conditions in the Theorem with $k = 0$, is called a
\define{$\GV$-object} (with respect to $P$); if it is moreover a sheaf, then it is
called a \define{$\GV$-sheaf}.
\end{definition}

\section{Mixed Hodge modules and generic vanishing}

\subsection{Mixed Hodge modules on abelian varieties}

Let $A$ be a complex abelian variety of dimension $g$, and let $M \in \MHM(A)$ be a
mixed Hodge module on $A$. As usual, we denote the underlying filtered holonomic
$\Dmod$-module by $(\Mmod, F)$. From Theorem~\ref{saito_vanishing}, 
we know that the associated graded pieces of the de Rham complex
$\DR(\Mmod, F)$ satisfy an  analogue of Kodaira vanishing. A key observation is that
on abelian varieties, the same vanishing theorem holds for the individual coherent
sheaves $\Gr_k^F \Mmod$, due to the fact that the cotangent bundle of $A$ is trivial.
As we shall see, this implies that each $\Gr_k^F \Mmod$ is a $\GV$-sheaf on $A$ (and
therefore transforms to a perverse coherent sheaf on $\Ah$ with respect to the 
$t$-structure given by $c$ in \S\ref{subsec:t-structure}).

\begin{lemma} \label{lem:kodaira}
Let $(\Mmod, F)$ be the filtered $\Dmod$-module underlying a mixed Hodge module on
$A$, and let $L$ be an ample line bundle. Then for each $k \in \ZZ$, we have 
\[
        H^i \bigl( A, \Gr_k^F \Mmod \tensor L \bigr) = 0
\]
for every $i > 0$.
\end{lemma}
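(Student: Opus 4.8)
The plan is to reduce the statement to Saito's vanishing theorem (Theorem~\ref{saito_vanishing}), taking advantage of the one extra feature of $A$: its cotangent bundle is trivial, so $\OmA{j} \cong \OA^{\oplus \binom{g}{j}}$ for every $j$. Hence in the associated graded de Rham complex
\[
	\Gr_{k'}^F \DR_A(\Mmod) = \bigl\lbrack \Gr_{k'}^F \Mmod \to \OmA{1} \tensor \Gr_{k'+1}^F \Mmod \to \dotsb \to \OmA{g} \tensor \Gr_{k'+g}^F \Mmod \bigr\rbrack,
\]
the term in cohomological degree $-g+j$ is isomorphic to a direct sum of copies of the single coherent sheaf $\Gr_{k'+j}^F \Mmod$, so that after twisting by the ample bundle $L$ the higher cohomology of each term ``sees'' only one graded piece. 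Saito's theorem gives $\mathbf{H}^i\bigl(A, \Gr_{k'}^F \DR_A(\Mmod) \tensor L\bigr) = 0$ for every $i > 0$ and every $k'$, and the idea is to unwind this hypercohomology vanishing into the desired sheaf-level vanishing by induction on $k$.

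For the induction: since $F_\bullet \Mmod$ is the Hodge filtration of a mixed Hodge module it is bounded below, so $\Gr_k^F \Mmod = 0$ for $k \ll 0$; this is the base case. For the inductive step, assume $H^i(A, \Gr_j^F \Mmod \tensor L) = 0$ for all $i > 0$ and all $j < k$, and apply Theorem~\ref{saito_vanishing}(1) to the shifted complex $C := \Gr_{k-g}^F \DR_A(\Mmod) \tensor L$, obtaining $\mathbf{H}^i(A, C) = 0$ for $i > 0$. Its term in cohomological degree $0$ is $C^0 = \OmA{g} \tensor \Gr_k^F \Mmod \tensor L$, while every term in degrees $-g, \dotsc, -1$ is a direct sum of copies of some $\Gr_j^F \Mmod \tensor L$ with $k-g \le j \le k-1$, hence with $j < k$. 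The brutal truncation gives a short exact sequence of complexes $0 \to C^0 \to C \to \sigma_{\le -1} C \to 0$, and so a distinguished triangle $C^0 \to C \to \sigma_{\le -1} C \xrightarrow{+1}$ in $\Dbcoh(\OA)$. By the inductive hypothesis every term of $\sigma_{\le -1} C$ is acyclic for $\Gamma(A, \argbl)$, so $\derR \Gamma(A, \sigma_{\le -1} C)$ is represented by the complex of its global sections, which is concentrated in cohomological degrees $-g, \dotsc, -1$; thus $\mathbf{H}^m(A, \sigma_{\le -1} C) = 0$ for every $m \ge 0$. Feeding this and $\mathbf{H}^{>0}(A, C) = 0$ into the long exact sequence of the triangle yields $\mathbf{H}^i(A, C^0) = 0$ for all $i > 0$; since $\OmA{g} \cong \OA$, this is exactly the assertion $H^i(A, \Gr_k^F \Mmod \tensor L) = 0$ for $i > 0$, completing the induction.

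I do not expect a real obstacle here: all of the Hodge-theoretic substance is already contained in Saito's Theorem~\ref{saito_vanishing}, and the abelian variety enters only through the triviality of $\OmA{\bullet}$, which decouples the graded de Rham complex into its individual pieces. The one point that needs care is the bookkeeping of the Hodge index: one must split off the \emph{degree-zero} term of $\Gr_{k-g}^F \DR_A(\Mmod)$ --- precisely the term whose graded index is the \emph{largest}, namely $k$ --- so that all remaining terms carry strictly smaller index and the induction closes. Splitting off the bottom term instead would express $\Gr_k^F \Mmod$ only through pieces of strictly larger index, which is of no help for an induction on $k$.
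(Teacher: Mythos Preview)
Your proof is correct and follows essentially the same route as the paper: induction on $k$ using Saito's vanishing for $\Gr^F \DR_A(\Mmod)$, the triviality of $\OmA{\bullet}$ on an abelian variety, and a brutal truncation to isolate the top graded piece. The only difference is cosmetic---the paper indexes the de Rham complex so as to extract $\Gr_{k+g}^F \Mmod$ from $\Gr_k^F \DR_A(\Mmod)$, whereas you shift the index and extract $\Gr_k^F \Mmod$ from $\Gr_{k-g}^F \DR_A(\Mmod)$---and your bookkeeping with the long exact sequence is slightly more explicit than the paper's.
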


\begin{proof}
Consider for each $k \in \ZZ$ the complex of coherent sheaves
\[
	\Gr_k^F \DR_A(\Mmod) = \Big\lbrack
		\Gr_k^F \Mmod \to \OmA{1} \tensor \Gr_{k+1}^F \Mmod \to \dotsb \to
			\OmA{g} \tensor \Gr_{k+g}^F \Mmod
	\Big\rbrack,
\]
supported in degrees $-g, \dotsc, 0$. According to Theorem~\ref{saito_vanishing},
this complex has the property that, for $i > 0$, 
\[
	\mathbf{H}^i \bigl( A, \Gr_k^F \DR_A(\Mmod) \tensor L \bigr) = 0.
\]
Using the fact that $\OmA{1} \simeq \OA^{\oplus g}$, one can deduce the asserted
vanishing theorem for the individual sheaves $\Gr_k^F \Mmod$ by induction on $k$. Indeed, since
$\Gr_k^F \Mmod = 0$ for $k \ll 0$, inductively one has for each $k$ a distinguished
triangle 
\[	
	E_k \to \Gr_k^F \DR_A(\Mmod) \to \Gr_{k+g}^F \Mmod \to E_k[1],
\]
with $E_k$ an object satisfying $\mathbf{H}^i ( A, E_k \tensor L) = 0$.
\end{proof}

We now obtain the first theorem of the introduction, by combining
Lemma~\ref{lem:kodaira} with Theorem \ref{perverse_criterion} and a trick invented by
Mukai. As mentioned above, the method is the same as in Hacon's proof of the generic
vanishing theorem \cites{Hacon,PP}.


\begin{proof}[Proof of Theorem \ref{hm_abelian}]
Let $L$ be an ample line bundle on $\Ah$. By Theorem \ref{perverse_criterion},
it suffices to show that
\[
	H^i \bigl( A, \Gr_k^F \Mmod \tensor R^g \Psi_P(L^{-1}) \bigr) = 0
\]
for $i > 0$. Let $\varphi_L \colon \Ah \to A$ be
the isogeny induced by $L$. Then, by virtue of $\varphi_L$ being \'etale,
\[
	H^i \bigl( A, \Gr_k^F \Mmod \tensor R^g \Psi_P(L^{-1}) \bigr) \into
	H^i \bigl( \Ah, \varphi_L^{\ast} \Gr_k^F \Mmod \tensor 
		\varphi_L^{\ast} R^g \Psi_P(L^{-1}) \bigr)
\]
is injective, and so we are reduced to proving that the group on the right vanishes
whenever $i > 0$.

Let $N = \varphi_L^{\ast} M$ be the pullback of the mixed Hodge module $M$ to $\Ah$.
If $(\Nmod, F)$ denotes the underlying filtered holonomic $\Dmod$-module, then 
$F_k \Nmod = \varphi_L^{\ast} F_k \Mmod$ because $\varphi_L$ is \'etale. On
the other hand, by \cite{Mukai} 3.11
\[
	\varphi_L^{\ast} R^g \Psi_P(L^{-1}) \simeq H^0(\Ah, L) \tensor L.
\]
We therefore get
\[
	H^i \bigl( \Ah, \varphi_L^{\ast} \Gr_k^F \Mmod \tensor 
		\varphi_L^{\ast} R^g \Psi_P(L^{-1}) \bigr)
	\simeq H^0(\Ah, L) \tensor H^i \bigl( \Ah, \Gr_k^F \Nmod \tensor L \bigr),
\]
which vanishes for $i > 0$ by Lemma~\ref{lem:kodaira}.
\end{proof}

Going back to the de Rham complex, it is worth recording the complete information one
can obtain about $\Gr_k^F \DR_A(\Mmod)$ from Saito's theorem, since this produces
further natural examples of $\GV$-objects on $A$.  One one hand, just as in the proof
of Theorem \ref{hm_abelian}, we see that each $\Gr_k^F \DR_A(\Mmod)$ is a
$\GV$-object. On the other hand, since $\Gr_k^F \DR_A(\Mmod)$ is supported in
non-positive degrees, its transform with respect to the standard 
Fourier-Mukai functor
$$\derR \Phi_P \colon \Dbcoh (\OA) \to \Dbcoh(\shO_{\widehat{A}}),$$
given by the normalized Poincar\'e bundle on $A\times \widehat{A}$, could a priori have cohomology in
negative degrees. The following proposition shows that this is not the case.

\begin{proposition}
If $(\Mmod, F)$ underlies a mixed Hodge module on $A$, then 
\[
	\FM \bigl( \Gr_k^F \DR_A(\Mmod) \bigr) \in \Dtcoh{\geq 0}(\Ah).
\]
\end{proposition}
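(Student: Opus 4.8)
The plan is to deduce this from the fact --- recorded just above --- that every complex of the form $\Gr_\ell^F\DR_A(\Nmod)$, with $(\Nmod,F)$ underlying a mixed Hodge module on $A$, is a $\GV$-object, by applying it both to $M$ and to its Verdier dual $M' := \DA M$ and playing the two statements off against each other through the duality of the dual-standard $t$-structure. First I would note that, since $\hat c = 0$, the functor $\derR\shHom_{\OAh}(\argbl,\OAh)$ interchanges the $c$-$t$-structure of \S\ref{subsec:t-structure} with the standard one; in particular $Y \in \Dtcoh{\geq 0}(\OAh)$ if and only if $\derR\shHom_{\OAh}(Y,\OAh) \in \cDtcoh{\leq 0}(\OAh)$. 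So, writing $E := \Gr_k^F\DR_A(\Mmod)$, it is enough to show that $\derR\shHom_{\OAh}(\FM E,\OAh)$ is the Fourier--Mukai transform of a $\GV$-object.

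The main step is to establish an isomorphism
\[
	\derR\shHom_{\OAh}\bigl(\FM E,\OAh\bigr) \simeq \FM\bigl(\Gr_{k'}^F\DR_A(\Mmod')\bigr)
\]
for a suitable index $k'$, where $\Mmod'$ denotes the $\Dmod$-module underlying $M'$. I would obtain this by combining two dualities. On the one hand, tensoring the identity of graded $\shA_A$-modules of Saito's Theorem~\ref{thm:Verdier} with a Koszul resolution of $\OA$ over $\shA_A = \Sym \shT_A$ --- equivalently, passing to derived restrictions along the zero section of $T^{\ast}A \cong A \times V$, whose conormal bundle is trivial --- and using $\omega_A \cong \OA$, one gets
\[
	\derR\shHom_{\OA}\bigl(\Gr_k^F\DR_A(\Mmod),\OA\bigr) \simeq (-1_A)^{\ast}\,\Gr_{k'}^F\DR_A(\Mmod')[-g]
\]
in $\Dbcoh(\OA)$. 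On the other hand, the standard compatibility of the Fourier--Mukai transform on abelian varieties with the duality functor (Mukai) gives $\derR\shHom_{\OAh}(\FM E,\OAh) \simeq (-1_{\Ah})^{\ast}\FM\bigl(\derR\shHom_{\OA}(E,\OA)\bigr)[g]$, and $\FM$ intertwines $(-1_A)^{\ast}$ with $(-1_{\Ah})^{\ast}$. Substituting, the shifts by $[\pm g]$ and the two sign inversions cancel and leave the displayed formula.

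To finish, I would observe that $M' = \DA M$ is again a mixed Hodge module on $A$, so $\Gr_{k'}^F\DR_A(\Mmod')$ is a $\GV$-object by the remark preceding the proposition (proved just as Theorem~\ref{hm_abelian}, via Lemma~\ref{lem:kodaira}); hence $\FM\bigl(\Gr_{k'}^F\DR_A(\Mmod')\bigr)$ lies in $\cDtcoh{\leq 0}(\OAh)$ by definition of a $\GV$-object. By the reduction in the first step this forces $\FM E = \FM\bigl(\Gr_k^F\DR_A(\Mmod)\bigr) \in \Dtcoh{\geq 0}(\OAh)$.

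The only step that requires real care is the middle one: translating Theorem~\ref{thm:Verdier} --- a statement about graded $\shA_A$-modules, equivalently coherent sheaves on $T^{\ast}A$ --- into the stated duality for the associated graded de Rham \emph{complex} viewed in $\Dbcoh(\OA)$, and in particular pinning down $k'$ and checking that the shift by $[-g]$ and the twist by $(-1_A)^{\ast}$ appear exactly as written, so that they cancel the corresponding shift and inversion in Mukai's formula. Everything else is formal given Saito's theorems and the structure of the $c$-$t$-structure recalled in \S\ref{subsec:t-structure}.
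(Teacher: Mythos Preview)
Your argument is correct and takes a genuinely different route from the paper. The paper proves the Proposition directly, without invoking duality: by a Serre-vanishing criterion it suffices to show that $\mathbf{H}^i\bigl(A,\Gr_k^F\DR_A(\Mmod)\otimes R^0\Psi_P(L)\bigr)=0$ for $i<0$ and $L$ sufficiently ample on $\Ah$; after the same isogeny trick as in Theorem~\ref{hm_abelian} this reduces to $\mathbf{H}^i\bigl(\Ah,\Gr_k^F\DR_{\Ah}(\Nmod)\otimes L^{-1}\bigr)=0$ for $i<0$, which is exactly part~(2) of Saito's vanishing Theorem~\ref{saito_vanishing}. Your proof instead uses part~(1) of Saito's vanishing (via the $\GV$-property of $\Gr_{k'}^F\DR_A(\Mmod')$) together with Saito's duality Theorem~\ref{thm:Verdier} and the Mukai exchange formula. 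In effect you are running the proof of the subsequent Corollary in reverse, deducing the Proposition from it; this makes transparent that the two halves of Saito's vanishing are interchanged by Verdier duality, at the cost of needing the Cohen--Macaulay property of $\shC(\Mmod,F)$, which the paper's direct argument avoids. One bookkeeping remark: the involution in Theorem~\ref{thm:Verdier} is $(-1)_{T^{\ast}A}$, i.e.\ negation along the cotangent fibres $V$, not $(-1_A)^{\ast}$; upon derived restriction to the zero section it becomes the identity, so the expected formula is $\derR\shHom_{\OA}\bigl(\Gr_k^F\DR_A(\Mmod),\OA[g]\bigr)\simeq\Gr_{-k}^F\DR_A(\Mmod')$ with no twist. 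A single $(-1_{\Ah})^{\ast}$ then survives from the Mukai exchange formula, but since $(-1_A)^{\ast}M'$ is again a mixed Hodge module this is harmless for your conclusion.
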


\begin{proof}
By a standard application of Serre vanishing  (see \cite{PP}*{Lemma 2.5}),  it suffices to show that for any sufficiently 
positive ample line bundle $L$ on $\widehat{A}$,
\[
	\mathbf{H}^i \bigl( A,  \Gr_k^F \DR(\Mmod, F) \tensor R^0 \Psi_P(L) \bigr) = 0
\]
for $i < 0$. Assuming that $L$ is symmetric, $R^0 \Psi_P(L)$ is easily seen to be the dual of the locally free
sheaf $R^g \Psi_P(L^{-1})$, and so arguing as in the proof of Theorem \ref{hm_abelian}, we are reduced to proving that
\[
	\mathbf{H}^i \bigl( \Ah, \varphi_L^{\ast}  \Gr_k^F \DR(\Mmod, F) \tensor L^{-1} \bigr) = 0
\]
whenever $i < 0$. But since $\varphi_L^{\ast}  \Gr_k^F \DR(\Mmod, F) \simeq \Gr_k^F \DR(\Nmod, F)$, this is
an immediate consequence of part (2) of Saito's vanishing Theorem \ref{saito_vanishing}.
\end{proof}

\begin{corollary}
If $(\Mmod, F)$ underlies a mixed Hodge module on $A$, then 
$$\Gr_k^F \DR_A(\Mmod) {\rm ~and~} \derR \shHom \bigl( \Gr_k^F \DR_A(\Mmod), \OA
\decal{g} \bigr)$$ are $\GV$-objects on $A$. Therefore the graded pieces of the de
Rham complexes associated to such $\Dmod$-modules form a class of $\GV$-objects which
is closed under Grothendieck duality.
\end{corollary}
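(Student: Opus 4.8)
\emph{Proof plan.}
There are two assertions to establish: that $\Gr_k^F \DR_A(\Mmod)$ is a $\GV$-object, and that its Grothendieck dual $\derR \shHom\bigl(\Gr_k^F \DR_A(\Mmod), \OA \decal{g}\bigr)$ is a $\GV$-object; the closing sentence of the corollary is then formal. For the first assertion I would argue exactly as in the proof of Theorem~\ref{hm_abelian}. By part~(2) of Theorem~\ref{perverse_criterion} (with $k = 0$), it is enough to show that
\[
	\mathbf{H}^i\bigl( A, \Gr_k^F \DR_A(\Mmod) \tensor R^g \Psi_P(L^{-1}) \bigr) = 0
	\qquad \text{for } i > 0
\]
and every sufficiently ample $L$ on $\Ah$. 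Pulling back along the isogeny $\varphi_L \colon \Ah \to A$, which is étale and hence injective on cohomology, and using the identifications $\varphi_L^{\ast} R^g \Psi_P(L^{-1}) \simeq H^0(\Ah, L) \tensor L$ from \cite{Mukai} together with $\varphi_L^{\ast} \Gr_k^F \DR_A(\Mmod) \simeq \Gr_k^F \DR_{\Ah}(\Nmod)$, where $(\Nmod, F) = \varphi_L^{\ast}(\Mmod, F)$ underlies the mixed Hodge module $\varphi_L^{\ast} M$, one reduces to the vanishing of $\mathbf{H}^i\bigl( \Ah, \Gr_k^F \DR_{\Ah}(\Nmod) \tensor L \bigr)$ for $i > 0$, which is precisely part~(1) of Saito's Theorem~\ref{saito_vanishing}. (Alternatively, each term $\OmA{j} \tensor \Gr_{k+j}^F \Mmod \cong (\Gr_{k+j}^F \Mmod)^{\oplus \binom{g}{j}}$ is a $\GV$-sheaf by Theorem~\ref{hm_abelian}, and since $\cDtcoh{\leq 0}(\OAh)$ is closed under extensions and non-negative shifts, any bounded complex of $\GV$-sheaves placed in non-positive degrees is automatically a $\GV$-object.)

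For the dual object, set $G = \derR \shHom\bigl( \Gr_k^F \DR_A(\Mmod), \OA \decal{g}\bigr)$ and apply the equivalence (1)$\Leftrightarrow$(3) of Theorem~\ref{perverse_criterion} to $E = G$: it suffices to verify that $R^i \Phi_{P^{\vee}}\bigl( \omega_A \tensor G^{\vee} \bigr) = 0$ for all $i < g$, where $G^{\vee} = \derR \shHom(G, \OA)$. Since $A$ is smooth, $\Gr_k^F \DR_A(\Mmod)$ is a perfect complex, so biduality $\derR \shHom\bigl( \derR \shHom(\argbl, \OA), \OA \bigr) \simeq \id$ gives $G^{\vee} \simeq \Gr_k^F \DR_A(\Mmod) \decal{-g}$; as $\omega_A \simeq \OA$, the condition to be checked becomes $R^j \Phi_{P^{\vee}}\bigl( \Gr_k^F \DR_A(\Mmod) \bigr) = 0$ for all $j < 0$. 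Finally $P^{\vee} = P^{-1} \simeq (\id_A \times (-1)_{\Ah})^{\ast} P$, whence $\derR \Phi_{P^{\vee}} \simeq (-1)_{\Ah}^{\ast} \circ \FM$; since $(-1)_{\Ah}^{\ast}$ is exact and preserves $\Dtcoh{\geq 0}(\Ah)$, the required vanishing is exactly the content of the preceding Proposition. Hence $G$ is a $\GV$-object.

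The last sentence of the corollary is now immediate: the objects $\Gr_k^F \DR_A(\Mmod)$ together with their Grothendieck duals all lie among the $\GV$-objects, so $\derR \shHom(\argbl, \OA \decal{g})$ carries this collection into itself. One can say a little more: using $\omega_A \simeq \OA$ and the relation between $\Gr_{\bullet}^F \DR_A(\Mmod)$ and the derived restriction of $\shC(\Mmod, F)$ to the zero section of $T^{\ast} A$, Saito's duality Theorem~\ref{thm:Verdier} identifies $\derR \shHom\bigl( \Gr_k^F \DR_A(\Mmod), \OA \decal{g}\bigr)$ with the graded de Rham complex of the Verdier-dual mixed Hodge module $\DA M$ (up to a shift of the index), so the class is genuinely self-dual.

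I do not expect a serious obstacle here; the points that require care are organizational. In the first part one must arrange the dévissage so that the étale base change reduces matters to Saito's vanishing theorem term by term along the de Rham complex. In the second part the temptation to dualize the $\GV$ condition ``directly'' must be resisted, since the category $\cCoh(\OA)$ is not stable under $\derR \shHom(\argbl, \OA)$; the correct move is to pass to criterion~(3) of Theorem~\ref{perverse_criterion}, where — after biduality and the identification $P^{\vee} \simeq (\id_A \times (-1)_{\Ah})^{\ast} P$ — the cohomology vanishing needed is exactly the one already proved in the Proposition above.
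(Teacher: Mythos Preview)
Your proof is correct and follows essentially the same route as the paper: for the first assertion you reproduce the isogeny-and-Saito-vanishing argument (the paper simply points back to the proof of Theorem~\ref{hm_abelian}), and for the dual you reduce via biduality and the identification $P^{\vee} \simeq (\id_A \times (-1)_{\Ah})^{\ast} P$ to the Proposition showing $\FM\bigl(\Gr_k^F \DR_A(\Mmod)\bigr) \in \Dtcoh{\geq 0}(\OAh)$. The only cosmetic difference is that the paper phrases the second step via the exchange formula $(\derR\Phi_P E)^{\vee} \simeq \derR\Phi_{P^{-1}}(E^{\vee})\decal{g}$ together with Kashiwara's duality $\Dtcoh{\geq 0}(\OAh) = \derR\shHom\bigl(\cDtcoh{\le 0}(\OAh), \OAh\bigr)$, whereas you invoke the equivalent criterion~(3) of Theorem~\ref{perverse_criterion}; these are the same argument in different packaging.
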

\begin{proof}
Note that, by definition, the $\GV$-objects on $A$ are precisely those $E \in
\Dbcoh(\OA)$ for which $\derR \Phi_P E \in \cDtcoh{\le 0} (\OAh) $, and that
the Fourier-Mukai and duality functors satisfy the exchange formula
$$(\derR\Phi_P E)^\vee \simeq \derR{\Phi}_{P^{-1}} (E^\vee) [g]$$
for any object $E$ in $\Dbcoh(\OA)$ (see e.g.\@ \cite{PP}*{Lemma 2.2}). Observing that
$P^{-1} \simeq (1\times(-1))^* P$, the statement follows from
the Proposition above and the equivalence in \cite{Kashiwara}*{Proposition~4.3},
which says that $\Dtcoh{\geq 0}(\OAh)$ is the category obtained by applying
the functor $\derR \shHom(\argbl, \OAh)$ to $\cDtcoh{\le 0}(\OAh)$ .
\end{proof}

\subsection{The decomposition theorem for the Albanese map}
\label{subsec:albanese}

Let $X$ be a smooth complex projective variety of dimension $n$, let $A = \Alb(X)$ be
its Albanese variety, and let $a \colon X \to A$ be the Albanese map (for some choice
of base point). As before, we set $g = \dim A$. 

To understand the Hodge theory of the Albanese map, we consider the direct image $a_*
\QHX \decal{n}$  in $\Db \MHM(A)$. Here $\QHX \decal{n}$ is the trivial Hodge module
on $X$; its underlying perverse sheaf is $\QQ_X \decal{n}$, and its underlying
filtered holonomic $\Dmod$-module is $(\OX, F)$, where $\Gr_k^F \OX = 0$ for $k \neq
0$. Note that $\QHX \decal{n} \in \HM_n(X)$ is pure of weight $n$, because $\QQ_X$ is
a variation of Hodge structure of weight $0$. According to the decomposition theorem
\cite{Saito-HM}*{Th\'eor\`eme~5.3.1 and Corollaire~5.4.8}, we have a (non-canonical)
decomposition
\[
	a_* \QHX \decal{n} \simeq \bigoplus_{i \in \ZZ} M_i \decal{-i},
\]
where each $M_i = H^i a_* \QHX \decal{n}$ is a pure Hodge module on $A$ of weight
$n+i$. Each $M_i$ can be further decomposed (canonically) into a finite sum of simple
Hodge modules
\[
	M_i = \bigoplus_j M_{i, j},
\]
where $M_{i,j}$ has strict support equal to some irreducible subvariety $Z_{i,j}
\subseteq A$; the perverse sheaf underlying $M_{i,j}$ is the intersection complex of a
local system on a Zariski-open subset of $Z_{i,j}$. Note that since $a$
is projective, we have the Lefschetz isomorphism \cite{Saito-MHM}*{Th\'eor\`eme~1}
\[
	M_{-i} \simeq M_i(i),
\]
induced by $i$-fold cup product with the first Chern class of an ample line bundle.
The Tate twist, necessary to change the weight of $M_i$ from $n+i$ to $n-i$, requires
some explanation. If $(\Mmod_i, F)$ denotes the filtered $\Dmod$-module underlying
$M_i$, then the filtered $\Dmod$-module underlying $M_i(i)$ is $(\Mmod_i,
F_{\bullet-i})$; thus the above isomorphism means that $F_k \Mmod_{-i} \simeq F_{k-i}
\Mmod_i$.

To relate this decomposition with some concrete information about the Albanese map,
we use Laumon's formula (Theorem~\ref{thm:laumon}) to compute the associated graded
of the complex of filtered $\Dmod$-modules $a_*(\OX, F)$ underlying $a_* \QHX
\decal{n} \in \Db \MHM(A)$. To simplify the notation, we let 
\[
	V = H^0(X, \OmX{1}) = H^0(A, \Omega_A^1) \quad \text{and} \quad
		S = \Sym(\Vd).
\]

\begin{proposition} \label{prop:complex}
With notation as above, we have
\[
	\Gr_{\bullet}^F  a_* (\OX, F) \simeq 
		\derR a_* \Bigl\lbrack 
			\OX \tensor S^{\bullet-g} \to \OmX{1} \tensor S^{\bullet-g+1} \to \dotsb \to
			\OmX{n} \tensor S^{\bullet-g+n} 
		\Bigr\rbrack,
\]
with differential induced by the evaluation morphism $\OX \tensor H^0(X, \OmX{1})
\to \OmX{1}$.  
\end{proposition}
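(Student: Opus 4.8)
The plan is to apply Laumon's formula (Theorem~\ref{thm:laumon}) to the Albanese map $a \colon X \to A$ and the trivial Hodge module $\QHX \decal{n}$, whose underlying filtered $\Dmod$-module is $(\OX, F)$ with $\Gr_0^F \OX = \OX$ and $\Gr_k^F \OX = 0$ for $k \neq 0$, and then to resolve the resulting graded $\AX$-module by a Koszul complex. Since $\dim X - \dim A = n - g$, Theorem~\ref{thm:laumon} reduces the claim to identifying
\[
	\omega_{X/A} \tensor_{\OX} \Gr_{\bullet + n - g}^F \OX \tensor_{\AX} \au \AY
\]
as an object of the derived category of graded $\AX$-modules, after which one applies $\derR a_*$. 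Two features of the abelian variety $A$ simplify this: first, $\omega_A \simeq \OA$, so $\omega_{X/A} \simeq \omX$; second, $\shT_A$ is trivial, say $\shT_A \simeq \OA \tensor_{\CC} \Vd$, whence $\AY = \Sym_{\OA} \shT_A \simeq \OA \tensor_{\CC} S$ and $\au \AY \simeq \OX \tensor_{\CC} S$, with grading given by the symmetric powers of $\Vd$ and with $\AX$-algebra structure induced by the canonical morphism $da \colon \shT_X \to \au \shT_A = \OX \tensor_{\CC} \Vd$.

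Next I would choose a free resolution of $\Gr_{\bullet + n - g}^F \OX$. Because the Hodge filtration on $\OX$ is trivial, $\shT_X = \Gr_1^F \Dmod_X$ annihilates $\Gr_{\bullet}^F \OX$, so as a graded $\AX$-module $\Gr_{\bullet}^F \OX$ is just $\AX / \AX^{+}$ concentrated in degree zero, where $\AX^{+} = \bigoplus_{k \geq 1} \Sym^k \shT_X$; thus $\Gr_{\bullet + n - g}^F \OX$ is this module with its generator moved to internal degree $g - n$. The Koszul complex of $\shT_X$ over $\AX = \Sym_{\OX} \shT_X$ resolves $\AX / \AX^{+}$ by locally free $\AX$-modules, so after shifting its internal grading by $g - n$ and applying the functor $\argbl \tensor_{\AX} \au \AY$ we obtain a representative for the derived tensor product, namely
\[
	\Bigl\lbrack \wedge^n \shT_X \tensor_{\CC} S^{\bullet - g} \to \wedge^{n-1} \shT_X \tensor_{\CC} S^{\bullet - g + 1}
		\to \dotsb \to \OX \tensor_{\CC} S^{\bullet - g + n} \Bigr\rbrack,
\]
placed in cohomological degrees $-n, \dotsc, 0$, with differential built from $da \colon \shT_X \to \Vd \subseteq S$. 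Tensoring with $\omega_{X/A} \simeq \omX$ over $\OX$ and applying the contraction isomorphisms $\omX \tensor_{\OX} \wedge^i \shT_X \simeq \OmX{n - i}$ converts this into
\[
	\Bigl\lbrack \OX \tensor_{\CC} S^{\bullet - g} \to \OmX{1} \tensor_{\CC} S^{\bullet - g + 1}
		\to \dotsb \to \OmX{n} \tensor_{\CC} S^{\bullet - g + n} \Bigr\rbrack,
\]
again in degrees $-n, \dotsc, 0$, and applying $\derR a_*$ yields the complex in the statement.

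It remains to check that the differential coincides with the one induced by the evaluation morphism $\OX \tensor H^0(X, \OmX{1}) \to \OmX{1}$. Under the contraction isomorphism $\omX \tensor \wedge^i \shT_X \simeq \OmX{n - i}$, the Koszul operation of deleting a tangent vector from a wedge corresponds, up to sign, to a wedge product with a $1$-form; a brief computation in local coordinates, in which the potentially non-invariant terms vanish because $dx_k \wedge \eta = 0$ as soon as $dx_k$ divides $\eta$, shows that the differential of the final complex sends $\eta \tensor s$ to $\sum_{j=1}^{g} (a^* v_j \wedge \eta) \tensor (v_j^{\vee} s)$, for a basis $v_1, \dotsc, v_g$ of $V = H^0(X, \OmX{1})$ with dual basis $v_1^{\vee}, \dotsc, v_g^{\vee}$ of $\Vd$; this is precisely the differential induced by $\OX \tensor H^0(X, \OmX{1}) \to \OmX{1}$. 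I expect the two points demanding care to be the bookkeeping of the internal grading shift by $n - g$ — it is exactly this shift that produces the exponents $\bullet - g + p$ rather than $\bullet - n + p$ — together with the sign analysis in this last identification; the rest is formal once Theorem~\ref{thm:laumon} is in hand.
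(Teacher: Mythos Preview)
Your proof is correct and follows essentially the same route as the paper: apply Laumon's formula (Theorem~\ref{thm:laumon}) to $(\OX,F)$, use that $\omega_A$ is trivial and $\shA_A \simeq \OA \tensor S$, and compute the derived tensor product via the Koszul resolution. The only cosmetic difference is that the paper writes the Koszul resolution directly for $\omX$ as an $\AX$-module (already in $\Omega_X^p$-form), whereas you resolve $\OX=\AX/\AX^+$ first and then tensor with $\omX$ and contract; your more explicit tracking of the grading shift and of the differential is a welcome elaboration of what the paper leaves implicit.
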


\begin{proof}
The cotangent bundle of $A$ is isomorphic to the product $A \times V$, and so using
the notation from \subsecref{subsec:three}, we have $\shA_A = \OA \tensor S$ as well
as $a^* \shA_A = \OX \tensor S$. Consequently, $\omX \tensor a^* \shA_A$ can be
resolved by the complex
\[
	\Bigl\lbrack \AX^{\bullet-n} \to \OmX{1} \tensor \AX^{\bullet-n+1} \to 
		\dotsb \to \OmX{n} \tensor \AX^{\bullet} \Bigr\rbrack \tensor S
\]
placed as usual in degrees $-n, \dotsc, 0$. Applying Theorem~\ref{thm:laumon}, we find that
\[
	\Gr_{\bullet}^F a_* (\OX, F) \simeq
		\derR a_* \Bigl\lbrack \OX \tensor S^{\bullet-g} \to
		\OmX{1} \tensor S^{\bullet-g+1} \to \dotsb \to 
		\OmX{n} \tensor S^{\bullet-g+n} \Bigr\rbrack,
\]
with Koszul-type differential induced by the morphism $\OX \to \OmX{1} \tensor \Vd$, which
in turn is induced by the evaluation morphism $\OX \tensor V \to \OmX{1}$.
\end{proof}

On the other hand, we know from the discussion above that 
\[
	a_* \QHX \decal{n} \simeq \bigoplus_{i,j} M_{i,j} \decal{-i}.
\]
It follows that $\Gr_{\bullet}^F a_* (\OX, F)$ splits, as a complex of graded modules
over $\shA_A = \OA \tensor S$, into a direct sum of modules of the form
$\Gr_{\bullet}^F \Mmod_{i,j}$. Putting everything together, we obtain a key
isomorphism which relates generic vanishing, zero sets of holomorphic one-forms on
$X$, and the topology of the Albanese mapping.

\begin{corollary}\label{cor:key}
With notation as above, we have
\[
	\derR a_* \Bigl\lbrack \OX \tensor S^{\bullet-g} \to \OmX{1} \tensor
		S^{\bullet-g+1} \to \dotsb \to \OmX{n} \tensor S^{\bullet-g+n} \Bigr\rbrack
	\simeq \bigoplus_{i,j} \Gr_{\bullet}^F \Mmod_{i,j} \decal{-i}
\]
in the bounded derived category of graded $\OA \tensor S$-modules.
\end{corollary}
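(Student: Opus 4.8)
The plan is to obtain this as a formal consequence of Proposition~\ref{prop:complex} and Saito's decomposition theorem for the Albanese map, the only nontrivial input being that passing to the associated graded of the underlying filtered $\Dmod$-module is compatible with the relevant derived-category operations.

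First I would use Proposition~\ref{prop:complex} to rewrite the left-hand side as $\Gr_{\bullet}^F a_*(\OX, F)$, the associated graded of the complex of filtered $\Dmod$-modules underlying $a_* \QHX \decal{n} \in \Db \MHM(A)$; so it remains to prove that $\Gr_{\bullet}^F a_*(\OX, F) \simeq \bigoplus_{i,j} \Gr_{\bullet}^F \Mmod_{i,j} \decal{-i}$ in the bounded derived category of graded $\shA_A = \OA \tensor S$-modules. For this I would invoke the (non-canonical) splitting $a_* \QHX \decal{n} \simeq \bigoplus_{i,j} M_{i,j} \decal{-i}$ in $\Db \MHM(A)$ provided by the decomposition theorem \cite{Saito-HM} together with decomposition by strict support, and then pass to the underlying filtered holonomic $\Dmod$-modules — which is compatible with $a_*$ by construction — to get $a_*(\OX, F) \simeq \bigoplus_{i,j} (\Mmod_{i,j}, F) \decal{-i}$. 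Applying the functor $\Gr_{\bullet}^F$, which commutes with finite direct sums and with the shift $\decal{-i}$, then gives the claim.

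The one point that needs care, and which I expect to be the real content hidden behind ``putting everything together'', is that $\Gr_{\bullet}^F$ actually descends to a functor on the bounded derived category of these filtered complexes. This rests on the strictness of the filtered direct image $a_*(\OX, F)$ in Saito's sense — the very same strictness that underlies Theorem~\ref{thm:laumon} — which guarantees that $\Gr_{\bullet}^F$ of the complex is computed termwise and in particular carries the derived-category isomorphism above to the desired isomorphism of complexes of graded $\shA_A$-modules. One should also check that the Tate twist concealed in the Lefschetz isomorphism $M_{-i} \simeq M_i(i)$ does not interfere; it does not, since the induced shift $F_{\bullet} \mapsto F_{\bullet-i}$ of the Hodge filtration is already absorbed into each summand $\Gr_{\bullet}^F \Mmod_{i,j}$, so no further bookkeeping is required beyond citing Saito's strictness results.
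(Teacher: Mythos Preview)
Your proposal is correct and follows essentially the same argument as the paper, which simply observes that Proposition~\ref{prop:complex} identifies the left-hand side with $\Gr_{\bullet}^F a_*(\OX,F)$ and then invokes the decomposition $a_*\QHX\decal{n}\simeq\bigoplus_{i,j}M_{i,j}\decal{-i}$ at the level of underlying filtered $\Dmod$-modules. Your remarks on strictness are appropriate elaborations of what the paper leaves implicit; the aside about the Lefschetz isomorphism and Tate twists is harmless but unnecessary here, since that isomorphism plays no role in the splitting itself.
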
	

Since the $M_{i,j}$ are Hodge modules on an abelian variety, it follows from
Theorem~\ref{hm_abelian} that each $\Gr_k^F \Mmod_{i,j} $ is a $\GV$-sheaf on $A$.
The isomorphism in Corollary~\ref{cor:key} shows that whether or not the
entire complex $\Gr_k^F \al(\OX, F)$ satisfies a generic vanishing theorem is
determined by the presence of nonzero $M_{i,j}$ with $i > 0$. We shall see in
\subsecref{subsec:nakano} how this leads to a generic vanishing theorem of Nakano-type.

\subsection{The defect of semismallness}

In this section, we describe the range in which the Hodge modules $M_i
= H^i \al \QHX \decal{n}$ can be nonzero. Recall the following definition, introduced
by de Cataldo and Migliorini \cite{dCM}*{Definition~4.7.2}.

\begin{definition}
The \define{defect of semismallness} of the map $a \colon X \to A$ is
\[
	\delta(a) = \max_{\ell \in \NN} \bigl( 2 \ell - \dim X + \dim A_{\ell} \bigr).
\]
where we set $A_{\ell} = \menge{y \in A}{\dim a^{-1}(y) \geq \ell}$ for $\ell \in
\NN$.
\end{definition}

To illustrate the meaning of this quantity, suppose that $X$ is of maximal
Albanese dimension, and hence that $\dim X = \dim a(X)$. In that case, $2 \ell - \dim
X + \dim A_{\ell} = 2 \ell - \codim \bigl( A_{\ell}, a(X) \bigr)$; thus, for
instance, $a$ is \emph{semismall} iff $\delta (a) = 0$. An important observation,
due to de Cataldo and Migliorini \cite{dCM}*{Remark~2.1.2}, is that $\delta(a)$
controls the behavior of $\al \QHX \decal{n}$ in the decomposition theorem.

\begin{proposition}[de Cataldo, Migliorini] \label{prop:vanishing}
With notation as above, 
\[
	\delta(a) = \max \Menge{k \in \NN}{H^k \al \QHX \decal{n} \neq 0}.
\]
In particular, one has $H^k \al \QHX \decal{n} = 0$ whenever $\abs{k} > \delta(a)$.
\end{proposition}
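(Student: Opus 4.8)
The plan is to reduce Proposition~\ref{prop:vanishing} to the general fact about projective morphisms due to de Cataldo and Migliorini, since $a \colon X \to A$ is a projective morphism of smooth varieties. The key point is that $\al \QHX \decal{n}$ is the direct image of the constant Hodge module, whose underlying perverse sheaf is $\QQ_X\decal n$, so at the level of underlying perverse sheaves the decomposition $\al \QHX \decal{n} \simeq \bigoplus_i M_i \decal{-i}$ recovers exactly the classical decomposition theorem for $\derR \al \QQ_X\decal n$. Thus $H^k\al\QHX\decal n \neq 0$ if and only if the perverse cohomology sheaf ${}^p\cohH^k(\derR \al \QQ_X\decal n)$ is nonzero, and the vanishing range for the latter is precisely what de Cataldo and Migliorini compute in terms of $\delta(a)$.

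The first step is to recall, following \cite{dCM}, the stratification estimate: if one stratifies $A$ by locally closed strata $A_\ell \setminus A_{\ell+1}$ according to fiber dimension, then for a point $y$ in a stratum of a given type the stalk cohomology of $\derR \al \QQ_X\decal n$ at $y$ is concentrated in the degrees dictated by $H^\bullet$ of the fiber $a^{-1}(y)$, which has dimension $\le \ell$. Combining the bound $\dim a^{-1}(y) \le \ell$ on the strata $A_\ell$ with the dimension $\dim A_\ell$ of the stratum and running through the defining inequalities of the perverse $t$-structure on $A$ gives the two-sided bound $\abs{k} \le \delta(a)$ for the nonvanishing of ${}^p\cohH^k$. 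The symmetry $k \leftrightarrow -k$ is then built in by the relative hard Lefschetz isomorphism $M_{-i} \simeq M_i(i)$ recorded just above, so it suffices to prove the upper bound $H^k \al\QHX\decal n = 0$ for $k > \delta(a)$ and deduce the lower bound by duality. Finally, that $\delta(a)$ is actually attained — i.e.\ that the maximum is not strictly smaller — follows because for the value of $\ell$ realizing the maximum in the definition of $\delta(a)$, the top cohomology $H^{2\ell}$ of the generic fiber over $A_\ell$ contributes a nonzero summand in degree $k = 2\ell - \dim X + \dim A_\ell = \delta(a)$; this non-cancellation is where one genuinely uses that we are decomposing a direct image of a pure Hodge module (so the decomposition theorem applies and no cancellation between summands of different supports can occur).

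The main obstacle I expect is the non-cancellation/attainment part: the inequality $H^k = 0$ for $\abs k > \delta(a)$ is a routine consequence of the support/cosupport conditions of the perverse $t$-structure applied stratum by stratum, but showing that the bound $\delta(a)$ is sharp requires producing an explicit nonzero summand $M_{i,j}$ with $i = \delta(a)$. For this one should pick an irreducible component $Z$ of $A_\ell$ (for the optimal $\ell$) over which the fiber dimension is generically exactly $\ell$, restrict to the open part of $Z$ over which $a$ is a fiber bundle-like submersion onto its image in a suitable sense, and observe that the local system $R^{2\ell}(a\vert)_* \QQ$ on that open set is nonzero of Tate type; the corresponding intermediate extension then appears as a direct summand $M_{\delta(a), j}$ with strict support $Z$. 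The technical care needed is in matching indices: one must track the shift $\decal n$, the fact that $M_i = H^i \al \QHX \decal n$ carries weight $n+i$, and verify that the component living over $Z$ really lands in cohomological degree $2\ell - (\dim X - \dim A_\ell) = \delta(a)$ rather than an adjacent degree. Since \cite{dCM}*{Remark~2.1.2} is cited as the source, in the write-up I would simply invoke that reference for both the bound and its sharpness, after translating their statement about $\derR \al \QQ_X$ into the Hodge module language via the Riemann–Hilbert-compatible forgetful functor $\rat$.
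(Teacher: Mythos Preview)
Your proposal is correct and follows essentially the same route as the paper: reduce to the underlying perverse sheaves via $\rat$, obtain the upper bound $k\le\delta(a)$ from the support conditions of the perverse $t$-structure applied along the fiber-dimension stratification (equivalently, via proper base change at a general point of the strict support of a simple summand), and invoke the Lefschetz isomorphism $M_{-k}\simeq M_k(k)$ for the symmetric lower bound.

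The one place where the paper's argument differs slightly from your sketch is sharpness. You propose to directly exhibit $\IC_Z\bigl(R^{2\ell}(a\vert)_*\QQ\bigr)$ as a summand of $M_{\delta(a)}$ for the optimal $\ell$ and a component $Z$ of $A_\ell$; this is true but requires justifying that this intermediate extension really lands in perverse degree $\delta(a)$ and is not absorbed elsewhere. The paper instead argues by elimination: it picks the stratum $S$ realizing the maximum, notes $H^{2\ell}\bigl(a^{-1}(S)\bigr)\neq 0$, writes this as $\bigoplus_i H^{2\ell-n-i}\iu_S M_i$, and uses the support condition $H^{j}\iu_S M_i=0$ for $j>-\dim S$ to force some $M_i$ with $i\ge\delta(a)$ to be nonzero; combined with the upper bound this gives $M_{\delta(a)}\neq 0$. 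Your plan to simply cite \cite{dCM} for the sharpness is also perfectly acceptable, since that is exactly what the paper attributes to them.
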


\begin{proof}
We begin by showing that $M_k \neq 0$ implies $k \leq \delta(a)$. Suppose that $M$ is
one of the simple summands of the Hodge module $M_k$,
which means that $M \decal{-k}$ is a direct factor of $\al \QHX \decal{n}$. The
strict support of $M$ is an irreducible subvariety $Z \subseteq A$, and there is a
variation of Hodge structure of weight $n + k - \dim Z$ on a dense open subset of $Z$
whose intermediate extension is $M$. Let $z \in Z$ be a general point, and let $i_z
\colon \{z\} \into A$ be the inclusion. Then $H^{-\dim Z} \iu_z M$ is the corresponding
Hodge structure, and so we get $H^{-\dim Z} \iu_z M \neq 0$. This implies that 
\[
	H^{-\dim Z + k} \iu_z \al \QHX \decal{n} \neq 0
\]
as well. Now let $F = a^{-1}(z)$ be the fiber. By the formula for proper base change
\cite{Saito-MHM}*{(4.4.3)}, we have $\iu_z \al \QHX \decal{n} \simeq \pl \QHF
\decal{n}$, where $p \colon F \to \{z\}$, and so 
\[
	H^{-\dim Z + k} \iu_z \al \QHX \decal{n} \simeq H^{n - \dim Z + k} \pl \QHF 
		= H^{n - \dim Z + k}(F, \QQ)
\]
is the usual mixed Hodge structure on the cohomology of the projective variety $F$.
By the above, this mixed Hodge structure is nonzero; it follows for dimension reasons
that $n - \dim Z + k \leq 2 \dim F$. Consequently, $k \leq \delta(a)$ as claimed.

To show that $M_{\delta(a)} \neq 0$, we proceed as follows. Let $\ell \geq 0$ be the
smallest integer for which $A_{\ell}$ has an irreducible component of dimension
$\delta(a) + n - 2\ell$. After choosing a suitable Whitney stratification of
$A$, we may assume that this irreducible component is the closure of a stratum $S
\subseteq A_{\ell}$ of dimension $\delta(a) + n - 2 \ell$. Let $i_S \colon S \into
A$ denote the inclusion, and define $Y = a^{-1}(S)$. Then $p \colon Y \to S$ is 
proper of relative dimension $\ell$, and so necessarily $H^{2 \ell}(Y) \neq 0$. As
before, we have
\[
	H^{2 \ell}(Y) = H^{2 \ell - n} \pl \QHY \decal{n} 
	\simeq H^{2 \ell - n} \iu_S \al \QHX \decal{n} \simeq
		\bigoplus_{i=-\delta(a)}^{\delta(a)} H^{2 \ell - n - i} \iu_S M_i.
\]
Now the conditions of support for the perverse sheaf $\rat M_i$ imply that
\[
	H^{2\ell-n-i} \iu_S M_i = 0 \quad \text{for $2 \ell - n - i > - \dim S$},
\]	
and so there must be at least one nonzero module $M_i$ with $2 \ell - n - i
\leq - \dim S$, or equivalently, $i \geq \delta(a)$. Since $M_i = 0$ for $i >
\delta(a)$, it follows that $M_{\delta(a)} \neq 0$.

The final assertion is a consequence of the isomorphism $M_{-k} \simeq M_k(k)$.
\end{proof}

\subsection{Generic vanishing on the Picard variety}
\label{subsec:nakano}

We now address the generic vanishing theorem of Nakano type, Theorem \ref{thm:nakano}
in the introduction, and related questions. Ideally, such a theorem would say that 
\[
V^q (\Omega_X^p) = \Menge{L \in \Pic^0(X)}{H^q \bigl( X, \OmX{p} \tensor L \bigr) \neq 0}
\]
has codimension at least $\abs{p+q-n}$ in $\Ah$. Unfortunately, such a good statement is
not true in general (see Example \ref{blow-up} below, following \cite{GL1}). To simplify our discussion
of what the correct statement is, we make the following definition.

\begin{definition}
Let $X$ be a smooth projective variety. We say that $X$ satisfies the \define{generic
Nakano vanishing theorem with index $k$} if 
\[
	\codim  V^q (\Omega_X^p)  \geq \abs{p+q-n} - k
\]
for every $p,q \in \NN$.
\end{definition}

Note that the absolute value is consistent with Serre duality, which implies that
\[
	V^q(\Omega_X^p) \simeq V^{n-q}(\Omega_X^{n-p}).
\]
We can use the analysis in \subsecref{subsec:albanese} to obtain a precise formula
for the index $k$, namely we show that $X$ satisfies generic Nakano vanishing with 
index $\delta(a)$, but not with index $\delta(a) -1$.

\begin{proof}[Proof of Theorem \ref{thm:nakano}]
Recall from Proposition~\ref{prop:vanishing} that we have
\[
	\delta(a) = \max \menge{k \in \ZZ}{H^k \al \QHX \decal{n} \neq 0}.
\]
It follows from the equivalence established in \cite{Popa}*{\S3} that generic Nakano
vanishing with index $k$ is equivalent to the statement that
\begin{equation} \label{eq:Nakano}
	\FM \Bigl( \derR a_* \OmX{p} \decal{n-p} \Bigr) \in \cDtcoh{\leq k}(\OAh).
\end{equation}
We shall prove that this formula holds with $k = \delta(a)$ by descending induction on $p
\geq 0$, starting from the trivial case $p = n+1$. To simplify the bookkeeping, we set
\[
	\shC_{\bullet} = \derR a_* \Bigl\lbrack \OX \tensor S^{\bullet-g} \to 
		\OmX{1} \tensor S^{\bullet-g+1} \to \dotsb \to \OmX{n} \tensor S^{\bullet-g+n} \Bigr\rbrack.
\]
In particular, recalling that $(\Mmod_i, F)$ is the filtered $\Dmod$-module underlying the
Hodge module $M_i = H^i a_* \QHX \decal{n}$, we get from Corollary \ref{cor:key} that
\[
	\shC_{g-p} = \derR a_* \Bigl\lbrack \OmX{p} \to \OmX{p+1} \tensor S^1 \to
		\dotsb \to \OmX{n} \tensor S^{n-p} \Bigr\rbrack
		\simeq \bigoplus_i \Gr_{g-p}^F \Mmod_i \decal{-i}.
\]
From Theorem~\ref{hm_abelian}, we know that $\FM(\Gr_{g-p}^F \Mmod_i) \in
\cCoh(\OAh)$, and so we conclude that $\FM \shC_{g-p} \in \cDtcoh{\leq k}(\OAh)$ for
$0 \leq p \leq n$.  It is clear from the definition of $\shC_{g-p}$ that there is a
distinguished triangle
\[
	\shC_{g-p}' \to \shC_{g-p} \to \derR a_* \OmX{p} \decal{n-p} 
		\to \shC_{g-p}' \decal{1},
\]
in which $\shC_{g-p}'$ is an iterated extension of $\derR a_* \OmX{r} \decal{n-r}$
for $p+1 \leq r \leq n$. From the inductive hypothesis, we obtain that 
$\FM \shC_{g-p}' \in \cDtcoh{\leq k}(\OAh)$. Now apply the functor $\FM$
to the distinguished triangle to conclude that \eqref{eq:Nakano} continues to hold
for the given value of $p$.

This argument can be reversed to show that $\delta(a)$ is the optimal value for the
index. Indeed, suppose that $X$ satisfies the generic Nakano vanishing theorem with
some index $k$. Since each complex $\shC_j$ is an iterated extension of $\derR a_*
\OmX{p} \decal{n-p}$, the above shows that $\FM \shC_j \in \cDtcoh{\leq k}(\OAh)$,
and hence that $\FM(\Gr_p^F \Mmod_i) = 0$ for every $p \in \ZZ$ and $i > k$. Because
the Fourier-Mukai transform is an equivalence of categories, we conclude that
$\Mmod_i = 0$ for $i > k$, which means that $\delta(a) \leq k$.
\end{proof}

\begin{example}\label{blow-up}
Our result explains the original counterexample from \cite{GL1}*{\S3}. 
The example consisted in blowing up an
abelian variety $A$ of dimension four along a smooth curve $C$ of genus at least two; if
$X$ denotes the resulting variety, then $a \colon X \to A$ is the Albanese mapping,
and a short computation shows that
\[
	H^2(X, \Omega_X^3 \tensor \au L) \simeq H^0(C, \omega_C \tensor L) \neq 0
\]
for every $L \in \Pic^0(A)$. This example makes it clear that the index in the
generic Nakano vanishing theorem is not equal to the dimension of the generic fiber
of the Albanese mapping. On the other hand, it is not hard to convince oneself that
\[
	H^k \al \QHX \decal{4} \simeq \begin{cases}
		\QHA \decal{4} &\text{for $k = 0$,} \\
		\QHC \decal{1} &\text{for $k = \pm 1$,} \\
		0 &\text{otherwise.}
	\end{cases}
\]	
Thus $\delta(a) = 1$ is the correct value for the index in this case.
\end{example}

Theorem \ref{thm:nakano}, combined with the equivalence between (1) and (3) in Theorem \ref{perverse_criterion},   
implies a vanishing result for the cohomology sheaves of the Fourier-Mukai transform of any $\Omega_X^p$, in analogy with the result for $\shO_X$ conjectured by Green and Lazarsfeld and proved in \cite{Hacon} (note again that 
$P^{-1} \simeq (1\times (-1))^* P$, so for vanishing $P$ and $P^{-1}$ can be used interchangeably).

\begin{corollary}
With the notation above, for every integer $p$ we have 
$$R^i \Phi_P \Omega_X^p = 0 \,\,\,\, {\rm for~all~} i < n - p - \delta(a) .$$ 
Moreover, $R^{n - p - \delta(a)} \Phi_P \Omega_X^p \neq 0$ for some $p$.
\end{corollary}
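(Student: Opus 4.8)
The plan is to deduce this directly from Theorem~\ref{thm:nakano} via the cohomological criterion in Theorem~\ref{perverse_criterion}, together with standard base change. Fix an integer $p$. By Theorem~\ref{thm:nakano} applied to all pairs $(p,q)$, we have $\codim V^q(\OmX{p}) \geq \abs{p+q-n} - \delta(a)$, which in the language of \cite{Popa}*{\S3} says precisely that
\[
	\FM \bigl( \derR a_* \OmX{p} \decal{n-p} \bigr) \in \cDtcoh{\leq \delta(a)}(\OAh),
\]
as was already used in the proof of Theorem~\ref{thm:nakano} (equation \eqref{eq:Nakano}). First I would pass from the direct image on $A$ back to $X$: since the Fourier-Mukai transform on $X\times\Pic^0(X)$ is, by definition, induced from the one on $A\times\Ah$ by composing with the Albanese map, one has $\derR\Phi_P \OmX{p} \simeq \derR\Phi_P\bigl(\derR a_* \OmX{p}\bigr)$ on $\Ah$ up to the shift bookkeeping, so the displayed membership is equivalent to $\FM \OmX{p} \in \Dtcoh{\leq \delta(a)+(n-p)}(\OAh)$ (after accounting for the shift $\decal{n-p}$ and the fact that $c=\codim$).

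Next I would invoke the equivalence of (1) and (3) in Theorem~\ref{perverse_criterion}, applied with $E = \OmX{p}$, $Y = \Pic^0(X)$, and $k$ the appropriate index. Condition (1), that $\derR\Phi_P E \in \cDtcoh{\leq k}(\OAh)$, translates into the stated bound on cohomological support loci exactly when $E$ is a sheaf; condition (3) reads $R^i\Phi_{P^\vee}(\omega_X \tensor E^\vee) = 0$ for $i < n-k$. Applying Serre duality on $X$, $\omega_X \tensor (\OmX{p})^\vee \simeq \OmX{n-p}$, so condition (3) becomes $R^i\Phi_{P^\vee}\OmX{n-p} = 0$ for $i < n-k$. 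Now I would use the index $k = (n-p) - \delta(a)$ coming from Theorem~\ref{thm:nakano} for the pair $(n-p, \argbl)$ — equivalently, since $V^q(\OmX{p}) \simeq V^{n-q}(\OmX{n-p})$ as noted after the definition of generic Nakano vanishing, the index is symmetric — and obtain $R^i\Phi_{P^\vee}\OmX{n-p} = 0$ for $i < n - \bigl((n-p)-\delta(a)\bigr) = p + \delta(a)$. Re-labelling $n-p$ as $p$ (it runs over all integers), this is exactly $R^i\Phi_P\OmX{p} = 0$ for $i < n-p-\delta(a)$, using once more $P^{-1} \simeq (1\times(-1))^*P$ so that $P$ and $P^\vee$ are interchangeable for vanishing statements.

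For the final non-vanishing claim, I would argue that the bound is sharp for at least one $p$ because $\delta(a)$ is the optimal index in Theorem~\ref{thm:nakano}: there exist $p_0, q_0$ with $\codim V^{q_0}(\OmX{p_0}) = \abs{p_0+q_0-n} - \delta(a)$, and chasing this equality through the same equivalences — in particular through the identification $\FM\shC_{g-p} \simeq \bigoplus_i \Gr_{g-p}^F\Mmod_i\decal{-i}$ of Corollary~\ref{cor:key} together with $M_{\delta(a)} \neq 0$ from Proposition~\ref{prop:vanishing} — forces $R^{n-p-\delta(a)}\Phi_P\OmX{p} \neq 0$ for the corresponding $p$; concretely, the summand $\FM(\Gr_{g-p}^F\Mmod_{\delta(a)})\decal{-\delta(a)}$ contributes a nonzero cohomology sheaf in exactly that degree. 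The main obstacle I anticipate is bookkeeping the shifts correctly: the index in Theorem~\ref{thm:nakano} is stated for the complex $\derR a_*\OmX{p}\decal{n-p}$, whereas the corollary concerns $\OmX{p}$ as a sheaf on $X$, so one must carefully track the interplay between the shift $\decal{n-p}$, the degree $g$ appearing in the duality/Fourier-Mukai exchange formula, and the convention that geometric $\GV$-objects transform to honest perverse sheaves; none of this is deep, but it is where an off-by-one error would creep in.
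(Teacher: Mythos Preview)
Your approach is exactly the one the paper uses: the sentence preceding the corollary says it follows from Theorem~\ref{thm:nakano} combined with the equivalence between (1) and (3) in Theorem~\ref{perverse_criterion}, together with the observation that $P^{-1}\simeq(1\times(-1))^*P$ makes $P$ and $P^\vee$ interchangeable for vanishing. The non-vanishing ``moreover'' clause likewise comes directly from the sharpness of $\delta(a)$ in Theorem~\ref{thm:nakano} through the same equivalence; you don't need to go back to Corollary~\ref{cor:key} and Proposition~\ref{prop:vanishing}.

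However, your bookkeeping has the very sign errors you feared. With $E=\OmX{p}$, Theorem~\ref{thm:nakano} gives $\codim V^q(\OmX{p})\geq q-(n-p+\delta(a))$, so condition~(1) holds with $k=(n-p)+\delta(a)$, \emph{not} $k=(n-p)-\delta(a)$. Condition~(3) then yields $R^i\Phi_{P^\vee}\OmX{n-p}=0$ for $i<n-k=p-\delta(a)$, and relabelling $p\mapsto n-p$ gives $i<n-p-\delta(a)$ as desired. In your version the wrong $k$ and a second slip in the relabelling happened to cancel: substituting $p\mapsto n-p$ into your bound $i<p+\delta(a)$ gives $i<(n-p)+\delta(a)$, not $i<n-p-\delta(a)$.
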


Let us finally note that Theorem \ref{thm:nakano} and its proof improve the previously known generic vanishing
results for $\Omega_X^p$ with $p < n$, and recover those for $\omega_X$ (or its
higher direct images):

First, it was proved in \cite{PP}*{Theorem 5.11} that
\[
\codim V^q (\Omega_X^p ) \ge |p + q  - n| - \mu (a),
\]
with $\mu (a) = {\rm max} \{k, m-1\}$, $k$ being the minimal dimension and $m$ the
maximal dimension of a fiber of the Albanese map.  It is a routine check that $\delta
(a) \le \mu (a)$.

Secondly, in the case of the lowest nonzero piece of the filtration on $a_*
(\OX, F)$ the situation is better than what comes out of 
Theorem~\ref{thm:nakano}. This allows one to recover the original generic vanishing theorem for $\omega_X$ of \cite{GL1}, as well as its extension to higher direct images $R^j a_* \omega_X$ given in \cite{Hacon}. Indeed, in the proof above note that 
\begin{equation}\label{lowest}
\shC_{g-n} = \derR a_* \omega_X \simeq  \bigoplus_i \Gr_{g-n}^F \Mmod_i
\decal{-i}.  
\end{equation}
This shows that $R^i a_* \omega_X \simeq \Gr_{g-n}^F \Mmod_i$. Since these sheaves
are torsion-free by virtue of Koll\'ar's theorem, it follows that $\Gr_{g-n}^F \Mmod_i = 0$
unless $0 \leq i \leq \dim X - \dim a(X)$. Thus one recovers the original generic
vanishing theorem of Green and Lazarsfeld. A similar argument
works replacing $\omega_X$ by higher direct images $R^i f_* \omega_Y$, where $f
\colon Y\rightarrow X$ is a projective morphism with $Y$ smooth and $X$ projective
and generically finite over $A$.

\begin{note}
It is worth noting that since for an arbitrary projective morphism $f \colon X \rightarrow
Y$ a decomposition analogous to (\ref{lowest}) continues to hold, one recovers the
main result of \cite{Kollar}*{Theorem~3.1}, namely the splitting of $\derR f_*
\omega_X$ in $\Dbcoh (Y)$. This is of course not a new observation: it is precisely
Saito's approach to Koll\'ar's theorem and its generalizations.
\end{note}

\section{Cohomological support loci for local systems}

\subsection{Supports of transforms}

Let $A$ be an abelian variety of dimension $g$, and set as before $V = H^0(A,
\Omega_A^1)$ and $S = \Sym \Vd$. Let $M$ be a mixed Hodge module on $A$, with
underlying filtered $\Dmod$-module $(\Mmod, F)$. Then $\Gr_{\bullet}^F \Mmod$ is a
finitely-generated graded module over $\Sym \shT_A \simeq  \OA \tensor S$, and we
denote the associated coherent sheaf on $T^{\ast} A = A \times V$ by $\shC(\Mmod,
F)$. We may then define the \define{total Fourier-Mukai transform} of
$\Gr_{\bullet}^F \Mmod$ to be
\[
	\FM \bigl( \Gr_{\bullet}^F \Mmod \bigr) 
		= \bigoplus_{k \in \ZZ} \FM(\Gr_k^F \Mmod),
\]
which belongs to the bounded derived category of graded modules over $\OAh \tensor
S$. (Note that the right hand side involves the standard Fourier-Mukai transform, and we are 
continuing to use the same notation for the total transform, as no confusion seems likely.)
The geometric interpretation is as follows: the object in $\Dbcoh \bigl(
\shO_{\Ah \times V} \bigr)$ corresponding to the total Fourier-Mukai transform of
$\Gr_{\bullet}^F \Mmod$ is
\[
	\FM \shC(\Mmod, F) 
		= \derR p_{23\ast} \Bigl( p_{13}^{\ast} \shC(\Mmod, F) \tensor p_{12}^{\ast} P \Bigr)
\]
where the notation is as in the following diagram:
\begin{diagram}{3em}{2.5em}
\matrix[math] (m) {
	A \times V & A \times \Ah \times V & \Ah \times V \\
	& A \times \Ah \\
}; 
\path[to] (m-1-2) edge node[above] {$p_{13}$} (m-1-1)
	(m-1-2) edge node[auto] {$p_{12}$} (m-2-2)
	(m-1-2) edge node[auto] {$p_{23}$} (m-1-3);
\end{diagram}

Let now $X$ be a smooth projective variety of dimension $n$,  let $a \colon X \to A$
its Albanese map, and consider again the decomposition 
\[
	a_* \QHX \decal{n} \simeq \bigoplus_{i,j} M_{i,j} \decal{-i} \in \Db \MHM(A).
\]
Denote by $\shC_{i,j} = \shC(\Mmod_{i,j}, F)$ the coherent sheaf on $T^{\ast} A = A
\times V$ determined by the Hodge module $M_{i,j}$. The supports in $\Ah \times V$ of
the total Fourier-Mukai transforms $\FM \shC_{i,j}$ are of a very special kind; this
follows by using a result of Arapura \cite{Arapura}. To state the result, we recall
the following term coined by Simpson \cite{Simpson2}*{p.~365}.

\begin{definition}
A \define{triple torus} is any subvariety of $\Ah \times H^0(A, \Omega_A^1)$ 
of the form 
\[
	\im \Bigl( \varphi^{\ast} \colon \hat{B} \times H^0(B, \Omega_B^1) 	
		\into \Ah \times H^0(A, \Omega_A^1) \Bigr),
\]
for a surjective morphism $\varphi \colon A \to B$ to another abelian variety $B$. A
subvariety is called a \define{torsion translate of a triple torus} if it is a
translate of a triple torus by a point of finite order in $A \times H^0(A, \Omega_A^1)$.
\end{definition}

\begin{proposition}\label{prop:linear}
With notation as above, every irreducible component of the support of $\FM \shC_{i,j}$
is a torsion translate of a triple torus in $\Ah \times V$.
\end{proposition}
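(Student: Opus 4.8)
The plan is to reduce the statement to Arapura's description of the cohomological support loci of rank one local systems on $X$, via a direct computation of the support of the total Fourier–Mukai transform $\FM \shC_{i,j}$. First I would observe that, by Corollary~\ref{cor:key}, the direct sum $\bigoplus_{i,j} \shC_{i,j}$ is (up to the grading shifts $\decal{-i}$) the coherent sheaf on $T^{\ast}A = A \times V$ associated to the graded $\OA \tensor S$-module $\Gr_{\bullet}^F a_*(\OX, F)$, which by Proposition~\ref{prop:complex} is computed by the Koszul-type complex $\derR a_* \bigl[ \OX \tensor S^{\bullet - g} \to \OmX{1} \tensor S^{\bullet - g + 1} \to \dotsb \to \OmX{n} \tensor S^{\bullet - g + n} \bigr]$. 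Hence it suffices to identify the support of the total transform of this complex, because the support of each direct summand is contained in the support of the whole, and conversely each irreducible component of the latter appears in one of the summands. So the support in question is a union of components of $\Supp \FM \shC(\al(\OX,F))$.

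Next I would make the fiberwise interpretation explicit. A point of $\Ah \times V$ is a pair $(L, \theta)$ with $L \in \Pic^0(A) = \Ah$ and $\theta \in V = H^0(A, \Omega_A^1) = H^0(X, \OmX 1)$; pulling $\theta$ back along $a$ and wedging gives a Higgs-type differential on $\bigoplus_p \OmX p$, and the fiber of $\FM \shC(\al(\OX,F))$ at $(L,\theta)$ is (up to a shift and by base change, exactly as in the $\GV$-arguments of \S\ref{subsec:nakano}) computed by the hypercohomology of $X$ with coefficients in the complex $\bigl[ \OmX{\bullet} \tensor \au L, \wedge \au\theta \bigr]$. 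That complex is precisely the one whose hypercohomology computes the cohomology of the rank one local system on $X$ attached to the point of the Dolbeault realization of $\Char(X)$ determined by $(\au L, \au\theta)$ — this is Simpson's correspondence between Higgs line bundles and local systems, together with the standard identification of the cotangent space of $\Pic^0$ with $H^0(\OmX 1)$. Thus $(L,\theta) \in \Supp \FM \shC(\al(\OX,F))$ if and only if the corresponding rank one local system has nonzero cohomology in some degree, i.e. $(L,\theta)$ lies in one of the cohomological support loci $\Sigma^k$ pulled back to $\Ah \times V$ under the Albanese map.

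Finally, I would invoke Arapura's theorem \cite{Arapura} (in the Dolbeault / Higgs-bundle formulation, using Simpson \cite{Simpson2}), which asserts that each such cohomological support locus is a finite union of torsion translates of triple tori in $\Ah \times H^0(A, \Omega_A^1)$. Since $V = H^0(A, \Omega_A^1)$ and pullback along $a \colon X \to A$ induces the identification of the relevant parameter spaces, every irreducible component of $\Supp \FM \shC(\al(\OX,F))$ is a torsion translate of a triple torus, and hence so is every component of $\Supp \FM \shC_{i,j}$.

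The main obstacle I anticipate is the base-change/duality bookkeeping that turns the support of the \emph{total} (graded) Fourier–Mukai transform at a point $(L,\theta)$ into the hypercohomology of the twisted de Rham–Koszul complex on $X$: one must keep careful track of the grading on $S = \Sym \Vd$ (so that specializing the polynomial variables to $\theta \in V$ is legitimate), of the shift by $g = \dim A$ in Proposition~\ref{prop:complex}, and of the normalization of the Poincaré bundle, and then match the resulting complex with the one governing local system cohomology in Simpson's correspondence. Once that dictionary is in place, the appeal to Arapura's structure theorem is immediate; the genuinely new content is entirely contained in the identification of the fibers.
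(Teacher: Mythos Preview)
Your argument has a genuine gap in the first paragraph. You assert that ``the support in question is a union of components of $\Supp \FM \shC(\al(\OX,F))$,'' but this does not follow from the two facts you cite. It is true that $\Supp \FM \shC_{i,j} \subseteq \Supp \FM \shC$, and that every irreducible component of $\Supp \FM \shC$ arises from some summand; but nothing prevents an irreducible component of $\Supp \FM \shC_{i,j}$ from being a \emph{proper} closed subvariety of a component of $\Supp \FM \shC$ (one that is the full support of a different summand $\shC_{i',j'}$). Being contained in a torsion translate of a triple torus does not make a subvariety one.

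The paper repairs this by working not just with supports but with the full family of jump loci
\[
   Z^k(m) = \Bigl\{(L,\omega)\in \Ah\times V \;:\; \dim \mathbf{H}^k\bigl(A,\, L\tensor \shC\restr{A\times\{\omega\}}\bigr)\ge m\Bigr\},
\]
and the analogous $Z_{i,j}^k(m)$ for each summand. Arapura's theorem (together with Simpson) actually gives the triple-torus structure for all $Z^k(m)$, not just $m=1$, and the direct-sum decomposition yields
\[
   Z^k(m)=\bigcup_{\mu}\;\bigcap_{i,j} Z_{i,j}^{\,k-i}\bigl(\mu(i,j)\bigr),
\]
the union over functions $\mu$ with $\sum_{i,j}\mu(i,j)=m$. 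An argument from \cite{Arapura}*{p.~312} then shows that this system of identities, valid for \emph{all} $m$, forces each irreducible component of each $Z_{i,j}^k(m)$ to be a torsion translate of a triple torus. (Roughly: a component $W$ of $Z_{i,j}^k(1)$ is, by upper-semicontinuity of all the dimension functions, an irreducible component of $Z^{k'}(M)$ for the specific $M$ recording the generic values on $W$.) The extra parameter $m$ is precisely what lets one disentangle the summands. Your fiberwise identification in the second paragraph and the appeal to Arapura are correct, but by themselves only establish the structure of $\bigcup_k Z^k(1)$, which is not enough.
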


\begin{proof}
It is convenient to prove a more general statement. For simplicity, denote by $\shC \in
\Dbcoh \bigl( \shO_{A \times V} \bigr)$ the object corresponding to $\Gr_{\bullet}^F
\al(\OX, F)$. Recall from Proposition~\ref{prop:complex} and Corollary~\ref{cor:key}
that we have
\begin{equation} \label{eq:recall-decomp}
	\shC \simeq \derR (a \times \id)_* \Bigl\lbrack 
		p_1^* \OX \to p_1^* \Omega_X^1 \to \dotsb \to p_1^* \Omega_X^n
	\Bigr\rbrack  \simeq \bigoplus_{i,j} \shC_{i,j} \decal{-i},
\end{equation}
where $p_1 \colon X \times V \to X$, and the complex in brackets is placed in degrees
$-n, \dotsc, 0$, and has differential induced by the evaluation map $\OX \tensor V
\to \Omega_X^1$.

Now define, for $m \in \NN$, the following closed subsets of $\Ah \times V$:
\begin{align*}
	Z^k(m) &= \Menge{(L, \omega) \in \Ah \times V}%
		{\dim \mathbf{H}^k \bigl(A,  L \tensor \shC \restr{ A \times \{\omega\} } \bigr) \geq m} \\
		&= \Menge{(L, \omega) \in \Ah \times V}%
		{\dim \mathbf{H}^k \bigl(X, L \tensor K(\Omega_X^1, \omega) \bigr) \geq m},
\end{align*}
where we denote the Koszul complex associated to a single one-form $\omega \in V$ by
\[
	K(\Omega_X^1, \omega) = \Bigl\lbrack \OX \xrightarrow{\wedge \omega}
		 \OmX{1} \xrightarrow{\wedge \omega} \dotsb \xrightarrow{\wedge \omega}
			\OmX{n} \Bigr\rbrack.
\]
It follows from \cite{Arapura}*{Corollary on p.~312} and \cite{Simpson2} that each
irreducible component of $Z^k(m)$ is a torsion translate of a triple torus.
To relate this information to the support of the total Fourier-Mukai transforms $\FM
\shC_{i,j}$, we also introduce 
\[
	Z_{i,j}^k(m) = \Menge{(L, \omega) \in \Ah \times V}%
		{\dim \mathbf{H}^k \bigl( L \tensor \shC_{i,j} \restr{ A \times \{\omega\} } \bigr) \geq m}.
\]
The decomposition in \eqref{eq:recall-decomp} implies that 
\[	
	Z^k(m) = \bigcup_{\mu} \, \bigcap_{i,j} Z_{i,j}^{k-i} \bigl( \mu(i,j) \bigr),
\]
where the union is taken over the set of functions $\mu \colon \ZZ^2 \to \NN$ with the
property that $\sum_{i,j} \mu(i,j) = m$. As in \cite{Arapura}*{p.~312}, this formula
implies that each irreducible component of $Z_{i,j}^k(m)$ is also a torsion translate
of a triple torus.

Finally, we observe that each irreducible component of $\Supp \bigl( \FM \shC_{i,j}
\bigr)$ must also be an irreducible component of one the sets $Z_{i,j}^k := Z_{i,j}^k
(1)$, which concludes the proof.  More precisely, we have
$$ \Supp \bigl( \FM \shC_{i,j} \bigr) = \bigcup_k Z^k_{i,j}.$$
Indeed, the base change theorem shows that $\Supp \bigl( R^k \Phi_P \shC_{i,j} \bigr) \subset Z^k_{i,j}$ for all $k$, with equality 
if $k \gg 0$. Now assume that $(L, \omega) \in Z^k_{i,j}$ is a general point of a component which is not contained in   
$\Supp \bigl( R^k \Phi_P \shC_{i,j} \bigr)$. We claim that then $(L, \omega) \in Z^{k+1}_{i,j}$, which concludes the proof by 
descending induction. If this were not the case, then again by the base change
theorem, the natural map $$R^k \Phi_P \shC_{i,j} \otimes \shO_{\Ah \times V}/\mm_{(L, \omega)} \to \mathbf{H}^k
\bigl(A,  L \tensor \shC_{i,j} \restr{ A \times \{\omega\} } \bigr)$$
would be surjective, which would contradict $(L, \omega) \notin
\Supp \bigl( R^k \Phi_P \shC_{i,j} \bigr)$.
\end{proof}

\subsection{Generic vanishing for rank one local systems}
We continue to use the notation from above. Let ${\rm Char} (X) = {\rm Hom} ( \pi_1 (X), \CC^{\ast})$ be the 
algebraic group of characters of $X$.
We are interested in bounding the codimension of the cohomological support loci 
\[
\Sigma^i (X) = \menge{\rho \in {\rm Char} (X)}{H^i (X, \CC_{\rho}) \neq 0},
\]
where $\CC_{\rho}$ denotes the local system of rank one associated to a character $\rho$.
The structure of these loci has been studied by Arapura \cite{Arapura} and Simpson \cite{Simpson2}, who showed that 
they are finite unions of torsion translates of algebraic subtori of ${\rm Char}
(X)$. This is established via an interpretation in
terms of Higgs line bundles. (We have already used the more precise result for Higgs
bundles from \cite{Arapura} during the proof of Proposition \ref{prop:linear}.)

We shall assume for simplicity that $H^2 (X, \ZZ)$ is torsion-free (though the
argument goes through in general, the only difference being slightly more complicated
notation). In this case, the space of Higgs line bundles on $X$ consisting of pairs
$(L, \theta)$ of holomorphic line bundles with zero first Chern class and holomorphic
one-forms can be identified as the complex algebraic group
\[
{\rm Higgs} (X) \simeq \widehat{A} \times V.
\]
One can define an isomorphism of real (but not complex) algebraic Lie groups
\[
{\rm Char} (X) \rightarrow {\rm Higgs} (X), \quad \rho \to (L_{\rho}, \theta_{\rho}),
\]
where $\theta_{\rho}$ is the $(1,0)$-part of ${\rm log} \|\rho \|$ interpreted as 
a cohomology class via the isomorphism $H^1 (X, \RR) \simeq {\rm Hom} ( \pi_1 (X),
\RR)$. 
\begin{note}
$L_{\rho}$ is not the holomorphic line bundle $\CC_{\rho} \tensor_{\CC}
\OX$, except when the character $\rho$ is unitary; this point is stated incorrectly in
\cite{Arapura}*{p.312}. Please refer to \cite{Simpson2}*{p.~364} or page~\pageref{pg:higgs} below for the 
precise description of $L_{\rho}$.
\end{note}

By means of this identification, the local system cohomology of $\CC_{\rho}$ can be
computed in terms of the Dolbeault cohomology of the Higgs bundle $(L_{\rho},
\theta_{\rho})$.

\begin{lemma}[\cite{Simpson}*{Lemma 2.2}]\label{lem:dolbeault}
There is an isomorphism
\begin{align*}
H^k (X, \CC_{\rho}) &\simeq 
		\mathbf{H}^{k-n} \bigl( L_{\rho} \tensor K(\Omega_X^1, \theta_{\rho}) \bigr)  \\
&=  \mathbf{H}^{k-n} \Bigl( X, \Bigl\lbrack L_{\rho} \xrightarrow{\wedge \theta_{\rho}}
L_{\rho}\otimes \OmX{1} \xrightarrow{\wedge \theta_{\rho}} \dotsb \xrightarrow{\wedge
\theta_{\rho}} L_{\rho}\otimes \OmX{n} \Bigr\rbrack
\Bigr),
\end{align*}
where the Koszul-type complex in brackets is again placed in degrees $-n, \dotsc, 0$.
\end{lemma}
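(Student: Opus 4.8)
The plan is to deduce the isomorphism from Simpson's harmonic theory for rank one flat bundles, which the paper uses in any case. I would first recall that, by the Corlette--Simpson correspondence in rank one, the flat bundle $(L,D)$ underlying $\CC_\rho$ admits a harmonic metric $h$, unique up to a positive constant. Relative to $h$ one writes $D = d_h + \psi$ with $d_h$ the induced unitary connection and $\psi$ the $h$-self-adjoint part, and sets $\bar\partial_\rho = d_h^{0,1}$, $\theta_\rho = \psi^{1,0}$. Since $L$ has rank one, the Higgs conditions $\bar\partial_\rho^2 = 0$, $\theta_\rho$ holomorphic, $\theta_\rho\wedge\theta_\rho = 0$ are automatic, so $(L,\bar\partial_\rho,\theta_\rho)$ is a Higgs line bundle, and I would check that its underlying holomorphic line bundle is the point $L_\rho\in\Ah$ and that $\theta_\rho\in V$ is exactly the one-form attached to $\rho$ in the identification ${\rm Char}(X)\simeq{\rm Higgs}(X)$. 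This also makes the preceding note transparent: $\bar\partial_\rho$ differs from the trivial operator on $\CC_\rho\tensor_\CC\OX$ by the $(0,1)$-part of $\log\norm{\rho}$, so $L_\rho$ and $\CC_\rho\tensor\OX$ agree only when $\rho$ is unitary.

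Next I would reinterpret the two sides. Writing $D'' = \bar\partial_\rho + \theta_\rho$ and $D' = \partial_\rho + \theta_\rho^{\ast_h}$, one has $D = D'+D''$ with $(D')^2 = (D'')^2 = 0$ and $D^2 = 0$. The de Rham complex of $(L,D)$ resolves $\CC_\rho$, hence computes $H^\bullet(X,\CC_\rho)$. On the other hand, the complex of smooth $L$-valued forms equipped with $D''$ is the total complex of the Dolbeault bicomplex resolving the Higgs complex
\[
	L_\rho \xrightarrow{\wedge\theta_\rho} L_\rho\tensor\OmX{1} \xrightarrow{\wedge\theta_\rho}
		\dotsb \xrightarrow{\wedge\theta_\rho} L_\rho\tensor\OmX{n},
\]
placed in degrees $-n,\dotsc,0$; thus its cohomology in total form-degree $k$ is $\mathbf{H}^{k-n}$ of that complex, which accounts for the shift in the statement.

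The key step, and the one I expect to be the main obstacle, is the \emph{formality} of the harmonic bundle: on the compact K\"ahler manifold $X$ the Laplacians satisfy $\Delta_D = 2\Delta_{D'} = 2\Delta_{D''}$, so the finite-dimensional spaces of $D$-harmonic and $D''$-harmonic $L$-valued forms coincide in each degree and each represents the corresponding cohomology. Combined with the previous paragraph this gives $H^k(X,\CC_\rho)\simeq\mathbf{H}^{k-n}\bigl(L_\rho\tensor K(\OmX{1},\theta_\rho)\bigr)$. This Laplacian identity is exactly the analytic content of Simpson's harmonic theory; in the rank one case it can alternatively be obtained by solving a single linear elliptic equation for the harmonic metric and then verifying the K\"ahler identities by hand. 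Everything else is bookkeeping, but one must keep careful track of the degree shift by $n$ and of the fact that $L_\rho$ is not $\CC_\rho\tensor\OX$.
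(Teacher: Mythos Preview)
The paper does not give its own proof of this lemma: it is simply cited as \cite{Simpson}*{Lemma~2.2}, and the text moves on immediately. Your sketch is correct and is essentially an outline of Simpson's argument (harmonic metric, $D=D'+D''$ decomposition, equality of Laplacians $\Delta_D=2\Delta_{D''}$, hence identification of de~Rham and Dolbeault/Higgs cohomology); this is the same content the paper invokes by citation, so there is nothing to compare.
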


We can now obtain the generic vanishing theorem for local systems of rank one, stated in
the introduction.

\begin{proof}[Proof of Theorem \ref{thm:locsyst}]
Note first that, since Verdier duality gives an isomorphism 
\[
	H^k(X, \CC_{\rho}) \simeq H^{2n-k}(X, \CC_{-\rho}),
\]
we only need to prove the asserted inequality for $k \geq n$. Furthermore,
Lemma~\ref{lem:dolbeault} shows that it is enough to prove, for $k \geq 0$, the
analogous inequality 
\[
	\codim_{{\rm Higgs}(X)} Z^k(1) \geq 2 \bigl( k - \delta(a) \bigr),
\]
for the subsets $Z^k (1) \subseteq {\rm Higgs}(X)$ that were introduced during
the proof of Proposition \ref{prop:linear}. Recall from there that
\[	
	Z^k(1) = \bigcup_{i,j} Z_{i,j}^{k -i}(1).
\]
Finally, we will see in a moment in the proof of Theorem \ref{thm:mCoh} that 
\[
	\codim \Supp R^{\ell} \Phi_P \shC_{i,j} \geq 2 \ell
\]
for all $\ell$. By the same base change argument as before, this is equivalent to
$\codim Z_{i,j}^{\ell}(1) \geq 2 \ell$ for all $\ell$. Since $\shC_{i,j}$ is zero
unless $i \leq \delta(a)$, we conclude that
\[
	\codim Z^k(1) \geq \min_i \codim Z_{i,j}^{k-i}(1) \geq 2 \bigl( k - \delta(a) \bigr),
\]
which is the desired inequality.
\end{proof}

\subsection{Duality and perversity for total transforms}

In this section, we take a closer look at the behavior of the object $\shC(\Mmod, F)$ 
under the total Fourier-Mukai transform. We begin with a result that shows how the
total Fourier-Mukai transform interacts with Verdier duality for mixed Hodge modules.

\begin{lemma} \label{lem:total-dual}
Let $M$ be a mixed Hodge module on $A$, let $M'$ be its Verdier dual, and let
$\shC(\Mmod, F)$ and $\shC(\Mmod', F)$ be the associated coherent sheaves on $A
\times V$. Then
\[
	\derR \shHom \bigl( \FM \shC(\Mmod, F), \shO_{\Ah \times V} \bigr) \simeq
		\iota^{\ast} \bigl( \FM \shC(\Mmod', F) \bigr),
\]
where $\iota = (-1_{\Ah}) \times (-1_V)$.
\end{lemma}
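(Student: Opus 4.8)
The plan is to combine the Verdier duality statement for mixed Hodge modules at the level of associated graded modules (Theorem~\ref{thm:Verdier}) with the Grothendieck duality exchange formula for the Fourier-Mukai transform. First I would recall from Theorem~\ref{thm:Verdier} that, as coherent sheaves on $T^{\ast} A = A \times V$,
\[
	\derR \shHom \bigl( \shC(\Mmod, F), \shO_{A \times V} \decal{g} \bigr) \simeq
		\pu \omega_A \tensor (-1_{A \times V})^{\ast} \shC(\Mmod', F),
\]
where $p \colon A \times V \to A$ is the projection. On an abelian variety $\omega_A \simeq \OA$ is trivial, so after adjusting the shift this says simply that $\derR \shHom \bigl( \shC(\Mmod, F), \shO_{A \times V} \bigr) \simeq (-1_{A \times V})^{\ast} \shC(\Mmod', F) \decal{-g}$.

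Next I would invoke the compatibility of the (relative, over $V$) Fourier-Mukai transform $\FM \colon \Dbcoh(\shO_{A \times V}) \to \Dbcoh(\shO_{\Ah \times V})$ with Grothendieck duality. In the absolute case this is the formula $(\derR \Phi_P E)^{\vee} \simeq \derR \Phi_{P^{-1}}(E^{\vee}) \decal{g}$ used already in the Corollary after Proposition on $\GV$-objects (cf.\ \cite{PP}*{Lemma~2.2}); the relative version over the factor $V$ is formally identical, since the transform $\FM$ on $A \times V$ is just the transform on $A$ performed fiberwise, tensored with $\OV$, and duality commutes with flat base change along $V$. Together with the identification $P^{-1} \simeq (1 \times (-1))^{\ast} P$ on $A \times \Ah$, this gives
\[
	\derR \shHom \bigl( \FM \shC(\Mmod, F), \shO_{\Ah \times V} \bigr) \simeq
		\bigl( (-1_{\Ah}) \times \id_V \bigr)^{\ast} \FM \Bigl( \derR \shHom \bigl( \shC(\Mmod, F), \shO_{A \times V} \bigr) \Bigr) \decal{g}.
\]
Substituting the first displayed formula into this and cancelling the shift by $g$, I obtain
\[
	\derR \shHom \bigl( \FM \shC(\Mmod, F), \shO_{\Ah \times V} \bigr) \simeq
		\bigl( (-1_{\Ah}) \times \id_V \bigr)^{\ast} \FM \Bigl( (-1_{A \times V})^{\ast} \shC(\Mmod', F) \Bigr).
\]
The last step is to move the $(-1)$ pullbacks past the Fourier-Mukai transform: pulling $\shC(\Mmod', F)$ back by $(-1_A) \times \id_V$ and then applying $\FM$ equals applying $\FM$ and then pulling back by $(-1_{\Ah}) \times \id_V$ — since $(-1_A)^{\ast} P \simeq (\id_A \times (-1_{\Ah}))^{\ast} P$ and these two $(-1)$'s combine with the $(-1_{\Ah})$ already present to yield precisely $\iota = (-1_{\Ah}) \times (-1_V)$ acting on the target. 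Bookkeeping the signs on $V$: the $(-1_V)$ in $(-1_{A \times V})^{\ast}$ passes through $\FM$ unchanged (the transform is the identity on the $V$-direction), so it survives on the target side and combines with the $(-1_{\Ah})$ to produce $\iota$ exactly.

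The main obstacle I anticipate is the careful verification that the relative Fourier-Mukai transform over $V$ genuinely satisfies the Grothendieck-duality exchange formula with the correct normalization and shift — in particular tracking the dualizing-complex shift $\decal{g}$ through the relative setting and confirming that the factor $\pu \omega_A$ from Theorem~\ref{thm:Verdier}, being trivial on an abelian variety, really does disappear without leaving a residual twist. A secondary subtlety is the sign bookkeeping on the abelian and Higgs directions: one must check that the various $(-1)$-pullbacks (coming from $P^{-1} \simeq (1 \times (-1))^{\ast}P$, from Theorem~\ref{thm:Verdier}, and from the definition of $\iota$) compose to give exactly $\iota = (-1_{\Ah}) \times (-1_V)$ and not, say, $(-1_{\Ah}) \times \id_V$ or $\id_{\Ah} \times (-1_V)$. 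Everything else is a formal chain of canonical isomorphisms in the derived category.
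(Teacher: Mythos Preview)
Your approach is exactly the one the paper takes: combine Theorem~\ref{thm:Verdier} with the Grothendieck-duality exchange formula for the Fourier--Mukai transform, then track the $(-1)$'s. The paper carries out the computation in the opposite order (it applies Grothendieck duality to $\FM\shC$ first, then invokes Theorem~\ref{thm:Verdier} inside the push-forward), but the content is identical.

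There is, however, a genuine error in your sign bookkeeping. The involution $(-1)_{T^{\ast}X}$ in Theorem~\ref{thm:Verdier} is the \emph{fiberwise} sign on the cotangent bundle: this is visible from the graded-module formulation of that theorem, where the twist is that sections of $\Sym^k\shT_X$ act with an extra factor of $(-1)^k$. For $T^{\ast}A = A \times V$ this map is $\id_A \times (-1_V)$, not $(-1_A)\times(-1_V)$ as you wrote. With your reading there would be an extra $(-1_A)$ to pass through the transform; since $\FM \circ (-1_A)^{\ast} \simeq (-1_{\Ah})^{\ast}\circ \FM$, this extra factor would \emph{cancel} the $(-1_{\Ah})$ coming from $P^{-1}\simeq(\id\times(-1_{\Ah}))^{\ast}P$, and you would end up with $\id_{\Ah}\times(-1_V)$ rather than $\iota$. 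Your final paragraph asserts the correct answer but the chain of $(-1)$'s you describe does not actually produce it.

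Once you use $\id_A\times(-1_V)$ from Theorem~\ref{thm:Verdier}, the bookkeeping becomes clean: the $(-1_V)$ passes through the transform unchanged (the kernel is pulled back along $p_{12}$ and does not see $V$), and the single $(-1_{\Ah})$ from the duality exchange formula survives, giving exactly $\iota = (-1_{\Ah})\times(-1_V)$. This is precisely what the paper's proof records.
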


\begin{proof}
Recall that the Grothendieck dual on a smooth algebraic variety $X$ is given by
$\DX(\argbl) = \derR \shHom \bigl( \argbl, \omega_X \decal{\dim X} \bigr)$. To simplify the
notation, set $\shC = \shC(\Mmod, F)$ and $\shC' = \shC(\Mmod', F)$. Then
\begin{align*}
	\DAhV &\bigl( \FM \shC(\Mmod, F) \bigr)
		= \DAhV \Bigl( \derR p_{23\ast} \bigl( p_{13}^{\ast} \shC \tensor p_{12}^{\ast} P \bigr) \Bigr) \\
		&\simeq \derR p_{23\ast} \Bigl( p_{13}^{\ast} \bigl( \DAV \shC \bigr) \decal{g} 
			\tensor p_{12}^{\ast} P^{-1} \Bigr) \\
		&\simeq \derR p_{23\ast} \Bigl( 
			\bigl( \id \times \id \times (-1_V) \bigr)^{\ast}(p_{13}^{\ast} \shC') \tensor 
			\bigl( \id \times (-1_{\Ah}) \times \id \bigr)^{\ast} (p_{12}^{\ast} P) \Bigr) \decal{2g} \\
		&\simeq \bigl( (-1_{\Ah}) \times (-1_V) \bigr)^{\ast} \bigl( \FM \shC' \bigr) \decal{2g}
			= \iota^{\ast} \bigl( \FM \shC(\Mmod', F) \bigr) \decal{2g}.
\end{align*}
For the first isomorphism we  use Grothendieck duality, while for the second we use 
Theorem~\ref{thm:Verdier}.  Since $\dim \Ah \times V = 2g$, this implies the result.
\end{proof}

Now let $a \colon X \to A$ be the Albanese map of a smooth projective variety of
dimension $n$. Consider the coherent sheaves $\shC_{i,j} = \shC(\Mmod_{i,j}, F)$
that arise from the decomposition theorem applied to $\al \QHX \decal{n} \in \Db
\MHM(A)$; see Corollary~\ref{cor:key} for the notation. Many of the results of the
previous sections can be stated very succinctly in the following way, using the
second $t$-structure introduced in \subsecref{subsec:t-structure}.

\begin{theorem} \label{thm:mCoh}
With notation as above, each $\FM \shC_{i,j}$ is a $m$-perverse coherent
sheaf on $\Ah \times V$. More precisely, the support of the object $\FM \shC_{i,j}$
is a finite union of torsion translates of triple tori, subject to the inequality
\[
	\codim \Supp R^k \Phi_P \shC_{i,j} \geq 2k
\]
for every $k \in \ZZ$. Moreover, the dual objects
\[
	\derR \shHom \bigl( \FM \shC_{i,j}, \shO_{\Ah \times V} \bigr)
\]
have the same properties.
\end{theorem}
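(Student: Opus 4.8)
The plan is to reduce everything to the single codimension estimate $\codim \Supp R^k \Phi_P \shC_{i,j} \geq 2k$ for all $k$, and to prove that estimate by playing the generic vanishing property of the individual graded pieces $\Gr_\ell^F \Mmod_{i,j}$ against the structural description of the support supplied by Proposition~\ref{prop:linear}. First I would record the formal reductions. By Lemma~\ref{lem:total-dual} one has $\derR \shHom \bigl( \FM \shC_{i,j}, \shO_{\Ah \times V} \bigr) \simeq \iota^{\ast} \bigl( \FM \shC(\Mmod_{i,j}', F) \bigr)$, and since each $M_{i,j}$ is a simple polarizable Hodge module, its Verdier dual is isomorphic to $M_{i,j}$ up to a Tate twist; the twist only shifts the Hodge filtration and hence does not change the associated coherent sheaf on $T^{\ast} A$, so $\shC(\Mmod_{i,j}', F) \simeq \shC_{i,j}$. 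Thus the dual object is $\iota^{\ast} \FM \shC_{i,j}$, with the same cohomology-sheaf codimensions and (since $\iota = (-1_{\Ah}) \times (-1_V)$ preserves torsion translates of triple tori and preserves codimension) the same kind of support; this already gives the ``moreover'' clause. It also shows that the $\hat m$-half of the condition defining $\mCoh(\shO_{\Ah \times V})$, namely $\codim \Supp R^k \shHom(\FM \shC_{i,j}, \shO_{\Ah \times V}) \geq 2k - 1$ (Lemma~\ref{lem:m-structure}), is automatic once $\codim \Supp R^k \Phi_P \shC_{i,j} \geq 2k$ is known; together with Proposition~\ref{prop:linear} for the support, the whole theorem reduces to that one estimate.

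To prove it, the first step is to pass to graded pieces. By construction of the total transform, $\FM \shC_{i,j} \simeq \bigoplus_\ell \FM(\Gr_\ell^F \Mmod_{i,j})$ in the derived category of graded $\OAh \tensor S$-modules, so the coherent sheaf $R^k \Phi_P \shC_{i,j}$ on $\Ah \times V = \Spec_{\Ah}(\OAh \tensor S)$ corresponds to the graded module $\bigoplus_\ell R^k \Phi_P(\Gr_\ell^F \Mmod_{i,j})$. By Theorem~\ref{hm_abelian} every $\Gr_\ell^F \Mmod_{i,j}$ is a $\GV$-sheaf on $A$, so $\codim_{\Ah} \Supp R^k \Phi_P(\Gr_\ell^F \Mmod_{i,j}) \geq k$ for each $\ell$. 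Writing $q \colon \Ah \times V \to \Ah$ for the projection, the support of the direct sum $R^k \Phi_P \shC_{i,j}$ projects under $q$ into $W_k := \overline{\bigcup_\ell \Supp R^k \Phi_P(\Gr_\ell^F \Mmod_{i,j})}$; this is a countable union of closed subsets of codimension $\geq k$ (the Hodge filtration is good but not necessarily bounded above), so, $\CC$ being uncountable, $\codim_{\Ah} W_k \geq k$. In particular $\Supp R^k \Phi_P \shC_{i,j} \subseteq W_k \times V$.

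The second step is to feed in the triple-torus structure. By Proposition~\ref{prop:linear}, applied to the cohomology sheaf $R^k \Phi_P \shC_{i,j}$ via base change exactly as in its proof, every irreducible component $T$ of $\Supp R^k \Phi_P \shC_{i,j}$ is a torsion translate of a triple torus, say arising from a surjection $\varphi \colon A \to B$ with $\dim B = b$; then $\codim_{\Ah \times V} T = 2(g - b)$. Because $T$ is a \emph{torsion} translate it has no translation component in the vector space $V$, hence $T$ is conic in the $V$-direction and $q(T)$ is a torsion translate of the subtorus $\varphi^{\ast} \hat B \subseteq \Ah$, of dimension exactly $b$. From $T \subseteq W_k \times V$ we get $q(T) \subseteq W_k$, and since $q(T)$ is irreducible it lies in one component of $W_k$; therefore $g - b = \codim_{\Ah} q(T) \geq \codim_{\Ah} W_k \geq k$, and so $\codim_{\Ah \times V} T = 2(g - b) \geq 2k$, as required.

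The crux is this last step: the generic vanishing property of the graded pieces only sees codimension $k$, and the extra factor of two is forced by the fact that \emph{every} component of the support is an honest triple torus — even-codimensional and with its codimension already visible over the zero section of $T^{\ast} A$. This is exactly where the harmonic-theoretic and Higgs-bundle structure results of Arapura and Simpson, imported through Proposition~\ref{prop:linear}, do the essential work; without them the bound cannot be upgraded. The remaining points — the compatibility of the total Fourier–Mukai transform with formation of cohomology sheaves, and the fact that $\Spec_{\Ah}$ of a graded module has support projecting into the union of the supports of its graded pieces — are routine, but rely on the set-up of the relative transform from \S9.
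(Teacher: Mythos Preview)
Your proof is correct and follows essentially the same strategy as the paper's: project to $\Ah$ via $q$, use the $\GV$-property of the graded pieces (Theorem~\ref{hm_abelian}) to get codimension $\geq k$ for the image, and then exploit the triple-torus structure (Proposition~\ref{prop:linear}) to double the bound. Two minor remarks: your self-duality argument via the polarization of each simple $M_{i,j}$ is slightly more direct than the paper's, which instead observes that $\DA\bigl(a_\ast\QHX\decal{n}\bigr)\simeq a_\ast\QHX(n)\decal{n}$ so that $\DA M_{i,j}$ is again one of the summands of the same decomposition; and the uncountability detour is unnecessary, since $\Gr_\bullet^F\Mmod_{i,j}$ is a finitely generated graded $\OA\otimes S$-module and hence $\bigcup_\ell \Supp R^k\Phi_P(\Gr_\ell^F\Mmod_{i,j})$ already coincides with the finite union over the degrees of a generating set (and is closed).
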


\begin{proof}
Let $q \colon \Ah \times V \to \Ah$ be the first projection. Since we are dealing
with sheaves of graded modules, the support of the quasi-coherent sheaf 
\[
	\ql \bigl( R^{\ell} \Phi_P \shC_{i,j} \bigr) = 
		\bigoplus_{k \in \ZZ} R^{\ell} \Phi_P \bigl( \Gr_k^F \Mmod_{i,j} \bigr)
\]
is the image of $\Supp R^{\ell} \Phi_P \shC_{i,j}$ under the map $q$. Thanks to
Theorem~\ref{hm_abelian}, each $\FM \bigl( \Gr_k^F \Mmod_{i,j} \bigr)$ is a
$c$-perverse coherent sheaf on $\Ah$, so each irreducible component of the image
has codimension at least $\ell$. On the other hand, by Proposition \ref{prop:linear},
the support of $R^{\ell} \Phi_P \shC_{i,j}$ is a finite union of translates of triple
tori. Since a triple torus is always of the form $\hat{B} \times H^0(B,
\Omega_B^1)$, we obtain $\codim \hat{B} \geq \ell$, and therefore
\[
	\codim \Supp R^{\ell} \Phi_P \shC_{i,j} \geq 2\ell.
\]
This implies that $\FM \shC_{i,j}$ belongs to $\mDtcoh{\leq 0} \bigl( \shO_{\Ah
\times V} \bigr)$. Since Verdier duality for mixed Hodge modules commutes with direct
images by proper maps, we have
\[
	\DA a_* \QHX \decal{n} \simeq a_* \DX \QHX \decal{n}
		\simeq a_* \QHX(n) \decal{n},
\]
which shows that each module $\DA(M_{i,j})$ is again one of the direct factors in
the decomposition of $a_* \QHX \decal{n}$. By Lemma~\ref{lem:total-dual}, the
dual complex is thus again of the same type, hence lies in $\mDtcoh{\leq 0}
\bigl( \shO_{\Ah \times V} \bigr)$ as well. It is then clear from the description of
the $t$-structure in \subsecref{subsec:t-structure} that both objects actually belong
to the heart $\mCoh(\shO_{\Ah \times V})$, hence are $m$-perverse coherent sheaves.
\end{proof}

\begin{note}\label{general_dmod}
More precisely, each $\FM(\shC_{i,j})$ belongs to the subcategory of $\mCoh(\shO_{\Ah
\times V})$ consisting of objects whose support is a finite union of translates of
triple tori. It is not hard to see that this subcategory---unlike the category of
$m$-perverse coherent sheaves itself---is closed under the duality functor $\derR
\shHom \bigl( \argbl, \shO_{\Ah \times V} \bigr)$, because each triple torus has even
dimension.  
\end{note}

\subsection{Perverse coherent sheaves on the space of holomorphic one-forms}

We conclude this part by observing that, in analogy with the method described in \cite{Popa},
our method also produces natural perverse coherent sheaves on the affine space
$V = H^0(A, \Omega_A^1)$, where $A$ is an abelian variety of dimension
$g$. We shall use the following projection maps:
\begin{diagram}{2em}{2.5em}
\matrix[math] (m) {
	A \times V & V & & \Ah \times V & V \\
	A & & & \Ah \\ 
};
\path[to] (m-1-1) edge node[auto] {$q$} (m-1-2)
	edge node[left] {$p$} (m-2-1);
\path[to] (m-1-4) edge node[auto] {$q$} (m-1-5)
	edge node[left] {$p$} (m-2-4);
\end{diagram}

\begin{lemma} \label{lem:FM-ample1}
Let $L$ be an ample line bundle on the abelian variety $\Ah$. For any object $E \in
\Dbcoh \bigl( \shO_{A \times V} \bigr)$, we have
\[
	\derR \shHom \bigl( \derR \ql ( \pu L \tensor \FM E ), \shO_V \bigr)
	 		\simeq \derR \ql \bigl( E' \tensor 
		\pu (-1_A)^{\ast} R^g \Psi_P(L^{-1}) \bigr),
\]
where we set
\[
	E' = (\id \times (-1_V))^{\ast} \derR \shHom 
		\bigl( E, \shO_{A \times V} \decal{g} \bigr).
\]
\end{lemma}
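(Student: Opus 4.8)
The plan is to run the argument behind Lemma~\ref{lem:total-dual} in the presence of the auxiliary ample line bundle $L$, reducing the statement to Grothendieck--Serre duality and to the behaviour of $\derR\Psi_P$ on $L^{-1}$ (a form of Mumford's index theorem for abelian varieties). Since $\Ah$ is an abelian variety, the projection $q\colon\Ah\times V\to V$ is smooth and proper with relative dualizing complex $\shO_{\Ah\times V}[g]$; Grothendieck--Serre duality, followed by factoring the line bundle $\pu L$ out of the $\derR\shHom$, therefore gives
\[
	\derR\shHom\bigl(\derR\ql(\pu L\tensor\FM E),\shO_V\bigr)\;\simeq\;
		\derR\ql\bigl(\pu L^{-1}\tensor\derR\shHom(\FM E,\shO_{\Ah\times V}[g])\bigr).
\]
The task is now to evaluate $\derR\shHom(\FM E,\shO_{\Ah\times V}[g])$ and to recognise $\derR\Psi_P(L^{-1})$ inside what comes out.

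For the first point I would repeat, with an arbitrary $E$ in place of $\shC(\Mmod,F)$, the computation in the proof of Lemma~\ref{lem:total-dual}: write $\FM E=\derR p_{23\ast}(p_{13}^{\ast}E\tensor p_{12}^{\ast}P)$, apply Grothendieck duality along the smooth proper morphism $p_{23}$ (whose relative canonical sheaf is trivial, $A$ being abelian), slide the dual past the flat morphism $p_{13}$, and absorb the resulting $p_{12}^{\ast}P^{-1}$ into an automorphism of $A\times\Ah\times V$ via $P^{-1}\simeq(\id_A\times(-1_{\Ah}))^{\ast}P$. This yields $\derR\shHom(\FM E,\shO_{\Ah\times V}[g])$, up to a shift, as $\bigl((-1_{\Ah})\times\id_V\bigr)^{\ast}$ applied to $\FM$ of the object $E'=(\id\times(-1_V))^{\ast}\derR\shHom(E,\shO_{A\times V}[g])$ of the statement. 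Substituting into the display above and using $\pu L^{-1}\simeq\bigl((-1_{\Ah})\times\id_V\bigr)^{\ast}\pu\bigl((-1_{\Ah})^{\ast}L^{-1}\bigr)$ to cancel the $(-1_{\Ah})$-twist against $\derR\ql$, I would then invoke the elementary identity
\[
	\derR\ql\bigl(\pu M\tensor\FM G\bigr)\;\simeq\;\derR\ql\bigl(G\tensor\pu\derR\Psi_P(M)\bigr),
		\qquad M\in\Dbcoh(\shO_{\Ah}),\; G\in\Dbcoh(\shO_{A\times V}),
\]
which holds by the projection formula and flat base change along the fibre square presenting $A\times\Ah\times V$ as $(A\times V)\times_{A}(A\times\Ah)$ (on the right $q$ and $p$ now denote the projections of $A\times V$), to rewrite the whole expression as a pushforward over $V$ of $E'$ tensored with $\pu\derR\Psi_P\bigl((-1_{\Ah})^{\ast}L^{-1}\bigr)$.

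Finally, $(-1_{\Ah})^{\ast}L^{-1}$ is again anti-ample on $\Ah$; since $\Ah$ has trivial canonical bundle, Serre duality turns Kodaira vanishing into $H^i(\Ah,(-1_{\Ah})^{\ast}L^{-1}\tensor Q)=0$ for all $i<g$ and all $Q\in\Pic^{0}(\Ah)$, so cohomology and base change give $\derR\Psi_P\bigl((-1_{\Ah})^{\ast}L^{-1}\bigr)\simeq R^{g}\Psi_P\bigl((-1_{\Ah})^{\ast}L^{-1}\bigr)[-g]$ with that sheaf locally free on $A$; and by the symmetry $((-1_A)\times(-1_{\Ah}))^{\ast}P\simeq P$ of the normalized Poincar\'e bundle, $R^{g}\Psi_P\bigl((-1_{\Ah})^{\ast}L^{-1}\bigr)\simeq(-1_A)^{\ast}R^{g}\Psi_P(L^{-1})$. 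The shift $[-g]$ here cancels the shift $[g]$ from Grothendieck--Serre duality for $q$, and one is left with exactly $\derR\ql\bigl(E'\tensor\pu(-1_A)^{\ast}R^{g}\Psi_P(L^{-1})\bigr)$, as claimed. The only genuinely delicate part of the argument — and the step I would carry out most carefully — is the bookkeeping of all these shifts and signs: the relative dualizing complexes of $q$ and $p_{23}$ (each contributing merely a shift by $g$, since abelian varieties have trivial canonical bundle) and, above all, the automorphisms of $A\times\Ah$ and $A\times\Ah\times V$ dictated by $P^{-1}\simeq(\id_A\times(-1_{\Ah}))^{\ast}P\simeq((-1_A)\times\id_{\Ah})^{\ast}P$ and $((-1_A)\times(-1_{\Ah}))^{\ast}P\simeq P$, which are exactly what force the twists $(-1_A)^{\ast}$ and $(\id\times(-1_V))^{\ast}$ appearing in the statement — the same bookkeeping that underlies the proof of Lemma~\ref{lem:total-dual}.
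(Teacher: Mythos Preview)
Your proposal is correct and is exactly what the paper has in mind: its own proof reads, in full, ``This is an exercise in interchanging Grothendieck duality with pushforward by proper maps and pullback by smooth maps.'' Your write-up supplies the details of that exercise---Grothendieck duality for $q$, the computation of $\derR\shHom(\FM E,\shO_{\Ah\times V})$ as in Lemma~\ref{lem:total-dual}, the projection-formula/base-change identity relating $\FM$ and $\derR\Psi_P$, and the concentration of $\derR\Psi_P(L^{-1})$ in degree $g$---and you are right that the only delicate point is the sign/twist bookkeeping from the symmetries of the Poincar\'e bundle.
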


\begin{proof}
This is an exercise in interchanging Grothendieck duality with pushforward by proper
maps and pullback by smooth maps.
\end{proof}

We can essentially rephrase Lemma~\ref{lem:kodaira} as follows.

\begin{lemma} \label{lem:Coh-L}
Let $M$ be a mixed Hodge module on an abelian variety $A$, with underlying filtered
$\Dmod$-module $(\Mmod, F)$, and let $\shC(\Mmod, F)$ be the coherent sheaf on $A
\times V$ associated to $\Gr_{\bullet}^F \Mmod$. Then for every ample line bundle $L$
on $A$, one has 
\[
	\derR \ql \bigl( \pu L \tensor \shC(\Mmod, F) \bigr) \in \Coh(\OV).
\]
\end{lemma}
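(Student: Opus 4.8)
The plan is to reduce everything to the vanishing in Lemma~\ref{lem:kodaira}, applied one graded piece at a time. Recall that $\shC(\Mmod, F)$ is, by definition, the coherent sheaf on $A \times V = T^{\ast}A$ determined by the finitely generated graded $\OA \tensor S$-module $\Gr_{\bullet}^F \Mmod$; in particular, under the bundle projection $p \colon A \times V \to A$ — which is an affine morphism — one has $p_{\ast} \shC(\Mmod, F) \simeq \bigoplus_{k \in \ZZ} \Gr_k^F \Mmod$ as a quasi-coherent $\OA$-module.

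First I would compute the sheaf cohomology of $\pu L \tensor \shC(\Mmod, F)$ on $A \times V$. Since $L$ is a line bundle and $p$ is affine, the projection formula gives $\derR p_{\ast}\bigl(\pu L \tensor \shC(\Mmod, F)\bigr) \simeq L \tensor p_{\ast}\shC(\Mmod, F) \simeq \bigoplus_k \bigl(L \tensor \Gr_k^F \Mmod\bigr)$, concentrated in degree $0$. Taking cohomology on $A$ and using that cohomology commutes with direct sums on the Noetherian scheme $A$, we obtain
\[
	H^i\bigl(A \times V, \pu L \tensor \shC(\Mmod, F)\bigr) \simeq \bigoplus_{k \in \ZZ} H^i\bigl(A, \Gr_k^F \Mmod \tensor L\bigr),
\]
which vanishes for every $i > 0$ by Lemma~\ref{lem:kodaira}.

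To conclude, I would use that $V$ is affine and $q \colon A \times V \to V$ is proper. Because $V$ is affine, the higher direct image $R^i \ql\bigl(\pu L \tensor \shC(\Mmod, F)\bigr)$ is the quasi-coherent sheaf on $V$ associated to the module $H^i\bigl(A \times V, \pu L \tensor \shC(\Mmod, F)\bigr)$; by the previous step this vanishes for $i > 0$, and it vanishes trivially for $i < 0$. Hence $\derR \ql\bigl(\pu L \tensor \shC(\Mmod, F)\bigr)$ is concentrated in degree $0$, and since $q$ is proper and $\pu L \tensor \shC(\Mmod, F)$ is coherent, $R^0 \ql\bigl(\pu L \tensor \shC(\Mmod, F)\bigr)$ is a coherent sheaf on $V$; that is, $\derR \ql\bigl(\pu L \tensor \shC(\Mmod, F)\bigr) \in \Coh(\OV)$.

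There is no serious obstacle here beyond bookkeeping: the only real content is Lemma~\ref{lem:kodaira}, and the point to be careful about is the passage between the coherent sheaf $\shC(\Mmod, F)$ on the total space $A \times V$ and the individual coherent sheaves $\Gr_k^F \Mmod$ on $A$ — that is, using affineness of $p$ to strip off the $V$-direction, and affineness of $V$ (equivalently, properness of $q$) to convert the vanishing of the global cohomology groups into the statement about $\derR \ql$. As in the proof of Theorem~\ref{hm_abelian}, it is precisely the triviality of the cotangent bundle of $A$ that makes Lemma~\ref{lem:kodaira}—a statement about each $\Gr_k^F \Mmod$ separately, rather than only about the complex $\Gr_k^F \DR_A(\Mmod)$—available in the first place.
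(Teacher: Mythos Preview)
Your proof is correct and follows essentially the same route as the paper: reduce to the vanishing of $H^i\bigl(A\times V,\pu L\tensor\shC(\Mmod,F)\bigr)$ using that $V$ is affine, identify this with $\bigoplus_k H^i\bigl(A,\Gr_k^F\Mmod\tensor L\bigr)$ via the graded description of $\shC(\Mmod,F)$, and invoke Lemma~\ref{lem:kodaira}. You have simply spelled out the bookkeeping (affineness of $p$, properness of $q$ for coherence) more explicitly than the paper does.
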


\begin{proof}
$V$ being affine, it suffices to prove that the hypercohomology of the complex is
concentrated in degree $0$. But this hypercohomology is equal to
\[
	\mathbf{H}^i \bigl(A \times V,  L \tensor \pl \shC(\Mmod, F) \bigr)
		\simeq \bigoplus_{k \in \ZZ} H^i \bigl( A, L \tensor \Gr_k^F \Mmod \bigr),
\]
which vanishes for $i > 0$ because of Lemma~\ref{lem:kodaira}.
\end{proof}

We can now obtain perverse coherent sheaves on the affine space $V$ by pushing
forward along the projection $q \colon A \times V \to V$.

\begin{proposition}
Let $M$ be a mixed Hodge module on $A$, with underlying filtered $\Dmod$-module
$(\Mmod, F)$, and let $\shC(\Mmod, F)$ be the coherent sheaf on $A \times V$
associated to $\Gr_{\bullet}^F \Mmod$. Then for every ample line bundle $L$ on
$\Ah$, one has
\[
	\derR \ql \Bigl( \pu L \tensor \FM \shC(\Mmod, F) \Bigr) \in \cCoh(\OV).
\]
\end{proposition}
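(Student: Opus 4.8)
The plan is to verify condition (2) of Lemma~\ref{lem:perverse}: that the Grothendieck dual $\derR\shHom\bigl(\argbl, \shO_V\bigr)$ of the complex $\derR\ql\bigl(\pu L \tensor \FM\shC(\Mmod, F)\bigr)$ is a single coherent sheaf. Abbreviate $\shC = \shC(\Mmod, F)$. Since $L$ is ample on $\Ah$, Lemma~\ref{lem:FM-ample1} applied to $E = \shC$ gives
\[
	\derR\shHom\bigl(\derR\ql(\pu L \tensor \FM\shC), \shO_V\bigr)
		\simeq \derR\ql\bigl(\shC' \tensor \pu(-1_A)^{\ast} R^g\Psi_P(L^{-1})\bigr),
\]
with $\shC' = (\id_A \times (-1_V))^{\ast}\,\derR\shHom\bigl(\shC, \shO_{A\times V}\decal{g}\bigr)$, so it is enough to prove that the right-hand side is a coherent sheaf on $V$.

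First I would identify $\shC'$. Because $A$ is an abelian variety, $T^{\ast}A = A\times V$ with $\omega_{A\times V}\simeq\shO_{A\times V}$, and the fiberwise negation on $T^{\ast}A$ is the automorphism $\id_A\times(-1_V)$; hence Theorem~\ref{thm:Verdier} (with $X=A$) reads $\derR\shHom\bigl(\shC, \shO_{A\times V}\decal{g}\bigr)\simeq(\id_A\times(-1_V))^{\ast}\shC(\Mmod', F)$, where $M'$ denotes the Verdier dual mixed Hodge module. Applying $(\id_A\times(-1_V))^{\ast}$ once more cancels the sign, so $\shC'\simeq\shC(\Mmod', F)$, a coherent sheaf on $A\times V$. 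Set $\shG = (-1_A)^{\ast}R^g\Psi_P(L^{-1})$, which is locally free on $A$ (as is $R^g\Psi_P(L^{-1})$, being the Fourier-Mukai transform of $L^{-1}$). Thus $\shC(\Mmod', F)\tensor\pu\shG$ is a coherent sheaf on $A\times V$, so $\derR\ql$ of it is concentrated in non-negative degrees; as $V$ is affine, to see that it sits in degree $0$ it suffices to prove $H^i\bigl(A\times V, \shC(\Mmod', F)\tensor\pu\shG\bigr) = 0$ for $i>0$. Pushing forward along the affine projection $p\colon A\times V\to A$ and using the projection formula together with $\pl\shC(\Mmod', F) = \bigoplus_k\Gr_k^F\Mmod'$, this cohomology group is $\bigoplus_k H^i\bigl(A, \Gr_k^F\Mmod'\tensor\shG\bigr)$.

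The required vanishing then follows from the isogeny trick used in the proof of Theorem~\ref{hm_abelian}. Let $\varphi_L\colon\Ah\to A$ be the \'etale isogeny induced by $L$. Then $H^i\bigl(A, \Gr_k^F\Mmod'\tensor\shG\bigr)$ injects into $H^i\bigl(\Ah, \varphi_L^{\ast}(\Gr_k^F\Mmod'\tensor\shG)\bigr)$. Since $\varphi_L$ is \'etale it commutes with Verdier duality and preserves the Hodge filtration, so $\varphi_L^{\ast}\Gr_k^F\Mmod' = \Gr_k^F\Nmod'$, where $N'$ is the Verdier dual of the mixed Hodge module $\varphi_L^{\ast}M$ on $\Ah$; and since $\varphi_L$ is a group homomorphism, $\varphi_L^{\ast}\shG\simeq(-1_{\Ah})^{\ast}\varphi_L^{\ast}R^g\Psi_P(L^{-1})\simeq H^0(\Ah, L)\tensor(-1_{\Ah})^{\ast}L$ by \cite{Mukai}. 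Consequently
\[
	H^i\bigl(\Ah, \varphi_L^{\ast}(\Gr_k^F\Mmod'\tensor\shG)\bigr)
		\simeq H^0(\Ah, L)\tensor H^i\bigl(\Ah, \Gr_k^F\Nmod'\tensor(-1_{\Ah})^{\ast}L\bigr),
\]
which vanishes for $i>0$ by Lemma~\ref{lem:kodaira}, as $(-1_{\Ah})^{\ast}L$ is ample and $\Nmod'$ underlies a mixed Hodge module on $\Ah$. Hence the right-hand side of the first display is a coherent sheaf, and Lemma~\ref{lem:perverse} gives $\derR\ql\bigl(\pu L\tensor\FM\shC(\Mmod, F)\bigr)\in\cCoh(\OV)$.

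The one step requiring real care is the sign-and-pullback bookkeeping that yields $\shC'\simeq\shC(\Mmod', F)$: one must match the involution $\id_A\times(-1_V)$ produced by Lemma~\ref{lem:FM-ample1} with the fiberwise negation of $T^{\ast}A$ in Theorem~\ref{thm:Verdier}, and confirm that $\varphi_L^{\ast}$ intertwines Verdier duality of mixed Hodge modules on $A$ and on $\Ah$. Granting these, the proof is just the argument of Theorem~\ref{hm_abelian}, relativized over the affine base $V$.
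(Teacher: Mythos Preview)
Your proof is correct and follows essentially the same route as the paper's: dualize via Lemma~\ref{lem:FM-ample1} and Theorem~\ref{thm:Verdier}, then apply the isogeny trick to reduce to Lemma~\ref{lem:kodaira}. The paper packages the final vanishing step as a citation of Lemma~\ref{lem:Coh-L} rather than unpacking it inline, and it silently drops the $(-1_A)^{\ast}$ on $R^g\Psi_P(L^{-1})$ that you carefully track, but these are cosmetic differences.
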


\begin{proof}
By Lemma  \ref{lem:perverse}, it suffices to prove that
\[
	\derR \shHom \Bigl( \derR \ql \bigl( \pu L \tensor \FM \shC(\Mmod, F) \bigr), 
		\OV \Bigr) \in \Coh(\OV).
\]
By Lemma~\ref{lem:FM-ample1} and Theorem~\ref{thm:Verdier}, this object is isomorphic to
\begin{equation} \label{eq:object1}
	\derR \ql \Bigl( \shC(\Mmod', F) \tensor \pu R^g \Psi_P(L^{-1}) \Bigr),
\end{equation}
where $\shC(\Mmod', F)$ is associated to the Verdier dual $M' = \DA M$. Now we apply the
usual covering trick. Let $\varphi_L \colon \Ah \to A$ be the isogeny defined by $L$.
Then the object in \eqref{eq:object1} will belong to $\Coh(\OV)$ provided the same is true for
\begin{equation} \label{eq:object2}
	\derR \ql \Bigl( \varphi_L^{\ast} \shC(\Mmod', F)
			\tensor \pu L \Bigr) \tensor H^0 \bigl( \Ah, L \bigr).
\end{equation}
Since $\varphi_L^{\ast} \shC(\Mmod', F)$ comes from the mixed Hodge module
$\varphi_L^{\ast} M'$, this is a consequence of Lemma~\ref{lem:Coh-L}.
\end{proof}

\section{Strong linearity}

\subsection{Fourier-Mukai for $\Dmod$-modules}
\label{subsec:Rothstein}

A stronger version of the results on cohomological support loci in \cite{GL1} was
given in \cite{GL2} (see also various extensions in \cite{CH}). This is the statement
(SL) in the introduction;  it
states that the standard Fourier-Mukai transform $\derR \Phi_P \OX$ in $\Dbcoh
(\shO_{\widehat{A}})$ is represented by an (explicit) linear complex in the neighborhood of any point
in $\widehat{A}$. Here we extend this to the setting of the trivial $\Dmod$-module
$\OX$, and consequently to all the Hodge modules $M_{i,j}$ appearing in the previous
sections. In order to do this, we shall make use of the Fourier-Mukai transform for
$\Dmod$-modules on abelian varieties, introduced by Laumon \cite{Laumon2}
and Rothstein \cite{Rothstein}.

We start by setting up some notation. Let $A$ be a complex abelian variety of dimension $g$,
and let $\Ash$ be the moduli space of algebraic line bundles with integrable connection
on $A$. Note that $\Ash$ naturally has the structure of a quasi-projective algebraic
variety: on $\Ah$, there is a canonical vector bundle extension
\[
	0 \to \Ah \times H^0(A, \OmA{1}) \to E_{\Ah} \to \Ah \times \CC \to 0,
\]
and $\Ash$ is isomorphic to the preimage of $\Ah \times \{1\}$ inside $E_{\Ah}$. The
projection 
\[
\pi \colon \Ash \to \Ah,  \qquad (L, \nabla) \mapsto L
\] 
is thus a torsor for the trivial bundle $\Ah \times H^0(A, \OmA{1})$; this
corresponds to the fact that $\nabla + \omega$ is again an integrable connection for any
$\omega \in H^0(A, \OmA{1})$. Note that $\Ash$ is a group under tensor product, and
that the trivial line bundle $(\OA, d)$ plays the role of the zero element. 

Recall now that Laumon \cite{Laumon2} and Rothstein \cite{Rothstein} have extended
the Fourier-Mukai transform to $\Dmod$-modules. Their generalized Fourier-Mukai
transform takes bounded complexes of coherent algebraic $\Dmod$-modules on $A$ to
bounded complexes of algebraic coherent sheaves on $\Ash$; we briefly describe it
following the presentation in \cite{Laumon2}*{\S3}, which is more convenient for our
purpose.  On
the product $A\times \Ash$, the pullback $P^\natural$ of the Poincar\'e bundle $P$ is
endowed with a universal integrable connection $\nabla^\natural \colon P^\natural \to
\Omega_{A \times \Ash / \Ash}^1 \tensor P^\natural$, relative to $\Ash$.
Given any algebraic left $\mathcal{D}$-module $\Mmod$ on $A$, interpreted as a quasi-coherent
sheaf with integrable connection, we consider $p_1^* \Mmod \otimes P^\natural$ on $A\times
\Ash$, endowed with the natural tensor product integrable connection $\nabla$
relative to $\Ash$. We then define
\begin{equation}\label{eqn:laumon}
\derR \Phi_{P^\natural}  \Mmod := \derR p_{2\ast} {\rm DR} ( p_1^* \Mmod \otimes P^\natural, \nabla),
\end{equation}
where ${\rm DR} ( p_1^* \Mmod \otimes P^\natural, \nabla)$ is the usual (relative) de Rham complex 
\begin{equation*}
\left[ p_1^* \Mmod \otimes P^\natural \overset{\nabla}{\longrightarrow}  p_1^* \Mmod \otimes P^\natural \otimes \Omega^1_{A\times \Ash/ \Ash} \overset{\nabla}{\longrightarrow} \cdots \overset{\nabla}{\longrightarrow}
 p_1^* \Mmod \otimes P^\natural \otimes \Omega^g_{A\times \Ash/ \Ash} \right]
 \end{equation*}
placed in degrees $-g, \ldots,0$. As all of the entries in this complex are
relative to $\Ash$, it follows that $ \derR \Phi_{P^\natural} \Mmod$ is represented
by a complex of algebraic quasi-coherent sheaves on $\Ash$. Restricted to coherent
$\mathcal{D}$-modules, this is shown to induce an equivalence of categories 
$$\derR \Phi_{P^\natural} \colon \Dbcoh (\Dmod_A) \to  \Dbcoh (\OAsh).$$
The same result is obtained by a different method in \cite{Rothstein}*{Theorem 6.2}.

\begin{note}
Since $\Ash$ is not compact, it is essential to consider algebraic coherent sheaves
on $\Ash$ in the above equivalence. On $A$, this problem does not arise, because the
category of coherent analytic $\Dmod$-modules on a smooth projective variety is 
equivalent to the category of coherent algebraic $\Dmod$-modules by a version of the
GAGA theorem.
\end{note}

Now let $X$ be a smooth projective variety with Albanese map $a \colon X \to A$. 
By first pushing forward to $A$, or equivalently by working 
with the pullback of $(P^\natural, \nabla^\natural)$ to $X \times \Ash$, one can similarly define
$$\derR \Phi_{P^\natural} : \Dbcoh (\Dmod_X) \to  \Dbcoh (\OAsh).$$
In this and the following six subsections, our goal is to prove the following linearity
property for the Fourier-Mukai transform of the trivial $\Dmod$-module $\OX$ (see
Definition~\ref{def:linear} below for the definition of a linear complex
over a local ring).

\begin{theorem} \label{thm:linear}
Let $X$ be a smooth projective variety of dimension $n$, and let $\derR
\Phi_{P^\natural} \OX \in \Dbcoh(\OAsh)$ be the Fourier-Mukai transform of the trivial
$\Dmod$-module on $X$. Let $(L, \nabla) \in \Ash$ be an any point, and denote by $R =
\shO_{\Ash, (L, \nabla)}^{\mathit{an}}$ the local ring in the analytic topology. Then the stalk
$\derR \Phi_{P^\natural} \OX \tensor_{\OX} R$ is quasi-isomorphic to a linear complex
over $R$.
\end{theorem}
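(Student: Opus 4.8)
The plan is to follow the strategy of \cite{GL2}, with the $\dbar$-harmonic theory of $(0,q)$-forms used there replaced by Simpson's harmonic theory for flat line bundles \cite{Simpson}. Since $\Ash$ is a group and the Laumon--Rothstein transform is compatible with translation by a point of $\Ash$, it is enough to prove the statement in an analytic neighborhood of an arbitrary $(L, \nabla) \in \Ash$. After pulling everything back to the universal covering space $W \simeq H^1(X, \CC)$ and choosing linear coordinates $z_1, \dotsc, z_{2g}$ dual to a basis $\eps_1, \dotsc, \eps_{2g}$ of $H^1(X, \CC)$, a point $w \in W$ near the origin corresponds to the flat line bundle obtained from $(L, \nabla)$ by perturbing the connection in the direction of $w$. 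Using a fine ($C^\infty$, Dolbeault-type) resolution of $P^\natural$ relative to $\Ash$, the complex $\derR \Phi_{P^\natural} \OX$ pulled back to $W$ is represented, near the origin, by the twisted de Rham complex of $(L, \nabla)$ whose differential has been perturbed by $w$: trivializing the underlying $C^\infty$ bundle, this is the complex of sheaves of smooth forms on $X$ with differential $D_w = D + w \wedge (\argbl)$, where $D$ is the flat connection attached to the local system $\LL = \ker \nabla$. This is the analytic model to be linearized.

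Next I would invoke harmonic theory. Fix a harmonic metric on $(L, \nabla)$ in Simpson's sense; this produces the decomposition of $D$ and of its adjoint, together with the Kähler-type identities ensuring that the relevant Laplacians agree up to a constant, so that the space $\Har^\bullet(X, \LL)$ of $D$-harmonic forms is finite-dimensional, closed under the harmonic projection, and computes $H^\bullet(X, \LL)$. The proposed linear model is
\[
	\Har^0(X, \LL) \tensor \OW \to \Har^1(X, \LL) \tensor \OW \to \dotsb \to \Har^{2n}(X, \LL) \tensor \OW,
\]
placed in degrees $-n, \dotsc, n$, with differential $\alpha \tensor f \mapsto \sum_{j=1}^{2g} (\eps_j \wedge \alpha) \tensor z_j f$, where $\eps_j$ is represented by its harmonic form and the wedge product is followed by harmonic projection. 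I would then produce a quasi-isomorphism from this linear complex to the analytic model, in a neighborhood of the origin, by homological perturbation: the inclusion of harmonic forms, the harmonic projection, and the Green's operator $G$ of $D$ constitute a contraction of the twisted de Rham complex onto $\Har^\bullet(X, \LL)$, and feeding the perturbation $w \wedge (\argbl)$ through this contraction yields, via a power series in $w$, a chain map whose linear term is exactly the displayed differential. Convergence of this series in the analytic topology — using boundedness of $G$ — is what pins the argument to the analytic (rather than formal) local ring $R = \shO_{\Ash, (L,\nabla)}^{\mathit{an}}$.

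The main obstacle is precisely the phenomenon flagged after Theorem~\ref{thm:linear_intro}: the wedge product of two $D$-harmonic forms need not be $D$-harmonic, so the higher-order terms of the perturbed differential do not vanish identically, and one does not simply land on an honest linear subcomplex as in \cite{GL2}. One must show these higher terms can be absorbed: either that the perturbed complex, as a complex of $\OW$-modules, becomes isomorphic to the linear one after an $\OW$-linear (non-constant) change of trivialization, or — equivalently — that the non-linear part of the transferred differential is null-homotopic over $R$ by a homotopy built from $G$ that also converges. Arranging both the chain map and the homotopies to converge, while keeping track of the bidegrees so that the leading differential is identified with cup product, is the technical heart of the argument; the purely formal computation of that leading term, which reproduces the formula in Theorem~\ref{thm:linear_intro}, is by comparison routine.
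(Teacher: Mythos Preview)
Your overall strategy matches the paper's: pull back to the universal cover $W$, represent the transform by the complex $(C^{\bullet}, D)$ with $D = \dtau + e$ where $e$ is wedging with the tautological one-form, invoke Simpson's harmonic theory for the flat line bundle corresponding to $\tau$, take as linear model $(\Hartau^{\bullet} \tensor R, \delta)$ with $\delta\alpha = \sum_j \Htau(e_j \wedge \alpha) \tensor z_j$, and produce a quasi-isomorphism by a power series in a Green-type operator whose convergence is controlled by Sobolev estimates. All of this is exactly what the paper does.

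Where your proposal parts ways with the paper is precisely at the step you flag as the crux, and there the mechanism you suggest is not the one that works. You propose standard homological perturbation: transfer $D$ to $\Hartau^{\bullet} \tensor R$ via the contraction built from $\Htau$ and the homotopy $\dsttau \Gtau$, obtain an \emph{a priori} non-linear transferred differential, and then argue that its higher-order part can be absorbed by a further homotopy. The paper does not transfer and then correct; it constructs the chain map $f \colon \Hartau^{\bullet} \tensor R \to C^{\bullet}$ \emph{directly} so that $Df = f\delta$ for the honestly linear $\delta$. The key is to solve $(\id + 2\,\dbarsttau \Gtau e)\,f(\alpha) = \alpha$ subject to the side constraints $\deltau f(\alpha) = 0$ and $\dbarsttau f(\alpha) = 0$. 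Note the asymmetry: one uses $\dbarsttau \Gtau$, not $\dsttau \Gtau$. The constraint $\deltau\beta = 0$ propagates through the iteration because of the K\"ahler identity $\deltau \dbarsttau = -\dbarsttau \deltau$, and at the end it yields $\beta = \alpha + 2\,\deltau \delsttau \Gtau \beta$, so that $e\beta - e\alpha = -2\,\deltau(e\,\delsttau \Gtau \beta)$ is $\deltau$-exact and hence $\Htau(e\beta) = \Htau(e\alpha)$. That identity is exactly what makes $Df(\alpha) = f(\delta\alpha)$ hold on the nose, with no higher-order correction to $\delta$. Your ``absorb afterwards'' plan is plausible in spirit, but you have not supplied the device---the splitting $\dtau = \deltau + \dbartau$ and the deliberate use of only one half in the homotopy, with the other half imposed as a constraint---that actually kills the non-linear terms; the generic perturbation lemma with $\dsttau \Gtau$ does not do this by itself.
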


The proof of Theorem~\ref{thm:linear} takes up the following six subsections.
We will in fact prove the more precise version given in Theorem \ref{thm:linear_intro} in the introduction, 
by producing the natural analogue of the \emph{derivative complex} of \cite{GL2}.
As in that paper, the idea is to pull $\derR \Phi_{P^\natural} \OX$ back to a complex on the universal
covering space $H^1 (X, \CC) = H^1(A, \CC)$, and to use harmonic forms to construct a 
linear complex that is isomorphic to the pullback in a neighborhood of the preimage of the given
point. We will encounter, however, an important additional difficulty, namely that
harmonic forms are in general not preserved under wedge product.

We conclude this subsection by noting that, regardless of the explicit linear representation, 
in combination with Proposition~\ref{prop:summand} below we obtain that
every direct summand of $\derR \Phi_{P^\natural} \OX$ is also isomorphic to a linear
complex in an analytic neighborhood of any given point on $\Ash$.

\begin{corollary}\label{summands}
Suppose that a Hodge module $M$ occurs as a direct factor of some $H^i a_* \QHX
\decal{n}$. Let $(\Mmod, F)$ be the filtered holonomic $\Dmod$-module underlying $M$,
and let $\derR\Phi_{P^\natural} \Mmod \in \Dbcoh(\OAsh)$ be its Fourier-Mukai
transform. Then the stalk $\derR\Phi_{P^\natural} \Mmod \tensor_{\OX} R$ is
quasi-isomorphic to a linear complex over $R$.
\end{corollary}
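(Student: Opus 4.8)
The plan is to deduce Corollary~\ref{summands} formally from Theorem~\ref{thm:linear}, using only the additivity of the generalized Fourier--Mukai functor and the stability of linear complexes under passage to direct summands (Proposition~\ref{prop:summand}).

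First I would invoke the decomposition theorem for Hodge modules: $a_* \QHX\decal{n} \simeq \bigoplus_i M_i\decal{-i}$ in $\Db\MHM(A)$ with $M_i = H^i a_* \QHX\decal{n}$. If a Hodge module $M$ occurs as a direct factor of some $M_i$, then $M\decal{-i}$ is a direct factor of $a_* \QHX\decal{n}$, and applying the exact forgetful functor $\Db\MHM(A)\to\Dbcoh(\Dmod_A)$ shows that the regular holonomic $\Dmod_A$-module $\Mmod$ is, up to the shift $\decal{-i}$, a direct summand of the $\Dmod$-module complex underlying $a_*\QHX\decal{n}$ --- that is, of the $\Dmod$-module direct image $a_*(\OX, F)$ with its filtration forgotten.

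Next I would use that $\derR\Phi_{P^\natural}\colon\Dbcoh(\Dmod_A)\to\Dbcoh(\OAsh)$ is a triangulated functor (it is $\derR p_{2\ast}$ of a relative de Rham complex, hence a composite of triangulated functors), so it is additive, and that by construction the transform on $\Dbcoh(\Dmod_X)$ is obtained by first pushing forward to $A$. Consequently $\derR\Phi_{P^\natural}\Mmod\decal{-i}$ is a direct summand of $\derR\Phi_{P^\natural}\OX$ in $\Dbcoh(\OAsh)$. Localizing at a point $(L,\nabla)\in\Ash$ and passing to the analytic local ring $R$ preserves this: the stalk of $\derR\Phi_{P^\natural}\Mmod$, shifted by $\decal{-i}$, is a direct summand, in the derived category of $R$-modules, of the stalk of $\derR\Phi_{P^\natural}\OX$.

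Finally, Theorem~\ref{thm:linear} gives that the stalk of $\derR\Phi_{P^\natural}\OX$ at $(L,\nabla)$ is quasi-isomorphic over $R$ to a linear complex; by Proposition~\ref{prop:summand}, any direct summand (in the derived category) of a complex quasi-isomorphic to a linear complex is again quasi-isomorphic to a linear complex, and since a translate of a linear complex is again linear, this yields the assertion for $\derR\Phi_{P^\natural}\Mmod$. There is no genuine obstacle in this argument --- its entire content rests on Theorem~\ref{thm:linear}; the only point worth spelling out is that the decomposition theorem \emph{on $A$} does produce $\derR\Phi_{P^\natural}\Mmod$ as a summand of $\derR\Phi_{P^\natural}\OX$, which is why one works with the $\Dmod$-module direct image to $A$ rather than on $X$ directly.
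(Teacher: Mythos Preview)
Your proof is correct and follows exactly the approach the paper indicates: combine Theorem~\ref{thm:linear} with Proposition~\ref{prop:summand}, using the decomposition theorem to realize $\derR\Phi_{P^\natural}\Mmod$ (up to a shift) as a direct summand of $\derR\Phi_{P^\natural}\OX$. The paper does not spell out the argument beyond the sentence preceding the corollary, so your write-up is in fact more detailed than the original.
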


\subsection{Analytic description}

We begin by giving an analytic description of the space $\Ash$, and of the
Fourier-Mukai transform for $\Dmod$-modules. For the remainder of the discussion, we
shall identify $H^1(A, \CC)$ with the space of translation-invariant complex
one-forms on the abelian variety $A$. As complex manifolds, we then have
\[
	\Ash \simeq H^1(A, \CC) \big/ H^1(A, \ZZ) \qquad \text{and} \qquad
		\Ah \simeq H^1(A, \OA) \big/ H^1(A, \ZZ),
\]
and this is compatible with the exact sequence
\[
	0 \to H^0(A, \OmA{1}) \to H^1(A, \CC) \to H^1(A, \OA) \to 0.
\]
In particular, the universal covering space of $\Ash$ is isomorphic to 
$H^1(A, \CC)$. Given a translation-invariant one-form
$\tau \in H^1(A, \CC)$, let $\tau^{1,0}$ be its holomorphic and
$\tau^{0,1}$ its anti-holomorphic part. Then the image of $\tau$ in $\Ash$
corresponds to the trivial bundle $A \times \CC$, endowed with the holomorphic
structure given by the operator $\dbar + \tau^{0,1}$ and the integrable
connection given by $\del + \tau^{1,0}$. We can summarize this by saying
that $\tau$ corresponds to the trivial smooth vector bundle $A \times \CC$, endowed
with the smooth integrable connection $\dtau = d + \tau$. 

The Albanese mapping $a \colon X \to A$ induces an isomorphism $H^1(A, \CC) = H^1(X,
\CC)$, under which translation-invariant one-forms on $A$ correspond to harmonic
one-forms on $X$ for any choice of K\"ahler metric. To shorten the notation, we set
$$W := H^1(A, \CC) = H^1(X, \CC).$$ 
Then a harmonic form $\tau \in W$ corresponds to the
trivial smooth line bundle $X \times \CC$, endowed with the smooth integrable
connection $\dtau = d + \tau$.

We can similarly interpret the pullback of the Poincar\'e bundle to the complex
manifold $X \times W$.  Let $\shC^k(X \times W / W)$ be the sheaf of smooth
complex-valued relative $k$-forms on $X \times W$ that are in the kernel of
$\dbar_W$, meaning holomorphic in the direction of $W$, and denote by $$C^k(X \times
W / W) := {p_2}_*  \shC^k(X \times W / W)$$ its pushforward to $W$.  Let $T \in
C^1(X \times W / W)$ denote the tautological relative one-form on $X \times W$: the
restriction of $T$ to the slice $X \times \{\tau\}$ is equal to $\tau$. Then the
pullback of $(P^\natural, \nabla^\natural)$ is isomorphic to the trivial smooth bundle $X
\times W \times \CC$, with complex structure given by the operator $\dbar_X + \dbar_W
+ T^{0,1}$, and with relative integrable connection given by the operator $\del_X +
T^{1,0}$. 

This leads to the following analytic description of the Fourier-Mukai transform
(similar to \cite{GL2}*{Proposition 2.3}). Let
\[
	D \colon C^k(X \times W / W) \to C^{k+1}(X \times W / W)
\]
be the differential operator defined by the rule $D(\alpha) = d_X \alpha + T \wedge
\alpha$. Using that $d_X T = 0$, it is easy to see that $D \circ D = 0$.

\begin{lemma}\label{smooth_description}
The complex of $\shO_W$-modules $\bigl( C^{\bullet}(X \times W / W), D \bigr)$
is quasi-isomorphic to the pullback $\pi^\ast \derR \Phi_{P^\natural} \OX$, where $\pi: W
\rightarrow \Ash$ is the universal cover.
\end{lemma}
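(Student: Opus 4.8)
The goal is to identify the algebraically-defined Fourier–Mukai transform $\derR \Phi_{P^\natural} \OX$, after pulling back along the universal cover $\pi : W \to \Ash$, with the explicit Dolbeault-type complex $\bigl(C^{\bullet}(X\times W/W), D\bigr)$. The strategy is the standard one for translating an algebraic or holomorphic de Rham computation into a $C^{\infty}$ one: replace the relative holomorphic de Rham complex of $p_1^*\OX \tensor P^\natural$ by a relative Dolbeault (double complex) resolution, push forward, and take the total complex. First I would recall from \subsecref{subsec:Rothstein} that $\derR \Phi_{P^\natural}\OX$ is computed (after pushing forward to $A$, or equivalently working on $X \times \Ash$) as $\derR p_{2\ast}$ of the relative de Rham complex $\DR(p_1^*\OX \tensor P^\natural, \nabla)$, and that by the GAGA-type comparison in the Note following the construction, this algebraic object agrees with its analytification; so it suffices to work with the analytic picture on $X \times \Ash$ and then pull back along $\pi$.

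**Key steps.** (1) Pull everything back to $X \times W$: by the analytic description given just before the lemma, the pullback of $(P^\natural, \nabla^\natural)$ to $X\times W$ is the trivial $C^{\infty}$ line bundle with complex structure $\dbar_X + \dbar_W + T^{0,1}$ and relative connection $\del_X + T^{1,0}$, and the relative holomorphic de Rham differential becomes, on holomorphic-in-$W$ sections, the operator $\alpha \mapsto (\del_X + T^{1,0}\wedge)\alpha$. (2) Resolve in the anti-holomorphic $X$-direction: tensor the relative holomorphic de Rham complex with the relative Dolbeault resolution $\bigl(\mathcal A_X^{0,\bullet}, \dbar_X + T^{0,1}\wedge\bigr)$ — i.e. form the relative Dolbeault complex of the flat bundle $(X\times W\times\CC,\; d_X + T)$ — obtaining a double complex of fine sheaves on $X\times W$ whose entries are the sheaves $\shC^k(X\times W/W)$ of smooth relative $k$-forms holomorphic along $W$. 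Since these are fine (acyclic for $p_{2\ast}$ after the $\dbar_W$-constraint is imposed, which is exactly what the definition of $\shC^k(X\times W/W)$ builds in), applying $p_{2\ast}$ computes $\derR p_{2\ast}$ of the total complex with no higher direct images. (3) Identify the total differential of the double complex with $D(\alpha) = d_X\alpha + T\wedge \alpha$: the horizontal differential is $\del_X + T^{1,0}\wedge$ (relative holomorphic de Rham) and the vertical differential is $\dbar_X + T^{0,1}\wedge$, and their sum is $(\del_X+\dbar_X) + (T^{1,0}+T^{0,1})\wedge = d_X + T\wedge = D$, with $D\circ D = 0$ following from $d_X T = 0$ as noted. (4) Conclude, via the usual spectral-sequence / total-complex argument, that $\bigl(C^{\bullet}(X\times W/W), D\bigr) \simeq \pi^* \derR \Phi_{P^\natural}\OX$ in the derived category of $\shO_W$-modules, taking care of the degree shift: the relative de Rham complex sits in degrees $-g,\dots,0$ and $a_* $ contributes the $X$-directions, so the resulting total complex is placed in degrees $-n,\dots,n$, matching the indexing in Theorem~\ref{thm:linear_intro}.

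**Main obstacle.** The routine parts are the double-complex bookkeeping and the degree/shift conventions; the one genuinely delicate point is the interchange of the algebraic and analytic categories together with the pushforward. Specifically, one must be careful that $\derR p_{2\ast}$ computed with the fine resolution $\shC^\bullet(X\times W/W)$ on the \emph{non-compact} manifold $X\times W$ really does agree with the algebraic $\derR \Phi_{P^\natural}\OX$ pulled back along $\pi$ — the subtlety being exactly the one flagged in the Note in \subsecref{subsec:Rothstein}, that coherence on $\Ash$ is an algebraic notion. The clean way around this is to first establish the comparison on $X \times \Ash$ itself (where GAGA applies fibrewise over the quasi-projective base $\Ash$, or one argues with a relative holomorphic Dolbeault resolution on $X \times \Ash$ directly), obtaining $\derR\Phi_{P^\natural}\OX \simeq \derR p_{2\ast}^{\mathrm{an}}\bigl(\mathcal C^\bullet(X\times \Ash/\Ash), D\bigr)$ as complexes of coherent analytic sheaves on $\Ash$, and only then pull back along the (local biholomorphism) $\pi : W \to \Ash$, which commutes with $\dbar_W$, with $p_{2\ast}$, and with the formation of the relative forms sheaves, thereby yielding the stated quasi-isomorphism over $\shO_W$. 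I expect this comparison-plus-base-change step to be where essentially all the content of the lemma resides; everything else is formal.
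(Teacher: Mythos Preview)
Your proposal is correct and follows essentially the same approach as the paper: resolve the relative holomorphic de Rham complex of $(P^\natural,\nabla^\natural)$ by a relative Dolbeault complex of fine sheaves, identify the total differential with $D = d_X + T\wedge$, and use acyclicity of the sheaves $\shC^k(X\times W/W)$ for $p_{2\ast}$ (the paper calls this a ``partition of unity argument'') to conclude. The only minor difference is in the order of operations: the paper first pulls back along the covering map $\pi$ (using that $\pi^\ast$ commutes with $\derR p_{2\ast}$ for a local biholomorphism) and then works entirely on $X\times W$, whereas you propose to carry out the Dolbeault comparison on $X\times\Ash$ first and only then pull back---your worry about the algebraic/analytic comparison is therefore somewhat overstated, since the paper sidesteps it by working analytically from the outset.
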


\begin{proof}
By the definition of the Fourier transform, $\derR \Phi_{P^\natural} \OX$
is the derived pushforward, via the projection $p_2 \colon X \times \Ash \to \Ash$,
of the complex
\[
\DR(P^\natural, \nabla^\natural) = 
\left[ P^\natural \overset{\nabla^\natural}{\longrightarrow}  \Omega^1_{X \times \Ash/
\Ash} \otimes P^\natural \overset{\nabla^\natural}{\longrightarrow} \cdots
\overset{\nabla^\natural}{\longrightarrow} \Omega^g_{X\times \Ash/ \Ash} \tensor
P^\natural \right],
\]
where $(P^\natural, \nabla^\natural)$ denotes the pullback of the Poincar\'e bundle to $X
\times \Ash$. Since $\pi \colon W \to \Ash$ is a covering map, we thus obtain
\[
	\pi^\ast \derR \Phi_{P^\natural} \OX \simeq
		\derR p_{2\ast} \left( (\id \times \pi)^{\ast} \DR(P^\natural, \nabla^\natural) \right).
\]
As noted above, the pullback of $(P^\natural, \nabla^\natural)$ to $X \times W$ is
isomorphic to the trivial smooth bundle $X \times W \times \CC$, with complex
structure given by $\dbar_X + \dbar_W + T^{0,1}$, and relative integrable connection
given by $\del_X + T^{1,0}$. By a version of the Poincar\'e lemma, we therefore have
\[
	(\id \times \pi)^{\ast} \DR(P^\natural, \nabla^\natural) \simeq
		\bigl( \shC^{\bullet}(X \times W/W), D \bigr).
\]
To obtain the desired result, it suffices then to note that 
$$R^i {p_2}_* \shC^k(X \times W / W) = 0, ~{\rm for ~all~} k {\rm ~and~all~} i >0.$$ 
This follows from a standard partition of unity argument as in \cite{GH}*{p.~42}.
\end{proof}

To prove Theorem \ref{thm:linear_intro}, it now suffices to show that the stalk of the complex 
$\bigl( C^{\bullet}(X \times W / W), D \bigr)$ at any given point $\tau \in W$ is
quasi-isomorphic to the linear complex appearing in its statement. Choose a basis $e_1, \dotsc, e_{2g} \in H^1(X, \CC)$
for the space of harmonic one-forms on $X$, and let $z_1, \dotsc, z_{2g}$ be the
corresponding system of holomorphic coordinates on $W$, centered at the point $\tau$.
In these coordinates, we have
\[
	T = \tau + \sum_{j=1}^{2g} z_j e_j \in C^1(X \times W/W).
\]
Let $R = \shO_{W, \tau}^{\mathit{an}}$ be the analytic local ring at the point
$\tau$, with maximal ideal $\mm$ and residue field $R / \mm = \CC$. To simplify the
notation, we put 
\[
	C^k :=  C^k(X \times W / W) \tensor_{\shO_W} R.
\]
Then the $R$-module $C^k$ consists of all convergent power series of the form
\[
	\alpha = \sum_I \alpha_I \tensor z^I := \sum_{I \in \NN^{2g}} \alpha_I \tensor
		z_1^{I(1)} \dotsm z_{2g}^{I(2g)}
\]
where $\alpha_I \in A^k(X)$ are smooth complex-valued $k$-forms on $X$, and the
summation is over all multi-indices $I \in \NN^{2g}$. To describe the induced
differential in the complex, we define the auxiliary operators $\dtau = d + \tau$ and
\[
	e \colon C^k \to C^{k+1}, \qquad 
		e(\alpha) = \sum_{I, j} e_j \wedge \alpha_I \tensor z_j z^I.
\]
Then each differential $D \colon C^k \to C^{k+1}$ is given by the formula
\begin{equation} \label{eq:differential}
	D \alpha = \sum_I (d \alpha_I + \tau \wedge \alpha_I) \tensor z^I 
		+ \sum_{I,j} e_j \wedge \alpha_I \tensor z_j z^I
		= (\dtau + e) \alpha.
\end{equation}
Note that each $R$-module in the complex has infinite rank; moreover, in the formula
for the differential $D$, the first of the two terms is not linear in $z_1, \dotsc,
z_{2g}$. Our goal is then to build a linear complex quasi-isomorphic to
\[
	C^0 \to C^1 \to \dotsb \to C^{2n-1} \to C^{2n},
\]
by using harmonic forms with coefficients in the flat line bundle corresponding to
$\tau$. The space of $\dtau$-harmonic forms has the advantage of being
finite-dimensional; in addition, any such form $\alpha \in A^k(X)$ satisfies $\dtau
\alpha = 0$, and hence
\[
	D \alpha = \sum_{j=1}^{2g} e_j \wedge \alpha \tensor z_j.
\]
This shows that the differential is linear when restricted to the free $R$-module
generated by the $\dtau$-harmonic forms. The only problem is that we do not obtain a
subcomplex of $(C^{\bullet}, D)$ in this way, because the wedge product $e_j \wedge
\alpha$ is in general no longer harmonic. This difficulty can be overcome by
constructing a more careful embedding of the space of $\dtau$-harmonic $k$-forms into
$C^k$, as we now explain.

\subsection{Harmonic theory for flat line bundles}

In this section, we summarize the theory of harmonic forms with coefficients in a
flat line bundle, developed by Simpson \cite{Simpson}. Let $A^k(X)$ be the space of
smooth complex-valued $k$-forms on $X$.  Choose a K\"ahler metric on $X$, with
K\"ahler form $\omega \in A^2(X)$, and denote by
\[
	L \colon A^k(X) \to A^{k+2}(X), \qquad L(\alpha) = \omega \wedge \alpha
\]
the associated Lefschetz operator. The metric gives rise to the $\ast$-operator
\[
	\ast \colon A^k(X) \to A^{2n-k}(X),
\]
where $n = \dim X$, and the formula
\[
	(\alpha, \beta)_X = \int_X \alpha \wedge \ast \overline{\beta}
\]
defines a Hermitian inner product on the space $A^k(X)$. With respect to these inner
products, the adjoint of $L \colon A^k(X) \to A^{k+2}(X)$ is the operator $\Lambda
\colon A^k(X) \to A^{k-2}(X)$. Likewise, the adjoint of the exterior derivative $d
\colon A^k(X) \to A^{k+1}(X)$ is the operator $\dst \colon A^k(X) \to A^{k-1}(X)$,
described more explicitly as $\dst \alpha = - \ast d \ast \alpha$. We use the
notation $d = \del + \dbar$ for the decomposition of $d$ by type; thus $\del$ maps
$(p,q)$-forms to $(p+1,q)$-forms, and $\dbar$ maps $(p,q)$-forms to $(p,q+1)$-forms.

Now fix a holomorphic line bundle $L$ with integrable connection $\nabla$. Let $A^k(X, L)$
denote the space of smooth $k$-forms with coefficients in $L$. The theorem of
Corlette \cite{Simpson}*{Theorem~1} shows that there is (up to rescaling) a unique
metric on the underlying smooth bundle that makes $(L, \nabla)$ into a harmonic
bundle. Together with the K\"ahler metric on $X$, it defines Hermitian inner products
on the spaces $A^k(X, L)$. In the case at hand, the harmonic metric may be described
concretely as follows.  According to our previous discussion, there is a harmonic
one-form $\tau \in A^1(X)$,
such that $L$ is isomorphic to the trivial smooth bundle $X \times \CC$, with complex
structure given by $\dbar + \tau^{0,1}$ and integrable connection given by $\del +
\tau^{1,0}$. The harmonic metric on $L$ is then simply the standard metric on the
bundle $X \times \CC$.  Consequently, we have
\[
	A^k(X, L) = A^k(X),
\]
and both the $\ast$-operator and the inner product induced by the harmonic metric
agree with the standard ones defined above.

As before, we let $\dtau = d + \tau$ be the operator encoding the complex structure
and integrable connection on $(L, \nabla)$; concretely,
\[
	\dtau \colon A^k(X) \to A^{k+1}(X), \qquad 
		\alpha \mapsto d \alpha + \tau \wedge \alpha.
\]
Let $\dsttau \colon A^k(X) \to A^{k-1}(X)$ be the adjoint of $\dtau$ with respect
to the inner products, and let $\Laptau = \dtau \dsttau + \dsttau \dtau$ be the
Laplace operator; it is an elliptic operator of second order. If we denote by 
\[
	\Hartau^k = \ker \bigl( \Laptau \colon A^k(X) \to A^k(X) \bigr)
\]
the space of $\dtau$-harmonic $k$-forms, then $\Hartau^k$ is finite-dimensional, and
Hodge theory gives us a decomposition
\begin{equation} \label{eq:Hodge-dec}
	A^k(X) = \Hartau^k(X) \oplus \Laptau A^k(X),
\end{equation}
orthogonal with respect to the inner product on $A^k(X)$. Let $\Htau \colon A^k(X) \to
\Hartau^k$ be the orthogonal projection to the space of harmonic forms. It is not hard
to see that any $\alpha \in A^k(X)$ can be uniquely written in the form
\begin{equation} \label{eq:Greens}
	\alpha = \Htau \alpha + \Laptau \Gtau \alpha,
\end{equation}
where $\Gtau \alpha \in \Laptau A^k(X)$ is the so-called \define{Green's operator}. The
uniqueness of the decomposition implies that $\dtau \Gtau = \Gtau \dtau$ and $\dsttau
\Gtau = \Gtau \dsttau$.

Following Simpson, we have a decomposition $\dtau = \deltau + \dbartau$, where
\begin{align*}
	\deltau &= \del + \frac{\tau^{1,0} - \overline{\tau^{0,1}}}{2} 		
		+ \frac{\tau^{0,1} + \overline{\tau^{1,0}}}{2}, \\
	\dbartau &= \dbar + \frac{\tau^{0,1} - \overline{\tau^{1,0}}}{2} 		
		+ \frac{\tau^{1,0} + \overline{\tau^{0,1}}}{2}.
\end{align*}
The justification for defining these two peculiar operators is that they satisfy the
usual K\"ahler identities (which fail for the naive choice $\del + \tau^{1,0}$ and
$\dbar + \tau^{0,1}$).

\begin{theorem}[Simpson]
The following are true:
\begin{enumerate}
\item We have $\deltau \deltau = \dbartau \dbartau = \deltau \dbartau + \dbartau
\deltau = 0$.
\item Let $\delsttau$ and $\dbarsttau$ denote the adjoints of $\deltau$ and
$\dbartau$, respectively. Then the first-order K\"ahler identities
\[
	\delsttau = i \lbrack \Lambda, \dbartau \rbrack, \qquad
	\dbarsttau = -i \lbrack \Lambda, \deltau \rbrack, \qquad
	\dsttau = i \lbrack \Lambda, \dbartau - \deltau \rbrack.
\]
are satisfied.
\item We have $\dbartau \delsttau + \delsttau \dbartau = \deltau \dbarsttau +
\dbarsttau \deltau = 0$.
\item The Laplace operator satisfies
\[
	\Laptau = 2 (\deltau \delsttau + \delsttau \deltau)
		= 2 (\dbartau \dbarsttau + \dbarsttau \dbartau),
\]
and consequently, $\dtau$-harmonic forms are both $\deltau$-closed and $\delsttau$-closed.
\item We have $H \deltau = H \dbartau = H \delsttau = H \dbarsttau = 0$.
\item The Green's operator $\Gtau$ commutes with $\deltau$, $\dbartau$, $\delsttau$,
and $\dbarsttau$.
\end{enumerate}
\end{theorem}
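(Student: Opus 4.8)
The plan is to recognize $\deltau$ and $\dbartau$ as the operators $D', D''$ of the harmonic bundle structure on $(L,\nabla)$, and then to run Simpson's general argument \cite{Simpson}, which collapses considerably in the rank-one case. Write $\dtau = D_h + \Psi$, where $D_h = d + \tfrac12(\tau - \bar\tau)$ is the connection compatible with the standard (flat) metric on $X\times\CC$ and $\Psi = \tfrac12(\tau+\bar\tau)$ is the self-adjoint correction; Corlette's theorem, already invoked above, says that $\Psi$ is harmonic, which here reduces to the fact that $\tau$ is a harmonic one-form. Splitting by type, $D_h = \del_h + \dbar_h$ and $\Psi = \theta + \theta^{\ast}$ with $\theta = \tfrac12(\tau^{1,0} + \overline{\tau^{0,1}})$, and one checks directly from the formulas in the statement that $\dbartau = \dbar_h + \theta$ and $\deltau = \del_h + \theta^{\ast}$. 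Since $X$ is compact Kähler, the components $\tau^{1,0},\tau^{0,1}$ of the harmonic form $\tau$ are themselves harmonic, hence a holomorphic resp.\ anti-holomorphic one-form; in particular every $\CC$-linear combination of $\tau^{1,0},\tau^{0,1},\overline{\tau^{1,0}},\overline{\tau^{0,1}}$ --- and so each correction form above, as well as $\theta$ and $\theta^{\ast}$ --- is both $\del$- and $\dbar$-closed, and $d\tau = 0$.

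Granting this, part~(1) is a direct computation: $\dtau^2 = d\tau\wedge(\argbl) = 0$ because $\tau$ is closed; expanding $\dbartau^2 = \dbar_h^2 + \dbar_h\theta + \theta\wedge\theta$, the term $\dbar_h^2$ is the $(0,2)$-part of the curvature of $D_h$, which is $d\bigl(\tfrac12(\tau-\bar\tau)\bigr) = 0$, the term $\dbar_h\theta$ reduces to $\dbar\theta = 0$, and $\theta\wedge\theta = 0$; the same computation gives $\deltau^2 = 0$, and then $\deltau\dbartau + \dbartau\deltau = \dtau^2 - \deltau^2 - \dbartau^2 = 0$. Part~(2), the first-order Kähler identities, is the only genuinely computational point and the crux of the argument: $\del_h,\dbar_h$ are the components of the Chern connection of the Hermitian holomorphic line bundle $(L,\dbar_h)$ on the Kähler manifold $X$, so they satisfy the ordinary Kähler identities $\dbar_h^{\ast} = i[\Lambda,\del_h]$ and $\del_h^{\ast} = -i[\Lambda,\dbar_h]$, while the zeroth-order operators of wedging with $\theta\in A^{1,0}(X)$ and with $\theta^{\ast}\in A^{0,1}(X)$ satisfy the pointwise commutation relations with $\Lambda$ forced by the Kähler form; combining these in the groupings that define $\delsttau$, $\dbarsttau$ and $\dsttau = \delsttau + \dbarsttau$ yields the stated identities. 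I would either quote this verbatim as \cite{Simpson}*{Lemma~2.1} or re-derive it by the commutator bookkeeping just indicated.

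Parts~(3)--(6) are then formal, exactly as in the classical Kähler package. For~(3): by~(2), $\dbartau\delsttau + \delsttau\dbartau = i\bigl(\dbartau[\Lambda,\dbartau] + [\Lambda,\dbartau]\dbartau\bigr) = i\bigl(\Lambda\dbartau^2 - \dbartau^2\Lambda\bigr) = 0$ by~(1), and symmetrically for $\deltau\dbarsttau + \dbarsttau\deltau$. For~(4): expand $\Laptau = \dtau\dsttau + \dsttau\dtau$ with $\dtau = \deltau + \dbartau$; the mixed terms cancel by~(3), leaving $\Laptau = (\deltau\delsttau+\delsttau\deltau) + (\dbartau\dbarsttau+\dbarsttau\dbartau)$, and substituting the Kähler identities from~(2) and using~(1) shows the two summands are equal, exactly as in the scalar case, so $\Laptau = 2(\deltau\delsttau+\delsttau\deltau) = 2(\dbartau\dbarsttau+\dbarsttau\dbartau)$; a $\dtau$-harmonic $\alpha$ then has $0 = (\Laptau\alpha,\alpha) = 2\norm{\delsttau\alpha}^2 + 2\norm{\deltau\alpha}^2$, hence is $\deltau$- and $\delsttau$-closed, and likewise $\dbartau$- and $\dbarsttau$-closed. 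For~(5): for $\alpha\in A^k(X)$ and $\dtau$-harmonic $\beta$, $(\deltau\alpha,\beta)_X = (\alpha,\delsttau\beta)_X = 0$ by~(4), so $\Htau\deltau = 0$, and the other three are identical. For~(6): $\Laptau$ commutes with each of $\deltau,\dbartau,\delsttau,\dbarsttau$ by~(4) together with~(1), each of these operators annihilates $\Hartau^{\bullet}$ and preserves $\Laptau A^{\bullet}(X) = (\Hartau^{\bullet})^{\perp}$ by~(4)--(5), and $\Gtau$ is the inverse of $\Laptau$ on that orthogonal complement and zero on $\Hartau^{\bullet}$; any operator with these properties commutes with $\Gtau$, by the same uniqueness argument that gives $\Gtau\dtau = \dtau\Gtau$. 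The main obstacle is thus part~(2): it is precisely there that Simpson's "peculiar" choice of $\deltau,\dbartau$ (rather than the naive $\del + \tau^{1,0}$, $\dbar + \tau^{0,1}$) is essential, everything else being bookkeeping once~(1) and~(2) are available.
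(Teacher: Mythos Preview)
Your approach is correct and is essentially the same as the paper's: the paper likewise says (1) is immediate from the definitions, cites \cite{Simpson}*{p.~14} (your Lemma~2.1) for the first-order K\"ahler identities in (2), derives (3) and (4) formally ``as in the case of the usual K\"ahler identities,'' obtains (5) from (4), and gets (6) from the uniqueness of the decomposition $\alpha = \Htau\alpha + \Laptau\Gtau\alpha$. One small point to double-check: the signs you quote for the ordinary K\"ahler identities on the Chern connection are swapped relative to the convention used in the statement (you want $\del_h^{\ast} = i[\Lambda,\dbar_h]$ and $\dbar_h^{\ast} = -i[\Lambda,\del_h]$, not the reverse), so make sure your sign bookkeeping in (2) is consistent when you combine with the zeroth-order terms.
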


\begin{proof}
It is easy to see from the definition that (1) holds. The first-order K\"ahler
identities in (2) are proved in \cite{Simpson}*{p.~14}; in this simple case, they can
also be verified by hand by a calculation on $\CC^n$ with the Euclidean metric. From
this, (3) and (4) follow as in the case of the usual K\"ahler identities
\cite{Simpson}*{p.~22}. Finally, (5) is a consequence of (4), and (6) follows from
the previous results by the uniqueness of the decomposition in \eqref{eq:Greens}.  
\end{proof}

\begin{note}
\label{pg:higgs}
The Higgs bundle associated to $(L, \nabla)$ is the smooth vector bundle $X \times
\CC$, with complex structure defined by the operator 
\[
	\dbar + \frac{\tau^{0,1} - \overline{\tau^{1,0}}}{2},
\]
and with Higgs field
\[
	\theta = \frac{\tau^{1,0} + \overline{\tau^{0,1}}}{2}.
\]
Note that $\theta$ is holomorphic on account of $\dbartau \dbartau = 0$. The
complex structure on the original flat line bundle is defined by the operator $\dbar
+ \tau^{0,1}$, which means that the two line bundles are different unless
$\tau^{0,1} = - \overline{\tau^{1,0}}$.
\end{note}

\begin{example}
Harmonic theory can be used to solve equations involving $\deltau$ (or any of the
other operators), as follows. Suppose that we are given an equation of the form
$\deltau \alpha = \beta$. A
necessary and sufficient condition for the existence of a solution $\alpha$ is that
$\deltau \beta = 0$ and $\Htau \beta = 0$. If this is the case, then among all possible
solutions, there is a unique one that is $\delsttau$-exact, namely $2 \delsttau \Gtau
\beta$. (In fact, this is the solution of minimal norm.) Note that we can always
define $\alpha = 2 \delsttau \Gtau \beta$; but since
\[
	\deltau \bigl( 2 \delsttau \Gtau \beta \bigr) = \beta - \Htau \beta - 2 \delsttau
		\Gtau (\deltau \beta),
\]
we only obtain a solution to the original equation when $\Htau \beta = 0$ and
$\deltau \beta = 0$. This idea will appear again in the construction below.
\end{example}

\subsection{Sobolev spaces and norm estimates}

At some point of the construction below, we will need to prove the convergence of
certain power series. This requires estimates for the norms of the two operators
$\dbartau$ and $\Gtau$ introduced above, which hold in suitable Sobolev spaces. Since
this is standard material in the theory of partial differential equations, we only
give the briefest possible summary; all the results that we use can be found, for example, in
\cite{Wells}*{Chapter~IV}.

From the K\"ahler metric on $X$, we get an $L^2$-norm on the space $A^k(X)$ of
smooth $k$-forms, by the formula
\[
	\norm{\alpha}_0^2 = (\alpha, \alpha)_X = \int_X \alpha \wedge \ast \bar{\alpha}.
\]
It is equivalent to the usual $L^2$-norm, defined using partitions of unity.
There is also a whole family of higher Sobolev norms: for $\alpha \in A^k(X)$, the
$m$-th order Sobolev norm $\norm{\alpha}_m$ controls the $L^2$-norms of
all derivatives of $\alpha$ of order at most $m$. The Sobolev space $W_m^k(X)$ is the
completion of $A^k(X)$ with respect to the norm $\norm{\argbl}_m$; it is a Hilbert
space. Elements of $W_m^k(X)$ may be viewed as $k$-forms $\alpha$ with measurable
coefficients, all of whose weak derivatives of order at most $m$ are
square-integrable.  Here is the first result from analysis that we need.

\begin{theorem}[Sobolev lemma]
If $\alpha \in W_m^k(X)$ for every $m \in \NN$, then $\alpha$ agrees almost
everywhere with a smooth $k$-form, and hence $\alpha \in A^k(X)$.
\end{theorem}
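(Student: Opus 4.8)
The plan is to deduce this from the classical Sobolev embedding theorem by the standard localization argument; full details can be found in \cite{Wells}*{Chapter~IV}. First I would cover the compact manifold $X$ by finitely many coordinate charts $\fun{\varphi_i}{U_i}{\RR^{2n}}$ and choose a smooth partition of unity $\{\rho_i\}$ subordinate to $\{U_i\}$. Because $X$ is compact and the transition maps between charts are smooth, the intrinsic Sobolev norm $\norm{\argbl}_m$ on $A^k(X)$ is equivalent to the sum over $i$ of the Euclidean Sobolev norms of the coordinate components of $(\varphi_i)_{\ast}(\rho_i \alpha)$; in particular, multiplication by a fixed cutoff $\rho_i$ is bounded on every $W_m^k(X)$ by the Leibniz rule. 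Consequently, the hypothesis that $\alpha \in W_m^k(X)$ for all $m$ says exactly that each coordinate component of $(\varphi_i)_{\ast}(\rho_i \alpha)$ is a compactly supported element of the Euclidean Sobolev space $W_m(\RR^{2n})$ for every $m$.

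The heart of the argument is then the Euclidean Sobolev lemma: a compactly supported element of $W_m(\RR^N)$ agrees almost everywhere with a function of class $C^{\ell}$ whenever $m > N/2 + \ell$, with a corresponding norm estimate. Applying this with $N = 2n$ and letting $m$ tend to infinity, each component of $(\varphi_i)_{\ast}(\rho_i \alpha)$ agrees almost everywhere with a function that is $C^{\ell}$ for every $\ell \in \NN$, and hence with a $C^{\infty}$ function. Pulling back by $\varphi_i$ and summing the identity $\alpha = \sum_i \rho_i \alpha$ over the finitely many charts then exhibits $\alpha$ as agreeing almost everywhere with a smooth $k$-form on $X$, so that $\alpha \in A^k(X)$ as claimed.

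There is no serious obstacle here; the only points that require (routine) care are the equivalence of the intrinsic Sobolev norms on $X$ with the local Euclidean ones --- which uses compactness of $X$ together with the chain rule applied to the smooth transition functions --- and the stability of membership in every $W_m$ under multiplication by the bump functions $\rho_i$. Both facts are entirely standard, and we refer to \cite{Wells}*{Chapter~IV} for the details.
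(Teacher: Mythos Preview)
Your outline is correct and is precisely the standard reduction to the Euclidean Sobolev embedding theorem. Note, however, that the paper does not actually give a proof of this statement: it is quoted as a classical result, with a blanket reference to \cite{Wells}*{Chapter~IV} for all the analytic background in that subsection. So there is no ``paper's own proof'' to compare against --- your sketch simply spells out the argument behind the citation.
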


The second result consists of a pair of norm inequalities, one for the differential
operator $\delsttau$, the other for the Green's operator $\Gtau$.

\begin{theorem} \label{thm:estimates}
Let the notation be as above.
\begin{enumerate}
\item There is a constant $C > 0$, depending on $m \geq 1$, such that
\[
	\norm{\delsttau \alpha}_{m-1} \leq C \cdot \norm{\alpha}_m
\]
for every $\alpha \in W_m^k(X)$ with $m \geq 1$.
\item There is another constant $C > 0$, depending on $m \geq 0$, such that
\[
	\norm{\Gtau \alpha}_{m+2} \leq C \cdot \norm{\alpha}_m
\]
for every $\alpha \in W_m^k(X)$.
\end{enumerate}
\end{theorem}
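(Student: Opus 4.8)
The plan is to derive both inequalities from the standard package of results on Sobolev spaces and elliptic operators over the compact K\"ahler manifold $X$, as collected in \cite{Wells}*{Chapter~IV}; no genuinely new analysis is involved, only bookkeeping.

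For part (1), the point is simply that $\delsttau \colon A^k(X) \to A^{k-1}(X)$ is a first-order linear differential operator whose coefficients are smooth, being built out of the K\"ahler metric, the Hodge $\ast$-operator, and the fixed harmonic one-form $\tau$. On a compact manifold, any such operator extends to a continuous map $W_m^k(X) \to W_{m-1}^{k-1}(X)$ for every $m \geq 1$: taking one derivative drops the Sobolev order by one, and multiplication by a smooth function (automatically bounded together with all its derivatives, since $X$ is compact) is continuous on each Sobolev space. This is exactly the estimate $\norm{\delsttau \alpha}_{m-1} \leq C \norm{\alpha}_m$, with $C$ depending only on $m$ and on the chosen metric.

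For part (2), the essential input is the elliptic a priori estimate for the Laplacian. Since $\Laptau = \dtau \dsttau + \dsttau \dtau$ is a second-order elliptic operator with smooth coefficients on the compact manifold $X$, for each $m \geq 0$ there is a constant $C'$ such that $\norm{\beta}_{m+2} \leq C' \bigl( \norm{\Laptau \beta}_m + \norm{\beta}_0 \bigr)$ for every $\beta$; see \cite{Wells}*{Chapter~IV}. I would apply this to $\beta = \Gtau \alpha$. By the decomposition \eqref{eq:Greens} we have $\Laptau \Gtau \alpha = \alpha - \Htau \alpha$, whence $\norm{\Laptau \Gtau \alpha}_m \leq \norm{\alpha}_m + \norm{\Htau \alpha}_m$. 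Now $\Htau$ is the orthogonal projection onto the finite-dimensional space $\Hartau^k$, all of whose elements are smooth, so all Sobolev norms are equivalent there and $\norm{\Htau \alpha}_m \leq C'' \norm{\Htau \alpha}_0 \leq C'' \norm{\alpha}_0$; and $\norm{\Gtau \alpha}_0 \leq \norm{\alpha}_0$ because $\Gtau$ is a bounded self-adjoint operator on $L^2$ (indeed, it inverts $\Laptau$ on the orthogonal complement of $\Hartau^k$, so its $L^2$-operator norm is the reciprocal of the lowest nonzero eigenvalue of $\Laptau$). Combining these three estimates gives $\norm{\Gtau \alpha}_{m+2} \leq C \norm{\alpha}_m$ for a suitable $C = C(m)$.

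The one step that deserves a little care — and the one I would single out as the main, though still routine, obstacle — is the transition from the elliptic estimate in its raw form (with the lower-order term $\norm{\beta}_0$ on the right) to the clean statement of the theorem: one must know independently that $\Gtau$ is $L^2$-bounded and that $\Htau$ is smoothing. Both are consequences of the spectral theory of the elliptic self-adjoint operator $\Laptau$ — discrete spectrum, finite-dimensional eigenspaces, eigenforms smooth by elliptic regularity — which is again part of the standard material in \cite{Wells}*{Chapter~IV}. With those facts in hand, the two inequalities follow at once.
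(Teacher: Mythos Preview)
Your argument is correct. Part (1) is identical to the paper's: $\delsttau$ is a first-order operator with smooth coefficients on a compact manifold, hence bounded $W_m^k \to W_{m-1}^{k-1}$.

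For part (2) the paper takes a slightly different route. Rather than invoking the elliptic a~priori estimate (G\aa rding's inequality) and then handling the lower-order term via the $L^2$-boundedness of $\Gtau$ and the smoothing property of $\Htau$, the paper starts from the Sobolev-space Hodge decomposition
\[
	W_m^k(X) = \Hartau^k \oplus \Laptau W_{m+2}^k(X),
\]
observes that $\Laptau \colon W_{m+2}^k(X) \cap (\Hartau^k)^{\perp} \to W_m^k(X) \cap (\Hartau^k)^{\perp}$ is a bounded bijection of Hilbert spaces, and applies the open mapping theorem to conclude that its inverse (which is $\Gtau$ on $(\Hartau^k)^{\perp}$) is bounded. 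Your approach is the more hands-on PDE argument; the paper's is the functional-analytic shortcut. They are two packagings of the same elliptic input: the Sobolev decomposition the paper uses already encodes elliptic regularity, and conversely your G\aa rding estimate is what proves that decomposition. Either way the conclusion drops out in a line or two.
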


\begin{proof}
The inequality in (1) is straightforward, using the fact that $\delsttau$ is a
first-order operator and $X$ is compact. On the other hand, (2) follows from the open
mapping theorem. To summarize the argument in a few lines, \eqref{eq:Hodge-dec}
is actually derived from an orthogonal decomposition
\[
	W_m^k(X) = \Hartau^k \oplus \Laptau W_{m+2}^k(X)
\]
of the Hilbert space $W_m^k(X)$. It implies that the bounded linear operator
\[
	\Laptau \colon W_{m+2}^k(X) \cap (\Hartau^k)^{\perp} \to 
		W_m^k(X) \cap (\Hartau^k)^{\perp}
\]
is bijective; by the open mapping theorem, the inverse must be bounded as well.
Since $\Gtau$ is equal to this inverse on $(\Hartau^k)^{\perp}$, and zero on
$\Hartau^k$, we obtain the desired inequality.
\end{proof}

\subsection{Construction of the linear complex}

We now return to the proof of Theorem~\ref{thm:linear_intro}. Recall that, after pullback
to the universal covering space $W$ of $\Ash$, the stalk of the Fourier-Mukai
transform of the $\Dmod$-module $\OX$ is represented by the complex of $R$-modules
\[
	C^0 \to C^1 \to \dotsb \to C^{2n-1} \to C^{2n},
\]
with differential
\[
	D(\alpha) = (\dtau + e)\alpha = \sum_I \dtau \alpha_I \tensor z^I
		+ \sum_{I,j} e_j \wedge \alpha_I \tensor z_j z^I.
\]
Our goal is to show that $\bigl( C^{\bullet}, D \bigr)$ is quasi-isomorphic to a
linear complex over $R$.

We begin by constructing a suitable linear complex from the finite-dimensional spaces
of $\dtau$-harmonic forms. Let $\bigl( \Hartau^{\bullet} \tensor R, \delta \bigr)$ be
the complex
\[
	\Hartau^0 \tensor R \to \Hartau^1 \tensor R \to \dotsb \to 
		\Hartau^{2n-1} \tensor R \to \Hartau^{2n} \tensor R,
\]
with differential obtained by $R$-linear extension from
\[
	\delta \colon \Hartau^k \to \Hartau^{k+1} \tensor R, \qquad
		\delta(\alpha) = \sum_j \Htau(e_j \wedge \alpha) \tensor z_j.
\]
One can see that this is indeed a complex by projecting to the harmonic subspace; as
a warm-up for later computations, we shall prove directly that $\delta \circ \delta =
0$.

\begin{lemma}
We have $\delta \circ \delta = 0$.
\end{lemma}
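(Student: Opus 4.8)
The plan is to reduce $\delta\circ\delta=0$ to a pointwise statement about harmonic forms and then invoke the anticommutativity of one-forms. Since $\delta$ is obtained from the $\CC$-linear maps $\Hartau^k\to\Hartau^{k+1}\tensor R$ by $R$-linear extension, it suffices to check the identity on the generators $\alpha\in\Hartau^k$, where
\[
	(\delta\circ\delta)(\alpha)=\sum_{j,k}\Htau\bigl(e_k\wedge\Htau(e_j\wedge\alpha)\bigr)\tensor z_j z_k .
\]
I claim that for every $\dtau$-harmonic $\alpha$ and all $j,k$ one has $\Htau\bigl(e_k\wedge\Htau(e_j\wedge\alpha)\bigr)=\Htau(e_k\wedge e_j\wedge\alpha)$. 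Granting this, the right-hand side becomes $\sum_{j,k}\Htau(e_k\wedge e_j\wedge\alpha)\tensor z_j z_k$; since $z_j z_k=z_k z_j$ while $e_k\wedge e_j=-e_j\wedge e_k$ (and $e_j\wedge e_j=0$), the terms cancel in pairs and $(\delta\circ\delta)(\alpha)=0$.

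To prove the claim I would first record two facts. First, each $e_j$ is harmonic, in particular $de_j=0$ (this also follows from $d_X T=0$); combined with $\dtau\alpha=0$, which holds because $\alpha$ is $\dtau$-harmonic, the Leibniz rule gives $\dtau(e_j\wedge\alpha)=-e_j\wedge d\alpha+\tau\wedge e_j\wedge\alpha=0$, using $d\alpha=-\tau\wedge\alpha$ and $e_j\wedge\tau=-\tau\wedge e_j$. So $e_j\wedge\alpha$ is $\dtau$-closed, and the decomposition \eqref{eq:Greens} together with $\dtau\Gtau=\Gtau\dtau$ yields
\[
	e_j\wedge\alpha=\Htau(e_j\wedge\alpha)+\dtau v,\qquad v:=\dsttau\Gtau(e_j\wedge\alpha).
\]
Second, wedging with $e_k$ and again using $de_k=0$ gives $e_k\wedge\dtau v=-\dtau(e_k\wedge v)$, which is $\dtau$-exact; and $\Htau$ annihilates $\dtau$-exact forms since $\Htau\dtau=\Htau\deltau+\Htau\dbartau=0$ by Simpson's theorem. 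Hence $\Htau(e_k\wedge e_j\wedge\alpha)-\Htau\bigl(e_k\wedge\Htau(e_j\wedge\alpha)\bigr)=\Htau(e_k\wedge\dtau v)=0$, which is the claim.

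The whole argument is formal manipulation with $\dtau$, $\Htau$ and $\Gtau$; the only points demanding care are the signs in the Leibniz identities — in particular the fact that $e_k\wedge\dtau v$ is $\dtau$-exact, which uses $\deg e_k=1$ and $de_k=0$ a second time — and the orthogonality of harmonic forms to exact forms. There is no genuine obstacle at this stage: the role of the lemma is to rehearse, in the easy setting where the wedge of a harmonic form with a closed one-form is still $\dtau$-closed, exactly the Green's-operator bookkeeping that will be needed in the following subsections, where that closedness fails and $\Htau$ must be replaced by a full chain homotopy.
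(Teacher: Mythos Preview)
Your proof is correct and follows essentially the same approach as the paper's own argument: both reduce to the identity $\Htau\bigl(e_k\wedge\Htau(e_j\wedge\alpha)\bigr)=\Htau(e_k\wedge e_j\wedge\alpha)$, proved by writing $e_j\wedge\alpha=\Htau(e_j\wedge\alpha)+\dtau\dsttau\Gtau(e_j\wedge\alpha)$ and using $\Htau\dtau=0$, then concluding via the antisymmetry of $e_k\wedge e_j$ against the symmetry of $z_jz_k$.
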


\begin{proof}
Before we begin the actual proof, let us make a useful observation: namely, that for
every $\alpha \in A^k(X)$, one has
\[
	\dtau(e_j \wedge \alpha) = d(e_j \wedge \alpha) + \tau \wedge e_j \wedge \alpha
		= - e_j \wedge d \alpha - e_j \wedge \tau \wedge \alpha = 
		- e_j \wedge \dtau \alpha,
\]
due to the fact that $e_j$ is a closed one-form. Now take $\alpha \in \Hartau^k$. Then
\[
	\Htau(e_j \wedge \alpha) = e_j \wedge \alpha - \Laptau \Gtau (e_j \wedge \alpha)
		= e_j \wedge \alpha - \dtau \dsttau \Gtau (e_j \wedge \alpha),
\]
because $\dtau(e_j \wedge \alpha) = 0$ by the above observation. Consequently,
\[
	e_k \wedge \Htau(e_j \wedge \alpha) =
		e_k \wedge e_j \wedge \alpha
		+ \dtau \bigl( e_k \wedge \dsttau \Gtau(e_j \wedge \alpha) \bigr),
\]
and since $\Htau \dtau = 0$, this allows us to conclude that
\[
	\delta(\delta \alpha) = \sum_{j,k} 
		\Htau \bigl( e_k \wedge \Htau(e_j \wedge \alpha) \bigr) \tensor z_j z_k
		= \sum_{j,k} \Htau(e_k \wedge e_j \wedge \alpha) \tensor z_j z_k = 0,
\]
which means that $\delta \circ \delta = 0$.
\end{proof}

Note that it is clear from the representation of cohomology via harmonic forms that the complex thus  constructed is quasi-isomorphic to the stalk at a point mapping to $(L, \nabla)$ of the complex appearing in the statement of 
Theorem \ref{thm:linear_intro}.

\subsection{Construction of the quasi-isomorphism} 

We shall now construct a sequence of maps $f^k \colon \Hartau^k \to C^k$,
in such a way that, after $R$-linear extension, we obtain a quasi-isomorphism $f
\colon \Hartau^{\bullet} \tensor R \to C^{\bullet}$. 
In order for the maps $f^k$ to define a morphism of complexes, the identity
\begin{equation} \label{eq:morphism}
	D f^k(\alpha) = f^{k+1}(\delta \alpha)
\end{equation}
should be satisfied for every $\alpha \in \Hartau^k$. As a first step, we shall find a 
\emph{formal} solution to the problem, ignoring questions of convergence for the time
being. Let $\hat{C}^k$ be the space of all formal power series
\[
	\alpha = \sum_I \alpha_I \tensor z^I :=
		 \sum_{I \in \NN^{2g}} \alpha_I \tensor z_1^{I(1)} \dotsm z_{2g}^{I(2g)}
\]
with $\alpha_I \in A^k(X)$ smooth complex-valued $k$-forms. We extend the various
operators from $A^k(X)$ to $\hat{C}^k$ by defining, for example,
\[
	\dtau \alpha = \sum_I \dtau \alpha_I \tensor z^I, \qquad
		e(\alpha) = \sum_{I,j} e_j \wedge \alpha_I \tensor z_j z^I, \qquad
		\text{etc.}
\]
Note that $C^k \subseteq \hat{C}^k$ is precisely the subspace of those power series
that converge in some neighborhood of $X \times \{z=0\}$.

To make sure that $f^k(\alpha)$ induces the correct map on cohomology, we
require that $\Htau f^k(\alpha) = \alpha$. Following the general
strategy for solving equations with the help of harmonic theory, we impose the
additional conditions $\deltau f^k(\alpha) = 0$ and  $\dbarsttau f^k(\alpha) = 0$.
Under these assumptions, \eqref{eq:morphism} reduces to
\[
	\dbartau f^k(\alpha) + e f^k(\alpha) = f^{k+1}(\delta \alpha).
\]
On account of $\dbarsttau f^k(\alpha) = 0$ and the K\"ahler identities, we should
then have
\begin{align*}
	f^k(\alpha) &= \Htau f^k(\alpha) + \Laptau \Gtau f^k(\alpha) 
		= \alpha + 2 \bigl( \dbartau \dbarsttau + \dbarsttau \dbartau \bigr) 
				\Gtau f^k(\alpha) \\
		&= \alpha + 2 \dbarsttau \Gtau \bigl( \dbartau f^k(\alpha) \bigr) 
		= \alpha - 2 \dbarsttau \Gtau \bigl( e f^k(\alpha) \bigr).
\end{align*}
This suggests that we try to solve the equation $\bigl( \id + 2 \dbarsttau \Gtau e
\bigr) f^k(\alpha) = \alpha$.

\begin{lemma} \label{lem:equation}
For any $\dtau$-harmonic form $\alpha \in \Hartau^k$, the equation 
\begin{equation} \label{eq:equation}
	\bigl( \id + 2 \dbarsttau \Gtau e \bigr) \beta = \alpha
\end{equation}
has a unique formal solution $\beta \in \hat{C}^k$.  This solution has the property
that $\Htau \beta = \alpha$, as well as $\dbarsttau \beta = 0$ and $\deltau \beta = 0$. 
\end{lemma}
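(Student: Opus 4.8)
The plan is to invert the operator $\id + 2\dbarsttau\Gtau e$ by a formal geometric series and then to extract the three stated properties of $\beta$ from Simpson's identities. Throughout, the operators $\dtau, \deltau, \dbartau, \dsttau, \delsttau, \dbarsttau, \Gtau, \Htau$ are extended to $\hat C^{\bullet}$ by letting them act on the coefficients $\alpha_I \in A^{\bullet}(X)$ of a power series; since the $z_j$ are merely formal parameters, every identity in Simpson's theorem above continues to hold verbatim on $\hat C^{\bullet}$. Note that no convergence will be claimed at this stage — that is the content of the next subsection — so everything here is purely algebraic.

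First I would settle existence and uniqueness. The operator $e$ raises the $z$-degree by one, while $\dbarsttau$ and $\Gtau$ preserve it, so $N := 2\dbarsttau\Gtau e$ strictly raises the $z$-degree. Grading $\hat C^k$ by $z$-degree (each graded piece a copy of $A^k(X)$), the operator $\id + N$ is therefore unipotent, and $\beta := \sum_{m\ge 0}(-N)^m\alpha$ is a well-defined element of $\hat C^k$ — only finitely many $m$ contribute to each multi-index — which solves $(\id + N)\beta = \alpha$. The same degree argument shows $\id + N$ is injective on $\hat C^k$ and on $\hat C^{k+1}$; this gives uniqueness, and the injectivity on $\hat C^{k+1}$ will be reused below.

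Next I would read off $\Htau\beta = \alpha$ and $\dbarsttau\beta = 0$. Applying $\Htau$ to \eqref{eq:equation} and using $\Htau\dbarsttau = 0$ from Simpson's theorem together with $\Htau\alpha = \alpha$ (since $\alpha \in \Hartau^k$) gives $\Htau\beta = \alpha$. Rewriting \eqref{eq:equation} as $\beta = \alpha - 2\dbarsttau\Gtau e\beta$ and applying $\dbarsttau$, the fact that $\dbarsttau\dbarsttau = 0$ (the adjoint of $\dbartau\dbartau = 0$) and that $\dbarsttau\alpha = 0$ (a $\dtau$-harmonic form is $\dbarsttau$-closed, by $\Laptau = 2(\dbartau\dbarsttau + \dbarsttau\dbartau)$) immediately yield $\dbarsttau\beta = 0$.

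The hard part, such as it is, will be $\deltau\beta = 0$, which needs the commutation relation $\deltau e = -e\deltau$ on $\hat C^{\bullet}$. Each $e_j$ lies in $H^1(X,\CC)$ and is thus an ordinary harmonic one-form on $X$; on a compact Kähler manifold such a form has holomorphic $(1,0)$-part and anti-holomorphic $(0,1)$-part, so $\del e_j = 0$. Writing $\deltau = \del + c\wedge(\argbl)$ with $c = \tfrac{\tau^{1,0} - \overline{\tau^{0,1}}}{2} + \tfrac{\tau^{0,1} + \overline{\tau^{1,0}}}{2}$ a fixed one-form, and using $\del e_j = 0$ together with anticommutativity of one-forms, one computes $\deltau(e_j\wedge\gamma) = -e_j\wedge\del\gamma - e_j\wedge c\wedge\gamma = -e_j\wedge\deltau\gamma$; since $\deltau$ ignores the $z$-variables this gives $\deltau e = -e\deltau$. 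Applying $\deltau$ to $\beta = \alpha - 2\dbarsttau\Gtau e\beta$ and using $\deltau\alpha = 0$, the anticommutation $\deltau\dbarsttau = -\dbarsttau\deltau$, the commutation $\deltau\Gtau = \Gtau\deltau$ (both from Simpson's theorem), and $\deltau e = -e\deltau$, one gets
\[
	\deltau\beta = -2\deltau\dbarsttau\Gtau e\beta = 2\dbarsttau\Gtau\deltau e\beta = -2\dbarsttau\Gtau e\,\deltau\beta = -N(\deltau\beta),
\]
so that $(\id + N)(\deltau\beta) = 0$, whence $\deltau\beta = 0$ by the injectivity from the first step. The only genuine subtlety is thus in this last step: one must isolate the relation $\deltau e = -e\deltau$ — where the specific shape of Simpson's modified operator $\deltau$ and the honest harmonicity of the $e_j$ both enter — and then invoke uniqueness from the first step rather than trying to solve for $\deltau\beta$ directly.
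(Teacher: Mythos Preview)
Your proof is correct and follows essentially the same approach as the paper's: both exploit that $e$ strictly raises the $z$-degree to invert $\id + 2\dbarsttau\Gtau e$ (you via the geometric series/unipotency, the paper via the equivalent explicit recursion $\beta_{\ell+1} = -\sum_j 2z_j\dbarsttau\Gtau(e_j\wedge\beta_\ell)$), and both establish $\deltau\beta = 0$ from the commutation relations $\deltau\dbarsttau = -\dbarsttau\deltau$, $\deltau\Gtau = \Gtau\deltau$, and $\deltau(e_j\wedge\argbl) = -e_j\wedge\deltau(\argbl)$. Your packaging of the last step as $(\id+N)(\deltau\beta)=0$ plus injectivity is a compact reformulation of the paper's induction on $\ell$, and you give slightly more detail on why $\del e_j = 0$ is needed for the anticommutation $\deltau e = -e\deltau$.
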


\begin{proof}
To solve the equation formally, we write
\[
	\beta = \sum_{\ell=0}^{\infty} \sum_{\abs{I} = \ell} \beta_I \tensor z^I 
		= \sum_{\ell = 0}^{\infty} \beta_{\ell},
\]
making $\beta_{\ell}$ homogeneous of degree $\ell$ in $z_1, \dotsc, z_{2g}$. Taking
harmonic parts in \eqref{eq:equation}, it is clear that we must have $\beta_0 = \alpha$
and $\Htau \beta_{\ell} = 0$ for $\ell \geq 1$; for the rest, the equation dictates
that 
\begin{equation} \label{eq:solution}
	\beta_{\ell+1} = 
		- \sum_j 2 z_j \dbarsttau \Gtau \bigl( e_j \wedge \beta_{\ell} \bigr),
\end{equation}
for every $\ell \geq 0$, which means that there is a unique formal solution $\beta$.
It is apparent from the equation that $\dbarsttau \beta = 0$, and so to prove the lemma,
it remains to show that we have $\deltau \beta = 0$. Since $\deltau \beta_0 = 0$, we
can proceed by induction on $\ell \geq 0$. Using the K\"ahler identity $\deltau
\dbarsttau = - \dbarsttau \deltau$, and the fact that $\deltau(e_j \wedge
\beta_{\ell}) = - e_j \wedge \deltau \beta_{\ell}$, we deduce from
\eqref{eq:solution} that
\[
	\deltau \beta_{\ell+1} 
		= - \sum_j 2 z_j \deltau \dbarsttau \Gtau 
			\bigl( e_j \wedge \beta_{\ell} \bigr)
		= - \sum_j 2 z_j \dbarsttau \Gtau 
			\bigl( e_j \wedge \deltau \beta_{\ell} \bigr),
\]
and so $\deltau \beta_{\ell} = 0$ implies $\deltau \beta_{\ell+1} = 0$, as required.
\end{proof}

The next step is to prove the convergence of the power series defining the solution
to \eqref{eq:equation}. For $\eps > 0$, let
\[	
	W_{\eps} = \Menge{\tau + z \in W}{\sum_j \abs{z_j} < \eps},
\]
which is an open neighborhood of the point $\tau \in W$.

\begin{lemma}
There is an $\eps > 0$, such that for all $\alpha \in \Hartau^k$, the formal power
series
\[
	\beta = (\id + 2 \dbarsttau \Gtau e)^{-1} \alpha \in \hat{C}^k
\]
converges absolutely and uniformly on $X \times W_{\eps}$ to an element of $C^k(X
\times W_{\eps} / W_{\eps})$.
\end{lemma}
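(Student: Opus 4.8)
The plan is to solve equation~\eqref{eq:equation} by a Neumann-series argument in a scale of Banach spaces of Sobolev-valued power series, and then to recover smoothness of the resulting sum in the $X$-direction by an elliptic bootstrap. First I would record the estimate for the basic building block. Fix $m \geq 0$. Wedging with the smooth closed one-form $e_j$ is a bounded operator on $W_m^k(X)$; by Theorem~\ref{thm:estimates}(2) the Green's operator $\Gtau$ maps $W_m^k(X)$ boundedly into $W_{m+2}^k(X)$; and by the analogue of Theorem~\ref{thm:estimates}(1) for $\dbarsttau$ (which, like $\delsttau$, is a first-order operator) the operator $\dbarsttau$ maps $W_{m+2}^k(X)$ boundedly into $W_{m+1}^k(X) \subseteq W_m^k(X)$. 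Composing, the operator $\gamma \mapsto \dbarsttau \Gtau(e_j \wedge \gamma)$ is bounded on $W_m^k(X)$, with operator norm at most a constant $C_m$ that we may take independent of $j$.

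Next, writing $\beta = \sum_{\ell \geq 0} \beta_\ell$ with $\beta_\ell = \sum_{\abs{I} = \ell} \beta_{\ell, I} \tensor z^I$ homogeneous of degree $\ell$, the recursion \eqref{eq:solution} reads $\beta_{0,0} = \alpha$ and $\beta_{\ell+1, I} = -2 \sum_{j : I(j) \geq 1} \dbarsttau \Gtau(e_j \wedge \beta_{\ell, I - e_j})$. An immediate induction on $\ell$, using that each multi-index $J$ of degree $\ell$ contributes to exactly $2g$ multi-indices $J + e_j$ of degree $\ell + 1$, gives the bound
\[
	\sum_{\abs{I} = \ell} \norm{\beta_{\ell, I}}_m \leq (4g C_m)^{\ell} \norm{\alpha}_m .
\]
Now fix $m_0 > n$, so that $W_{m_0}^k(X)$ embeds continuously into the space of continuous $k$-forms, and choose any $\eps$ with $4 g C_{m_0} \eps < 1$. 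Since $\sum_{\abs{I} = \ell} \abs{z^I} \leq \bigl( \sum_j \abs{z_j} \bigr)^{\ell} < \eps^{\ell}$ for $\tau + z \in W_{\eps}$, the bound above shows that $\sum_{\ell} \norm{\beta_\ell(z)}_{m_0}$ is dominated by the convergent geometric series $\norm{\alpha}_{m_0} \sum_{\ell} (4 g C_{m_0} \eps)^{\ell}$, uniformly for $\tau + z \in W_{\eps}$. By the Sobolev embedding this yields that $\beta = \sum_{\ell} \beta_\ell$ converges absolutely and uniformly on $X \times W_{\eps}$; being a locally uniform limit of polynomials in $z$ with smooth coefficients, $\beta$ is holomorphic in $z$, i.e.\ $\dbar_W \beta = 0$.

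It then remains to check that $\beta$ is smooth in the $X$-direction. Passing to the limit in \eqref{eq:equation}—legitimate because $2 \dbarsttau \Gtau e$ is bounded on the relevant space of power series—shows that $\beta$ satisfies $\beta = \alpha - 2 \dbarsttau \Gtau(e \beta)$; restricting to a fixed slice $\tau + z_{\ast} \in W_{\eps}$ gives $\beta(z_{\ast}) = \alpha - 2 \dbarsttau \Gtau\bigl( \sum_j z_{\ast, j}\, e_j \wedge \beta(z_{\ast}) \bigr)$. If $\beta(z_{\ast}) \in W_m^k(X)$, then by the same three estimates as above the right-hand side lies in $W_{m+1}^k(X)$, so $\beta(z_{\ast}) \in W_{m+1}^k(X)$; starting from $m = m_0$ and iterating, $\beta(z_{\ast}) \in W_m^k(X)$ for every $m$, whence $\beta(z_{\ast}) \in A^k(X)$ by the Sobolev lemma. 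Together with $\dbar_W \beta = 0$ this shows $\beta \in C^k(X \times W_{\eps}/W_{\eps})$, as desired.

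The main obstacle is the familiar one in such harmonic-theoretic arguments: the elliptic constants $C_m$ grow with $m$, so a single radius $\eps$ only controls one fixed Sobolev norm of the coefficients, and the smoothness of the sum in $X$ cannot be read off by summing the series in every $W_m^k(X)$ simultaneously—it has to be recovered afterwards by bootstrapping the fixed-point equation. A minor technical point is that Theorem~\ref{thm:estimates}(1) is stated for $\delsttau$ and is applied here to $\dbarsttau$, which is equally a first-order operator and so satisfies the same estimate.
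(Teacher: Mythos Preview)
Your argument is essentially correct, but it takes a more roundabout route than the paper's, and your diagnosis of the ``main obstacle'' is off the mark.

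The paper exploits the smoothing property of $\dbarsttau \Gtau$ (it \emph{gains} one derivative, since $\Gtau$ gains two and $\dbarsttau$ loses one) directly in the recursive estimate, obtaining
\[
	\norm{\beta_{\ell+1}}_m \leq C_m \eps \cdot \norm{\beta_\ell}_{m-1}
\]
pointwise for $z \in W_\eps$. Summing over $\ell$ gives $\sum_\ell \norm{\beta_\ell}_m \leq \norm{\alpha}_m + C_m \eps \sum_\ell \norm{\beta_\ell}_{m-1}$, so once $\eps < 1/C_1$ secures convergence at $m=0$, convergence in every higher Sobolev norm follows by a one-line induction on $m$. The single choice of $\eps$ works for all $m$ precisely because the right-hand side involves $\norm{\cdot}_{m-1}$, not $\norm{\cdot}_m$. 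So the growth of $C_m$ with $m$ is irrelevant: the obstacle you identify simply does not arise if one uses the gain of regularity from the outset rather than only in the bootstrap.

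Your approach---fix one Sobolev level $m_0$, sum the Neumann series there, then bootstrap the limit via the fixed-point equation---also works, but it leaves a small loose end: the bootstrap as you state it gives $\beta(z_\ast) \in A^k(X)$ for each fixed $z_\ast$, whereas membership in $C^k(X \times W_\eps/W_\eps)$ requires joint smoothness on $X \times W_\eps$. This is recoverable (the bootstrap bounds are uniform in $z_\ast$, and continuity of $z \mapsto \beta(z)$ into each $W_m$ follows from the equation), but it is extra work that the paper's argument avoids entirely by establishing uniform convergence of $\sum_\ell \beta_\ell$ in every Sobolev norm directly.

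A minor point: when you combine $\sum_{\abs{I}=\ell} \norm{\beta_{\ell,I}}_m \leq (4gC_m)^\ell \norm{\alpha}_m$ with $\sum_{\abs{I}=\ell} \abs{z^I} < \eps^\ell$ to bound $\sum_{\abs{I}=\ell} \abs{z^I} \norm{\beta_{\ell,I}}_m$, the two sum bounds do not multiply; what you actually need (and what your recursion also yields) is the termwise bound $\norm{\beta_{\ell,I}}_m \leq (4gC_m)^\ell \norm{\alpha}_m$, or else simply $\abs{z^I} < \eps^\ell$.
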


\begin{proof}
If we apply the estimates from Theorem~\ref{thm:estimates} to the relation in
\eqref{eq:solution}, we find that for every $m \geq 1$, there is a constant $C_m >
0$, such that
\begin{equation} \label{eq:estimate}
	\norm{\beta_{\ell+1}}_m 
		\leq C_m \sum_j \abs{z_j} \norm{\beta_{\ell}}_{m-1}
		\leq C_m \eps \cdot \norm{\beta_{\ell}}_{m-1}
\end{equation}
holds for all $\ell \geq 1$, provided that $z \in W_{\eps}$. Given that $\beta_0 =
\alpha$, we conclude that
\[
	\norm{\beta_{\ell}}_0 \leq (C_1 \eps)^{\ell} \norm{\alpha}_0.
\]
Now choose a positive real number $\eps < 1/C_1$. We then obtain
\[
	\sum_{\ell=0}^{\infty} \norm{\beta_{\ell}}_0 
		\leq \sum_{\ell=0}^{\infty} (C_1 \eps)^{\ell} \norm{\alpha}_0
		= \frac{\norm{\alpha}_0}{1 - C_1 \eps},
\]
from which it follows that $\beta$ is absolutely and uniformly convergent in the
$L^2$-norm as long as $z \in W_{\eps}$. To prove that $\beta$ is actually smooth, we
return to the original form of \eqref{eq:estimate}. It implies that, for any $m \geq
1$,
\[
	\sum_{\ell=0}^{\infty} \norm{\beta_{\ell}}_m
		\leq \norm{\alpha}_m + 
		C_m \eps \cdot \sum_{\ell=0}^{\infty} \norm{\beta_{\ell}}_{m-1}.
\]
By induction on $m \geq 0$, one now easily shows that $\sum_{\ell}
\norm{\beta_{\ell}}_m$ converges absolutely and uniformly on $X \times W_{\eps}$ for
every $m \geq 0$; because of the Sobolev lemma, this means that $\beta$ is smooth on
$X \times W_{\eps}$. Since we clearly have $\dbar_W \beta = 0$, it follows that
$\beta \in C^k(X \times W_{\eps}/W_{\eps})$.  
\end{proof}

The preceding lemmas justify defining
\[
	f^k \colon \Hartau^k \to C^k, \qquad
		f^k(\alpha) = (\id + 2 \dbarsttau \Gtau e)^{-1} \alpha.
\]
It remains to show that we have found a solution to the original problem
\eqref{eq:morphism}.

\begin{lemma}
For every $\alpha \in \Hartau^k$, we have $D f^k(\alpha) = f^{k+1}(\delta
\alpha)$.
\end{lemma}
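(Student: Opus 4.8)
The plan is to verify that $D f^k(\alpha)$ satisfies the same linear equation that characterizes $f^{k+1}(\delta\alpha)$, and then to conclude by uniqueness. Observe first that, because the second summand of $\id + 2\dbarsttau\Gtau e$ strictly raises the degree in $z_1,\dotsc,z_{2g}$, the equation $(\id + 2\dbarsttau\Gtau e)\xi = \gamma$ has at most one formal solution $\xi \in \hat C^{\bullet}$ for \emph{any} right-hand side $\gamma$ — this is the same recursion that produced $f^{k+1}$ in Lemma~\ref{lem:equation}. Since $f^{k+1}(\delta\alpha)$ solves $(\id + 2\dbarsttau\Gtau e)\xi = \delta\alpha$ by construction, it suffices to prove
\[
	(\id + 2\dbarsttau\Gtau e)\bigl( D f^k(\alpha) \bigr) = \delta\alpha .
\]
Throughout, all the operators $\deltau,\dbartau,\delsttau,\dbarsttau,\Laptau,\Gtau,\Htau,\Lambda$ act coefficientwise on $\hat C^{\bullet}$, so that the relations recalled above persist there; the only operator interacting with the $z$-variables is $e$, and this interaction is governed by the elementary identities $e\circ\dtau = -\dtau\circ e$, $e\circ\deltau = -\deltau\circ e$, $e\circ\dbartau = -\dbartau\circ e$ (each $e_j$ being a $d$-closed one-form with $\del e_j = \dbar e_j = 0$), together with $e\circ e = 0$.

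Write $\beta = f^k(\alpha)$, and recall that $\Htau\beta = \alpha$, $\deltau\beta = 0$, $\dbarsttau\beta = 0$, and $\beta + 2\dbarsttau\Gtau(e\beta) = \alpha$. The first step is to compute the left-hand side. Since $\deltau\beta = 0$ we have $D\beta = \dbartau\beta + e\beta$; applying $\dbartau$ to the defining relation and using that $\alpha$, being $\dtau$-harmonic, is $\dbartau$-closed, gives $\dbartau\beta = -2\dbartau\dbarsttau\Gtau(e\beta)$. Substituting this, writing $e\beta = \Htau(e\beta) + \Laptau\Gtau(e\beta)$ with $\Laptau = 2(\dbartau\dbarsttau + \dbarsttau\dbartau)$, and using that $\Gtau$ commutes with $\dbartau$ and $\dbarsttau$, the two $\dbartau\dbarsttau\Gtau(e\beta)$ terms cancel and one is left with $D\beta = \Htau(e\beta) + 2\dbarsttau\Gtau\dbartau(e\beta)$. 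On the other hand $e(D\beta) = e\,\dbartau\beta + e^2\beta = -\dbartau(e\beta)$, so $2\dbarsttau\Gtau\,e(D\beta) = -2\dbarsttau\Gtau\dbartau(e\beta)$; adding, $(\id + 2\dbarsttau\Gtau e)(D\beta) = \Htau(e\beta)$.

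It remains to identify $\Htau(e\beta)$ with $\delta\alpha$. Since $\alpha$ has $z$-degree zero, $\Htau(e\alpha) = \sum_j \Htau(e_j\wedge\alpha)\tensor z_j = \delta\alpha$, so the claim is equivalent to $\Htau\bigl( e(\beta - \alpha) \bigr) = 0$. This is the step I expect to be the main obstacle: $\beta - \alpha$ is a genuinely non-harmonic power series, and there is no reason for the wedge products $e_j\wedge\beta_{\ell}$ to be harmonic termwise. The key point is that the defining relation exhibits $\beta - \alpha = \dbarsttau\mu$ with $\mu = -2\Gtau(e\beta)$, and — crucially — $\deltau\mu = 0$, because $\deltau\mu = -2\Gtau\deltau(e\beta) = 2\Gtau\,e(\deltau\beta) = 0$ (using that $\Gtau$ commutes with $\deltau$, that $e$ anticommutes with $\deltau$, and that $\deltau\beta = 0$). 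The first-order K\"ahler identity $\dbarsttau = -i[\Lambda,\deltau]$ together with $\deltau\mu = 0$ then yields $\dbarsttau\mu = i\,\deltau(\Lambda\mu)$, whence
\[
	e(\beta - \alpha) = e(\dbarsttau\mu) = i\, e\bigl( \deltau(\Lambda\mu) \bigr) = -i\,\deltau\bigl( e(\Lambda\mu) \bigr)
\]
is $\deltau$-exact. Since $\Htau\deltau = 0$, this gives $\Htau\bigl( e(\beta - \alpha) \bigr) = 0$, hence $\Htau(e\beta) = \delta\alpha$, hence $(\id + 2\dbarsttau\Gtau e)\bigl( D f^k(\alpha) \bigr) = \delta\alpha$. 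By the uniqueness of formal solutions noted at the outset, $D f^k(\alpha) = f^{k+1}(\delta\alpha)$, as desired.
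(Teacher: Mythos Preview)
Your proof is correct and follows essentially the same strategy as the paper: reduce to showing $(\id + 2\dbarsttau\Gtau e)(D\beta) = \Htau(e\alpha)$ via uniqueness of formal solutions, compute the left-hand side to be $\Htau(e\beta)$, and then argue that $\Htau(e\beta) = \Htau(e\alpha)$ by exhibiting $\beta - \alpha$ as $\deltau$-exact so that $e(\beta-\alpha)$ is $\deltau$-exact and killed by $\Htau$.

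The only genuine difference is in how you show $\beta - \alpha$ is $\deltau$-exact. The paper uses the Hodge decomposition for the $\deltau$-half of the Laplacian: since $\Htau\beta = \alpha$ and $\deltau\beta = 0$, one has $\beta = \alpha + 2\deltau\delsttau\Gtau\beta$, whence $e(\beta-\alpha) = -2\deltau(e\,\delsttau\Gtau\beta)$ directly. You instead read off $\beta - \alpha = \dbarsttau\mu$ with $\mu = -2\Gtau(e\beta)$ from the defining equation, check $\deltau\mu = 0$, and then convert $\dbarsttau\mu$ into $i\,\deltau(\Lambda\mu)$ via the first-order K\"ahler identity $\dbarsttau = -i[\Lambda,\deltau]$. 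Both routes are short and valid; the paper's is marginally more direct (it avoids the detour through $\Lambda$), while yours makes the role of the defining equation slightly more explicit. Either way the content is the same.
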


\begin{proof}
Let $\beta = f^k(\alpha)$, so that $(\id + 2 \dbarsttau \Gtau e) \beta = \alpha$ and
$\deltau \beta = 0$. Noting that $\delta(\alpha) = \Htau(e \alpha)$, we need to show
that
\[
	(\id + 2 \dbarsttau \Gtau e) (\dbartau + e) \beta = \Htau(e \alpha).
\]
Since $e \circ e = 0$ and $e(\dbartau \beta) = - \dbartau(e \beta)$, we compute that
\[
	(\id + 2 \dbarsttau \Gtau e) (\dbartau + e) \beta
		= \dbartau \beta + e \beta + 2 \dbarsttau \Gtau e \dbartau \beta
		= \dbartau \beta + e \beta - 2 \dbarsttau \dbartau \Gtau e \beta.
\]
We always have $e \beta = \Htau(e \beta) + 2 \dbartau \dbarsttau \Gtau e \beta + 2
\dbarsttau \dbartau \Gtau e \beta$, and so we can simplify the above to
\begin{align*}
	(\id + 2 \dbarsttau \Gtau e) (\dbartau + e) \beta
		&= \dbartau \beta + \Htau(e \beta) + 2 \dbartau \dbarsttau \Gtau e \beta \\
		&= \Htau(e \beta) + \dbartau \bigl( \beta + 2 \dbarsttau \Gtau e \beta \bigr) 
		= \Htau(e \beta) + \dbartau \alpha = \Htau(e \beta).
\end{align*}
Thus it suffices to prove that $\Htau(e \beta) = \Htau(e \alpha)$. But this is
straightforward: from $\Htau(\beta) = \alpha$ and $\deltau \beta = 0$, we get
$\beta = \alpha + 2 \deltau \delsttau \Gtau \beta$, and therefore
\[
	e \beta = e \alpha - 2 \deltau \bigl( e \delsttau \Gtau \beta \bigr).
\]
Since $\Htau \deltau = 0$, we obtain the desired identity $\Htau(e \beta) = \Htau(e
\alpha)$.
\end{proof}

\begin{note}
The decomposition $\beta = \alpha + 2 \deltau \delsttau \Gtau \beta$ is the reason
for imposing the additional condition $\deltau f^k(\alpha) = 0$. Without this, it would be
difficult to relate the $\dtau$-harmonic parts of $e \alpha$ and $e \beta$ in the
final step of the proof.
\end{note}

If we extend $R$-linearly, we obtain maps of $R$-modules $f^k \colon \Hartau^k \tensor R
\to C^k$. Because \eqref{eq:morphism} is satisfied, they define a morphism of
complexes $f \colon \bigl( \Hartau^{\bullet} \tensor R, \delta \bigr) \to \bigl(
C^{\bullet}, D \bigr)$.

\begin{lemma}
$f \colon \Hartau^{\bullet} \tensor R \to C^{\bullet}$ is a quasi-isomorphism.
\end{lemma}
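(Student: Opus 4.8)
The plan is to show that the mapping cone of $f$ is acyclic by reducing everything, via the residue field $R/\mm = \CC$, to the classical statement that $\dtau$-harmonic forms compute the cohomology of the twisted de Rham complex. The one genuine subtlety is that the modules $C^k$ have infinite rank over $R$, so a naive $\mm$-adic filtration argument does not converge; the input that circumvents this is the coherence of $\derR\Phi_{P^\natural}\OX$. Concretely, I would first record that the cohomology modules $H^k(C^{\bullet}, D)$ are \emph{finitely generated} over the Noetherian local ring $R$: by Lemma~\ref{smooth_description} the complex $\bigl(C^{\bullet}(X\times W/W), D\bigr)$ represents $\pi^{\ast}\derR\Phi_{P^\natural}\OX$, and since $\derR\Phi_{P^\natural}\OX$ lies in $\Dbcoh(\OAsh)$ by the Laumon--Rothstein equivalence, its cohomology sheaves are coherent; pulling back along the local isomorphism $\pi\colon W\to\Ash$ and taking stalks at $\tau$ gives the claim, the ring $R$ being flat over $\shO_W$. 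The cohomology of $\bigl(\Hartau^{\bullet}\tensor R, \delta\bigr)$ is finitely generated for the trivial reason that this complex is finite free. Hence $K^{\bullet} := \Cone(f)$ is a bounded complex of $R$-modules with finitely generated cohomology, and if $i$ is the largest index with $H^i(K^{\bullet})\neq 0$, then truncation (or the hyper-$\Tor$ spectral sequence) gives $H^i\bigl(K^{\bullet}\Ltensor_R R/\mm\bigr)\cong H^i(K^{\bullet})\tensor_R R/\mm$, which is nonzero by Nakayama's lemma. Therefore it suffices to prove that $f\Ltensor_R R/\mm$ is a quasi-isomorphism.

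Next I would identify both complexes modulo $\mm$. Since $\Hartau^{\bullet}\tensor R$ is free and $\delta$ takes values in $\mm\cdot(\Hartau^{\bullet+1}\tensor R)$, the derived reduction of $\bigl(\Hartau^{\bullet}\tensor R, \delta\bigr)$ is $(\Hartau^{\bullet}, 0)$. For $C^{\bullet}$, the maximal ideal is generated by the regular system of parameters $z_1,\dotsc,z_{2g}$, which form a regular sequence on each $C^k$ (a convergent power series divisible by $z_j$ in the formal sense is divisible by it in the convergent sense); tensoring $C^{\bullet}$ with the Koszul resolution of $R/\mm$ therefore collapses, so $C^{\bullet}\Ltensor_R R/\mm$ is represented by $C^{\bullet}/\mm C^{\bullet}$. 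Setting $z=0$ identifies $C^k/\mm C^k$ with $A^k(X)$, and because $e$ takes values in $\mm C^{\bullet+1}$, the reduced differential $D\bmod\mm$ is exactly $\dtau$. Finally, $f^k(\alpha)=\alpha+(\text{terms vanishing along }z=0)$, so modulo $\mm$ the map $f$ becomes the inclusion $\iota\colon (\Hartau^{\bullet}, 0)\into (A^{\bullet}(X), \dtau)$ of $\dtau$-harmonic forms into all smooth forms. This $\iota$ is a quasi-isomorphism: every $\dtau$-harmonic form is $\dtau$-closed, and by the orthogonal Hodge decomposition \eqref{eq:Hodge-dec} each class in $H^{\bullet}(A^{\bullet}(X), \dtau)$ has a unique harmonic representative. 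Combining this with the previous paragraph finishes the proof.

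I expect the main obstacle to be precisely the infinite rank of the $C^k$: without the finiteness of the cohomology supplied by the Laumon--Rothstein transform being a bounded \emph{coherent} complex, one cannot legitimately reduce the quasi-isomorphism statement to the residue field, since the $\mm$-adic filtration on $C^{\bullet}$ fails to be complete and the usual filtered-complex comparison theorems do not apply. Once that reduction is in place, the remaining ingredients -- the regular-sequence computation identifying $C^{\bullet}/\mm C^{\bullet}$ with the twisted de Rham complex, and the fact that $f$ is the identity to lowest order -- are routine, and the final step is just the harmonic theory of $(L,\nabla)$ already developed above.
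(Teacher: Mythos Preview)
Your argument is correct, and it takes a genuinely different route from the paper's proof. The paper argues via the $\mm$-adic filtration spectral sequence \eqref{eq:SS-mm}: since $\Hartau^{\bullet}\tensor R$ is linear, its spectral sequence degenerates at $E_2$ by Lemma~\ref{lem:homog}; the map $f$ induces an isomorphism on $E_1$-pages (this is again the identification $\Hartau^k\simeq H^k(X,\ker\nabla)$), so the spectral sequence for $C^{\bullet}$ also degenerates at $E_2$, and one concludes that $f$ is a quasi-isomorphism. By contrast, you bypass the spectral sequence entirely: you first import finite generation of $H^{\ast}(C^{\bullet})$ from the coherence of $\derR\Phi_{P^\natural}\OX$ (via Lemma~\ref{smooth_description}), then run a Nakayama argument on the cone of $f$, reducing everything to the derived fiber at $R/\mm$, which you identify with the inclusion of harmonic forms into the twisted de Rham complex.

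The two approaches share the same analytic core (harmonic forms compute $H^{\ast}(X,\ker\nabla)$) but package the passage from ``isomorphism modulo $\mm$'' to ``quasi-isomorphism over $R$'' differently. Your version has the virtue of making the finiteness input completely explicit and of replacing the paper's ``it is then not hard to see'' by a clean derived-Nakayama step; it also isolates exactly why the infinite rank of $C^k$ is harmless (the regular-sequence computation showing $C^{\bullet}\Ltensor_R R/\mm\simeq C^{\bullet}/\mm C^{\bullet}$). The paper's spectral-sequence proof, on the other hand, ties the lemma into the general framework of \subsecref{subsec:SS} and yields the $E_2$-degeneration for $C^{\bullet}$ as a byproduct.
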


\begin{proof}
We use the spectral sequence \eqref{eq:SS-mm}. The complex $\Hartau^{\bullet} \tensor
R$ is clearly linear, and so the associated spectral sequence
\[
	{^1}E_1^{p,q} = \Hartau^{p+q} \tensor \Sym^p(\mm/\mm^2) \Longrightarrow
		H^{p+q} \bigl( \Hartau^{\bullet} \tensor R, \delta \bigr)
\]
degenerates at $E_2$ by Lemma \ref{lem:homog}. On the other hand, the complex
$C^{\bullet}$ also satisfies the conditions needed to  define (\ref{eq:SS-mm}), giving us a
second convergent spectral sequence with
\[	
	{^2}E_1^{p,q} = H^{p+q}(X, \ker \nabla) \tensor \Sym^p(\mm/\mm^2) \Longrightarrow
		H^{p+q} \bigl( C^{\bullet}, D \bigr),
\]
where $\ker \nabla$ is the local system corresponding to $(L, \nabla)$.
The morphism $f$ induces a morphism between the two spectral sequences; at $E_1$, it
restricts to isomorphisms ${^1}E_1^{p,q} \simeq {^2}E_1^{p,q}$, because $\Hartau^k
\simeq H^k(X, \ker \nabla)$. It follows that the second spectral sequence also
degenerates at $E_2$; it is then not hard to see that $f$ must
be indeed a quasi-isomorphism.
\end{proof}

We have now shown that the complex $\bigl( C^{\bullet}, D \bigr)$ is isomorphic, in
$\Dbcoh(R)$, to the linear complex $ \bigl( \Hartau^{\bullet} \tensor R, \delta \bigr)$. 
This completes the proof of Theorem~\ref{thm:linear_intro}.

\subsection{Filtered complexes and linear complexes}
\label{subsec:SS}
 
This section contains the homological algebra used in the constructions and proofs 
of the previous sections. It reviews and expands some of the content of \cite{LPS}*{\S1}, 
the main improvement with respect to that paper being Proposition \ref{prop:summand}.

\begin{definition} \label{def:linear}
Let $(R, \mm)$ be a regular local $k$-algebra of dimension $n$, with residue field $k = R / \mm$.
A \define{linear complex} over $R$ is a bounded complex $\bigl( K^{\bullet}, d \bigr)$ of
finitely generated free $R$-modules with the following property: there is a system of
parameters $t_1, \dotsc, t_n \in \mm$, such that every differential of the complex
is a matrix of linear forms in $t_1, \dotsc, t_n$.
\end{definition}

The property of being linear is obviously not invariant under isomorphisms (or
quasi-isomorphisms). To have a notion that works in the derived category $\Dbcoh(R)$,
we make the following definition.
\begin{definition}
We say that a complex is \define{quasi-linear} over $R$ if it is quasi-isomorphic to
a linear complex over $R$. 
\end{definition}

It is an interesting problem to try and find natural necessary and sufficient 
conditions for quasi-linearity in $\Dbcoh(R)$. For instance, if $E \in \Dbcoh(R)$,
then is it true that $E$ is quasi-linear over $R$ iff $E \tensor_R \Rh$ is quasi-linear over the
completion $\Rh$ with respect to the $\mm$-adic topology?

Here we only treat a special
case that was needed in Corollary~\ref{summands}, namely the fact that any direct summand 
in $\Dbcoh(R)$ of a quasi-linear complex  is again quasi-linear, plus a necessary condition 
for quasi-linearity.
The basic tool that we shall use is the notion of a minimal complex. Recall that a
bounded complex $(K^{\bullet}, d)$ of finitely generated free $R$-modules
is called \define{minimal} if $d(K^i) \subseteq \mm K^{i+1}$ for every
$i \in \ZZ$. This means that every differential of the complex is a matrix with
entries in the maximal ideal $\mm$. A basic fact is that every object in $\Dbcoh(R)$
is isomorphic to a minimal complex; moreover, two minimal complexes are isomorphic as
objects of $\Dbcoh(R)$ if and only if they are isomorphic as complexes (see 
 \cite{Roberts} for more details). Linear complexes are clearly minimal; it
follows that a minimal complex is quasi-linear iff it is isomorphic (as a
complex) to a linear complex.

\begin{proposition} \label{prop:summand}
Let $E \in \Dbcoh(R)$ be quasi-linear. If $E'$ is a direct summand of $E$ in the
derived category, then $E'$ is also quasi-linear.
\end{proposition}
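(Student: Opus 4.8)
The plan is to pass to minimal complexes and reduce the statement about direct summands to a statement about idempotents of chain complexes. First I would replace $E$ by a linear complex $K^{\bullet}$ to which it is quasi-isomorphic; this is possible since $E$ is assumed quasi-linear. A direct summand $E'$ in $\Dbcoh(R)$ is determined by an idempotent endomorphism $\pi \in \Hom_{\Dbcoh(R)}(E, E)$ with $E' = \im(\pi)$ (in the sense that $E \simeq E' \oplus E''$ with $E'' = \im(\id - \pi)$). Since $R$ is local and $K^{\bullet}$ is a bounded complex of finitely generated free modules, every morphism in $\Dbcoh(R)$ out of $K^{\bullet}$ is represented by an actual chain map, so $\pi$ is represented by a chain map $p \colon K^{\bullet} \to K^{\bullet}$, and $p^2$ is chain-homotopic to $p$.

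The next step is to replace $p$ by a genuine idempotent chain map. This is the standard trick: because $p^2 \simeq p$ are homotopic, one can use the fact that in the category of complexes of $R$-modules (or rather, after localizing by homotopy) the ``homotopy idempotent'' $p$ lifts to a strict idempotent $e$ with $e \simeq p$ — concretely, over a ring one can either invoke idempotent-completeness of $\Dbcoh(R)$ (well known for the bounded derived category of finitely generated modules over a Noetherian ring) to get that $E'$ is itself representable by a bounded complex of finitely generated free modules, or one can correct $p$ by a homotopy to an honest idempotent chain map $e \colon K^{\bullet} \to K^{\bullet}$. Either way, $E' \simeq (\im e, d)$, a direct summand of $K^{\bullet}$ as a complex of $R$-modules: each $\im e^i$ is a finitely generated projective, hence (since $R$ is local) free, $R$-module, and $K^i = \im e^i \oplus \ker e^i$ as complexes.

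It remains to observe that a direct summand, as a complex, of a linear complex is minimal, hence — by the discussion preceding the proposition, which says a minimal complex is quasi-linear iff it is isomorphic as a complex to a linear one — one must check it is actually isomorphic to a linear complex. Here the key point is that $K^{\bullet}$ is linear with respect to a system of parameters $t_1, \dotsc, t_n$: every differential is a matrix of linear forms in the $t_j$, so $d(K^i) \subseteq \mm K^{i+1}$ and in fact $d(K^i) \subseteq \sum_j t_j K^{i+1}$. Restricting to the summand $\im e^{\bullet}$, the induced differential still lands in $\sum_j t_j \cdot \im e^{i+1}$; choosing free $R$-bases of $\im e^i$, the matrix entries of the restricted differential are again $R$-linear combinations of the $t_j$ with no constant term, and since they lie in the summand one can in fact take them to be $k$-linear combinations of $t_1, \dotsc, t_n$ after a change of basis — this is the only slightly delicate point. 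So $\im e^{\bullet}$ is linear, $E' \simeq \im e^{\bullet}$ is quasi-linear, and we are done. The main obstacle I expect is the passage from the homotopy-idempotent $p$ to an actual idempotent chain map (or, equivalently, invoking idempotent-completeness in the right form); everything after that is bookkeeping with the linearity condition.
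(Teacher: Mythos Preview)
Your outline has a genuine gap at the point you yourself flag as ``slightly delicate.'' Even granting a strict chain-map idempotent $e$ on $K^{\bullet}$, the restricted differential on $\im e^{\bullet}$ need not be linear in any basis you can write down, and the claim that ``after a change of basis'' it becomes linear is precisely the content of the proposition. Concretely: if you change basis in each $K^i$ so that $e^i$ becomes a standard projection, you destroy the linearity of $d$; if instead you keep the linear $d$ and choose a free basis of $\im e^i \subseteq K^i$, those basis vectors will typically have higher-order terms, and re-expressing $d$ in that basis produces non-linear entries. You have not supplied any argument bridging this, so the proof is incomplete. (The homotopy-idempotent lifting step is also not justified and is in fact unnecessary, as the paper's argument shows.)

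The paper's proof is different and more direct. It never tries to split an idempotent on $K^{\bullet}$. Instead it takes a \emph{minimal} complex $L^{\bullet}$ representing $E'$, obtains chain maps $s \colon L^{\bullet} \to K^{\bullet}$ and $p \colon K^{\bullet} \to L^{\bullet}$ with $p \circ s$ homotopic to $\id$; minimality of $L^{\bullet}$ forces $p \circ s$ to be an actual isomorphism, so one may assume $p \circ s = \id$. The key construction is then to form the \emph{linear part} $d_0$ of the differential on $L^{\bullet}$ (giving a new complex $L_0^{\bullet}$) and the \emph{constant part} $s_0$ of $s$. Because $K^{\bullet}$ is linear, taking linear parts in the identity $d \circ s = s \circ d$ yields $d \circ s_0 = s_0 \circ d_0$, so $s_0$ is a chain map; and $p \circ s_0$ reduces to the identity modulo $\mm$, hence is an isomorphism $L_0^{\bullet} \xrightarrow{\sim} L^{\bullet}$. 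This ``take linear/constant parts and check compatibility'' trick is exactly the missing idea in your argument, and it is what makes the delicate point go through.
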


\begin{proof}
Let $K^{\bullet}$ be a linear complex quasi-isomorphic to $E$, and
$L^{\bullet}$ a minimal complex quasi-isomorphic to $E'$. Since $E'$
is a direct summand of $E$ in the derived category, there are morphisms of complexes
\[
	s \colon L^{\bullet} \to K^{\bullet} \qquad \text{and} \qquad
		p \colon K^{\bullet} \to L^{\bullet}
\]
such that $p \circ s$ is homotopic to the identity morphism of $L^{\bullet}$. Because
$L^{\bullet}$ is minimal, it follows that $p \circ s$ reduces to the identity
modulo $\mm$, and is thus an isomorphism. After replacing $p$ by $(p \circ
s)^{-1} \circ p$, we may therefore assume without loss of generality that $p \circ s = \id$. 

Now choose a system of parameters $t_1, \dotsc, t_n \in \mm$ with the property that
each differential of the complex $K^{\bullet}$ is a matrix of linear forms in $t_1,
\dotsc, t_n$. We are going to construct a new complex $L_0^{\bullet}$
that has the same property, and is isomorphic to $L^{\bullet}$. Set $L_0^i =
L^i$, and define the differential $d_0 \colon L_0^i \to L_0^{i+1}$ to be the linear
part of $d \colon L^i \to L^{i+1}$. That is to say, let $d_0$ be the unique matrix of
linear forms in $t_1, \dotsc, t_n$ with the property that $(d - d_0)(L^i)
\subseteq \mm^2 L^{i+1}$ for every $i \in \ZZ$. It is easy to see that $d_0 \circ d_0
= 0$, and so $(L_0^{\bullet}, d_0)$ is a linear complex.

Likewise, we may define $s_0 \colon L_0^i \to K^i$ as the constant part of $s
\colon L^i \to K^i$; that is to say, as the unique matrix with entries in the field
$k$ such that $(s - s_0)(L^i) \subseteq \mm K^i$. Now consider the commutative diagram
\begin{diagram}{3em}{2.5em}
\matrix[math] (m) { 
	L^i & L^{i+1} \\
	K^i & K^{i+1} \\ 
}; 
\path[to] (m-1-1) edge node[auto] {$d$} (m-1-2)
		edge node[left] {$s$} (m-2-1);
\path[to] (m-1-2) edge node[auto] {$s$} (m-2-2);
\path[to] (m-2-1) edge node[auto] {$d$} (m-2-2);
\end{diagram}
By taking linear parts in the identity $d \circ s = s \circ d$, and using the fact
that $K^{\bullet}$ is a linear complex, we find that $d \circ s_0 = s_0 \circ d_0$;
consequently, $s_0 \colon L_0^{\bullet} \to K^{\bullet}$ is a morphism of complexes.
To conclude the proof, we consider the composition
\[
	p \circ s_0 \colon L_0^{\bullet} \to L^{\bullet}.
\]
By construction, $p \circ s_0$ reduces to the identity modulo $\mm$, and is
therefore an isomorphism. This shows that $L^{\bullet}$ is indeed
isomorphic to a linear complex, as claimed.
\end{proof}

A necessary condition for quasi-linearity is the degeneration of a certain spectral
sequence. To state this, we first recall some general facts.
Let $\bigl( K^{\bullet}, F \bigr)$ be a filtered complex in an abelian category. We
assume that the filtration is decreasing, meaning that $F^p K^n \supseteq F^{p+1}
K^n$, and satisfies
\[
	\bigcup_{p \in \ZZ} F^p K^n = K^n \qquad \text{and} \qquad
	\bigcap_{p \in \ZZ} F^p K^n = \{ 0 \}.
\]
Moreover, the differentials should respect the filtration, in the sense that
$d \bigl( F^p K^n \bigr) \subseteq F^p K^{n+1}$. Under these assumptions, the
filtered complex gives rise to a spectral sequence (of cohomological type)
\begin{equation} \label{eq:SS}
	E_1^{p,q} = H^{p+q} \bigl( F^p K^{\bullet} / F^{p+1} K^{\bullet} \bigr)
		\Longrightarrow H^{p+q} \bigl( K^{\bullet} \bigr).
\end{equation}
It converges by the standard convergence criterion \cite{McCleary}*{Theorem~3.2}. 

Going back now to the situation of a regular local $k$-algebra $(R, \mm)$ as above, on any bounded complex $K^{\bullet}$ of $R$-modules with finitely generated
cohomology, we can define the $\mm$-adic filtration by setting
\[
	F^p K^n = \mm^p K^n
\]
for all $p \geq 0$. Noting that $\mm^p / \mm^{p+1} \simeq \Sym^p(\mm/\mm^2)$, we have
\[
	F^p K^n / F^{p+1} K^n \simeq \bigl( K^n \tensor_R k \bigr) 
		\tensor_k \Sym^p(\mm/\mm^2).
\]
Provided that each $K^n$ has the property that
\[
	\bigcap_{p=1}^{\infty} \mm^p K^n = \{0\},
\]
the filtration satisfies the conditions necessary to define (\ref{eq:SS}), and we obtain a
convergent spectral sequence
\begin{equation} \label{eq:SS-mm}
	E_1^{p,q} = H^{p+q} \bigl( K^{\bullet} \tensor_R k \bigr) 
		\tensor_k \Sym^p(\mm/\mm^2)
		\Longrightarrow H^{p+q} \bigl( K^{\bullet} \bigr).
\end{equation}
It follows from the Artin-Rees theorem that the induced filtration on the limit is
$\mm$-good; in particular, the completion of $H^n \bigl( K^{\bullet} \bigr)$ with
respect to this filtration is isomorphic to $H^n \bigl( K^{\bullet} \bigr) \tensor_R
\Rh$. In this sense, the spectral sequence describes the formal analytic stalk of the
original complex.

\begin{lemma}[\cite{LPS}*{Lemma 1.5}]\label{lem:homog}
If $K^{\bullet}$ is a linear complex, then the spectral sequence 
\[
	E_1^{p,q} = H^{p+q} \bigl( K^{\bullet} \tensor_R k \bigr) 
		\tensor_k \Sym^p(\mm/\mm^2)
		\Longrightarrow H^{p+q} \bigl( K^{\bullet} \bigr)
\]
degenerates at the $E_2$-page.
\end{lemma}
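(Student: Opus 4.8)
The plan is to reduce the assertion to the case of a \emph{graded} complex over a polynomial ring, where the $\mm$-adic filtration is nothing but a (cohomological-degree-dependent shift of) the internal grading, and then to observe that for such a filtration the spectral sequence collapses for essentially formal reasons.

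\smallskip
\noindent\emph{Step 1: reduction to a polynomial ring.} Fix a regular system of parameters $t_1, \dotsc, t_n \in \mm$ with respect to which all differentials of $K^{\bullet}$ are matrices of linear forms. Since $R$ is regular, $\Gr_{\mm} R$ is the polynomial ring $S = k[t_1,\dotsc,t_n]$ on $\mathfrak n/\mathfrak n^2 \simeq \mm/\mm^2$, where $\mathfrak n = (t_1,\dotsc,t_n)$. Comparing leading homogeneous parts shows that the $k$-algebra map $S \to R$, $t_i \mapsto t_i$, is injective (a nonzero relation would produce a nonzero homogeneous relation in $\Gr_{\mm}R$), and by miracle flatness it is faithfully flat, because $R/\mathfrak n R = k$ and $\dim R = \dim S$. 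Writing $r_m = \rk_R K^m$, one has $K^{\bullet} \simeq L^{\bullet} \tensor_S R$, where $L^{\bullet}$ is the complex of graded free $S$-modules with $L^m = S(m)^{r_m}$ and with the same matrices of linear forms as differentials; these are genuine degree-$0$ morphisms $S(m)^{r_m} \to S(m+1)^{r_{m+1}}$. Because $R$ is flat over $S$, we get $\mm^p K^{\bullet} = (\mathfrak n^p L^{\bullet}) \tensor_S R$; since $\tensor_S R$ is exact and restricts to the identity on $S/\mathfrak n$-modules, and since $\Gr_{\mm}(N \tensor_S R) \simeq \Gr_{\mathfrak n} N$ for finitely generated graded $S$-modules $N$, the $\mm$-adic spectral sequence of $K^{\bullet}$ is identified — pages, differentials, and abutment included — with the $\mathfrak n$-adic spectral sequence of $L^{\bullet}$. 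So it suffices to treat $L^{\bullet}$.

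\smallskip
\noindent\emph{Step 2: the graded case.} The differential of $L^{\bullet}$ is homogeneous of degree $0$, so $L^{\bullet} = \bigoplus_{d} (L^{\bullet})_d$ splits as a direct sum of bounded complexes of finite-dimensional $k$-vector spaces (its internal-degree pieces), whence $H^m(L^{\bullet}) = \bigoplus_d H^m\bigl((L^{\bullet})_d\bigr)$ as graded $S$-modules. On the other hand, each $L^m = S(m)^{r_m}$ is generated in the single internal degree $-m$, so $\mathfrak n^p L^m = \bigoplus_{d \ge p-m}(L^m)_d$: the $\mathfrak n$-adic filtration in cohomological degree $m$ is just the internal-degree filtration shifted by $-m$. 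Consequently $E_1^{p,q} = \Gr_{\mathfrak n}^p L^{p+q} = (L^{p+q})_{-q}$, the induced differential $d_1$ is exactly the internal-degree decomposition of the differential of $L^{\bullet}$, and at the spot $(p,q)$ the $d_1$-complex is precisely $(L^{\bullet})_{-q}$; thus $E_2^{p,q} = H^{p+q}\bigl((L^{\bullet})_{-q}\bigr)$. A routine computation with the brutal truncations $\sigma_{\ge p-d}(L^{\bullet})_d$ (the internal-degree-$d$ parts of $\mathfrak n^p L^{\bullet}$) shows that the induced filtration on $H^{p+q}(L^{\bullet})$ satisfies $\Gr^p H^{p+q}(L^{\bullet}) = H^{p+q}\bigl((L^{\bullet})_{-q}\bigr)$; this filtration is $\mathfrak n$-good by the Artin--Rees remark recorded before the statement, so it is the abutment, $E_{\infty}^{p,q}$. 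Hence $E_2^{p,q} = E_{\infty}^{p,q}$ for all $p,q$, every $d_r$ with $r \ge 2$ vanishes, and the spectral sequence degenerates at $E_2$.

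\smallskip
I expect the only genuinely delicate point to be the bookkeeping in Step 1 — checking that the faithfully flat embedding $S \hookrightarrow R$ carries the $\mm$-adic spectral sequence of $K^{\bullet}$ isomorphically onto the $\mathfrak n$-adic one of $L^{\bullet}$, abutment included. An essentially equivalent route bypasses the embedding: pass to $\hat R \simeq k[[t_1,\dotsc,t_n]]$ (the comparison $K^{\bullet} \to K^{\bullet} \tensor_R \hat R$ is filtered and an isomorphism on $\Gr$, hence on every page), after which a linear complex over $\hat R$ is literally a completed graded complex whose cohomology is computed internal degree by internal degree — which is again exactly the equality $E_2 = E_{\infty}$. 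The rest is standard manipulation with the $\Sym^p(\mm/\mm^2)$-grading.
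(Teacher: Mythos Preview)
The paper does not give its own proof of this lemma; it is simply quoted from \cite{LPS}*{Lemma~1.5}. So there is nothing in the present paper to compare your argument against.

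Your argument is correct in substance. One small overclaim: the map $S = k[t_1,\dotsc,t_n] \to R$ is flat (the local map $S_{\mathfrak n} \to R$ is flat by miracle flatness and localization is flat), but it is \emph{not} faithfully flat---for instance, no prime of the local ring $R$ can contract to the maximal ideal $(t_1-1,t_2,\dotsc,t_n)$ of $S$, since $t_1 - 1$ is a unit in $R$. This does no damage: you only use flatness. The filtered map $L^{\bullet} \to L^{\bullet}\tensor_S R = K^{\bullet}$ induces an isomorphism on the $E_0$-pages, hence on all subsequent pages and differentials, and that suffices to transport degeneration from $L^{\bullet}$ to $K^{\bullet}$. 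The alternative you sketch at the end---passing to the completion $\hat R \simeq k\llbracket t_1,\dotsc,t_n\rrbracket$---is cleaner and sidesteps the flatness discussion entirely. Step~2 is the heart of the matter and is exactly right: for a graded linear complex over $S$ the $\mathfrak n$-adic filtration is the internal-degree filtration (shifted by cohomological degree), so the spectral sequence merely records the grading on cohomology and collapses at $E_2$.
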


Note that degeneration of the spectral sequence is not a sufficient condition for
being quasi-linear: over $R = k \llbracket x, y \rrbracket$, the complex
\begin{diagram}{2.5em}{2em}
\matrix[math] (m) { R & R^{\oplus 2} \\ }; 
\path[to] (m-1-1) edge node[auto] {$\begin{pmatrix} x^2 \\ y 
\end{pmatrix}$} (m-1-2);
\end{diagram}
is an example of a minimal complex that is not isomorphic to a linear complex, but
where the spectral sequence nevertheless degenerates at $E_2$.

\section{Perversity and the cohomology algebra}

\subsection{Perversity of the Laumon-Rothstein transform}
\label{subsec:perversity_laumon}

An important issue is to understand the exactness properties of the derivative-type 
complexes appearing in Theorem \ref{thm:linear_intro}.  In the case of the standard Fourier-Mukai 
transform $\derR \Phi_P \shO_X$, it was emphasized in \cite{LP} that this is a crucial 
step towards obtaining qualitative and quantitative information about the cohomology algebra of $X$. 

As our complexes locally represent the object $\derR \Phi_{P^\natural} \shO_X$, an
equivalent problem is to prove a vanishing statement for the higher direct images
$R^i \Phi_{P^\natural} \shO_X$.  This turns out to be related to the behavior of
$\derR \Phi_{P^\natural} \shO_X$ with respect to the $t$-structure $m$ defined in
\S\ref{subsec:t-structure}. Before rephrasing  Theorem \ref{thm:intro_structure} in
this language, note that due to the degree convention for the de Rham complex
associated to a $\Dmod$-module $\Mmod$, definition \ref{eqn:laumon} implies that
$R^i \Phi_{P^\natural} \shO_X = 0$ for $\abs{i} > n := \dim X$.

\begin{theorem}\label{thm:structure}
For any smooth projective complex variety $X$ we have
$$\derR \Phi_{P^\natural} \shO_X \in \mDtcoh{\leq \delta(a)}(\OX) \cap \mDtcoh{\geq
-\delta(a)}(\OX).$$
In particular, if the Albanese map of $X$ is semismall, $\derR \Phi_{P^\natural} \shO_X$ is 
an $m$-perverse coherent sheaf on $A^\natural$.
\end{theorem}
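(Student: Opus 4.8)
The first containment is essentially a restatement. By Lemma~\ref{lem:m-structure}, an object $E\in\Dbcoh(\OAsh)$ lies in $\mDtcoh{\leq\delta(a)}(\OAsh)$ exactly when $\codim\Supp H^i(E)\geq 2\bigl(i-\delta(a)\bigr)$ for all $i\in\ZZ$; taking $E=\derR\Phi_{P^\natural}\shO_X$, so that $H^i(E)=R^i\Phi_{P^\natural}\shO_X$, this is precisely Theorem~\ref{thm:intro_structure}. So the real work is the containment $\derR\Phi_{P^\natural}\shO_X\in\mDtcoh{\geq-\delta(a)}(\OAsh)$, which I would prove by duality.

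By the duality identities of \subsecref{subsec:t-structure} together with Lemma~\ref{lem:m-structure}, the membership $\derR\Phi_{P^\natural}\shO_X\in\mDtcoh{\geq-\delta(a)}(\OAsh)$ is equivalent to $\derR\shHom\bigl(\derR\Phi_{P^\natural}\shO_X,\OAsh\bigr)\in\pDtcoh{\hat m}{\leq\delta(a)}(\OAsh)$, i.e.\ to $\codim\Supp H^i\derR\shHom(\derR\Phi_{P^\natural}\shO_X,\OAsh)\geq 2(i-\delta(a))-1$ for all $i$. The key point I would establish is that, in a neighbourhood of any point of $\Ash$ in the analytic topology, there is an isomorphism
\[
	\derR\shHom\bigl(\derR\Phi_{P^\natural}\shO_X,\OAsh\bigr)\simeq(-1_{\Ash})^{\ast}\,\derR\Phi_{P^\natural}\shO_X
\]
with no cohomological shift. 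Granting this, support and codimension of support are local-analytic notions invariant under $(-1_{\Ash})^{\ast}$, so Theorem~\ref{thm:intro_structure} gives $\codim\Supp H^i\derR\shHom(\derR\Phi_{P^\natural}\shO_X,\OAsh)=\codim\Supp R^i\Phi_{P^\natural}\shO_X\geq 2(i-\delta(a))$, which a fortiori is $\geq 2(i-\delta(a))-1$, finishing the second containment. To obtain the displayed isomorphism I would use the explicit derivative complex of Theorem~\ref{thm:linear_intro}: near a point $(L,\nabla)\leftrightarrow\tau$ the transform is represented by $\bigl(H^{\bullet}(X,\LL)\tensor\OW,\delta\bigr)$ placed in degrees $-n,\dots,n$, a complex of free $\OW$-modules, and its term-by-term $\OW$-linear dual is, by Poincar\'e duality $H^k(X,\LL)^{\vee}\simeq H^{2n-k}(X,\LL^{\vee})$ and the coordinate change $z_j\mapsto-z_j$ (which identifies the two differentials), exactly the derivative complex attached to the dual local system $\LL^{\vee}$, i.e.\ the one at $-\tau=(-1_{\Ash})^{\ast}(L,\nabla)$. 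Equivalently one may invoke the self-duality of $\shO_X$ as a $\Dmod$-module — the $\Dmod$-module underlying the Verdier dual $\DX\QHX\decal{n}=\QHX(n)\decal{n}$ is again $\shO_X$, up to a shift of the filtration — together with the compatibility of the Laumon--Rothstein transform with duality, \cite{Laumon2,Rothstein}, which is the $\derR\Phi_{P^\natural}$-analogue of Lemma~\ref{lem:total-dual}.

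Putting the two containments together proves the theorem, and the final assertion is immediate: if $a$ is semismall then $\delta(a)=0$, so $\derR\Phi_{P^\natural}\shO_X\in\mDtcoh{\leq 0}(\OAsh)\cap\mDtcoh{\geq 0}(\OAsh)=\mCoh(\OAsh)$, i.e.\ it is an $m$-perverse coherent sheaf. I expect the one genuinely delicate step to be the displayed self-duality with \emph{vanishing} shift: either checking that the $\OW$-linear dual of the derivative complex is the derivative complex of $\LL^{\vee}$ on the nose — matching the signs from graded-commutativity of cup product against those from $z_j\mapsto-z_j$ — or, if one prefers the second route, pinning down the correct shift-free normalization of the Laumon--Rothstein duality exchange. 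Everything else is formal manipulation with the perverse $t$-structures of \subsecref{subsec:t-structure}.
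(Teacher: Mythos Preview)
Your proposal is correct and follows essentially the same route as the paper. For the first containment both you and the paper reduce to the codimension bound (the paper derives it in situ from Theorem~\ref{thm:locsyst} via the Riemann--Hilbert identification $\Ash\simeq\mathrm{Char}(X)$, but this is exactly the content of Theorem~\ref{thm:intro_structure}); for the second containment your ``second route'' is precisely what the paper does, namely invoke the compatibility of the Laumon--Rothstein transform with Verdier duality \cite{Laumon2}*{Proposition~3.3.4} and the self-duality of $\shO_X$ as a $\Dmod$-module.

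One minor point of comparison: the paper additionally appeals to the fact that every irreducible component of the support of $\derR\Phi_{P^\natural}\shO_X$ has even dimension (being a translate of a triple torus), which makes the $m$- and $\hat m$-perversities coincide on the relevant loci. You correctly observe that this extra input is not needed for the statement as written: once self-duality gives $\codim\Supp H^i\bigl(\derR\shHom(\derR\Phi_{P^\natural}\shO_X,\OAsh)\bigr)\geq 2(i-\delta(a))$, this already exceeds the threshold $2(i-\delta(a))-1$ required for membership in $\pDtcoh{\hat m}{\leq\delta(a)}(\OAsh)$. Your first route, computing the $\OW$-dual of the derivative complex by hand via Poincar\'e duality, is a legitimate alternative that avoids citing \cite{Laumon2}, at the cost of the sign-matching you flag; the paper simply takes the black-box approach.
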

\begin{proof}
This is implied by our generic vanishing statement for local systems. Indeed, note that by Theorem \ref{thm:locsyst}, in the character space
${\rm Char}(X)$ we have
$$\codim \Sigma^{i +n} (X) \ge 2 \big(i - \delta(a)\big)$$
for any $i \ge 0$. 
Let us now define the locus 
$$S^i (X) : = \{(L, \nabla) ~|~ {\bf H}^i \big(X, {\rm DR}(L, \nabla)\big) \neq 0\} \subset X^\natural,$$
where $X^\natural$ denotes the moduli space of line bundles with integrable connection on $X$.
Via the well-known correspondence between bundles with integrable connection and local systems, we have a biholomorphic identification
$$F: A^\natural \simeq X^\natural \overset{\simeq}{\longrightarrow}  {\rm Char}(X), \,\,\,\, (L, \nabla) \mapsto {\rm Ker}(\nabla)$$
such that, as in the proof of Theorem \ref{thm:locsyst}, one has
$$F (S^i (X)) =   \Sigma^{i +n} (X).$$ 
This implies the inequalities 
$$\codim S^i (X) \ge 2 \big(i - \delta(a) \big),$$
and since by base change we have ${\rm Supp}~R^i \Phi_{P^\natural} \shO_X \subset S^i (X)$, we finally obtain
$$\derR \Phi_{P^\natural} \shO_X \in \mDtcoh{\leq \delta(a)}(\OX).$$
On the other hand, by the commutation of Verdier duality with the Fourier transform
(see \cite{Laumon2}*{Proposition 3.3.4}), the object $\derR \Phi_{P^\natural} \shO_X$
is self dual. Going back to the definition of the $t$-structure $m$, and using the
fact that each irreducible component of the support of $\derR \Phi_{P^\natural}
\shO_X$ has even dimension, this implies that $\derR \Phi_{P^\natural} \shO_X \in
\mDtcoh{\ge -\delta(a)}(\OX)$ as well.
\end{proof}

\begin{corollary}\label{cor:vanishing}
For any smooth projective complex variety $X$ we have
$$R^i \Phi_{P^\natural} \shO_X = 0 \,\,\,\, {\rm for~all~} i <  - \delta(a).$$
\end{corollary}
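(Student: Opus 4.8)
The plan is to read this off directly from Theorem~\ref{thm:structure}, where all the work has already gone; what remains is a purely formal property of the $m$-perverse $t$-structure. Recall that Theorem~\ref{thm:structure} asserts in particular that $\derR \Phi_{P^\natural}\shO_X \in \mDtcoh{\leq \delta(a)}(\OAsh)$, which by Lemma~\ref{lem:m-structure} is just a restatement of the support bounds $\codim \Supp R^i \Phi_{P^\natural}\shO_X \geq 2\bigl(i - \delta(a)\bigr)$ for every $i$; these in turn come, via base change, from the generic vanishing statement for rank one local systems in Theorem~\ref{thm:locsyst}.

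First I would observe that $\delta(a) \geq 0$ for any Albanese map. Indeed, taking $\ell$ equal to the dimension $k = \dim X - \dim a(X)$ of the generic fibre in the definition of $\delta(a)$, one has $A_k = a(X)$ (the locus where fibres have dimension $\geq k$ is closed in the irreducible variety $a(X)$ and contains a dense open subset), so
\[
	2k - \dim X + \dim A_k = 2\bigl(\dim X - \dim a(X)\bigr) - \dim X + \dim a(X) = k \geq 0.
\]
Hence Lemma~\ref{lem:heart} applies with the non-negative integer $\delta(a)$ in place of $k$, and yields
\[
	\derR \Phi_{P^\natural}\shO_X \in \Dtcoh{\geq -\delta(a)}(\OAsh).
\]
Unwinding what this means for the cohomology sheaves gives exactly $R^i \Phi_{P^\natural}\shO_X = 0$ for all $i < -\delta(a)$, which is the assertion.

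There is essentially no obstacle here: the substance is entirely contained in Theorem~\ref{thm:structure} (and ultimately in the codimension estimates of Theorem~\ref{thm:locsyst}), while the passage from the $m$-perverse bound to the honest vanishing is the elementary Lemma~\ref{lem:heart}, whose content is simply that the supporting function $m$ takes non-negative values. If one prefers, the same conclusion can also be extracted from the explicit linear (derivative-type) representatives of $\derR \Phi_{P^\natural}\shO_X$ furnished by Theorem~\ref{thm:linear_intro} together with Corollary~\ref{summands}, combined with the support estimates above; but the route through Lemma~\ref{lem:heart} is the most economical.
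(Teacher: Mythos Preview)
Your proof is correct and follows exactly the same route as the paper's own argument: combine Theorem~\ref{thm:structure} with Lemma~\ref{lem:heart}. You have in fact been more careful than the paper by explicitly verifying $\delta(a)\geq 0$ so that Lemma~\ref{lem:heart} applies.
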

\begin{proof}
This follows immediately from Theorem \ref{thm:structure} and Lemma \ref{lem:heart}.
\end{proof}

\begin{note}
The second part of Theorem \ref{thm:structure} can be generalized by replacing $\shO_X$ (or rather $a_*\shO_X$) by any holonomic $\Dmod$-module on $A$; see Corollary \ref{cor:extension} below.
Due to this fact, the decomposition theorem, and Proposition \ref{prop:vanishing}, one can in fact 
draw an even stronger conclusion about $\derR \Phi_{P^\natural} \shO_X$, namely that
$$\derR \Phi_{P^\natural} \shO_X \simeq \bigoplus_{i = - \delta(a)}^{\delta(a)} E_i
\decal{-i},$$ with $E_i$ an $m$-perverse coherent sheaf on $A^\natural$ for each $i$. 
\end{note}

\subsection{Regularity of the cohomology algebra.}
\label{subsec:BGG}

The results of the previous section can be used to understand finer properties of the
singular cohomology algebra of $X$ as a graded module over the cohomology algebra of
$\Alb(X)$. We follow the method introduced in \cite{LP}. More precisely, we define
\[
	P_X = \bigoplus_{i=-n}^n P_{X, i} := \bigoplus_{i=-n}^n H^{n-i}(X, \CC).
\]
Via cup product, we may view this as a graded module over the exterior algebra 
\[
	E := \Lambda^{\ast} H^1 (X, \CC) \simeq H^{\ast} \bigl( \Alb(X), \CC \bigr).
\]
The conventions regarding the grading are similar to \cite{EFS}: in the algebra $E$,
elements of $H^1 (X, \CC)$ have degree $-1$; in $P_X$, the space $P_{X,i} = H^{n-i}
(X, \CC)$ is taken to have degree $i$. Via Poincar\'e duality, the dual graded
$E$-module is
\[
	\bigoplus_{i=-n}^n \Hom_{\CC} \bigl( P_{X,-i}, \CC \bigr) \simeq P_X.
\]
Using the results above, we can bound the degrees of the generators and syzygies of
$P_X$ as an exterior module.
To this end, recall the following analogue of Castelnuovo-Mumford regularity for
modules over the exterior algebra: given a graded $E$-module $Q$ generated in degrees
$\leq d$, one says that $E$ is \define{$m$-regular}
if the generators of $Q$ appear in degrees $d, d-1, \ldots, d-m$, the relations among
these generators are in degrees $d-1, \ldots, d-m-1$, and more generally the
$p$-th module of syzygies of $Q$ has all its generators in degrees $d-p, \dotsc,
d-m-p$. An equivalent condition (see \cite{LP}*{Proposition 2.2}) is the vanishing
\[
	\Tor_i^E(Q, \CC)_{-i-k} = 0
\]
for $i \in \ZZ$ and $k > m-d$.

\begin{proof}[Proof of Corollary~\ref{cor:intro_regularity}]
In view of Theorem \ref{thm:linear_intro} and Corollary \ref{cor:vanishing}, this
follows precisely as in \cite{LP}*{\S2}, and we will only briefly sketch the details.
On $W$ we have the complex $\mathcal{K}^\bullet$ of trivial vector bundles 
\[
	H^0(X, \shO_X) \tensor \OW \to H^1(X, \shO_X) \tensor \OW \to 
		\dotsb \to H^{2n}(X, \shO_X) \tensor \OW,
\]
placed in degrees $-n, \dotsc, n$, appearing in Theorem \ref{thm:linear_intro} for
$(L, \nabla) = (\OX, d)$. Passing to global sections, 
we obtain a complex $\mathbf{L}_X : = \Gamma (X, \mathcal{K}^\bullet)$, equal to
\[
	H^0(X, \shO_X) \tensor_{\CC} S \to H^1(X, \shO_X) \tensor_{\CC} S \to 
		\dotsb \to H^{2n}(X, \shO_X) \tensor_{\CC} S,
\]
where $S = {\rm Sym} (W^\vee)$. The Koszul-type differential of the complex
$\mathcal{K}^\bullet$ implies that $\mathbf{L}_X = \derL (P_X)$, the BGG-complex associated to the exterior module $P_X$ via the BGG correspondence; this is a linear complex of free $S$-modules. Since $W$ is an affine space, 
the exactness properties of $\mathbf{L}_X$ are dictated by those of $\mathcal{K}^\bullet$
around the origin (note that the differentials of $\mathcal{K}^\bullet$ scale
linearly along radial directions). But by Theorem \ref{thm:linear_intro}, the complex
$\mathcal{K}^\bullet$ represents $\derR \Phi_{P^\natural} \shO_X$ in an analytic
neighborhood of the origin. Corollary \ref{cor:vanishing} therefore implies that 
$\mathbf{L}_X$ is exact in cohomological degrees $< -\delta(a)$.
Since the dual graded $E$-module is again isomorphic to $P_X$,
\cite{Eisenbud}*{Theorem~7.8} shows that we have
\[
	\Tor_i^E(P_X, \CC)_{-i-k} = 0
\]
for $i \in \ZZ$ and $k > \delta(a)$. Since $P_X$ is generated in degrees $\leq n$,
it follows that $P_X$ is $\big(n+\delta(a)\big)$-regular as a graded $E$-module.
\end{proof}

The best possible situation is when the Albanese map of $X$ is semismall, in which case 
$P_X$ is $n$-regular. This, as well 
as the general result, is optimal. We check this in some simple examples.

\begin{example}\label{algebras}
Let $C$ be a smooth projective curve of genus $g \ge 2$, and denote by $C_n$ its
$n$-th symmetric product; we shall assume that $n \leq g-1$, so that the image of
$C_n$ in the Jacobian $J(C)$ is a proper subvariety. It is well known that the
Abel-Jacobi morphism $C_n \to J(C)$ is semismall, and small if $C$ is
non-hyperelliptic. The singular cohomology
of $C_n$ was computed by Macdonald \cite{MacDonald}*{(6.3)}, with the following
result: There is a basis $\xi_1, \dotsc, \xi_{2g} \in H^1(C_n, \ZZ)$, and an element
$\eta \in H^2(C_n, \ZZ)$, such that $H^{\ast}(C_n, \ZZ)$ is generated by $\xi_1,
\dotsc, \xi_{2g}$ and $\eta$, subject only to the relations
\[
	\xi_{i_1} \dotsm \xi_{i_a} \xi_{j_1+g} \dotsm \xi_{j_b+g}
	(\xi_{k_1} \xi_{k_1+g} - \eta) \dotsm (\xi_{k_c} \xi_{k_c+g} - \eta) \eta^d= 0,
\]
where $i_1, \dotsc, i_a, j_1, \dotsc, j_b, k_1, \dotsc, k_c$ are distinct integers
in $\{1, \dotsc, g\}$, and $ a + b + 2c + d = n+1$. 
In the notation introduced above, this implies that $P_{C_n}$ is
generated as a graded $E$-module by the elements $ \eta^k$, with $k = 0, \ldots, n$,
while the relations imply that for $k > n/2$, these generators are obtained from 
those in lower degrees. It follows that $P_{C_n}$ is actually generated in degrees $0, \dotsc, n$.
On the other hand, note that when $n$ is even, $\eta^{n/2}$ is indeed a new 
generator in degree $0$, not coming from lower degrees.
This shows that in this case Corollary~\ref{cor:intro_regularity} is sharp. Equivalently,  up to (and including) the degree $-1$ term, the complex $\derL_{C_n}$ is exact. Is obtained as a direct sum of Koszul complexes in the vector space $W$, with a new one starting in each even degree; in other words, at a point $w\in W$, its first half looks as follows:
$$0 \to \CC \to W \to \wedge^2 W \oplus \CC \to \wedge^3 W \oplus W  \to 
\wedge^4 W \oplus \wedge^2 W \oplus \CC \to \cdots$$
where the differentials are given by wedging with $w$.
\end{example}

\begin{example}
Let us revisit Example \ref{blow-up} from this perspective. We are considering a four-fold $X$ obtained by blowing up 
an abelian variety $A$ of dimension $4$ along a smooth projective curve $C\subset A$ of genus at least $2$. We have
seen that $\delta (a) = 1$. It is not hard to compute the singular cohomology of $X$. For instance, 
denoting by $E$ the exceptional divisor on $X$, and $W = H^1 (A, \CC)$, we have 
\begin{align*}
H^1 (X, \CC) &\simeq W \\
H^2 (X, \CC) &\simeq H^2 (A, \CC) \oplus H^0(E, \CC) \simeq \wedge^2 W \oplus \CC \\
H^3 (X, \CC) &\simeq H^3 (A, \CC) \oplus H^1 (E, \CC) \simeq \wedge^3 W \oplus H^1
(C, \CC) \\
H^4 (X, \CC) &\simeq H^4 (A, \CC) \oplus H^2 (E, \CC) \simeq \wedge^4 W \oplus H^2
(C, \CC) \oplus \CC.
\end{align*}
Since the differentials are given by wedging with $1$-forms, 
the complex $\derL_X$  is then, up to the $H^4$-term, the direct sum of the (truncation of) the Koszul complex corresponding to $A$, and 
the following sequence corresponding to the cohomology of $E$, starting at $H^2$:
$$0 \rightarrow \CC \rightarrow \CC^{2g} \rightarrow \CC\oplus \CC.$$
This is exact at the first step, but clearly not at the second, which shows that
$\derL_X$ is exact at the first three terms, as in Corollary \ref{cor:intro_regularity},
but not at the fourth. Equivalently, $P_X$ is $5$-regular, but not $4$-regular.
\end{example}

Finally, we note that the partial exactness of the complex $\derL_X$ in Corollary \ref{cor:intro_regularity} would have numerous quantitative
applications related to the Betti numbers of $X$, again as in \cite{LP}, in the case
when $R^{-\delta(a)} \Phi_{P^\natural} \shO_X$ is locally 
free in a punctured neighborhood of the origin. At the moment we do not have a good understanding of geometric conditions that would 
imply this. It is however not hard to see that the absence of irregular fibrations for $X$ does not suffice (as in the case of the standard 
Fourier-Mukai transform), even when the Albanese map is semismall.

\section{Generalizations and open problems}

\subsection{Extensions to $\Dmod$-modules on abelian varieties}

In light of our results, it should be clear that all the theorems in generic
vanishing theory are in reality statements about a certain class of filtered
$\Dmod$-modules on abelian varieties, namely those underlying mixed Hodge modules.
Moreover, the dimension (D), linearity (L), and strong linearity (SL) results that we
have discussed should be viewed as properties of the Fourier-Mukai transforms of such
$\Dmod$-modules.

A natural question is then whether there is a larger (and more easily described)
class of $\Dmod$-modules on abelian varieties for which the same results are true.
It is known that when $(\Mmod, F)$ underlies a mixed Hodge module $M$,
the $\Dmod$-module $\Mmod$ is always regular and holonomic; when $M$ is pure, $\Mmod$
is in addition semi-simple. This suggests that the results of generic vanishing theory might extend
to $\Dmod$-modules with those properties on abelian varieties.

Using the recent work of Sabbah \cites{Sabbah1,Sabbah2} and Mochizuki
\cites{Mochizuki1,Mochizuki2} on the correspondence between semi-simple holonomic
$\Dmod$-modules and polarizable twistor $\Dmod$-modules, such an extension is indeed
possible. The results are as follows.

\begin{theorem}
Let $\Mmod \in \Db_h(\Dmod_A)$ be a complex of $\Dmod$-modules with bounded holonomic
cohomology on a complex abelian variety $A$. Then for every $k,m \in \ZZ$, the
cohomological support locus
\[
	S_m^k(\Mmod) := \Menge{(L, \nabla) \in \Ash}%
		{\dim \mathbf{H}^k \Bigl( A, \DR_A \bigl( \Mmod 
			\tensor_{\OA} (L, \nabla) \bigr) \Bigr) \geq m}
\]
is a finite union of translates of triple tori in $\Ash$; the translates are by
torsion points when $\Mmod$ is of geometric origin. The cohomological support loci
satisfy
\[
	\codim S_m^k(\Mmod) \geq 2k \qquad \text{for every $k \in \ZZ$}
\]
in the special case when $\Mmod$ is a single holonomic $\Dmod$-module.
\end{theorem}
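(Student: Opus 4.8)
The plan is to reduce the statement about arbitrary complexes of holonomic $\Dmod$-modules on $A$ to the case of a single polarizable twistor $\Dmod$-module, for which the machinery of Sabbah and Mochizuki provides the appropriate Hodge-theoretic input. First I would observe that, since $\Ash$ and the cohomological support loci behave well under distinguished triangles (a point $(L,\nabla)$ lies in $\bigcup_k S_m^k(\Mmod)$ iff the total hypercohomology of $\DR_A(\Mmod \tensor (L,\nabla))$ jumps, and the long exact sequence bounds jumps of the middle term by jumps of the two outer terms), and since any bounded complex with holonomic cohomology is built up by finitely many triangles from its (shifted) cohomology $\Dmod$-modules, it suffices to treat a single holonomic $\Dmod$-module $\Mmod$. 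By the decomposition by strict support and the fact that a holonomic $\Dmod$-module is generically a vector bundle with flat connection on a locally closed subvariety, one further reduces — using the arguments of Arapura \cite{Arapura} and Simpson \cite{Simpson2} — to the case of a semisimple holonomic $\Dmod$-module, which by Sabbah--Mochizuki underlies a polarizable twistor $\Dmod$-module on $A$.

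Next I would import the two key structural facts. For the statement that $S_m^k(\Mmod)$ is a finite union of (torsion, in the geometric case) translates of triple tori: the twistor $\Dmod$-module structure gives, after restriction to the Dolbeault side, a Higgs bundle on $A$, and the cohomology of $\DR_A(\Mmod \tensor (L,\nabla))$ is computed, via harmonic theory for the associated harmonic bundle (exactly as in Lemma~\ref{lem:dolbeault} for the rank-one case), by a Dolbeault-type Koszul complex on the parameter space $\Ah \times V$; the jump loci of such complexes are of triple-torus type by the results of Arapura and Simpson already invoked in the proof of Proposition~\ref{prop:linear}. The torsion statement in the geometric case follows from the theorem of the fixed part / rigidity arguments, as in \cite{Simpson2}.

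For the codimension inequality $\codim S_m^k(\Mmod) \geq 2k$ when $\Mmod$ is a single holonomic module, I would run the same two-step argument as in the proof of Theorem~\ref{thm:mCoh} and Theorem~\ref{thm:locsyst}: the twistor structure produces, for each graded piece of the underlying filtered object, a $\GV$-type vanishing statement on $A$ (the analogue of Lemma~\ref{lem:kodaira} and Theorem~\ref{hm_abelian}, now using Mochizuki's vanishing theorem for twistor $\Dmod$-modules in place of Saito's), hence the image of $S_m^k$ under the projection $\Ash \to \Ah$ has codimension $\geq k$; combining this with the fact that each component is a triple torus $\hat B \times H^0(B,\Omega_B^1)$ — so its projection to $\Ah$ is $\hat B$, and the whole component has codimension $2\,\codim \hat B$ — upgrades the bound to $2k$, precisely as in the last lines of the proof of Theorem~\ref{thm:mCoh}.

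The main obstacle will be establishing the $\GV$/vanishing input for twistor $\Dmod$-modules: one needs a Kodaira-type vanishing theorem for the associated graded of the underlying $\Rh$-module (or for the graded pieces of its de Rham complex) valid on abelian varieties, playing the role that Saito's Theorem~\ref{saito_vanishing} plays in Lemma~\ref{lem:kodaira}. This should follow from Mochizuki's theory — he proves vanishing and semisimplicity results for polarizable twistor $\Dmod$-modules, and on an abelian variety one exploits the triviality of the cotangent bundle exactly as in the proof of Lemma~\ref{lem:kodaira} to pass from hypercohomology vanishing for the de Rham complex to vanishing for the individual graded sheaves — but checking that all the required strictness and compatibility-with-duality statements hold in the twistor setting, and that the covering trick of Theorem~\ref{hm_abelian} goes through, is the technical heart of the argument. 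As the excerpt itself indicates, the detailed proofs are deferred; here the plan is to indicate that each step of the mixed-Hodge-module argument has a twistor counterpart.
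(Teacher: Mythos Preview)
The paper does not actually prove this theorem: it is stated in the final section as an announcement, with the explicit remark that ``the proofs will appear elsewhere.'' All the paper offers by way of method is the sentence that the proofs ``are based on an extension of the Fourier-Mukai transform to twistor $\Dmod$-modules, which produces complexes of coherent analytic sheaves on the twistor space $\operatorname{Tw}(\Ash)$,'' with $\Ah \times V$ and $\Ash$ appearing as two fibers of $\operatorname{Tw}(\Ash) \to \PPn{1}$. So there is no detailed argument to compare against, only this architectural hint.

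Your sketch is in the right spirit---replace Saito's theory by Sabbah--Mochizuki and rerun the arguments of \S8--13---and you correctly flag the Kodaira-type vanishing for twistor $\Dmod$-modules as the technical crux. But there is a genuine gap in your reduction step. You write that one ``further reduces \dots\ to the case of a semisimple holonomic $\Dmod$-module,'' invoking Arapura--Simpson. This does not work: an arbitrary holonomic $\Dmod$-module need not be semisimple, and passing to a Jordan--H\"older filtration only gives that $S_m^k(\Mmod)$ is \emph{contained in} a finite union of the loci for the simple constituents. A closed subset of a finite union of translates of triple tori is not itself, in general, such a union---so the structure statement does not follow. Arapura and Simpson's arguments concern rank-one objects and absolute Hodge theory; they do not supply a mechanism for reducing the structure of jump loci of a non-semisimple $\Mmod$ to the semisimple case. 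The paper's twistor-space approach is designed precisely to handle holonomic $\Dmod$-modules uniformly, without such a reduction; your proposal would need a separate argument here.

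A smaller point: your plan works on the Dolbeault/Higgs side and then transfers to $\Ash$ via the harmonic correspondence. That correspondence is only real-analytic, so one has to be careful that the algebraic structure of the loci (as subvarieties of $\Ash$) survives the transfer. The paper's twistor-space formulation sidesteps this by treating both realizations at once.
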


The theorem implies the analogous result for cohomological support loci of
constructible complexes and perverse sheaves, which are now subsets of ${\rm
Char}(A)$; this is because of the Riemann-Hilbert correspondence. By the usual base
change arguments, one derives the following properties of the Fourier-Mukai transform.

\begin{corollary}\label{cor:extension}
For $\Mmod \in \Db_h(\Dmod_A)$, the support of the Fourier-Mukai transform
$\derR \Phi_{P^\natural} \Mmod \in \Dbcoh(\OAsh)$ is a finite union of translates of
triple tori in $\Ash$; the translates are by torsion points when $\Mmod$ is of
geometric origin. When $\Mmod$ is a single holonomic $\Dmod$-module, the system of
inequalities
\[	
	\codim \Supp R^{\ell} \Phi_{P^\natural} \Mmod \geq 2 \ell \qquad
		\text{for every $\ell \in \ZZ$}
\]
is satisfied, which implies that $\derR \Phi_{P^\natural} \Mmod$ is an $m$-perverse
coherent sheaf on $\Ash$.
\end{corollary}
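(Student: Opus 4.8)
The plan is to reduce everything to the two ingredients that already worked in the Hodge-module case: (i) a structural description of cohomological support loci coming from harmonic-theory / twistor methods, and (ii) a codimension estimate derived from the vanishing theorem via the Fourier–Mukai transform. The new input is that one must run these arguments with polarizable twistor $\Dmod$-modules replacing mixed Hodge modules, using the Sabbah–Mochizuki correspondence.

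First I would reduce to the case of a single semisimple holonomic $\Dmod$-module. For a complex $\Mmod \in \Db_h(\Dmod_A)$ with bounded holonomic cohomology, a spectral-sequence (or hypercohomology) argument expresses each $S_m^k(\Mmod)$ as a finite Boolean combination (unions of intersections, as in the proof of Proposition~\ref{prop:linear}) of the loci $S_{m'}^{k'}(\cohH^j \Mmod)$; since finite unions of translates of triple tori are closed under finite union, and (as in the argument at the end of Proposition~\ref{prop:linear}) the support loci of the transform are recovered from the $S_m^k$ by descending induction, it suffices to treat a single holonomic $\Dmod$-module $\Mmod$. Any holonomic $\Dmod$-module has a finite filtration with semisimple regular subquotients only after one allows irregular pieces; but the decomposition theorem of Sabbah–Mochizuki for polarizable twistor $\Dmod$-modules provides, for $\Mmod$ semisimple, the analogue of Saito's decomposition — each simple summand has strict support an irreducible subvariety $Z \subseteq A$ and is the minimal extension of an irreducible semisimple $\Dmod$-module (generically a flat bundle with tame/wild singularities) on a Zariski-open subset of $Z$. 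This plays the role of the pure-Hodge-module summands $M_{i,j}$.

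Next, for the structural statement (finite union of translates of triple tori, torsion translates in the geometric case), I would invoke the extension of Simpson's and Arapura's results to the non-unitary, possibly irregular setting. The key point is that for a semisimple holonomic $\Dmod$-module on $A$ the cohomological support loci in $\Ash$ are cut out by harmonic-theoretic conditions, and Mochizuki's work on mixed twistor $\Dmod$-modules gives exactly the semicontinuity and the ``Dolbeault = de Rham = Betti'' comparison needed to conclude that these loci are translates of triple tori; the translates are by torsion points precisely when the $\Dmod$-module is of geometric origin, because then the underlying local systems have quasi-unipotent monodromy, which forces torsion. (Here one uses that $\Ash$ is a torsor over $\Ah \times V$, so a ``triple torus'' in $\Ash$ is the natural lift of one in $\Ah \times V$.)

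For the codimension bound $\codim S_m^k(\Mmod) \geq 2k$ when $\Mmod$ is a single holonomic $\Dmod$-module, I would mimic the proof of Theorem~\ref{thm:locsyst} and Theorem~\ref{thm:mCoh}. One passes from $S_m^k(\Mmod) \subseteq \Ash$ to the Dolbeault picture, i.e.\ to support loci inside $\Ah \times V$ of the total Fourier–Mukai transform $\FM \shC(\Mmod,F)$ of the associated graded of a twistor filtration on $\Mmod$. The twistor analogue of Saito's vanishing theorem (available through Mochizuki's theory of polarizable twistor $\Dmod$-modules on projective varieties, and its consequence on abelian varieties exactly as in Lemma~\ref{lem:kodaira}) shows that each graded piece $\Gr_k^F \Mmod$ is a $\GV$-sheaf on $A$, so by Theorem~\ref{hm_abelian}'s proof its transform is $c$-perverse coherent on $\Ah$; combined with the triple-torus structure (each torus being of the shape $\hat B \times H^0(B,\Omega^1_B)$, hence of even codimension-to-dimension behaviour) one upgrades $\codim \geq \ell$ to $\codim \geq 2\ell$, exactly as in the proof of Theorem~\ref{thm:mCoh}. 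Translating back via Lemma~\ref{lem:dolbeault}'s twistor analogue yields $\codim S_m^k(\Mmod) \geq 2k$.

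I expect the main obstacle to be item (i) in the non-regular case: establishing the harmonic-theory input — the existence of a good filtration, the strictness under direct image, and the Dolbeault–de Rham comparison — for \emph{wild} (irregular) polarizable twistor $\Dmod$-modules, and verifying that Saito's vanishing theorem (Theorem~\ref{saito_vanishing}) genuinely extends to that setting. The regular semisimple case is a fairly direct transcription of the Hodge-module arguments using Mochizuki's polarizable-Hodge-twistor dictionary, but the irregular case requires the full strength of \cite{Sabbah1,Sabbah2,Mochizuki1,Mochizuki2} and is where all the real work lies; the codimension estimate and the reduction steps are then formal consequences of the machinery already set up in this paper. The authors note this proof will appear elsewhere, so here one would only indicate this outline.
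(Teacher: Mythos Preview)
The paper does not actually prove this corollary in any detail: it is presented as a consequence of the immediately preceding (unlabeled) theorem ``by the usual base change arguments'', and the proof of that theorem is explicitly deferred (``The proofs will appear elsewhere''). You correctly recognized this and offered a sketch of how the deferred proof might go, which is the only reasonable response here.

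Your outline is broadly compatible with the paper's one-sentence hint (Sabbah--Mochizuki twistor $\Dmod$-module theory replacing Saito's theory), but the mechanism the authors signal is somewhat different from yours. You propose to transplant the Hodge-module arguments graded-piece-by-graded-piece: establish a twistor analogue of Saito's vanishing (Theorem~\ref{saito_vanishing}), deduce the $\GV$-property for graded pieces of a twistor filtration as in Lemma~\ref{lem:kodaira} and Theorem~\ref{hm_abelian}, and then upgrade $\codim \geq \ell$ to $\codim \geq 2\ell$ via the triple-torus structure as in Theorem~\ref{thm:mCoh}. The paper's hint instead points to an extension of the Fourier--Mukai transform itself to twistor $\Dmod$-modules, producing complexes of coherent sheaves on the twistor space $\operatorname{Tw}(\Ash)$; the two spaces $\Ah \times V$ and $\Ash$ then appear as different fibers of $\operatorname{Tw}(\Ash) \to \PPn{1}$, and the results for both are obtained simultaneously rather than by passing from Dolbeault to de Rham. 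Your route may well succeed, but it is not the one indicated.

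One technical caution in your reduction step: passing from a general holonomic $\Mmod$ to its simple Jordan--H\"older constituents $\Mmod_i$ gives $S_1^k(\Mmod) \subseteq \bigcup_i S_1^k(\Mmod_i)$ via the long exact sequence, which is enough for the corollary (it only concerns supports, i.e.\ $m=1$), but the general $S_m^k$ bound in the theorem needs more care than your Boolean-combination remark suggests. And, as you yourself flag, the existence of a ``twistor analogue of Saito's vanishing theorem'' in the irregular case is precisely the hard input the deferred paper must supply; everything else in your outline is indeed formal once that is in hand.
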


For semi-simple holonomic $\Dmod$-modules, there is also a result analogous to (SL).

\begin{theorem}
If $\Mmod$ is a semi-simple holonomic $\Dmod$-module on $A$, then the Fourier-Mukai
transform $\derR \Phi_{P^\natural} \Mmod$ is locally, in the analytic topology,
quasi-isomorphic to a linear complex constructed from the cohomology of twists of $\Mmod$.
\end{theorem}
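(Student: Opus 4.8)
The plan is to run the argument of Theorem~\ref{thm:linear_intro} --- pull the transform back to the universal cover and build a linear model out of harmonic forms --- but with the harmonic metric supplied by the Sabbah--Mochizuki correspondence instead of by Corlette's theorem. First I would invoke \cites{Sabbah1,Sabbah2,Mochizuki1,Mochizuki2}: a semi-simple holonomic $\Dmod$-module $\Mmod$ on $A$ underlies a polarizable pure twistor $\Dmod$-module, hence restricts to a semi-simple local system on the smooth locus $U \subseteq A$ and carries there a (tame or wild) pluriharmonic metric $h$. Together with a flat K\"ahler metric on $A$ and the standard metric on the smooth line bundle $(L_\tau, \nabla_\tau)$ attached to a point $\tau$ of the universal cover $W = H^1(A,\CC)$ of $\Ash$, this equips $\Mmod \tensor (L_\tau,\nabla_\tau)$ with a harmonic metric, and Simpson's package of first-order K\"ahler identities for $\deltau$, $\dbartau$, $\delsttau$, $\dbarsttau$ and the Green's operator $\Gtau$ continues to hold in the appropriate $L^2$-sense; this last point is exactly the content of Mochizuki's $L^2$-Hodge theory for harmonic bundles.

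Next I would set up the analytic model of the Laumon--Rothstein transform as in Lemma~\ref{smooth_description}: over $A \times W$ the pullback of $(P^\natural,\nabla^\natural)$, twisted into $\Mmod$, produces a relative de Rham complex whose stalk at $\tau$ is a complex $\bigl( C^{\bullet}(\Mmod), D \bigr)$ with $D\alpha = (\dtau + e)\alpha$, where $\dtau$ now denotes the flat connection of $\Mmod$ twisted by $\tau$ and $e$ is the usual linear operator $e(\alpha) = \sum_{I,j} e_j \wedge \alpha_I \tensor z_j z^I$. Exactly as in the subsections ``Construction of the linear complex'' and ``Construction of the quasi-isomorphism'', I would introduce the linear complex
\[
	\bigl( \Hartau^{\bullet}(\Mmod) \tensor R, \delta \bigr), \qquad
		\delta(\alpha) = \sum_j \Htau(e_j \wedge \alpha) \tensor z_j,
\]
whose terms $\Hartau^k(\Mmod)$ --- the spaces of $\dtau$-harmonic $\Mmod$-valued $k$-forms --- are canonically identified with $\mathbf{H}^k\bigl( A, \DR_A(\Mmod \tensor (L_\tau, \nabla_\tau)) \bigr)$, i.e.\ with the cohomology of the indicated twist of $\Mmod$. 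One then solves the formal equation $(\id + 2\dbarsttau \Gtau e)\, f^k(\alpha) = \alpha$ to obtain a morphism of complexes $f \colon \Hartau^{\bullet}(\Mmod) \tensor R \to C^{\bullet}(\Mmod)$. Convergence of the defining power series follows from the norm estimates of Theorem~\ref{thm:estimates} applied in the weighted Sobolev spaces of $h$, and $f$ is seen to be a quasi-isomorphism via the $\mm$-adic spectral sequence \eqref{eq:SS-mm} and Lemma~\ref{lem:homog}, just as in the proof of Theorem~\ref{thm:linear_intro}: on $E_1$ the map $f$ is the identity on $\mathbf{H}^{p+q}\bigl( A, \DR_A(\Mmod \tensor (L_\tau,\nabla_\tau)) \bigr) \tensor \Sym^p(\mm/\mm^2)$.

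The main obstacle is analytic rather than formal. When $\Mmod$ is not smooth, the harmonic metric degenerates along the singular divisor $A \setminus U$, so the Hodge-theoretic inputs --- finite-dimensionality of the $\Hartau^k(\Mmod)$, the orthogonal decomposition \eqref{eq:Hodge-dec}, the boundedness of $\dbarsttau$ and $\Gtau$ on the relevant (weighted) Sobolev spaces, and above all the identification of $L^2$-cohomology with $\mathbf{H}^k\bigl( A, \DR_A(\Mmod \tensor (L,\nabla)) \bigr)$ --- have to be imported from Mochizuki's theory of tame and wild harmonic bundles rather than from the elementary discussion in the subsection ``Sobolev spaces and norm estimates''; one must also check that wedging with the (flat, hence bounded) closed one-forms $e_j$ is compatible with these weighted norms. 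When $\Mmod$ is itself a semi-simple local system on $A$ there is no singular locus, Corlette's theorem applies on the compact $A$, and the argument is a verbatim transcription of the six subsections preceding Theorem~\ref{thm:linear_intro} with $\Mmod$-valued forms in place of scalar forms; the general statement is then deduced from this smooth case together with the twistor package. As indicated in the text, the complete details will be given elsewhere.
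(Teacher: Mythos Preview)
The paper does not actually prove this theorem: it is announced in the final section with the remark that ``the proofs will appear elsewhere,'' accompanied only by a one-sentence sketch. So there is no detailed argument to compare against, but the indicated approach differs from yours. The paper says the proof goes through an \emph{extension of the Fourier--Mukai transform to twistor $\Dmod$-modules}, producing complexes of coherent analytic sheaves on the twistor space $\operatorname{Tw}(\Ash)$ of the quaternionic manifold $\Ash$, and then applies the Mochizuki--Sabbah theory at that level; the paper also remarks that this unifies the two pictures on $\Ah \times H^0(A,\Omega_A^1)$ and on $\Ash$, since these appear as different fibers of $\operatorname{Tw}(\Ash) \to \PPn{1}$.

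Your proposal instead transplants the proof of Theorem~\ref{thm:linear_intro} directly: work pointwise on $W$, equip $\Mmod$ with the harmonic metric coming from the polarizable twistor structure, and rerun the construction of $f^k = (\id + 2\dbarsttau \Gtau e)^{-1}$ with $\Mmod$-valued forms. This is more elementary in spirit and stays closer to the paper's own analytic argument, and you correctly identify the genuine obstacle: when $\Mmod$ has singularities the harmonic metric degenerates, so the finite-dimensionality of harmonic spaces, the Green's operator estimates, and especially the identification of $L^2$-cohomology with de Rham hypercohomology all require Mochizuki's weighted $L^2$ package rather than the elementary Sobolev discussion in \subsecref{subsec:Rothstein} and the surrounding subsections. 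What the paper's twistor-space approach buys is that this analytic input is absorbed once into the construction of the transform for twistor $\Dmod$-modules, and the linearity then becomes a structural consequence rather than a pointwise estimate; it also explains conceptually why the Dolbeault and de Rham results (Theorems~\ref{thm:mCoh_intro} and~\ref{thm:linear_intro}) are two faces of the same phenomenon. Your route, if the weighted-Sobolev details can be made to work uniformly in $\tau$, would give a more self-contained argument but would not yield that unification.
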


The proofs will appear elsewhere. They are based on an extension of the
Fourier-Mukai transform to twistor $\Dmod$-modules, which produces complexes of
coherent analytic sheaves on the twistor space $\operatorname{Tw}(\Ash)$ of the
quaternionic manifold $\Ash$, and on the powerful results about twistor
$\Dmod$-modules by Mochizuki and Sabbah. This treatment unifies the results for the
two spaces $\Ah \times H^0(A, \Omega_A^1)$ and $\Ash$ that we used in this paper:
they appear as two different fibers of $\operatorname{Tw}(\Ash) \to
\PPn{1}$. (This can also be used to deduce that the main result of \cite{GL2} is 
implied by Theorem \ref{thm:linear_intro} by passage to graded pieces.)

\subsection{Open problems}

Given the discussion above, a very interesting problem in this context is the following:

\begin{problem}
Describe classes of $\Dmod$-modules---such as holonomic, regular holonomic,
or semi-simple holonomic $\Dmod$-modules---on an abelian variety in terms of their
Fourier-Mukai transforms. In other words, characterize the subcategories of
$\Dbcoh(\OAsh)$ that correspond to the categories of such $\Dmod$-modules under
$\derR \Phi_{P^\natural}$.
\end{problem}

The two results above give several necessary conditions, so the problem is really to
find sufficient conditions. Such a description might also shed some light on the
difficult question of which $\Dmod$-modules are of geometric origin.

\bigskip

One may also wonder whether there are extensions of various results in this paper in the 
non-projective setting.

\begin{problem}
Does the analogue of Theorem \ref{hm_abelian} hold on arbitrary complex tori?
\end{problem}

Note that the (SL) type result, Theorem \ref{thm:linear_intro}, generalizes to compact K\"ahler 
manifolds, since the proof only uses harmonic theory. This raises the question whether  our
statements of type (D), here relying heavily on vanishing 
theorems for ample line bundles, extend to that context as well.

\begin{problem}
Are there analogues of the generic vanishing theorems \ref{thm:nakano}, \ref{thm:locsyst} 
and \ref{thm:intro_structure} in the K\"ahler setting?
\end{problem}

Finally, with respect to the discussion at the end of \S\ref{subsec:BGG}, it is natural to address the following:

\begin{problem}
Find geometric conditions on $X$ under which the higher direct image $R^{-\delta(a)} \Phi_{P^\natural} \shO_X$ is 
locally free in a punctured neighborhood of the origin. As a stronger question, find such conditions under which 
cohomological support loci $\Sigma^i (X)$ contain the origin as an isolated point for
all $i > n - \delta(a)$.
\end{problem} 

\section*{References}

\begin{biblist}
\bib{Arapura}{article}{
	author={Arapura, Donu},
	title={Higgs line bundles, Green-Lazarsfeld sets, and maps of K\"ahler manifolds
		to curves},
	journal={Bull. Amer. Math. Soc. (N.S.)},
	volume={26},
	date={1992},	
	number={2},
	pages={310--314},
}
\bib{AB}{article}{
        author={Arinkin, Dima},
        author={Bezrukavnikov, Roman},
        title={Perverse coherent sheaves},
        journal={Mosc. Math. J.},
        volume={10},
        date={2010}, 
        number={1},
        pages={3--29},
}
\bib{BBD}{article}{
        author={Beilinson, Alexander},
        author={Bernstein, Joseph},
        author={Deligne, Pierre},
        title={Faisceaux pervers},
        journal={Ast\'erisque},
        volume={100},
        date={1982}, 
        pages={3--171},
}
\bib{ChH}{article}{
        author={Chen, Junkgai},
        author ={Hacon, Christopher},
        title={Kodaira dimension of irregular varieties}, 
        journal={Invent. Math.},
        volume={186}, 
        date={2011}, 
        number={3}, 
        pages={481--500},
}
\bib{CH}{article}{
        author={Clemens, Herb},
        author ={Hacon, Christopher},
        title={Deformations of the trivial line bundle and vanishing theorems}, 
        journal={Amer. J. Math.},
        volume={124}, 
        date={2002}, 
        number={4}, 
        pages={769--815},
}
\bib{dCM}{article}{
   author={de Cataldo, Mark Andrea A.},
   author={Migliorini, Luca},
   title={The Hodge theory of algebraic maps},
   journal={Ann. Sci. \'Ecole Norm. Sup. (4)},
   volume={38},
   date={2005},
   number={5},
   pages={693--750},
}
\bib{Deligne}{article}{
   author={Deligne, Pierre},
   title={Th\'eorie de Hodge. II},
   journal={Inst. Hautes \'Etudes Sci. Publ. Math.},
   number={40},
   date={1971},
   pages={5--57},
}
\bib{EL}{article}{
   author={Ein, Lawrence},
   author={Lazarsfeld, Robert},
   title={Singularities of theta divisors and the birational geometry of
   irregular varieties},
   journal={J. Amer. Math. Soc.},
   volume={10},
   date={1997},
   number={1},
   pages={243--258},
}
\bib{Eisenbud}{book}{
   author={Eisenbud, David},
   title={The geometry of syzygies},
   series={Graduate Texts in Mathematics},
   volume={229},
   note={A second course in commutative algebra and algebraic geometry},
   publisher={Springer-Verlag},
   place={New York},
   date={2005},
   pages={xvi+243},
}
\bib{EFS}{article}{
   author={Eisenbud, David},
   author= {Fl\o ystad, Gunnar},
   author={Schreyer, Frank-Olaf},
   title={Sheaf cohomology and free resolutions over the exterior algebra}, 
   journal={Trans. Amer. Math. Soc.},
   volume={355}, 
   date={2003}, 
   number={11},
   pages={4397--4426},
}
\bib{GL1}{article}{
   author={Green, Mark},
   author={Lazarsfeld, Robert},
   title={Deformation theory, generic vanishing theorems, and some
   conjectures of Enriques, Catanese and Beauville},
   journal={Invent. Math.},
   volume={90},
   date={1987},
   number={2},
   pages={389--407},
}
\bib{GL2}{article}{
   author={Green, Mark},
   author={Lazarsfeld, Robert},
   title={Higher obstructions to deforming cohomology groups of line bundles},
   journal={J. Amer. Math. Soc.},
   volume={1},
   date={1991},
   number={4},
   pages={87--103},
}
\bib{GH}{book}{
   author={Griffiths, Phillip},
   author={Harris, Joseph},
   title={Principles of Algebraic Geometry},
   publisher={Wiley-Interscience},
   date={1978},
}
\bib{Hacon}{article}{
	author={Hacon, Christopher},
	title={A derived category approach to generic vanishing},
	journal={J. Reine Angew. Math.},
	volume={575},
	date={2004},	
	pages={173--187},
}
\bib{Kashiwara}{article}{
   author={Kashiwara, Masaki},
	title={$t$-structures on the derived categories of holonomic $\scr D$-modules and
		coherent $\scr O$-modules},
   journal={Moscow Math. J.},
   volume={4},
   date={2004},
   number={4},
   pages={847--868},
}
\bib{Kollar}{article}{
   author={Koll\'ar, J\'anos},
   title={Higher direct images of dualizing sheaves II},
   journal={Ann. of Math.},
   number={124},
   date={1986},
   pages={171--202},
}
\bib{KW}{article}{
	author={Kr\"amer, T.},
	author={Weissauer, Rainer},
	title={Vanishing theorems for constructible sheaves on abelian varieties},
	date={2011},
	eprint={arXiv:1111.4947v1},
}
\bib{Laumon}{article}{
   author={Laumon, G\'erard},
   title={Transformations canoniques et sp\'ecialisation pour les ${\scr D}$-modules filtr\'es},
   note={Differential systems and singularities (Luminy, 1983)},
   journal={Ast\'erisque},
   number={130},
   date={1985},
   pages={56--129},
}
\bib{Laumon2}{article}{
   author={Laumon, G\'erard},
   title={Transformation de Fourier g\'en\'eralis\'ee},
   eprint={arXiv:alg-geom/9603004},
	 date={1996}
}
\bib{LP}{article}{
       author={Lazarsfeld, Robert},
       author={Popa, Mihnea},
       title={Derivative complex, BGG correspondence, and numerical inequalities for 
       compact K\"ahler manifolds},
       journal={Invent. Math.},
      volume={182},
	   date={2010},
	   number={3},
       pages={605--633},
}
\bib{LPS}{article}{
       author={Lazarsfeld, Robert},
       author={Popa, Mihnea},
       author={Schnell, Christian},
       title={Canonical cohomology as an exterior module},
	eprint={arXiv:1010.5026},
	date={2010},
	status={to appear in the Eckart Viehweg memorial issue of 
	\textit{Pure and Applied Math Quarterly}},
}
\bib{Libgober}{article}{
   author={Libgober, Anatoly},
   title={First order deformations for rank one local systems with a non-vanishing cohomology. Arrangements in Boston: a Conference on Hyperplane Arrangements (1999)},
   journal={Topology Appl.},
   volume={118},
   date={2002},
   number={1-2},
   pages={159--168},
}
\bib{MacDonald}{article}{
   author={MacDonald, Ian},
   title={Symmetric products of an algebraic curve},
   journal={Topology},
   volume={1},
   date={1962},
   pages={319--343},
}
\bib{McCleary}{book}{
   author={McCleary, John},
   title={A user's guide to spectral sequences},
   series={Cambridge Stud. Adv. Math.},
   volume={58},
   edition={2},
   publisher={Cambridge Univ. Press},
   place={Cambridge},
   date={2001},
}
\bib{Mochizuki1}{article}{
   author={Mochizuki, Takuro},
   title={Asymptotic behaviour of tame harmonic bundles and an application
   to pure twistor $D$-modules. I},
   journal={Mem. Amer. Math. Soc.},
   volume={185},
   date={2007},
   number={869},
}
\bib{Mochizuki2}{article}{
   author={Mochizuki, Takuro},
   title={Asymptotic behaviour of tame harmonic bundles and an application
   to pure twistor $D$-modules. II},
   journal={Mem. Amer. Math. Soc.},
   volume={185},
   date={2007},
   number={870},
}
\bib{Mukai}{article}{
	author={Mukai, Shigeru},
	title={Duality between $D (X)$ and $D(\widehat{X})$ with its application to Picard sheaves},
	journal={Nagoya Math. J.},
	volume={81},
	date={1981},
	pages={153--175},
}
\bib{PP}{article}{
	author={Pareschi, Giuseppe},
	author={Popa, Mihnea},
	title={GV-sheaves, Fourier-Mukai transform, and generic vanishing},
	journal={Amer. J. Math.},
	volume={133},
	date={2011},
	number={1},
	pages={235--271},
}
\bib{PP2}{article}{
	author={Pareschi, Giuseppe},
	author={Popa, Mihnea},
	title={Strong generic vanishing and a higher dimensional Castelnuovo-de Franchis inequality},
	journal={Duke Math. J.},
	volume={150},
	date={2009},
	number={2},
	pages={269--285},
}
\bib{Popa}{article}{
        author={Popa, Mihnea},
	title={Generic vanishing filtrations and perverse objects in derived categories of coherent sheaves},
	eprint={arXiv:0911.3648},
	 date={2009},
	 status={to appear in \textit{Derived Categories, Tokyo, 2011}},
}
\bib{Roberts}{book}{
   author={Roberts, Paul},
   title={Homological invariants of modules over commutative rings},
   series={S\'eminaire de Math\'ematiques Sup\'erieures},
   volume={72},
   publisher={Presses de l'Universit\'e de Montr\'eal},
   place={Montreal, Que.},
   date={1980},
}
\bib{Rothstein}{article}{
   author={Rothstein, Mitchell},
   title={Sheaves with connection on abelian varieties},
   journal={Duke Math. J.},
   volume={84},
   date={1996},
   number={3},
   pages={565--598}
}
\bib{Sabbah1}{article}{
   author={Sabbah, Claude},
   title={Polarizable twistor $\scr D$-modules},
   journal={Ast\'erisque},
   number={300},
   date={2005},
}
\bib{Sabbah2}{article}{
   author={Sabbah, Claude},
   title={Wild twistor $\scr D$-modules},
   conference={
      title={Algebraic analysis and around},
   },
   book={
      series={Adv. Stud. Pure Math.},
      volume={54},
      publisher={Math. Soc. Japan},
      place={Tokyo},
   },
   date={2009},
   pages={293--353},
}
\bib{Saito-HM}{article}{
   author={Saito, Morihiko},
   title={Modules de Hodge polarisables},
	journal={Publ. Res. Inst. Math. Sci.},
   volume={24},
   date={1988},
   number={6},
   pages={849--995},
}
\bib{Saito-MHM}{article}{
   author={Saito, Morihiko},
   title={Mixed Hodge modules},
	journal={Publ. Res. Inst. Math. Sci.},
   volume={26},
   date={1990},
   number={2},
   pages={221--333},
}
\bib{Saito-survey}{article}{
	author={Saito, Morihiko},
	title={On the Theory of Mixed Hodge Modules},
   journal={Amer. Math. Soc. Transl. Ser. 2},
   volume={160},
   note={Translated from S\=ugaku},
   publisher={Amer. Math. Soc.},
   place={Providence, RI},
   date={1994},
	pages={47--61},
}
\bib{duality}{article}{
	author={Schnell, Christian},
	title={Local duality and polarized Hodge modules},
        journal={Publ. Res. Inst. Math. Sci.},
        volume={47},
        date={2011},
        number={3},
	pages={705--725},
}
\bib{Simpson}{article}{
   author={Simpson, Carlos},
   title={Higgs bundles and local systems},
   journal={Publ. Math. IHES},
   volume={75},
   date={1992},
   pages={5--95},
}
\bib{Simpson2}{article}{
   author={Simpson, Carlos},
   title={Subspaces of moduli spaces of rank one local systems},
   journal={Ann. Sci. ENS},
   volume={26},
   date={1993},
   pages={361--401},
}
\bib{Wells}{book}{
   author={Wells, Raymond O., Jr.},
   title={Differential analysis on complex manifolds},
   series={Graduate Texts in Mathematics},
   volume={65},
   edition={3},
   note={With a new appendix by Oscar Garcia-Prada},
   publisher={Springer},
   place={New York},
   date={2008},
   pages={xiv+299},
}
\end{biblist}

\end{document}